\newcommand{\plan}[1]{}
\newcommand{\BA}[1]{}
\newcommand{\PN}[1]{}
\newcommand{\ER}[1]{}
\newcommand{\JE}[1]{}
   \renewcommand{\plan}[1]{\todo[color=blue!30]{Plan: #1}\PackageWarning{TODO}{Plan: #1}}
   \renewcommand{\BA}[1]{\todo[color=orange!30]{BA: #1} \PackageWarning{TODO}{BA: #1}}
   \renewcommand{\ER}[1]{\todo[color=green!30]{ER: #1}\PackageWarning{TODO}{ER: #1}}
   \renewcommand{\JE}[1]{\todo[color=red!30]{JE: #1}\PackageWarning{TODO}{JE: #1} }
   \renewcommand{\PN}[1]{\todo[color=violet!30]{PN: #1} \PackageWarning{TODO}{PN: #1}}
\tikzset{fib/.style={->>,font=\scriptsize}}
\theoremstyle{plain}
\theoremstyle{definition}
  \Crefname{defi}{Definition}{Definitions}
  \Crefname{rem}{Remark}{Remarks}
  \Crefname{rems}{Remark}{Remarks}
  \Crefname{prob}{Problem}{Problems}
\Crefname{cor}{Corollary}{Corollaries}
\Crefname{lem}{Lemma}{Lemmas}
\Crefname{exa}{Example}{Examples}
\Crefname{egs}{Examples}{Examples}
\Crefname{thm}{Theorem}{Theorems}
\Crefname{prop}{Proposition}{Propositions}
\newtheorem{constrInternal}[thm]{Construction}
\newenvironment{construction}[2][]
{\pushQED{\qed}\begin{constrInternal}[{%
                                       for~\cref{#2}%
                                      }%
                                     ]
  }
  {\popQED\end{constrInternal}}
\Crefname{problem}{Problem}{Problems}
\Crefname{constrInternal}{Construction}{Constructions}
\newcommand{\define}[1]{\textbf{#1}}
\newcommand{\ft}[1][]{\mathsf{ft}_{#1}}
\newcommand{\bd}[1][]{\partial_{#1}}
\newcommand{\jdeq}{=}
\newcommand{\comp}{\circ}
\newcommand{\catof}[1]{\mathbf{#1}}
\newcommand{\opcat}{^{\mathrm{op}}}
\newcommand{\rttr}{\catof{RtTr}}
\newcommand{\bfr}{\catof{Bfr}}
\newcommand{\grph}{\catof{Grph}}
\newcommand{\Cat}{\catof{Cat}}
\newcommand{\Set}{\catof{Set}}
\newcommand{\catid}[1]{\mathrm{id}_{#1}}
\newcommand{\cat}[1]{\mathcal{#1}}
\newcommand{\C}{\cat{C}}
\newcommand{\D}{\cat{D}}
\newcommand{\F}{\cat{F}}
\newcommand{\converts}{\equiv}
\newcommand{\type}{\xspace \mathsf{type} \xspace}
\newcommand{\term}[1]{\tilde{#1}}
\newcommand{\N}{\mathbb{N}}
\newcommand{\ob}[1]{\mathrm{Ob}\inpar1{#1}}
\newcommand{\length}{{\ell}}
\newcommand{\defeq}{:=}
\newcommand{\U}{\mathcal{U}}
\newcommand{\tU}{\tilde{\mathcal{U}}}
\newcommand{\epsi}{\varepsilon}
\renewcommand{\phi}{\varphi}
\newcommand{\Mor}[1]{\mathrm{Mor}({#1})}
\newcommand{\te}{\mathsf{term}}
\newcommand{\ty}{\mathsf{type}}
\newcommand{\ff}[1]{{[#1]}}
\newcommand{\wt}{\widetilde}
\newcommand{\blank}{\ensuremath{-}}
\newcommand{\ctxext}[2]{{#1}.{#2}}
\newcommand{\inpar}[2]{%
	\relax\ifcase#1\relax{#2}
	\else{\left(#2\right)}\fi
}
\newcommand{\sys}[1]{\ensuremath{\mathbb{#1}}}
\newcommand{\assys}[2]{%
	\relax\ifcase#1\relax{#2}
	\else{\sys{#2}}\fi
}
\newcommand{\ptdcat}[2]{\inpar{#1}{#2}_{*}}
\newcommand{\strcat}[1]{\catof{#1_s}}
\newcommand{\strCat}{\strcat{Cat}}
\newcommand{\indecarr}{indecomposable\xspace}
\newcommand{\Bsys}{\catof{Bsys}}
\newcommand{\Bfr}{\catof{Bfr}}
\newcommand{\Slist}[2]{S_{#1}^{#2}}
\newcommand{\SlistT}[2]{\tilde{S}_{#1}^{#2}}
\newcommand{\ctxwk}[2]{\langle{#1}\rangle{#2}}
\newcommand{\subst}[2]{{#2}[{#1}]}
\newcommand{\Esys}{\catof{Esys}}
\newcommand{\strEsys}{\strcat{\Esys}}
\newcommand{\rEsys}{\catof{Esys}}%
\newcommand{\Esysptd}{\ptdcat0{\Esys}}
\newcommand{\Efam}[1]{\cat{F}_{\sys{#1}}}
\newcommand{\Eroot}[1]{[~]_{#1}}
\newcommand{\Erootsys}[1]{\Eroot{\sys{#1}}}
\newcommand{\EtrmCat}[2]{\cat{C}_{\sys{#1}}\!\left(#2\right)}
\newcommand{\EtrmCatcl}[1]{\cat{C}_{\sys{#1}}}
\newcommand{\Etrmfun}[2]{I_{\sys{#1}}^{#2}}
\newcommand{\Etrmfuncl}[1]{I_{\sys{#1}}}
\newcommand{\Efcmp}[2]{\ctxext{#1}{#2}}
\newcommand{\Epcmp}[4]{
	\inpar{#1}{#3}^*\!\inpar{#2}{#4}
}
\newcommand{\Epcmpf}[2]{\Epcmp{#1}{0}{#2}{\,}}
\newcommand{\Epcin}[4]{
	\inpar{#2}{#4} \cdot \inpar{#1}{#3}
}
\newcommand{\Ehtrm}[2]{#1_{#2}}
\newcommand{\tfid}[1]{\mathsf{idtm}_{#1}}
\newcommand{\thom}[2]{\mathrm{thom}(#1,#2)}
\newcommand{\thomd}[3]{\mathrm{thom}_{#1}(#2,#3)}
\newcommand{\jcomp}[3]{\Epcin00{#2}{#3}}
\newcommand{\tmext}[2]{\ctxext{#1}{#2}}
\newcommand{\cprojfstf}[2]{\mathrm{pr}_0^{#1,#2}}
\newcommand{\cprojsndf}[2]{\mathrm{pr}_1^{#1,#2}}
\newcommand{\cprojfst}[3]{\subst{#3}{\cprojfstf{#1}{#2}}}
\newcommand{\cprojsnd}[3]{\subst{#3}{\cprojsndf{#1}{#2}}}
\newcommand{\jhom}[4]{{#4} \in \thom{#2}{#3}}
\newcommand{\jhomd}[7]{#7 \in \thomd{#4}{#5}{#6}}
\newcommand{\jvcomp}[3]{#2 \ltimes #3}
\newcommand{\jfcomp}[5]{#5 \bullet #4}
\newcommand{\CEsys}{\catof{CEsys}}
\newcommand{\rCEsys}{\catof{rCEsys}}
\newcommand{\strCEsys}{\strcat{\CEsys}}
\newcommand{\strrCEsys}{\strcat{\rCEsys}}
\newcommand{\CEroot}[2]{\mathsf{1}_{\assys{#1}{#2}}}
\newcommand{\CEctxext}[2]{\ctxext{#1}{#2}}
\newcommand{\CEfcmp}[2]{\ctxext{#1}{#2}}
\newcommand{\CEpb}[4]{%
	\inpar{#1}{#3}^*\!\inpar{#2}{#4}
}
\newcommand{\CEpbvar}[4]{%
	\inpar{#1}{#3}^{\hat{*}}\!\inpar{#2}{#4}
}
\newcommand{\CEpbf}[2]{\CEpb{#1}{0}{#2}{\,}}
\newcommand{\CEqar}[2]{\pi_2\left(#1,#2\right)}
\newcommand{\CEqarvar}[2]{\hat{\pi_2}\left(#1,#2\right)}
\newcommand{\CEcat}[2]{\cat{C}_{\assys{#1}{#2}}}
\newcommand{\CEfam}[2]{\cat{F}_{\assys{#1}{#2}}}
\newcommand{\CEfun}[2]{I_{\assys{#1}{#2}}}
\newcommand{\CEsl}[2]{\sys{#1}/{#2}}
\newcommand{\CEslcat}[3]{\CEcat{#1}{#2}\!\left(#3\right)}
\newcommand{\CEslfun}[1]{I_{#1}}
\newcommand{\hCEfam}[1]{#1_{\cat{F}}}
\newcommand{\hCEcat}[1]{#1_{\cat{C}}}
\newcommand{\p}{\mathsf{p}}
\newcommand{\q}{\mathsf{q}}
\newcommand{\Csys}{\catof{Csys}}
\newcommand{\Cterminal}{\mathsf{1}}
\newcommand{\isfctr}[1]{\mathbf{#1}}
\newcommand{\Ffctr}{\isfctr{F}}
\newcommand{\Gfctr}{\isfctr{G}}
\newcommand{\Rfctr}{\isfctr{R}}
\newcommand{\Ifctr}{\isfctr{I}}
\newcommand{\rEtoEp}{\isfctr{E2E_*}}
\newcommand{\CEtorE}{\isfctr{CE2E}}
\newcommand{\EptoCE}{\isfctr{E_*2CE}}
\newcommand{\rEtoCE}{\isfctr{E2CE}}
\newcommand{\CtoCE}{\isfctr{CE}}
\newcommand{\CEtoC}{\isfctr{C}}
\newcommand{\Ctorttr}{\isfctr{C2RtTr}}
\newcommand{\Btorttr}{\isfctr{R}}
\newcommand{\BtoTC}{\isfctr{T}}
\newcommand{\BtoE}{\isfctr{B2E}}
\newcommand{\EtoB}{\isfctr{E2B}}
\newcommand{\BtoC}{\isfctr{B2C}}
\newcommand{\CtoB}{\isfctr{C2B}}
\newcommand{\FF}{\ensuremath{\mathbb{F}}}
\newcommand{\fin}[1]{\ensuremath{[#1]}}
\newcommand{\std}[1]{\ensuremath{\mathsf{std}(#1)}}
\newcommand{\ie}{\textit{i.e.}\xspace}
\newcommand{\scat}{strict category\xspace}
\newcommand{\scats}{strict categories\xspace}
\newcommand{\sslcat}{strict slice category\xspace}
\newcommand{\rtdCE}{rooted\xspace}
\keywords{contextual categories, semantics of type theory, Martin-Löf type theory, B-systems, C-systems}
\begin{document}

\title{Algebraic presentations of type dependency}
\author[B.~Ahrens]{Benedikt Ahrens\lmcsorcid{0000-0002-6786-4538}}[a]
\author[J.~Emmenegger]{Jacopo Emmenegger\lmcsorcid{0000-0003-1383-2415}}[b]
\author[P.R.~North]{Paige Randall North\lmcsorcid{0000-0001-7876-0956}}[c]
\author[E.~Rijke]{Egbert Rijke\lmcsorcid{0000-0002-5272-6175}}[d]

\address{Delft University of Technology, Netherlands, and University of Birmingham, UK}
\email{B.P.Ahrens@tudelft.nl}
\address{University of Genoa, Italy}
\email{jacopo.emmenegger@unige.it}
\address{Utrecht University, Netherlands}
\email{p.r.north@uu.nl}
\address{Johns Hopkins University, USA, and University of Ljubljana, Slovenia}
\email{erijke1@jh.edu}
\thanks{%
  This work was partially funded by EPSRC grant EP/T000252/1 and ``PNRR - Young Researchers'' grant SOE-0000071 of the Italian Ministry of University and Research. This material is based upon work supported by the Air Force Office of Scientific Research under award numbers FA9550-21-1-0334 and FA9550-21-1-0024.
}

\begin{abstract}

  C-systems were defined by Cartmell as the algebraic structures that correspond exactly to generalised algebraic theories.
  B-systems were defined by Voevodsky in his quest to formulate and prove an initiality conjecture for type theories.
  They play a crucial role in Voevodsky's construction of a syntactic C-system from a term monad.

  In this work, we construct an equivalence between the category of C-systems and the category of B-systems, thus proving a conjecture by Voevodsky.
  We construct this equivalence as the restriction of an equivalence between more general structures, called CE-systems and E-systems, respectively.
  To this end, we identify C-systems and B-systems as ``stratified'' CE-systems and E-systems, respectively; that is, systems whose contexts are built iteratively via context extension, starting from the empty context.

\end{abstract}

\maketitle

\section{Introduction}

In his unfinished and only partially published \cite{MR3402489,Voevodsky_relative,MR3584698,MR3475277,MR3607209} research programme on type theories, Voevodsky aimed to develop a mathematical theory of type theories, similar to the theory of groups or rings.
In particular, he aimed to state and prove rigorously an ``Initiality Conjecture'' for type theories, in line with the initial semantics approach to the syntax of (programming) languages (cf.~\Cref{sec:initial-semantics}).

One aspect of this Initiality Conjecture is to construct, from the types and terms of a programming language, a ``model'', that is, a mathematical object (which is supposed to be an initial object in a category of models and their morphisms).
To help with this endeavour in the context of initial semantics for type theories, Voevodsky introduced the essentially-algebraic theory of \emph{B-systems}.
The models of this theory, he conjectured in \cite{VV_B-systems}, are constructively equivalent to the well-known \emph{C-systems} or \emph{contextual categories} first introduced by Cartmell~\cite{DBLP:journals/apal/Cartmell86}.
Furthermore, in his Templeton grant application~\cite{TempletonProposal}, Voevodsky writes:

\begin{quote}
  The theory of B-systems is conjecturally equivalent to the theory of C-systems that were introduced by John Cartmell under the name ``contextual categories'' in [2],[3]. Proving this equivalence is among the first goals of the proposed research.
\end{quote}

The precise role of B-systems in Voevodsky's programme is described in~\cite{MR3475277}; we give an overview in~\cref{sec:context} below.

In this present work, we construct an equivalence of categories between C-systems and B-systems, each equipped with a suitable notion of homomorphism.
Our construction is entirely constructive, in the sense that it does not rely on the law of excluded middle or the axiom of choice.

C- and B-systems are ``stratified'', in a sense that will be defined later (in \cref{ssec:c2ce,ssec:b2e}, respectively).
In this work, we also introduce unstratified structures, under the name of E-system and CE-system, respectively.
We construct an adjunction between these structures, and obtain the equivalence between B- and C-systems via an equivalence of suitable subcategories.
The construction is summarized in the following diagram, in which maps are annotated with the respective section numbers where they are constructed:

\begin{linenomath*}
\[
  \begin{tikzcd}[ampersand replacement=\&,column sep=4em,row sep=4em]
    \Esys
    \ar[rrr,shift right=1ex,"{\rEtoCE, \S\ref{ssec:e2ce}}"{swap,inner sep=1ex}]
    \ar[rrr,phantom,"{\scriptstyle\top}"]
    \ar[from=dr,hook',start anchor={[yshift=-.5ex,xshift=.5ex]north west}]
    \&
    \&
    \&
    \CEsys	\ar[lll,shift right=1ex,"{\CEtorE, \S\ref{ssec:ce2e}}"{swap,inner sep=1ex}]
    \\
    \Bsys	\ar[r,"\simeq",leftrightarrow,"\text{\S\ref{ssec:b2e}}"'] \ar[u,hook]
    \&
    \overset{\mathbf{stratified}}{\Esys}
    \ar[r,"\simeq",leftrightarrow,"\text{\S\ref{ssec:eqv-b-c}}"']
    \&
    \overset{\mathbf{stratified}}{\CEsys}
    \ar[ur,start anchor={[yshift=-.5ex]north east},hook]
    \&
    \Csys
    \ar[l,"\simeq"',leftrightarrow, "\text{\S\ref{ssec:c2ce}}"] \ar[u,hook]
  \end{tikzcd}
\]
\end{linenomath*}

\noindent
The unstratified structures are of interest in their own right:
they will serve, in a follow-up work, to relate C-systems and B-systems to other, well-established, unstratified categorical structures for the interpretation of type theories, such as
categories with families~\cite{Dybjer1996} and natural models~\cite{DBLP:journals/mscs/Awodey18}, categories with attributes~\cite{cartmell_phd,Hofmann_syntax_semantics},%
\footnote{Hofmann \cite[\S\S3.1, 3.2]{Hofmann_syntax_semantics} also compares categories with families and categories with attributes in a set-theoretic setting, and a comparison between these notions in a univalent setting is given in~\cite{DBLP:journals/lmcs/AhrensLV18}.}
and display map categories~\cite{Taylor1999,DBLP:journals/mscs/North19}.

\subsection{Initial Semantics}\label{sec:initial-semantics}

The ``template'' for initial semantics is as follows:
One starts by defining a suitable notion of \emph{signature}---an abstract specification device describing the (types and) terms of a language.
To any signature, one then associates a category of \emph{models} of that signature, in such a way that the\footnote{We are working modulo isomorphism in a category.} initial object in that category---if it exists---deserves to be called the \emph{syntax generated by the signature}.
Finally, one aims to construct such initial objects, or identify sufficient criteria for a signature to admit initial objects.

A particularly simple example of initial semantics is the following: consider the category an object of which is given by a triple $(X,x,s)$ where $X$ is a set, $x \in X$, and $s : X \to X$.
Then the initial object in that category is given by $(\mathbb{N}, 0, (+1))$, and the structure of being initial provides the well-known iteration principle: to define a map $\mathbb{N} \to X$, it suffices to specify $x \in X$ (the image of $0$) and an endomap $s : X \to X$ (the recursive image of $(+1)$).
That is, no explicit application of recursion or induction principles on $\mathbb{N}$ is required once it is established that $(\mathbb{N}, 0, (+1))$ is an initial object; instead, the initiality property provides an interface to these black-boxed principles.

For ``simple'' programming languages (e.g., for untyped or simply-typed lambda calculi), notions of signature, and initial semantics for such signatures, have been constructed; see, e.g., \cite{lamiaux2024introduction} for an overview.

For some specific dependently-typed languages, Streicher~\cite{streicher-semantics-of-tt}, and, more recently, De Boer, Brunerie, Lumsdaine, and Mörtberg~\cite{boer-brunerie-lumsdaine-mortberg}, have constructed initial models.
Voevodsky aimed at developing a general notion of signature for dependently-typed languages, and an initial semantics result for such signatures.
In \Cref{sec:context} we sketch Voevodsky's approach towards a theory of type theories, and the role of C- and B-systems therein.

Meanwhile, Uemura \cite[Section~5]{uemura_phd} has also developed a notion of signature for dependently-typed theories, and an initial semantics result for them.

\subsection{Voevodsky's approach towards a theory of type theories}
\label{sec:context}

In this \lcnamecref{sec:context}, we sketch Voevodsky's plan for an initial semantics result for type theories.
Voevodsky's Bonn lectures~\cite{vv-bonn} served as the main source for this overview.

\subsubsection{Setting the scene}
\label{sec:setting-scene}

In \cite{Voevodsky_relative}, Voevodsky opens with the following statement:
\begin{quote}
\textit{The first few steps in all approaches to the set-theoretic semantics of dependent
type theories remain insufficiently understood.}
\end{quote}
According to him, constructions and theorems about type theories are currently assumed by \textbf{analogy}.
Instead, they should be proved by \textbf{specialization} of a general theorem.

Voevodsky aimed to build his theory on top of
the notion of C-system, introduced by Cartmell~\cite{DBLP:journals/apal/Cartmell86} under the name of \emph{contextual category}.
Voevodsky calls a C-system equipped with extra operations corresponding to the inference rules of a type theory a \textbf{C-system model}---or just \textbf{model}---of type theory.
To give semantics of type theory, Voevodsky aimed to build two C-system models:
\begin{enumerate*}[label=(\roman*)]
\item one from the formulas and derivations of some type theory, and
\item one from a category of abstract mathematical objects.
\end{enumerate*}
Furthermore, one should construct an \textbf{interpretation} (a functor) from the first to the second.

Such an interpretation typically needs to be constructed by \emph{recursion} over the derivations of the type theory.
As explained in \cref{sec:initial-semantics}, the recursive pattern can be encapsulated in an initiality result; the methodology of initial semantics thus suggests the following approach:
\begin{enumerate}
\item Show that the term model is initial in a suitable category.
\item Then, any model yields automatically a (unique) interpretation from the term model.
\end{enumerate}

Now, for the construction of the two desired models, syntactic and semantic, respectively, Voevodsky developed different methodologies.
For the construction of \emph{semantic} models, Voevodsky exhibited several constructions of C-systems from universe categories~\cite{MR3402489}.
He also sketched a strictification from categories with families to C-systems.
For the construction of \emph{syntactic} (or \emph{term}) models, Voevodsky developed a framework outlined across several papers.
We summarize the ingredients involved here:
\begin{enumerate}
\item Restricted 2-sorted binding signatures (cf.~\cite[Section~1]{Voevodsky_relative}) with sorts for terms and types are used as abstract specificiation devices for pretypes and preterms.
\item From a restricted 2-sorted binding signature, a ``term'' monad $R : \Set \to \Set$ and a ``type'' module $LM : \Set \to \Set$ over $R$ are constructed
  (cf.~\cite[Section~1]{Voevodsky_relative}).
\item Any monad $R$ on $\Set$ gives rise to a C-system $C(R)$, corresponding to the mono-typed (or untyped) syntax of $R$, cf.~\cite[Section~4.2]{Voevodsky_relative}.
\item The \emph{presheaf extension} of $C(R)$ by the module $LM$ over $R$, called $C(R)[LM]$, constitutes the C-system of pretypes and preterms---but without any typing relation yet, cf.~\cite[Section~4.2]{Voevodsky_relative}.
\item Finally, Voevodsky's theory of \emph{sub-C-systems and regular quotients of C-systems}~\cite{MR3475277} allows one to carve out C-systems of types and well-typed terms modulo a regular congruence relation.
\end{enumerate}

In the following, we discuss some of these ingredients in slightly more detail, but without any rigorous definitions.

A ``restricted 2-sorted binding signature'' is a signature that specifies a 2-sorted language.
We can think of these two sorts as a sort $\ty$ of ``types'' and a sort $\te$ of ``terms'', respectively.
The signatures are ``restricted'' in the sense that constructors can bind variables of sort $\te$ but not of sort $\ty$.

We do not dwell on the notion of signature, but refer instead to \cite[Section~1]{Voevodsky_relative} for details; here, we give an example of a language specified by such a signature.
\begin{exa}
  An example of a syntax generated by a 2-sorted binding signature is the syntax of the Calculus of Constructions, adapted from Streicher's \emph{Semantics of Type Theory}~\cite{streicher-semantics-of-tt}:
  
\begin{linenomath*}
  \[
    \begin{array}{lcll}
      A,B    & ::= & \Pi(A,x.B) & \hspace{1cm} \text{Product of types} \\
             & |  & Prop & \hspace{1cm} \text{Type of propositions} \\
             & |  & Proof(t) & \hspace{1cm} \text{Type of proofs of proposition }t \\[1ex]
      t,u    & ::= & x & \hspace{1cm} \text{Variable} \\
             & |  & \lambda(A,x.t) & \hspace{1cm} \text{Function abstraction} \\
             & |  & App(A,x.B,t,u) & \hspace{1cm} \text{Function application} \\
             & |  & \forall(A,x.t) & \hspace{1cm} \text{Universal quant.\ over propositions }t
    \end{array}
  \]
\end{linenomath*}
\end{exa}

\noindent
This signature specifies a language with two sorts, the sort $\ty$ of ``types'' and the sort $\te$ of ``terms''.
It is restricted because there is no binding of variables of sort $\ty$, only of variables of sort $\te$.
Such a signature yields a monad $T : \Set\times\Set \to \Set\times\Set$,
\begin{linenomath*}
  \[   
    (X,Y) \mapsto (\ty(X,Y),\te(X,Y))
  \]
\end{linenomath*}
where $\ty(X,Y)$ is the set of expressions of sort $\ty$ with variables of kind $\ty$ in $X$  and of kind $\te$ in $Y$
and similarly for $\te(X,Y)$.
From such a monad on $\Set\times\Set$, Voevodsky~\cite{Voevodsky_relative} constructs, by fixing a set of type variables, a monad $R=\te$ on $\Set$, and a module $LM = \ty$ over $R$.
Here, the action of the module $LM$ is substitution of term expressions in type expressions.
From $R$ and $LM$, in turn, Voevodsky~\cite{Voevodsky_relative} constructs two C-systems, called $C(R)$ and $C(R)[LM]$, respectively.
The C-system $C(R)$ corresponds to a mono-typed syntax of just terms---in detail:
\begin{enumerate}
\item Objects are natural numbers (untyped contexts).
\item Morphisms $m \to n$ are maps $\ff{n} \to R(\ff{m})$, where $\ff{k}$ is the standard finite set associated to $k \in \N$.
\item The category thus obtained is the opposite of the Kleisli category on $R$ restricted to natural numbers.%
  \footnote{Put differently, it is the Kleisli category of the $Jf$-relative monad induced by the monad $R$, as indicated by the title of Voevodsky's article~\cite{Voevodsky_relative}.}
\item The morphism $\p_n : n+1 \to n$ is given by the composition $\ff{n} \xrightarrow{\iota} \ff{n+1} \xrightarrow{\eta} R(\ff{n+1})$.
\item Given a morphism $f : m \to n$, that is, a function $f : \ff{n} \to R(\ff{m})$, the pullback of $\p_n : n+1 \to n$ along $f$ is the morphism $\p_m : m+1 \to m$.
  The morphism $m+1 \to n+1$ required to complete the pullback square is the morphism $q(f) : n+1 \to R(\ff{m+1})$ induced by the morphisms $n \xrightarrow{f} R(\ff{m}) \xrightarrow{R(\iota)} R(\ff{m+1})$ and $1 \xrightarrow{\iota} \ff{m+1} \xrightarrow{\eta} R(\ff{m+1})$;
  intuitively, $q(f)$ extends the substitution $f$ by one variable. See also \cite[Lemma~4.2.2]{Voevodsky_relative}.
\end{enumerate}
The C-system $C(R)[LM]$, in turn, looks as follows:
\begin{enumerate}
\item $C(R)[LM]$ has, as contexts, finite sequences of types (with a suitable number of free variables).
\item Pullback is given by substitution of terms in type expressions.
\item There is no typing relationship yet: $C(R)[LM]$ is a C-system of pretypes and preterms.
\end{enumerate}
In order to build, from $C(R)[LM]$, a C-system of types and well-formed terms, with the intended typing relation, Voevodsky devised
\begin{enumerate*}[label=(\roman*)]
\item sub-C-systems (for eliminating ill-formed pretypes and preterms),
\item quotients of C-systems (for considering terms and types modulo judgemental equality).
\end{enumerate*}
To construct such subsystems and quotients, Voevodsky devised the theory of B-systems.

\subsubsection{B-systems for the construction of C-systems}
\label{sec:b-syst-constr}

Intuitively, the idea is to use the C-system $C(R)[LM]$ to obtain the pretypes and preterms to formulate \textbf{judgements}:
\begin{itemize}
\item A statement $\Gamma \vdash$ is an element of
  \begin{linenomath*}
  \begin{equation}\label{eq:B-sets-of-module}
    B(R,LM) \defeq \coprod_{n\ge 0} \prod_{i=0}^{n-1}LM(\ff{i})
  \end{equation}
\end{linenomath*}
\item A statement $\Gamma \vdash t : T$ is an element of
\begin{linenomath*}
  \begin{equation}\label{eq:Btilde-sets-of-module}
    \wt{B}(R,LM) \defeq \coprod_{n\ge 0} \left(\prod_{i=0}^{n-1}LM(\ff{i})\times R(\ff{n})\times LM(\ff{n})\right)
  \end{equation}
\end{linenomath*}
\end{itemize}
Voevodsky~\cite{VV_B-systems} defines eight operations on $B$ and $\wt{B}$, corresponding to structural rules of type theory.
The resulting mathematical structure is captured by the notion of \textbf{B-system}, illustrated in more detail in \Cref{sec:models-of-tt} and studied in detail in \Cref{sec:b-sys}.

Given a C-system $C$, we call $B(C)$ and $\wt{B}(C)$ the B-sets associated to $C$.
Voevodsky~\cite{MR3475277} constructed a bijection between
\begin{enumerate}
\item Sub-C-systems of a given C-system
\item Subsets of $(B,\wt{B}(C))$ that are closed under the eight operations
\end{enumerate}
and similar, but more complicated, for quotients.
This bijection is used by Voevodsky to construct suitable C-systems; Voevodsky himself~\cite{VV_B-systems} positions B-systems as follows:
 \begin{quote}
  B-systems are algebras (models) of an essentially algebraic theory that is expected to be
constructively equivalent to the essentially algebraic theory of C-systems which is, in turn,
constructively equivalent to the theory of contextual categories. The theory of B-systems is
closer in its form to the structures directly modeled by contexts and typing judgements of
(dependent) type theories and further away from categories than contextual categories and
C-systems.
 \end{quote}

\noindent
This concludes our overview of the use of B-systems in Voevodsky's research program.
In the remainder of the introduction, we provide more intuition for the notions of B-system and C-system, before giving rigorous definitions and constructions.

\subsection{Models of Type Theory}\label{sec:models-of-tt}

When studying type theories mathematically, one question to answer is: what is the appropriate mathematical structure that captures the essential behaviour of type theories? Technically speaking: what are the objects in the category of models of a type theory?

Many different answers have been given to this question.
The purpose of this \lcnamecref{sec:models-of-tt} is to present the two contenders studied and compared in this work, and to relate them to other notions of ``model''.

\subsubsection{Contextual categories and C-systems}

Contextual categories were defined, by Cartmell~\cite[\S14]{DBLP:journals/apal/Cartmell86}, as a mathematical structure for the interpretation of generalized algebraic theories and of the judgements of Martin-Löf type theory.
A contextual category comes with a tree structure, in particular, a partial ordering, on its objects; think of the objects of $\C$ as ``contexts'', and $\Gamma \leq \Delta$ stating that $\Gamma$ can be obtained from $\Delta$ by truncation.
Furthermore, there is a special class of morphisms, closed under pullback along arbitrary morphisms---thought of as substitution by that morphism.
In his PhD dissertation~\cite[Section~2.4]{cartmell_phd}, Cartmell shows that the category of contextual categories and homomorphisms between them is equivalent to the category of generalized algebraic theories and (equivalence classes of) interpretations between them.

Voevodsky defined C-systems as equivalent to contextual categories:
a C-system is a category coming, in particular, with a length function and a compatible ``father'' function on objects of the category, signifying truncation of contexts.
Again, we have a class of morphisms closed under pullback along arbitrary morphisms.
Voevodsky rejected the name ``contextual category'' for these mathematical object, for the reason that the extra structure on top of the underlying category cannot be transported along equivalence of categories and is thus not ``categorical'' in nature.
As an example, consider the terminal category: it can be equipped with exactly one C-system structure.
However, there is no C-system structure on any category with more than one, but finitely many, objects.

More recently, Cartmell~\cite{Cartmell_GAAxiom} gave two Generalized Algebraic axiomatizations of contextual categories, one of which is using Voevodsky's $s$-operator~\cite[Definition~2.3]{MR3475277} for pullbacks.

\subsubsection{B-systems}

Voevodsky's definition of B-systems \cite{VV_B-systems} is inspired by the presentation of type theories in terms of \emph{inference rules}.
Specifically, type theories ``of Martin-Löf genus'' are given by sets of five kinds of judgements:

\begin{description}
\item[Well-formed context]
\begin{linenomath*}  \[   \Gamma \vdash \] \end{linenomath*}
  
\item[Well-formed type in some context]
\begin{linenomath*}  \[  \Gamma \vdash A~\type \]\end{linenomath*}
  
\item[Well-formed term of some type in some context]
\begin{linenomath*}  \[ \Gamma \vdash a : A \]\end{linenomath*}
  
\item[Equality of types]
\begin{linenomath*}  \[   \Gamma \vdash A \converts B \]\end{linenomath*}
  
\item[Equality of terms]
\begin{linenomath*}  \[   \Gamma \vdash a \converts b : A \] \end{linenomath*}
\end{description}

Interpreting equality of types and terms as actual equality, and expressing $\Gamma \vdash A$ instead as $\Gamma, A \vdash$, lead Voevodsky to defining a B-system to consist of families of sets $(B_n)_{n\in \N}$ and $(\term{B}_n)_{n \in \N_{> 0}}$, intuitively denoting, for any $n \in \N$, contexts of length $n$ and terms in a context of length $n-1$, together with their types, respectively.
Furthermore, any B-system has various operations on $B$ and $\term{B}$, such as maps $\bd[n] : \term{B}_{n+1} \to B_{n+1}$ specifying, intuitively, for each ``term'' $t \in \term{B}_{n+1}$, the context $\bd[n](t) \in B_{n+1}$ in which $t$ lives.

Voevodsky's B-systems are very similar to the algebras of the theory \emph{MetaGAT} defined by  John Cartmell~\cite{Cartmell_metaGATs}, and to the algebras of a monad studied by Richard Garner~\cite{GARNER20151885}.
The intention is that these are all equivalent notions of structure.
Below, we will indicate more precise connections to Garner's work.

Compared to other semantics for dependent type theories, B-systems appear the closest to syntax.
For this reason it seems easier to describe extensions of the structural rules by type constructors or modal operators for B-systems than it is for, say, C-systems (\ie~contextual categories).
For the same reason, B-systems seem also more suitable than other semantics to describe notions of substructural dependent type theories and, more generally, variations on the syntax.

\subsubsection{Other Notions of Model}

There are many other mathematical structures for the interpretation of type theory.
Here, we give some pointers to related literature.

Voevodsky sketched a relation between C-systems and categories with families in his Lectures in the Max Planck Institute in Bonn \cite[Lecture~5]{vv-bonn}, identifying C-systems as categories with families with a particular property.
In the present work, we introduce and study unstratified categorical structures, in the form of E-systems and CE-systems, which we anticipate will be useful in giving a precise construction for Voevodsky's conjecture.

Categories with families, in turn, are related to categories with attributes (a.k.a.\ split type categories) in \cite{blanco} (in a categorical setting) and in \cite{DBLP:journals/lmcs/AhrensLV18} (in the univalent setting).
Composing these characterizations with the equivalence presented here provides a comparison between B-systems and other mathematical structures for type theory.

Garner~\cite{GARNER20151885} studies and compares two structures related to Voevodsky's B-systems: Generalized Algebraic Theories (GATs) and algebras for a monad on the category of type-and-term structures (see also \cref{eg:b-frames-equiv-type-term,eg:b-frames-structure}).

\begin{rem}
  Garner's and Cartmell's works, taken together, also point to another possible way to constructing an equivalence between C-systems and B-systems:
Cartmell~\cite[Section~2.4]{cartmell_phd} constructs an equivalence of categories between the category of contextual categories and homomorphisms between them, and the category of GATs and (equivalence classes of) interpretations between them.
Garner~\cite{GARNER20151885} constructs an equivalence of categories between the category of B-frames and the category of $\emptyset$-GATs (GATs without structural rules) (see also~\cref{eg:b-frames-equiv-type-term}).
Garner's equivalence looks like it could be ``upgraded'' to an equivalence between the category of B-systems and the category of GATs (see also~\cref{eg:b-frames-structure}).
Constructing an equivalence between B- and C-systems in this way is, however, conceptually circular --- if not in actuality, then at least in spirit.
After all, B- and C-systems were studied by Voevodsky in the context of the Initiality Conjecture; one purpose of the Initiality Conjecture is to give a \textbf{specification of dependently-typed syntax}. We believe that such a specification is best given without recourse to such syntax itself.
\end{rem}

\subsection{About the present work}

The main result of this paper is the construction of an equivalence of categories,
between the category of C-systems and the category of B-systems.
The existence of such an equivalence was conjectured by Voevodsky.

We construct this equivalence as a restriction of an equivalence between more general, unstratified structures introduced in this paper, called CE-systems and E-systems, respectively.
While it is not \emph{necessary} to pass via E-systems and CE-systems to construct an equivalence between B-systems and C-systems, it seems \emph{desirable} to us for two reasons:
\begin{enumerate}
\item The definitions and constructions are automatically more modular, isolating structure on either side that corresponds to each other.
\item The study of unstratified structures is useful in connecting B-systems and C-systems to other unstratified structures, such as categories with families~\cite{Dybjer1996}.
  Work on constructing a suitable comparison is already underway.
\end{enumerate}

\noindent
This paper is organized as follows.
In~\cref{sec:preliminaries} we discuss some prerequisites that we build upon in later sections.
In~\cref{sec:c-sys} we review the definition of C-systems given by Voevodsky in~\cite[Def.~2.1]{MR3475277}, itself an equivalent formulation of Cartmell's definition of contextual categories~\cite[\S14]{DBLP:journals/apal/Cartmell86}.
Here we also introduce CE-systems.
In~\cref{sec:b-sys}, we give Voevodsky's definition of B-system~\cite{VV_B-systems} and introduce E-systems.
In~\cref{sec:equiv-b-c} we construct our equivalence of categories between B-systems and C-systems.

\subsubsection{Foundations}
\label{sec:foundations}

The work described in this result can be read to take place in intuitionistic set theory (IZF) or extensional type theory, i.e., a type theory with equality reflection.
In particular, we do not make use of classical reasoning principles such as an axiom of choice or excluded middle.
We consider in this work categories built from algebraic structures (which sometimes are themselves categories with structure, but see \cref{sec:categories}).
Implicitly, we take these algebraic structures to be built from sets (or types) from a universe $\U_1$. The categories of such structures are hence categories built from sets (or types) of objects and morphisms of a universe $\U_2$.
In the following, we leave the universe levels implicit.

\subsubsection{About our use of categories}
\label{sec:categories}

In this work, categories are used on two different levels.

Firstly, we use categories as algebraic structures, as the basis for C-systems and CE-systems.
This use of categories is somewhat ``accidental'', and our constructions on these categories are not invariant under equivalence of categories.
In particular, we liberally reason about equality of objects in such categories.
Consequently, we avoid the unadorned word ``category'' for these structures, and call them \emph{\scats} instead.
We denote by $\Cat$ the category of \scats and functors between them.

Secondly, we use categories to compare different mathematical structures to each other, by considering a suitable category of such structures and their homomorphisms.
Here, we never consider equality, but only isomorphism, of such mathematical structures;
our reasoning on that level is entirely categorical.
We reserve the word ``category'' for such uses of the concept.

We use different fonts for \scats and categories, respectively:
calligraphic font, such as $\cat{C}$, is used for \scats;
boldface, such as $\grph$, is used for categories.
We use the same notation for arrows in strict categories and in categories.
We write either $g \comp f$ or $gf$ for the composition of $f \colon a \to b$ and $g \colon b \to c$.

\subsection{Version history}

We first reported on the construction of an equivalence of categories between B-systems and C-systems in a short summary paper \cite{b-c-systems}.
The present work expands on that previous work in the following ways:
\begin{enumerate}
\item An expanded introduction summarizes the role of B-systems and C-systems in Voevodsky's unfinished theory of type theories.
\item We consider here more general ``unstratified'' variants of B-systems and C-systems, called E-systems and CE-systems, respectively. We construct an equivalence of categories between E-systems and CE-systems. 
\item We then construct the equivalence between B-systems and C-systems as a restriction, to the respective subcategories of stratified objects, of the aforementioned equivalence.
\item We give full details of all the constructions in this paper.
\end{enumerate}

\subsection{Acknowledgements}

We thank Steve Awodey for feedback on a draft of this paper and the anonymous referees for helpful remarks and suggestions.
The research described in this paper was presented at the Seminar on Contextual Categories in Ljubljana and online in May 2021, at the TYPES conference in Leiden and online in June 2021, and at the first meeting of WG6 of the COST action CA20111 ``EuroProofNet'' in Stockholm in May 2022.
We thank the organisers and participants of the three events for valuable discussions.

\section{Preliminaries: stratification of categories}
\label{sec:preliminaries}

In this section we collect definitions and results related to \textbf{stratification of \scats}
and morphisms between them.
A stratification (see~\cref{def:stratification}) associates, to any object of a \scat a natural number, its ``length'',
and to any length-decreasing morphism a factorization of this morphism into morphisms ``of length 1''.
Such a stratification can equivalently be described as a rooted tree, see~\cref{sec:rooted-trees}.

\subsection{Stratification of \scats}

\begin{defi}[Stratified strict categories, stratified functors]
  \label{def:stratification}
  Let $\cat{C}$ be a \scat with terminal object 1.
  A \define{stratification} for $\cat{C}$ consists of a
  \emph{stratification functor}
  \begin{linenomath*}
  \begin{equation*}
  L : \cat{C}\to (\mathbb{N},\geq)
  \end{equation*}
  \end{linenomath*}
  such that
  \begin{enumerate}
  \item\label{def:stratification:term}
  $L(X)\jdeq 0$ if and only if $X$ is the chosen terminal object 1,
  \item\label{def:stratification:setlev}
  for any $f:X\to Y$ we have
  $L(X)\jdeq L(Y)$ if and only if $X\jdeq Y$ and $f\jdeq\catid{X}$, and
  \item\label{def:stratification:fact}
  every morphism $f:X\to Y$ in $\cat{C}$, where $L(X)\jdeq
  n+m+1$ and $L(Y)\jdeq n$, has a unique factorization 
 \begin{linenomath*}
  \begin{equation*}
  \begin{tikzcd}
  X \jdeq X_{m+1} \arrow[r,"f_m"] & X_m \arrow[r,"f_{m-1}"] & \cdots \arrow[r,"f_1"] & X_1 \arrow[r,"f_0"] & X_0 \jdeq Y
  \end{tikzcd}
\end{equation*}
\end{linenomath*}
  where $L(X_i)\jdeq n+i$.
  \end{enumerate}
  
  A functor $F:\cat{C}\to\cat{D}$ between \scats with stratifications $L_\C$ and $L_\D$, respectively, is said to be \define{stratified} 
  if $L_{\cat{C}}\jdeq L_{\cat{D}}\circ F$.
\end{defi}

\begin{rem}
  We emphasize that stratifications do not transport along equivalence of categories.
  For instance, there is a (necessarily unique, see~\cref{prop:strat-is-prop}) stratification on the terminal strict category, but no stratification on the strict chaotic category with two objects and a choice of terminal object.
\end{rem}

\begin{rem}
  Those readers familiar with Conduché functors might note that a stratified category $(\C, L)$ is equivalently a Conduché functor $L : \C \to \mathbb N$ with discrete fibers which takes the chosen terminal object of $\C$ to $0$.
\end{rem}

\begin{defi}\label{def:individual-ar}
Let $\C$ be a category with a terminal object and $\ell \colon \ob{C} \to \mathbb{N}$ a function.
An arrow $f \colon X \to Y$ in $\C$ is \define{\indecarr}
if $\ell(X) \jdeq \ell(Y) + 1$.
\end{defi}

\begin{rems}\label{rmk:stratified}\hfill
\begin{enumerate}
\item\label{rmk:stratified:uqterm}
In a stratified category, there is a unique terminal object 1.
More generally, if there is an arrow $1 \to X$, then $X \jdeq 1$.
 \item\label{rmk:stratified:fact}
 The factorisation of an arrow $f \colon X \to Y$ such that $L(X)-L(Y) \jdeq m+1 > 0$
 in~\ref{def:stratification}.\ref{def:stratification:fact}
 consists of $m+1$ \indecarr arrows.
\item\label{rmk:stratified:funct}
A stratified functor is determined by its action on \indecarr arrows.
\end{enumerate}
\end{rems}

\begin{lem}\label{lem:strat-alt}
Let $\C$ be a category with a terminal object $1$.
A function $\ell \colon \ob{\C} \to \mathbb{N}$ extends to a stratification
$L \colon \C \to (\mathbb{N},\geq)$ of $\C$ if and only if the following three conditions hold:
\begin{enumerate}[label=(\textit{\roman*}),ref=\textit{\roman*}]
\item\label{lem:strat-alt:term}
$\ell(1) \jdeq 0$,
\item\label{lem:strat-alt:down}
for every object $X$ and $k \leq \ell(X)$, the set
\begin{linenomath*} \[\coprod_{Y\mathop{\mid}\ell(Y)\jdeq k}\C(X,Y)\] \end{linenomath*}
is a singleton, \ie there is a unique arrow $x_k \colon X \to X_k$
such that $\ell(X_k) \jdeq k$, and
\item\label{lem:strat-alt:up}
for every $X$ and $k > \ell(X)$, the set
\begin{linenomath*}\[\coprod_{Y \mathop{\mid} \ell(Y)\jdeq k}\C(X,Y)\]\end{linenomath*}
is empty, \ie there are no arrows $X \to Y$ such that $\ell(X) < \ell(Y)$.
\end{enumerate}
\end{lem}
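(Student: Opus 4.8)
The plan is to prove both implications of the biconditional. First I note that since the codomain $(\mathbb{N},\geq)$ is a poset, a functor $L$ extending $\ell$ on objects exists and is unique as soon as $\ell$ is non-increasing along every arrow; hence ``$\ell$ lifts to a stratification'' really means ``$\ell$ is monotone for $\geq$ and the resulting functor satisfies the three axioms of \cref{def:stratification}''. The proof therefore reduces to translating between those axioms and conditions (\ref{lem:strat-alt:term})--(\ref{lem:strat-alt:up}).

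For the forward direction, assume $L$ is a stratification lifting $\ell$. Condition (\ref{lem:strat-alt:term}) is immediate from \cref{def:stratification}.\ref{def:stratification:term}, which gives $L(1)\jdeq 0$, and condition (\ref{lem:strat-alt:up}) is just functoriality of $L$ into $(\mathbb{N},\geq)$. The substantive point is (\ref{lem:strat-alt:down}). For existence I would factor the terminal arrow $!_X : X \to 1$ using \cref{def:stratification}.\ref{def:stratification:fact} (with $n \jdeq 0$ and $m \jdeq \ell(X)-1$, legitimate since $\ell(1)\jdeq 0$); this produces, in one stroke, objects $X_0 \jdeq 1, X_1, \dots, X_{\ell(X)} \jdeq X$ with $\ell(X_i)\jdeq i$ together with composite arrows $x_i : X \to X_i$, so the coproduct is inhabited for every $k \leq \ell(X)$. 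For the singleton (uniqueness) claim, given any $g : X \to Y$ with $\ell(Y)\jdeq k$, I would factor $g$ and $!_Y$ separately and concatenate; since $!_Y \comp g \jdeq !_X$ by terminality, the concatenation is a factorization of $!_X$, so by the uniqueness in \cref{def:stratification}.\ref{def:stratification:fact} it agrees with the canonical one, forcing $Y \jdeq X_k$ and $g \jdeq x_k$. The edge case $k \jdeq \ell(X)$ is handled directly by \cref{def:stratification}.\ref{def:stratification:setlev}, which pins the only such arrow to $\catid{X}$.

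For the reverse direction, assume (\ref{lem:strat-alt:term})--(\ref{lem:strat-alt:up}). Condition (\ref{lem:strat-alt:up}) says $\ell$ never increases along an arrow, so it defines a functor $L : \C \to (\mathbb{N},\geq)$, and I must verify the three axioms. For \cref{def:stratification}.\ref{def:stratification:term}: if $\ell(X)\jdeq 0$, applying (\ref{lem:strat-alt:down}) at $k\jdeq 0$ makes $\coprod_{\ell(Y)=0}\C(X,Y)$ a singleton, and both $(1,!_X)$ (using (\ref{lem:strat-alt:term})) and $(X,\catid{X})$ lie in it, whence $X \jdeq 1$. For \cref{def:stratification}.\ref{def:stratification:setlev}: given $f : X \to Y$ with $\ell(X)\jdeq\ell(Y)\jdeq k$, apply (\ref{lem:strat-alt:down}) at $k\jdeq\ell(X)$; both $(Y,f)$ and $(X,\catid{X})$ sit in the singleton, so $X\jdeq Y$ and $f\jdeq\catid{X}$. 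The main work is \cref{def:stratification}.\ref{def:stratification:fact}: for $f : X \to Y$ with $\ell(X)\jdeq n+m+1$ and $\ell(Y)\jdeq n$, I would let $X_i$ (for $n \le i \le n+m+1$) be the unique level-$i$ object below $X$ and $x_i : X \to X_i$ the unique arrow, both supplied by (\ref{lem:strat-alt:down}); uniqueness forces $X_{n+m+1}\jdeq X$, $x_{n+m+1}\jdeq\catid{X}$, and $X_n\jdeq Y$, $x_n\jdeq f$. Applying (\ref{lem:strat-alt:down}) to each $X_{i+1}$ at level $i$ yields individual arrows $f_i : X_{i+1} \to X_i$, and composing $f_i$ with $x_{i+1}$ and invoking the singleton at $X$ gives $x_i \jdeq f_i \comp x_{i+1}$; telescoping down from $x_{n+m+1}\jdeq\catid{X}$ exhibits $f\jdeq x_n$ as the composite of the $f_i$. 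Uniqueness follows by re-deriving any competitor's objects as the $X_i$ (via the singletons at $X$) and its arrows as the $f_i$ (via the singletons at each $X_{i+1}$).

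The hard part will be the factorization axiom, in both directions: establishing that the purely ``fibrewise'' singleton conditions (\ref{lem:strat-alt:down})--(\ref{lem:strat-alt:up}) encode exactly the existence-and-uniqueness of a length-wise factorization. In the forward direction the delicate step is the gluing-and-uniqueness argument through the terminal object; in the reverse direction it is checking that the individual arrows assembled from (\ref{lem:strat-alt:down}) genuinely compose to $f$ and that the factorization so produced is the only one. Everything else is bookkeeping with the terminal object and the thinness of $(\mathbb{N},\geq)$.
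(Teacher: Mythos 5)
Your proposal is correct and follows essentially the same route as the paper's proof: the forward direction establishes condition (\textit{ii}) by factoring the terminal arrow and the given arrow into individual arrows and invoking uniqueness of such factorizations, and the reverse direction assembles the factorization from the iterated individual arrows supplied by (\textit{ii}) and reads off uniqueness from the singleton conditions (the paper phrases this last step as an induction on length, but the content is the same). No gaps.
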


\begin{proof}
If $\C$ is stratified by $L$ such that $L(X) \jdeq \ell(X)$,
condition~\eqref{lem:strat-alt:term} follows from~\ref{def:stratification}.\ref{def:stratification:term}.
To show condition~\eqref{lem:strat-alt:down}, note that every arrow $X \to 1$ factors uniquely into $l \jdeq \ell(X)$ \indecarr arrows
\begin{linenomath*}
\[\begin{tikzcd}[column sep=3em]
X	\ar[r,"x_l"]	&	X_{l-1}	\ar[r,"x_{l-1}"]	&	\cdots	\ar[r,"x_2"]	&	X_1	\ar[r,"x_1"]	&	1.
\end{tikzcd}\]
\end{linenomath*}
In particular, for every $n \leq \ell(X)$, the composite $x_{n+1}\cdots x_l \colon X \to X_n$ is such that $\ell(X_n) \jdeq n$.
If $f \colon X \to Y$ is also such that $\ell(Y) \jdeq n$, then $f$ factors into $l-n$ \indecarr arrows $(f_i)_{i\jdeq1}^{l-n}$.
Let $y_1\cdots y_n$ be the factorisation of $Y \to 1$ into \indecarr arrows.
The composite $y_1\cdots y_n f_0 \cdots f_{l-n-1}$ is a factorisation of $X \to 1$ into \indecarr arrows.
It follows by uniqueness of such factorisations that
\begin{linenomath*}
\[
x_1 \jdeq y_1,\ x_2 \jdeq y_2,\ \dots\ ,\ x_n \jdeq y_n,\
x_{n+1} \jdeq f_1,\ \dots\ ,\ x_l \jdeq f_{l-n}.
\]
\end{linenomath*}
In particular, $f \jdeq x_{n+1}\cdots x_l$ as required.

Since $(\mathbb{N},\geq)$ is a poset, condition \eqref{lem:strat-alt:up} is equivalent to the fact that
the function $\ell$ extends uniquely to a functor $L \colon \C \to (\mathbb{N},\geq)$.

Suppose now that conditions~(\ref{lem:strat-alt:term}--\ref{lem:strat-alt:up}) above hold.
In particular, the function $\ell$ extends to a functor $L$.

\eqref{def:stratification:term}
Let $X$ be such that $\ell(X)\jdeq0$.
Then $X \to 1$ and $\catid{X}\colon X \to X$ are both such that $\ell(X)\jdeq0\jdeq\ell(1)$.
Hence $X \jdeq 1$ and the object $X$ is terminal.
Conversely, let $X$ be terminal. Then there is $1 \to X$ and thus $0 \geq \ell(X)$.

\eqref{def:stratification:setlev}
Let $f  \colon X \to Y$ and suppose $\ell(X)\jdeq \ell(Y)$,
then $Y \jdeq X$ and $f\jdeq\catid{X}$ by \eqref{lem:strat-alt:down} with $n\jdeq\ell(X)$.

\eqref{def:stratification:fact}
For every $X$ such that $n+1 \jdeq \ell(X) > 0$, let $\overline{X} \colon X \to X'$
be the unique arrow such that $\ell(X') \jdeq n$ given by~\eqref{lem:strat-alt:down}.
For every $k \leq \ell(X)$, we have a composite $x_k$ of $k$ \indecarr arrows
\begin{linenomath*}
\begin{equation}\label{lem:strat-alt:fact}
\begin{tikzcd}[column sep=4em]
X	\ar[r,"\overline{X}"] \ar[rrrr,bend right=1.5em,"x_k"]	&	X'	\ar[r,"\overline{X'}"]
&	\cdots	\ar[r,"\overline{X^{(k-2)}}"]	&	X^{(k-1)}	\ar[r,"\overline{X^{(k-1)}}"]	&	X^{(k)}
\end{tikzcd}
\end{equation}
\end{linenomath*}
where $\ell(X^{(k)}) \jdeq \ell(X)-k$,
which is the unique arrow $X \to Y$ such that $\ell(Y) \jdeq \ell(X)-k$ by~\eqref{lem:strat-alt:down}.
Let us show that \eqref{lem:strat-alt:fact} is also
the unique factorisation of $x_k$ into \indecarr arrows,
for every $0 < k \leq \ell(X)$.
We proceed by induction on $n$.
If $n\jdeq0$, then factorisations consist of only one \indecarr arrow
and uniqueness follow from~\eqref{lem:strat-alt:down}.
For $n>0$, let $0 < k \leq n+1$ and consider a factorisation
$X \xrightarrow{g_0} Z_1 \xrightarrow{g_1} \cdots \xrightarrow{g_{k-2}} Z_{k-1} \xrightarrow{g_{k-1}} X^{(k)}$
of $x_k$ into \indecarr arrows.
Then $\ell(Z_1) \jdeq \ell(X)-1 \jdeq \ell(X')$, and so $g_0 \jdeq \overline{X}$ by~\eqref{lem:strat-alt:down}.
Again, $g_{k-1}\cdots g_1 \jdeq x'_{k-1} \colon X' \to (X')^{(k-1)}$ by~\eqref{lem:strat-alt:down} and,
by inductive hypothesis, $g_i \jdeq \overline{X^{(i)}}$ for $0<i<k$.
Therefore~\eqref{lem:strat-alt:fact} is the unique factorisation of $x_k$
into \indecarr arrows.

Given an arrow $f \colon X \to Y$ such that $n \jdeq \ell(Y) < \ell(X) \jdeq m+n+1$,
it must be $Y \jdeq X^{(m+1)}$ and $f \jdeq x_{m+1}$ by~\eqref{lem:strat-alt:down}.
It follows that $f$ factors uniquely into $m+1$ \indecarr arrows
$\overline{X^{(m)}}\cdots\overline{X'}\,\overline{X}$.
\end{proof}

\begin{rem}
Condition~\eqref{lem:strat-alt:down} in \cref{lem:strat-alt} is equivalent to requiring that,
for every object $X$:
\begin{enumerate}[label=(\textit{\ref*{lem:strat-alt:down}.\alph*}),%
ref=\textit{\ref*{lem:strat-alt:down}.\alph*}]
\item
for every $n \leq \ell(X)$ there is at most one arrow $f \colon X \to Y$ such that $\ell(Y) \jdeq n$, and
\item
there is an \indecarr arrow $\overline{X} \colon X \to X'$.
\end{enumerate}
One direction is clear. For the converse it is enough to show that
for every $n < \ell(X)$ there is $f \colon X \to Y$ such that $\ell(Y) \jdeq n$.
Such an arrow is given as the composite of $\ell(X)-n$ \indecarr arrows
as in~\cref{lem:strat-alt:fact} above.
\end{rem}

\begin{prop}\label{prop:strat-is-prop}
Any category can be stratified in at most one way. 
\end{prop}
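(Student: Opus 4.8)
The plan is to reduce the statement to the uniqueness of the underlying length function on objects, and then to compare two candidate length functions by a symmetric chain-counting argument.

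First I would observe that a stratification is by definition a functor $L \colon \C \to (\mathbb{N},\geq)$, and since the codomain is a poset there is at most one morphism between any two of its objects; hence $L$ is completely determined by its action $\ell \defeq L$ on objects. So it suffices to show that whenever $\ell, \ell' \colon \ob{\C} \to \mathbb{N}$ both underlie stratifications of $\C$, we have $\ell \jdeq \ell'$. By \cref{lem:strat-alt}, each of $\ell$ and $\ell'$ satisfies conditions \ref{lem:strat-alt:term}--\ref{lem:strat-alt:up}; in particular both send the (fixed, chosen) terminal object $1$ to $0$.

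The key observation is that, under any stratification, a non-identity arrow strictly decreases length: if $f \colon X \to Y$ has $X \neq Y$, then $\ell(X) > \ell(Y)$. Indeed, \cref{lem:strat-alt}\eqref{lem:strat-alt:up} gives $\ell(X) \geq \ell(Y)$, and \ref{def:stratification:setlev} of \cref{def:stratification} forbids $\ell(X) \jdeq \ell(Y)$ unless $X \jdeq Y$ and $f \jdeq \catid{X}$. With this in hand, fix an object $X$ and set $m \defeq \ell'(X)$. Applying the factorization property for $\ell'$ (\cref{lem:strat-alt}\eqref{lem:strat-alt:down}, to the unique arrow $X \to 1$) yields a chain of $m$ individual arrows for $\ell'$,
\[
X \jdeq Z_m \to Z_{m-1} \to \cdots \to Z_1 \to Z_0 \jdeq 1,
\]
each of which is non-identity, since its domain and codomain have distinct $\ell'$-lengths. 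Feeding this chain into the key observation for $\ell$, the value of $\ell$ drops by at least $1$ at each of the $m$ steps, while $\ell(Z_0) \jdeq \ell(1) \jdeq 0$; hence $\ell(X) \jdeq \ell(Z_m) \geq m \jdeq \ell'(X)$. By the symmetric argument, exchanging the roles of $\ell$ and $\ell'$, we also get $\ell'(X) \geq \ell(X)$, so $\ell(X) \jdeq \ell'(X)$. As $X$ was arbitrary, $\ell \jdeq \ell'$, and therefore $L \jdeq L'$.

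The only genuine subtlety I anticipate is bookkeeping the fact that the notion of \emph{individual} arrow is defined relative to a chosen length function: the chain must be \emph{built} using $\ell'$-individuality but \emph{counted} using the $\ell$-strict-monotonicity of non-identity arrows. Keeping these two roles separate is precisely what makes the two inequalities honestly symmetric, rather than circular, and so is the crux of the argument.
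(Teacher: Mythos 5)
Your proof is correct, but it takes a genuinely different route from the paper's. The paper argues by induction on the level $n$: it uses \cref{lem:strat-alt}\eqref{lem:strat-alt:down} to characterise the fibre $L^{-1}(n+1)$ as the set of objects admitting an individual arrow into $L^{-1}(n)$, so that agreement of the fibres of two stratifications at level $n$ propagates to level $n+1$, with the base case $L^{-1}(0)\jdeq M^{-1}(0)\jdeq\{1\}$. You instead reduce to the object-level length functions (correctly noting that a functor into a poset is determined by its action on objects), establish that under \emph{any} stratification a non-identity arrow strictly decreases length, and then play the two candidate length functions off against each other: a factorisation of $X \to 1$ into $\ell'(X)$ individual arrows forces $\ell(X)\geq\ell'(X)$ by strict monotonicity of $\ell$ along that chain, and symmetry gives the reverse inequality. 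Your approach is arguably more self-contained --- it sidesteps the slightly delicate point in the paper's induction that ``individual'' is itself defined relative to the length function being compared --- at the cost of invoking the factorisation property twice (once per stratification) rather than once per level. One small bookkeeping remark: the factorisation of $X \to 1$ into individual arrows is really \cref{def:stratification}.\ref{def:stratification:fact} (or \cref{rmk:stratified}.\ref{rmk:stratified:fact}) rather than \cref{lem:strat-alt}\eqref{lem:strat-alt:down} as you cite, but the content you need is certainly available, so this is a citation quibble and not a gap.
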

\begin{proof}
Consider a category $\C$ with two stratifications $L,M: \C \to \mathbb N$.
By \cref*{lem:strat-alt}.\ref{lem:strat-alt:down},
\begin{linenomath*}
\[
L^{-1}(n+1) \jdeq \left\{ X \mid \exists\ Y \in L^{-1}(n) \text{ and an \indecarr } f: X \to Y \right\}
\]
\end{linenomath*}
and similarly for $M^{-1}(n+1)$.
Thus, if $L^{-1} (n) \jdeq M^{-1} (n)$, we find that $L^{-1} (n+1) \jdeq M^{-1} (n+1)$,
and the claim follows by induction
since $L^{-1} (0) \jdeq M^{-1} (0)$ by~\cref{def:stratification}.\ref{def:stratification:term}.
\end{proof}

Uniqueness of stratification justifies the following definition:

\begin{defi}\label{def:stratcat}
We define $\strCat$ to be the subcategory of $\Cat$ consisting of stratified \scats and stratified functors between them.
\end{defi}  

\begin{lem}\label{lem:strat-funct}
Let $F \colon \cat{C} \to \cat{D}$ be a functor between stratified categories.
The following are equivalent.
\begin{enumerate}
\item\label{lem:strat-funct:a}
The functor $F$ is stratified.
\item\label{lem:strat-funct:b}
The functor $F$ preserves terminal objects and \indecarr arrows.
\end{enumerate}
\end{lem}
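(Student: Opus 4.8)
The plan is to prove the two implications separately: the direction \ref{lem:strat-funct:a} $\Rightarrow$ \ref{lem:strat-funct:b} is a direct unfolding of the defining equation $L_{\cat C} \jdeq L_{\cat D} \circ F$, while the converse \ref{lem:strat-funct:b} $\Rightarrow$ \ref{lem:strat-funct:a} proceeds by induction on the length of an object.

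For \ref{lem:strat-funct:a} $\Rightarrow$ \ref{lem:strat-funct:b}, I would assume $F$ is stratified. Writing $1$ for the terminal object of $\cat C$, condition~\ref{def:stratification}.\ref{def:stratification:term} gives $L_{\cat C}(1) \jdeq 0$, hence $L_{\cat D}(F(1)) \jdeq 0$; applying the same condition in $\cat D$ shows that $F(1)$ is the chosen terminal object, so $F$ preserves terminal objects. If $f \colon X \to Y$ is individual, i.e.\ $L_{\cat C}(X) \jdeq L_{\cat C}(Y) + 1$, then applying $L_{\cat C} \jdeq L_{\cat D} \circ F$ to both endpoints yields $L_{\cat D}(F(X)) \jdeq L_{\cat D}(F(Y)) + 1$, so $F(f)$ is individual as well.

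For the converse, the first observation is that, since $(\N,\geq)$ is a poset, equality of the functors $L_{\cat C}$ and $L_{\cat D} \circ F$ amounts to the objectwise equality $L_{\cat C}(X) \jdeq L_{\cat D}(F(X))$ for all $X$; this is what makes preservation of terminal objects and individual arrows enough data, since a stratified functor carries nothing beyond its underlying functor. I would prove this objectwise equality by induction on $\ell(X) \jdeq L_{\cat C}(X)$. In the base case $\ell(X) \jdeq 0$ the object $X$ is terminal by~\ref{def:stratification}.\ref{def:stratification:term}, hence so is $F(X)$ by preservation of terminal objects, and therefore $L_{\cat D}(F(X)) \jdeq 0$. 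For the inductive step with $\ell(X) \jdeq n+1$, I would use condition~\eqref{lem:strat-alt:down} of~\cref{lem:strat-alt} (with $k \jdeq n$) to obtain an individual arrow $\overline{X} \colon X \to X'$ with $\ell(X') \jdeq n$. Since $F$ preserves individual arrows, $F(\overline{X})$ is individual, so $L_{\cat D}(F(X)) \jdeq L_{\cat D}(F(X')) + 1$; the inductive hypothesis gives $L_{\cat D}(F(X')) \jdeq n$, and hence $L_{\cat D}(F(X)) \jdeq n+1 \jdeq L_{\cat C}(X)$.

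The only step requiring genuine input is this inductive step, and the crucial ingredient is the existence of an individual arrow \emph{out of} every object of positive length, which is precisely what~\cref{lem:strat-alt} supplies; everything else is a mechanical translation between the length function and the notion of individual arrow. I therefore expect no real obstacle beyond correctly invoking~\cref{lem:strat-alt} to drive the induction downward.
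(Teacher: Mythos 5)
Your proposal is correct and takes essentially the same route as the paper: the forward direction is a direct unfolding of $L_{\cat{C}} \jdeq L_{\cat{D}} \circ F$, and the converse is an induction on the length of objects driven by preservation of terminal objects (base case) and of individual arrows (inductive step), with the existence of an individual arrow out of every object of positive length supplied by \cref{lem:strat-alt}. The paper's proof is just a terse statement of this same argument, so you have simply filled in the details it leaves implicit.
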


\begin{proof}
That \ref*{lem:strat-funct:a} implies
\ref*{lem:strat-funct:b} is clear.
The converse is by induction on the length of objects using that,
for every $f \colon X \to Y$,
$L_{\C}(X) \jdeq L_{\C}(Y) + 1$ implies $L_{\cat{D}}(F(X)) \jdeq L_{\cat{D}}(F(Y)) +1$.
\end{proof}

\begin{lem}\label{lem:strat-slice}
Let $\C$ be a stratified category with stratification functor $L$.
Then for every object $X$ and every $f \colon Y \to X$,
\begin{linenomath*}
\[
L_X(f) :\jdeq L(Y) - L(X)
\]
\end{linenomath*}
defines a stratification functor $L_X$ for the slice $\C/X$.
\end{lem}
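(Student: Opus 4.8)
The plan is to apply the characterization of stratifications from \cref{lem:strat-alt} to the slice category $\C/X$, using the object-function $\ell_X(f) \defeq L(Y) - L(X)$ for an object $f \colon Y \to X$ of $\C/X$. First I would check that $\ell_X$ is well-defined into $\mathbb{N}$: since $L$ is a functor into the poset $(\mathbb{N},\geq)$ and $f \colon Y \to X$ is an arrow of $\C$, we have $L(Y) \geq L(X)$, so $L(Y)-L(X) \in \mathbb{N}$. I would also note that $\C/X$ is a strict category with terminal object $\catid{X}$, and that $\ell_X(\catid{X}) \jdeq 0$, which is exactly condition~\eqref{lem:strat-alt:term} of \cref{lem:strat-alt}. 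It then remains to verify conditions~\eqref{lem:strat-alt:down} and~\eqref{lem:strat-alt:up} for $\ell_X$; the functor $L_X$ is then produced by \cref{lem:strat-alt}.

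The emptiness condition~\eqref{lem:strat-alt:up} is the easy one. Fix an object $f \colon Y \to X$ of $\C/X$ and $k > \ell_X(f)$. A morphism in $\C/X$ from $f$ to an object $g \colon Z \to X$ with $\ell_X(g) \jdeq k$ is an arrow $h \colon Y \to Z$ of $\C$ satisfying $gh \jdeq f$; but $L(Z) \jdeq L(X)+k > L(X) + \ell_X(f) \jdeq L(Y)$, so \cref*{lem:strat-alt}.\ref{lem:strat-alt:up} applied to $\C$ rules out any such $h$. Hence the coproduct over targets of length $k$ is empty, as required.

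I expect the singleton condition~\eqref{lem:strat-alt:down} to be the main obstacle, because an object of $\C/X$ of length $k$ is a pair consisting of a target $g \colon Z \to X$ together with a morphism $h \colon Y \to Z$ in $\C$, and I must show the entire pair is uniquely determined. Fix $f \colon Y \to X$ and $k \leq \ell_X(f)$, so that $L(X)+k \leq L(Y)$. By \cref*{lem:strat-alt}.\ref{lem:strat-alt:down} applied to $\C$ with source $Y$ and target-length $L(X)+k$, there is a unique pair $(Z,h)$ with $L(Z) \jdeq L(X)+k$ and $h \colon Y \to Z$; applying the same condition to $Z$ with target-length $L(X)$ gives a unique arrow $g \colon Z \to W$ with $L(W) \jdeq L(X)$. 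The key step is to identify $W$ with $X$ and check $gh \jdeq f$: both $gh$ and $f$ are arrows out of $Y$ into an object of length $L(X)$, so by the uniqueness in \cref*{lem:strat-alt}.\ref{lem:strat-alt:down} for $\C$ they have the same target and coincide, giving $W \jdeq X$ and $gh \jdeq f$. Thus $g$ is a morphism $f \to g$ in $\C/X$ with $\ell_X(g) \jdeq k$. For uniqueness, any competitor $(g' \colon Z' \to X,\, h' \colon Y \to Z')$ with $L(Z') \jdeq L(X)+k$ and $g'h' \jdeq f$ forces $(Z',h') \jdeq (Z,h)$ by uniqueness of $(Z,h)$, and then $g' \jdeq g$ since $Z$ admits a unique arrow to an object of length $L(X)$.

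With conditions~\eqref{lem:strat-alt:term}--\eqref{lem:strat-alt:up} verified, \cref{lem:strat-alt} yields a stratification functor on $\C/X$ lifting $\ell_X$; this is the functor $L_X$ of the statement, and its value on $f \colon Y \to X$ is $L(Y)-L(X)$ by construction. I would close by remarking that monotonicity of $L_X$ along slice morphisms needs no separate argument: it is subsumed by condition~\eqref{lem:strat-alt:up}, equivalently by the observation (made in the proof of \cref{lem:strat-alt}) that $\ell_X$ extends uniquely to a functor into the poset $(\mathbb{N},\geq)$.
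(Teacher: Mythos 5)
Your proposal is correct, and it takes a mildly different route from the paper: the paper's proof simply asserts that conditions (1)--(3) of \cref{def:stratification} are ``easily verified'' for $L_X$, i.e.\ it checks the definition directly, whereas you route everything through the characterization in \cref{lem:strat-alt}. Your path is arguably the cleaner one to write out in full, because the singleton and emptiness conditions on hom-sets into objects of a given length transfer from $\C$ to $\C/X$ almost mechanically, and the unique-factorization clause of \cref{def:stratification} then comes for free from \cref{lem:strat-alt} rather than having to be re-derived in the slice. You also correctly isolate and resolve the one step that genuinely needs an argument: given the unique pair $(Z,h)$ with $L(Z)\jdeq L(X)+k$ and $h\colon Y\to Z$, one must check that the unique arrow $g$ out of $Z$ into a length-$L(X)$ object lands on $X$ and satisfies $gh\jdeq f$, so that $h$ really is a morphism of $\C/X$; your appeal to the uniqueness of the arrow from $Y$ into length-$L(X)$ objects settles both points at once, and the uniqueness of the competitor pair follows by the same token. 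What the paper's terse direct verification buys is brevity; what your argument buys is an actual proof, at the modest cost of depending on \cref{lem:strat-alt} (which the paper has already established at this point, so the dependency is unproblematic).
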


\begin{proof}
The above clearly defines a functor $L_X \colon \C/X \to (\mathbb{N},\geq)$ and
conditions (\ref*{def:stratification:term}--\ref*{def:stratification:fact}) in \cref{def:stratification} are easily verified.
\end{proof}

\begin{cor}
Let $F \colon \cat{C} \to \cat{D}$ be a stratified functor
between stratified categories.
For every object $X$ in $\cat{C}$, the functor
\begin{linenomath*}
\[\begin{tikzcd}[column sep=3em]
\cat{C}/X	\ar[r,"F/X"]
&	\cat{D}/FX
\end{tikzcd}\]
\end{linenomath*}
is stratified.
\end{cor}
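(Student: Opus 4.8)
The plan is to prove the statement by directly verifying the defining equation of a stratified functor, namely $L_X \jdeq L_{FX} \comp (F/X)$, where $L_X$ and $L_{FX}$ are the slice stratifications supplied by \cref{lem:strat-slice}. Everything reduces to unwinding the explicit length formula for slices together with the hypothesis $L_{\C} \jdeq L_{\D} \comp F$ that expresses that $F$ itself is stratified.

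Concretely, first I would recall from \cref{lem:strat-slice} that an object of $\C/X$ is an arrow $f \colon Y \to X$ with length $L_X(f) \jdeq L_{\C}(Y) - L_{\C}(X)$, and likewise an object of $\D/FX$ is an arrow $g \colon Z \to FX$ with length $L_{FX}(g) \jdeq L_{\D}(Z) - L_{\D}(FX)$. The functor $F/X$ sends the object $f \colon Y \to X$ to $Ff \colon FY \to FX$, so that
\[
L_{FX}\bigl((F/X)(f)\bigr) \jdeq L_{\D}(FY) - L_{\D}(FX).
\]
Since $F$ is stratified we have $L_{\D}(FY) \jdeq L_{\C}(Y)$ and $L_{\D}(FX) \jdeq L_{\C}(X)$, and substituting these gives $L_{\D}(FY) - L_{\D}(FX) \jdeq L_{\C}(Y) - L_{\C}(X) \jdeq L_X(f)$. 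Hence $L_X \jdeq L_{FX} \comp (F/X)$, which is exactly the condition for $F/X$ to be stratified.

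Alternatively, one could invoke \cref{lem:strat-funct} and check only that $F/X$ preserves terminal objects and individual arrows: the terminal object $\catid{X}$ of $\C/X$ is sent to the terminal object $\catid{FX}$ of $\D/FX$, and an individual arrow between objects $f \colon Y \to X$ and $f' \colon Y' \to X$ of $\C/X$ is a commuting triangle whose underlying arrow $Y \to Y'$ is individual in $\C$, so it is preserved by $F$ because $F$ is stratified (again by \cref{lem:strat-funct}). I do not expect any genuine obstacle here; the argument is routine once the slice stratifications are in hand. The only point deserving care is that the computation uses the full strength of $F$ being stratified (the equality $L_{\C} \jdeq L_{\D} \comp F$ on objects), rather than $F$ being merely a functor, since a non-stratified functor need not respect lengths and hence need not induce a stratified functor on slices.
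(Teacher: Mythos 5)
Your proposal is correct and matches the paper's (implicit) intent: the paper states this corollary without proof as an immediate consequence of \cref{lem:strat-slice}, and your direct computation $L_{FX}\bigl((F/X)(f)\bigr) \jdeq L_{\D}(FY) - L_{\D}(FX) \jdeq L_{\C}(Y) - L_{\C}(X) \jdeq L_X(f)$ is exactly the one-line verification being left to the reader.
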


\subsection{Rooted trees}\label{sec:rooted-trees}

In this section, we compare stratified categories to rooted trees.
Rooted trees were used by Cartmell~\cite{DBLP:journals/apal/Cartmell86} to give his original definition of contextual categories.

\begin{defi}\label{def:rttr-to-cat}
\hfill
\begin{enumerate}
\item
We define a \define{rooted tree} $T$ to be a family of sets
$(T_n)_{n \in \mathbb{N}}$ indexed by the natural numbers
such that $T_0$ is a singleton, together with functions
$(t_n \colon T_{n+1} \to T_n)_{n \in \mathbb{N}}$
mapping a node to its parent.
A homomorphism of rooted trees $f \colon T \to S$ is a family of functions
$(f_n \colon T_n \to S_n)_{n\in\mathbb{N}}$
such that $f_n \circ t_n \jdeq s_n \circ f_{n+1}$
for every $n \in \mathbb{N}$.
Let $\rttr$ be the category of rooted trees and homomorphisms.
\item
Let $\grph$ be the category of directed (multi)graphs and homomorphisms.
We define the functor $\Gfctr : \rttr\to\grph$ as follows.
For a rooted tree $T$, the directed graph $\Gfctr(T)$
has the set of vertices given by the disjoint union $\coprod_{n\in\mathbb{N}}T_n$,
and there is an edge $(n+1,X) \to (n,t_n(X))$
for every $n \in \mathbb{N}$ and $X \in T_{n+1}$.
It is straightforward to verify that each homomorphism $T \to T'$ of rooted trees
gives rise to a homomorphism of graphs $\Gfctr(T) \to \Gfctr(T')$.
\item
Let $\Ffctr: \grph \to \Cat$ be the well-known functor~\cite[II.7]{MacLane98}
that takes a graph to the category freely generated by it.
\end{enumerate}
\end{defi}

We now show that
the image of the composite
\begin{linenomath*}
\[\begin{tikzcd}[column sep=4em]
\rttr	\ar[r,"\Gfctr"]	&	\grph	\ar[r,"\Ffctr"]	&	\Cat
\end{tikzcd}\]
\end{linenomath*}
is the subcategory of \emph{stratified \scats} defined in \cref{def:stratcat}.

\begin{prop}\label{lem:rttrStrCat}
The functor $\Ffctr\Gfctr \colon \rttr \to \Cat$ lifts to an equivalence $\rttr \overset{\simeq}{\longrightarrow} \strCat$.
\end{prop}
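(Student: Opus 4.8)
The plan is to first verify that the composite $\Ffctr\Gfctr$ lands in the subcategory $\strCat$, producing a lift $\Phi \colon \rttr \to \strCat$, and then to exhibit an explicit quasi-inverse $\Psi \colon \strCat \to \rttr$. Unwinding the definitions, the objects of the free category $\Ffctr\Gfctr(T)$ are the tagged pairs $(n,X)$ with $X \in T_n$, and, since every vertex $(n+1,X)$ of $\Gfctr(T)$ has exactly one outgoing edge (to its parent $(n,t_n(X))$) while the root $(0,*)$ has none, the paths out of $(n,X)$ are precisely the initial segments of the descending chain towards the root. Hence there is a unique morphism $(n,X) \to (m,Y)$ exactly when $m \le n$ and $Y$ is the ancestor of $X$ at level $m$, and none otherwise; in particular every hom-set is a subsingleton.

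First I would establish that $\Ffctr\Gfctr(T)$ is stratified by applying \cref{lem:strat-alt} to the length function $\ell(n,X) \defeq n$. The root $(0,*)$ is the unique object of length $0$ and is terminal, which gives condition~\eqref{lem:strat-alt:term}; the description of morphisms above gives exactly one arrow into each lower level (condition~\eqref{lem:strat-alt:down}) and none into a higher level (condition~\eqref{lem:strat-alt:up}). For a homomorphism $f \colon T \to S$, the induced functor sends $(n,X) \mapsto (n,f_n(X))$ and hence strictly preserves length, so $\Ffctr\Gfctr(f)$ is stratified by \cref{def:stratification}. This yields the lift $\Phi$, and \cref{prop:strat-is-prop} ensures that being stratified is a property, so nothing further is needed to land in $\strCat$.

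Next I would define $\Psi$ by $\Psi(\C)_n \defeq L^{-1}(n)$, which is a singleton for $n \jdeq 0$ by \cref{def:stratification}, equipped with the parent map sending an object $X$ of length $n+1$ to the codomain of its unique individual arrow $\overline{X}$ (existing and unique by \cref{lem:strat-alt}). A stratified functor $F$ preserves length and, by \cref{lem:strat-funct}, preserves individual arrows, hence commutes with the parent maps and so determines a homomorphism $\Psi(F)$. I would then produce the two natural isomorphisms. On one side, $\Psi\Phi(T)_n \jdeq \{(n,X) \mid X \in T_n\}$ is naturally isomorphic to $T_n$ via the projection, compatibly with parent maps, so $\Psi\Phi \cong \mathrm{id}_{\rttr}$. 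On the other side, $\Phi\Psi(\C)$ has objects $(L(X),X)$ and a unique morphism $(L(X),X) \to (L(Y),Y)$ exactly when $\C$ has one $X \to Y$; by \cref{lem:strat-alt}, conditions~\eqref{lem:strat-alt:down} and~\eqref{lem:strat-alt:up}, the assignment $(L(X),X) \mapsto X$ is a bijection both on objects and on each hom-set, hence a stratified isomorphism $\Phi\Psi(\C) \cong \C$. Naturality of both is routine bookkeeping.

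The main obstacle is the reconstruction of \emph{all} morphisms of a stratified category from the bare tree of its objects and individual arrows: one must check that the free category on $\Gfctr(T)$ reproduces exactly the hom-sets prescribed by the stratification, and, symmetrically, that a stratified category carries no morphisms beyond those forced by composing individual arrows. Both directions are precisely what has been packaged into \cref{lem:strat-alt}, so once that lemma is invoked the verifications collapse to statements about paths and factorizations; the only remaining care is to keep the tagging in the disjoint union $\coprod_n T_n$ consistent, so that the unit isomorphism $\Psi\Phi \cong \mathrm{id}$ is phrased correctly rather than as a strict identity.
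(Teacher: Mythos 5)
Your proposal is correct and follows essentially the same route as the paper: lift $\Ffctr\Gfctr$ by stratifying the free category via $\ell(n,X)\jdeq n$, construct the inverse on fibers $L^{-1}(n)$ with parent maps given by the unique individual arrows, and check the two composites. You are somewhat more explicit than the paper in routing the stratification check through \cref{lem:strat-alt} and in spelling out why the hom-sets of $\Ffctr\Gfctr\Ifctr(\C)$ match those of $\C$, but this is the same argument.
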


\begin{proof}
First, observe that, for a rooted tree $T$, the free category $\Ffctr\Gfctr(T)$ is stratified.
We define $L: \Ffctr\Gfctr(T) \to (\N,\geq)$ by sending an object $(n,X)$ to $n$ and a generating morphism $(n+1, X) \to (n, t_n(X))$ to $n+1 \geq n$.
Given a morphism $f: S \to T$ of rooted trees, the functor $\Ffctr\Gfctr(f) : \Ffctr\Gfctr(S) \to \Ffctr\Gfctr(T)$ is stratified by construction.
Thus, the functor $\Ffctr\Gfctr: \rttr \to \Cat$ lifts to an functor $\rttr \to \strCat$.

Next we define a functor $\Ifctr \colon \strCat \to \rttr$.
Consider a stratified category $(\C,L)$ and define a rooted tree  $\Ifctr(\C,L)$ as follows. Let $\Ifctr(\C,L)_n :\jdeq L^{-1}(n)$.
By \cref{lem:strat-alt}\eqref{lem:strat-alt:down}, for every $X \in \Ifctr(\C,L)_n$ there is
exactly one \indecarr arrow with domain $X$, say $X \to X'$.
Then we define $t_n : \Ifctr(\C,L)_{n+1} \to \Ifctr(\C,L)_n$ by $t_n(X) :\jdeq X'$.
A stratified functor $F \colon \C \to \D$ induces 
a homomorphism of rooted trees $\Ifctr(\C,L) \to \Ifctr(\C,M)$
since it commutes with the stratification functors
and it preserves \indecarr arrows.

It is now straightforward to verify that
$\Ffctr\Gfctr \circ \Ifctr \cong 1_{\mathbf{Cat_s}}$ and
$\Ifctr \circ \Ffctr\Gfctr \cong 1_{\rttr}$.
\end{proof}

\section{The category of C-systems}\label{sec:c-sys}

This section is dedicated to the study of C-systems.

In~\cref{ssec:c-sys} we review Voevodsky's definition of C-system, an equivalent formulation of Cartmell's contextual categories.
We then give, in~\cref{ssec:ce-sys} our definition of CE-system, and identify,
in~\cref{ssec:c2ce}, the category of C-systems as a subcategory of ``stratified'' objects in the category of CE-systems.

\subsection{The category of C-systems}
\label{ssec:c-sys}

John Cartmell~\cite[Section~14]{DBLP:journals/apal/Cartmell86} defined \emph{contextual categories} as mathematical structures for the interpretation of type theories.
Vladimir Voevodsky~\cite[Definition~2.1]{MR3475277} gave a slightly modified, but obviously equivalent definition, and coined them \emph{C-systems}.

\begin{defi}[C-system, {\cite[Def.~2.1]{MR3475277}}]
  \label{def:Csys}
  A \define{C-system} consists of
  \begin{enumerate}
  \item a \scat $\C$,
  \item a ``length'' function $\length \colon \ob{\C} \to \N$,
  \item a chosen object $\Cterminal \in \ob{\C}$,
  \item a function $\ft \colon \ob{\C} \to \ob{\C}$,
  \item for any object $\Gamma \in \ob{\C}$ such that $\length(\Gamma) > 0$, a morphism $\p_{\Gamma} \colon \Gamma \to \ft(\Gamma)$,
  \item for any $\Gamma \in \ob{\C}$ with $\length(\Gamma) > 0$ and any $f \colon \Delta \to \ft(\Gamma)$, an object $f^*\Gamma$ and a morphism $\q(f,\Gamma) \colon f^*\Gamma \to \Gamma$.
  \end{enumerate}
  satisfying the following axioms:
  \begin{enumerate}[label=\textit{\roman*}),ref=\textit{\roman*}]
  \item\label{def:Csys:lt}
  $\length^{-1}(0) \jdeq \{\Cterminal\}$,
  \item\label{def:Csys:lft}
  for $\Gamma$ with $\length(\Gamma) > 0$, we have $\length(\ft(\Gamma)) \jdeq \length(\Gamma)-1$,
  \item\label{def:Csys:ftt}
  $\ft(\Cterminal) = \Cterminal$,
  \item\label{def:Csys:t}
  $\Cterminal$ is a final object,
  \item\label{def:Csys:pb}
  for $\Gamma \in \ob{\C}$ with $\length(\Gamma) > 0$ and $f \colon \Delta \to \ft(\Gamma)$,
  one has $\length(f^*\Gamma) > 0$, $\ft(f^*\Gamma) = \Delta$, and the square
  \begin{linenomath*}
    \begin{equation}\label{Csys-pbsq}
      \begin{tikzcd}
        f^*\Gamma \ar[r, "{\q(f, \Gamma)}"]  \ar[d, "\p_{f^*(\Gamma)}"']
        &
        \Gamma \ar[d, "\p_\Gamma"]
        \\
        \Delta \ar[r, "f"]
        &
        \ft(\Gamma)
      \end{tikzcd}
    \end{equation}
  \end{linenomath*}
  commutes and is a pullback square,
  \item\label{def:Csys:pbid}
  for $\Gamma \in \ob{\C}$ with $\length(\Gamma)>0$, we have $\left(\catid{\ft(\Gamma)}\right)^*\Gamma = \Gamma$ and $\q(\catid{\ft(\Gamma)},\Gamma) = \catid{\Gamma}$, and
  \item\label{def:Csys:pbcmp}
  for $\Gamma \in \ob{\C}$ with $\length(\Gamma)>0$, $g \colon \Delta \to \ft(\Gamma)$ and $f \colon E \to \Delta$, we have $(g \comp f)^*\Gamma = f^* g^*\Gamma$ and $\q(g\comp f, \Gamma) = \q(g, \Gamma) \comp \q (f, g^*\Gamma)$.
  \end{enumerate}  
\end{defi}

\noindent
Intuitively, an object $\Gamma$ of the category underlying a C-system can be thought of as a context of length $\length(\Gamma)$.
Types in context $\Gamma$ are encoded by the projections $\p_{\Delta}$ with $\ft(\Delta) = \Gamma$
(hence, in particular, $\length(\Delta) = \length(\Gamma)+1$).
Terms are not explicitly given; a term of type $\p_{\Delta}$ (in context $\ft(\Delta)$) corresponds to a section to $\p_{\Delta}$.
This is exactly how terms are defined in the E-system constructed from a CE-system in \cref{constr:CE2E-fun}.

In case the reader wonders whether the length function $\length$ lifts to a stratification, in \cref{cor:freeproj} we show that it does so on a suitable subcategory of $\C$.

\begin{defi}\label{def:CsysHom}
A \define{morphism of C-systems} from \sys{C} to \sys{D} is a functor
$F \colon \C \to \D$ between the underlying categories that strictly preserves the rest of the structure, that is:
\begin{enumerate}[label=\textit{\roman*}),ref=\textit{\roman*}]
\item\label{def:CsysHom:t}
$F (\Cterminal_{\sys{C}}) = \Cterminal_{\sys{D}}$,
\item\label{def:CsysHom:l}
$\length_{\sys D} \circ \ob{F} = \length_{\sys C} \colon \ob{\C} \to \mathbb{N}$,
\item\label{def:CsysHom:ft}
$\ob{F} \circ \ft[\sys C] = \ft[\sys D] \circ \ob{F} \colon \ob{\C} \to \ob{\D}$,
\item\label{def:CsysHom:p}
$F \p_{\Gamma} = \p_{F\Gamma}$, for every $\Gamma \in \ob{\C}$,
\item\label{def:CsysHom:pb}
$F(f^*\Gamma) = (Ff)^*(F\Gamma)$ and $F(\q(f,\Gamma)) = \q(Ff,F\Gamma)$,
for every $\Gamma \in \ob{\C}$ such that $\length_{\sys C}(\Gamma) > 0$ and $f \colon \Delta \to \ft(\Gamma)$.
\end{enumerate}
\end{defi}

\begin{exa}[C-systems and Lavwere theories]
  Fiore and Voevodsky~\cite{1512.08104} construct an isomorphism of categories between the category of Lawvere theories and the category of \emph{$\length$-bijective} C-systems, that is, of C-systems whose length function is a bijection. Intuitively, such a C-system can be seen as modelling an untyped (or single-sorted) language.
\end{exa}

\begin{exa}[C-systems and contextual categories]
  C-systems are equivalent to Cartmell's contextual categories.
  In his Ph.D.\ dissertation, Cartmell~\cite[Section~2.4]{cartmell_phd} constructs an equivalence between the category of contextual categories and homomorphisms between them and the category of Generalized Algebraic Theories (GATs) and (equivalence classes of) interpretations between them.
  Hence C-systems are equivalent to GATs.
\end{exa}

\begin{exa}[C-system from a universe category]
  Any universe category gives rise to a C-system, via a construction by Voevodsky~\cite[Construction~2.12]{MR3402489}.
  A universe category is a category with a chosen terminal object and a \emph{universe}, that is, a morphism $p : \tU \to \U$ together with a choice of pullback of $p$ along any morphism $X \to \U$.
  Roughly, the C-system constructed from a universe category has, as objects of length $n$, sequences of $n$ morphisms $f_1,\ldots,f_n$ into $\U$ such that the domain of $f_{i+1}$ is the chosen pullback of $p$ along $f_i$. Such a sequence can be thought of as a sequence of (dependent) types $(A_1, A_2,\ldots,A_n)$ such that $A_1,\ldots A_i\vdash A_{i+1}$.
  Furthermore, any small C-system can be obtained via this construction;
  given a C-system $\sys{C}$, a universe can be constructed \cite[Construction~5.2]{MR3402489} on the presheaf category $\hat{\sys{C}}$ such that the C-system obtained from that universe is isomorphic to the C-system $\sys{C}$.
  For a brief overview of these constructions, see~\cite[Section~1.3]{kapulkin2012univalence}.
  
  Voevodsky's simplicial model of univalent foundations~\cite{kapulkin2012univalence} is built on top of a C-system obtained from a universe in the category of simplicial sets.
\end{exa}

\begin{prob}\label{prob:csys2rttr}
To construct a functor $\Ctorttr \colon \Csys \to \rttr$.
\end{prob}

\begin{construction}{prob:csys2rttr}\label{constr:csys2rttr}
Let $\sys{C} = (\C,\Cterminal,\length,\ft,\p,\dots)$ be a C-system.
The objects of $\C$ can be arranged into a rooted tree by defining
\[
T_n :\jdeq \left\{ \Gamma \mid \length(\Gamma)\jdeq n \right\}
\qquad\text{and}\qquad
t_n(\Gamma) :\jdeq \ft(\Gamma) \in T_n, \text{ for } \Gamma \in T_{n+1}.
\]
That is, the front square in the diagram of sets and functions
\begin{linenomath*}
\[\begin{tikzcd}
T_{n+1}	\ar[dd] \ar[rr,hook] \ar[dr,dashed,"t_n"{swap}]	&&	\ob{\C}	\ar[dd,"\length"{near end}] \ar[dr,"\ft"] 
&\\
&	T_n  \ar[rr,hook,crossing over]	&&	\ob{\C}	\ar[dd,"\length"]
\\
1	\ar[rr,"n+1"{near end}] \ar[dr]	&&	\mathbb{N}	\ar[dr,"\mathrm{pred}"{description}]
&\\
&	1 \ar[from=uu, crossing over]	\ar[rr,"n"]	&&	\mathbb{N}
\end{tikzcd}\]
\end{linenomath*}
is a pullback for every $n \in \mathbb{N}$,
and the function $t_n$ is defined by its universal property
as the right-hand square commutes by~\ref{def:Csys}.\ref{def:Csys:lft}.
The set $T_0$ is a singleton by~\eqref{def:Csys:lt} in \cref{def:Csys}.

A homomorphism of C-systems $F \colon \sys{C} \to \sys{D}$ restricts, for every $n \in \mathbb{N}$,
to a function $F_n : T_n \to S_n$
between the fibres $T_n$ and $S_n$ of the length function of $\C$ and $\D$, respectively,
by~\ref{def:CsysHom}.\ref{def:CsysHom:l}
as in the front part of the diagram below.
\begin{linenomath*}
\begin{equation}\label{constr:csys2rttr:morph}
\begin{tikzcd}
T_{n+1}	\ar[dd,"F_{n+1}"{swap}] \ar[rr,hook] \ar[dr,"t_n"{swap}]	&&	\ob{\C}	\ar[dd,"\ob{F}"{near end}] \ar[dr,"\ft"]
&\\
&	T_n \ar[rr,hook,crossing over]	&&	\ob{\C}	\ar[dd,"\ob{F}"{swap}] \ar[dddd,bend left,"\length"]
\\
S_{n+1}	\ar[dd] \ar[rr,hook] \ar[dr,"s_n"{swap}]	&&	\ob{\D}	\ar[dd,"\length"{near end}] \ar[dr,"\ft"] 
&\\
&	S_n	\ar[from=uu,crossing over,"F_n"{near end}] \ar[rr,hook,crossing over]	&&	\ob{\D}	\ar[dd,"\length"{swap}]
\\
1	\ar[rr,"n+1"{near end}] \ar[dr]	&&	\mathbb{N}	\ar[dr,"\mathrm{pred}"{description}]
&\\
&	1	\ar[from=uu,crossing over] \ar[rr,"n"]	&&	\mathbb{N}
\end{tikzcd}
\end{equation}
\end{linenomath*}
The upper-right square commutes by~\cref{def:CsysHom}.\ref{def:CsysHom:ft},
thus the upper-left square commutes as well since the rest of the diagram commutes.
Functoriality holds since each $F_n$ is defined by a universal property.
\end{construction}

\begin{lem}\label{lem:freeproj}
Let $\sys{C}$ be a C-system with underlying \scat $\C$ and
let $\p(\sys{C})$ denote the wide subgraph of $\C$ on the canonical projections
$\p_{\Gamma}$ for $\Gamma$ in $\C$.
Then $\p(\sys{C})$ is isomorphic to the graph $\Gfctr \circ \Ctorttr(\sys{C})$
naturally in $\sys{C}$,
where $\Gfctr\colon \rttr \to \grph$ is from \cref{def:rttr-to-cat}.
\end{lem}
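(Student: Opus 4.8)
The plan is to exhibit both $\p(\sys{C})$ and $\Gfctr\circ\Ctorttr(\sys{C})$ as graphs built from the same underlying data and then read off the comparison. First I would unfold the right-hand side. By \cref{constr:csys2rttr}, the rooted tree $\Ctorttr(\sys{C})$ has $n$-th level $T_n \jdeq \{\Gamma \mid \ell(\Gamma)\jdeq n\}$ with parent function $t_n \jdeq \ft$. Applying $\Gfctr$ from \cref{def:rttr-to-cat}, the graph $\Gfctr\circ\Ctorttr(\sys{C})$ has vertex set $\coprod_n T_n$, which is canonically the set $\ob{\C}$ since the fibres of $\ell$ partition $\ob{\C}$; and it has exactly one edge $(n+1,\Gamma)\to(n,\ft(\Gamma))$ for each $\Gamma$ with $\ell(\Gamma)\jdeq n+1 > 0$. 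On the other side, $\p(\sys{C})$ is the wide subgraph of $\C$ whose vertices are the objects of $\C$ and whose edges are precisely the canonical projections $\p_\Gamma \colon \Gamma \to \ft(\Gamma)$ for $\ell(\Gamma) > 0$; since a morphism of $\C$ carries its source, the assignment $\Gamma \mapsto \p_\Gamma$ is injective, so the edges of $\p(\sys{C})$ are likewise indexed by $\{\Gamma \mid \ell(\Gamma) > 0\}$.

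The isomorphism is then essentially forced: on vertices send $(n,\Gamma)$ to $\Gamma$ via the bijection $\coprod_n T_n \cong \ob{\C}$, and on edges send the edge indexed by $\Gamma$ to $\p_\Gamma$. To see this is a well-defined graph homomorphism I would check that source and target are respected, which amounts to $\ell(\ft(\Gamma)) \jdeq \ell(\Gamma)-1$, i.e.\ \cref{def:Csys}.\ref{def:Csys:lft}. Both the vertex map and the edge map are bijections by construction, so this assignment is an isomorphism of graphs, with inverse sending $\p_\Gamma$ back to the tree edge above $\Gamma$.

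For naturality, I would first record that a C-system homomorphism $F \colon \sys{C} \to \sys{D}$ restricts to a graph homomorphism $\p(F) \colon \p(\sys{C}) \to \p(\sys{D})$, since $F$ preserves projections by \cref{def:CsysHom}.\ref{def:CsysHom:p}; thus $\p(-)$ is a functor $\Csys \to \grph$, while $\Gfctr\circ\Ctorttr$ is one by \cref{constr:csys2rttr} and \cref{def:rttr-to-cat}. The naturality square is then checked level-wise. On vertices both composites send $\Gamma$ to $\ob{F}(\Gamma)$, using \cref{def:CsysHom}.\ref{def:CsysHom:l} to match the re-indexing $F_n \colon T_n \to S_n$ of \cref{constr:csys2rttr} with $\ob{F}$. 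On edges, $\Gfctr\circ\Ctorttr(F)$ sends the edge indexed by $\Gamma$ to the edge indexed by $F\Gamma$, which corresponds under the isomorphism to $\p_{F\Gamma} \jdeq F\p_\Gamma$, matching $\p(F)$ again by \cref{def:CsysHom}.\ref{def:CsysHom:p}; preservation of $\ft$ (\cref{def:CsysHom}.\ref{def:CsysHom:ft}) guarantees that the targets of these edges also agree.

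As for the main difficulty: there is no deep obstacle here, as the content is entirely bookkeeping. The one point that must be pinned down precisely is the matching of indexing sets — namely that both edge sets are canonically indexed by the objects of positive length, so that ``the projection out of $\Gamma$'' and ``the tree edge above $\Gamma$'' are literally the same datum. Once this identification is fixed, both the graph-isomorphism conditions and the naturality square reduce directly to the length, father, and projection-preservation clauses of \cref{def:Csys} and \cref{def:CsysHom}.
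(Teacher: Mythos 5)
Your proposal is correct and follows essentially the same route as the paper's proof: identify the vertices of $\Gfctr\circ\Ctorttr(\sys{C})$ with $\ob{\C}$, observe that each vertex of positive length has exactly one outgoing edge matching the unique projection $\p_\Gamma$, and deduce naturality from clauses (\textit{ii}), (\textit{iii}), (\textit{iv}) of \cref{def:CsysHom}. You simply spell out the bookkeeping (the indexing of both edge sets by objects of positive length) that the paper leaves implicit.
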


\begin{proof}
The vertices of $\Gfctr \circ \Ctorttr(\sys{C})$ are pairs $(\length(\Gamma),\Gamma)$
and edges are of the form
$(\length(\Gamma),\Gamma) \to (\length(\ft(\Gamma)),\ft(\Gamma))$
for $\length(\Gamma)>0$.
In particular, every vertex $(n+1,\Gamma)$
has exactly one outgoing edge.
The bijection between vertices then extends to an isomorphism
between $\p(\sys{C})$ and $\Gfctr \circ \Ctorttr(\sys{C})$.

Every C-homomorphism $F \colon \sys{C} \to \sys{D}$ induces
a morphism of graphs $\p(F) \colon \p(\C_{\sys{C}}) \to \p(\C_{\sys{D}})$
by \ref{def:CsysHom}.\ref{def:CsysHom:p}.
Naturality then follows from \ref{def:CsysHom}\ref{def:CsysHom:l}.
\end{proof}

\begin{cor}\label{cor:freeproj}
Let $\F$ be the free category on the graph $\p(\sys{C})$ on the canonical projections.
Then the terminal object $\Cterminal$ of $\C$ is terminal in $\F$ and
the function $\length$ extends to a stratification functor on $\F$.
\end{cor}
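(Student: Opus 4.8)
The plan is to reduce the statement entirely to the machinery already developed for rooted trees and stratified categories, so that essentially no new combinatorics is needed. By \cref{lem:freeproj}, the graph $\p(\sys{C})$ of canonical projections is isomorphic to $\Gfctr\Ctorttr(\sys{C})$. Applying the free-category functor $\Ffctr$, and using that $\Ffctr$ sends isomorphic graphs to isomorphic \scats, I would obtain an isomorphism of \scats
\[
\F \jdeq \Ffctr(\p(\sys{C})) \cong \Ffctr\Gfctr\Ctorttr(\sys{C}).
\]
The right-hand \scat lies in the image of $\Ffctr\Gfctr \colon \rttr \to \Cat$, which by \cref{lem:rttrStrCat} is exactly the subcategory $\strCat$ of stratified \scats. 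Hence $\F$ is (isomorphic to) a stratified \scat.

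Next I would pin down the stratification. From the proof of \cref{lem:rttrStrCat}, the stratification functor on $\Ffctr\Gfctr\Ctorttr(\sys{C})$ sends an object $(n,\Gamma)$ to $n$ and the generating arrow out of $(n+1,\Gamma)$ to $n+1\geq n$. Transporting this along the isomorphism above yields a stratification functor $L$ on $\F$, and since under the isomorphism the object $\Gamma$ corresponds to the pair $(\ell(\Gamma),\Gamma)$, we get $L(\Gamma) \jdeq \ell(\Gamma)$ on objects. In other words, $\ell$ is precisely the object part of $L$, so $\ell$ extends to a stratification functor on $\F$. For the terminal object: the terminal object of $\Ffctr\Gfctr\Ctorttr(\sys{C})$ is the root, corresponding to the unique object of length $0$, which is $\Cterminal$ by \cref{def:Csys}.\ref{def:Csys:lt}; as the isomorphism preserves terminal objects, $\Cterminal$ is terminal in $\F$. (Alternatively, this is immediate from \cref{rmk:stratified}.\ref{rmk:stratified:uqterm} once $L(\Cterminal) \jdeq \ell(\Cterminal) \jdeq 0$ is known.)

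The one point requiring care is the transport of the stratification along this isomorphism. Although \cref{def:stratification} is \emph{not} invariant under equivalence of categories, it does transport along an \emph{iso}morphism $G$ of \scats: such a $G$ is bijective on objects and hom-sets and has a strict inverse, so setting $L \defeq L' \circ G$ and choosing $G^{-1}$ of the terminal object, each of conditions (\ref{def:stratification:term}--\ref{def:stratification:fact}) in \cref{def:stratification} transfers verbatim along $G$. I would either record this transport observation explicitly or, to stay fully self-contained, avoid it altogether by verifying conditions (\ref{lem:strat-alt:term}--\ref{lem:strat-alt:up}) of \cref{lem:strat-alt} for $\ell$ on $\F$ directly: because $\p(\sys{C})$ is the graph of a rooted tree, every non-terminal object has exactly one outgoing edge $\p_\Gamma \colon \Gamma \to \ft(\Gamma)$, so the morphisms of the free category $\F$ are exactly the unique descending paths $\Gamma \to \ft^{k}(\Gamma)$. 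This gives a unique arrow from $\Gamma$ to the length-$k$ ancestor for each $k \leq \ell(\Gamma)$, no arrow for $k > \ell(\Gamma)$, and $\ell(\Cterminal)\jdeq 0$, which is precisely \cref{lem:strat-alt}.

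I expect this last verification to be where all the genuine content sits, but it is light: the tree structure of $\p(\sys{C})$ exhibited in \cref{lem:freeproj} does the heavy lifting, and freeness of $\F$ guarantees the paths are in bijection with ancestor relationships. There is no real obstacle beyond careful bookkeeping of the correspondence $\Gamma \leftrightarrow (\ell(\Gamma),\Gamma)$ between objects of $\F$ and nodes of the tree.
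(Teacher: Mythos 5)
Your proposal is correct and follows essentially the same route as the paper: the paper's proof is exactly the one-line argument $\F \jdeq \Ffctr\p(\sys{C}) \cong \Ffctr\Gfctr\Ctorttr(\sys{C})$ via \cref{lem:freeproj}, followed by an appeal to \cref{lem:rttrStrCat}. The additional care you take in transporting the stratification along the isomorphism of \scats (and the alternative direct verification via \cref{lem:strat-alt}) only makes explicit what the paper leaves implicit.
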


\begin{proof}
By \cref{lem:freeproj} there is an iso
$\F \jdeq \Ffctr \p(\sys{C}) \cong \Ffctr \circ \Gfctr \circ \Ctorttr(\sys{C})$.
The claim thus follows from \cref{lem:rttrStrCat}.
\end{proof}

\subsection{The category of CE-systems}
\label{ssec:ce-sys}

In this section, we define CE-systems and their morphisms.

\begin{defi}\label{def:CEsys}
A \define{CE-system} consists of
two \scat structures $\CEfam0{}$ and $\CEcat0{}$
on the same set of objects $\mathrm{Ob}(\CEfam0{}) \jdeq \mathrm{Ob}(\CEcat0{})$
and an identity-on-objects functor $I \colon \CEfam0{} \to \CEcat0{}$ between them,
together with
\begin{enumerate}
\item\label{def:CEsys:term}
a chosen object $\CEroot0{}$ which is terminal in $\CEfam0{}$, and
\item\label{def:CEsys:pb}
for any $f\colon\Delta\to\Gamma$ in $\CEcat0{}$ and any $A\in\CEfam0{}/\Gamma$,
a functorial choice of a pullback square 
\begin{linenomath*}
\begin{equation*}
\begin{tikzcd}[column sep=large]
  \CEctxext{\Delta}{\CEpb00{f}{A}} \arrow[r,"{\pi_2(f,A)}"] \arrow[d,swap,"{I(\CEpb00{f}{A})}"]
  &
  \CEctxext{\Gamma}{A} \arrow[d,"I(A)"]
  \\
  \Delta \arrow[r,"f"]
  &
  \Gamma
\end{tikzcd}
\end{equation*}
\end{linenomath*}
such that $\CEpb00{f}{A}\in\cat{F}/\Delta$.
Explicitly, the functoriality requirement is that
\begin{enumerate}
\item \label{def:CEsys:a}
For any $f\colon\Delta\to\Gamma$, one has
\begin{linenomath*}
\begin{equation*}
\CEpb01{f}{\catid{\Gamma}}\jdeq\catid{\Delta}
\qquad \text{and} \qquad
\CEqar{f}{\catid{\Gamma}}\jdeq f.
\end{equation*}
\end{linenomath*}
\item \label{def:CEsys:b}
  For any $A\in\cat{F}/\Gamma$, one has
  \begin{linenomath*}
\begin{equation*}
\CEpb10{\catid{\Gamma}}{A}\jdeq A
\qquad \text{and} \qquad
\CEqar{\catid{\Gamma}}{A}\jdeq\catid{\CEctxext{\Gamma}{A}}
\end{equation*}
\end{linenomath*}
\item \label{def:CEsys:c}
  For any $f\colon\Delta\to\Gamma$, $g\colon\Xi\to\Delta$ and $A\in\cat{F}/\Gamma$, one has
  \begin{linenomath*}
\begin{equation*}
\CEpb10{f\circ g}{A}\jdeq \CEpb01{g}{\CEpb00{f}{A}}
\qquad \text{and} \qquad
\CEqar{f\circ g}{A}\jdeq \CEqar{f}{A}\circ\CEqar{g}{\CEpb00{f}{A}}
\end{equation*}
\end{linenomath*}
\item \label{def:CEsys:d}
  For any $P\in\cat{F}/\ctxext{\Gamma}{A}$ and $f\colon\Delta\to\Gamma$, one has
  \begin{linenomath*}
\begin{equation*}
\CEpb01{f}{\CEfcmp{A}{P}}\jdeq \CEpb00{f}{A} \circ \CEpb10{\CEqar{f}{A}}{P}
\qquad \text{and} \qquad
\CEqar{f}{\CEfcmp{A}{P}} \jdeq \CEqar{\CEqar{f}{A}}{P}
\end{equation*}
\end{linenomath*}
\end{enumerate}
\end{enumerate}

A CE-system is \define{\rtdCE}
if $I(\CEroot0{}) \jdeq \CEroot0{}$ is terminal in $\CEcat0{}$.

For any $f\colon\Delta\to\Gamma$ we write $\CEpbf0f$ for the induced functor
$\cat{F}/\Gamma\to\cat{F}/\Delta$
and refer to the arrows in $\cat{F}$ as the \define{families} of the CE-system.
We shall write arrows in $\cat{F}$ with a double head as in the above diagram.

We may write $\CEfun1{A} \colon \CEfam1{A} \to \CEcat1{A}$ for the categories and functor underlying a CE-system $\sys{A}$,
whenever we need to make the CE-system explicit.
\end{defi}

We show in \cref{ssec:c2ce} that CE-systems generalize C-systems.
To provide some intuition, we can think of the image of $\CEfam0{}$ in $\CEcat0{}$ as the subcategory of $\CEcat0{}$ spanned by the projections $\p_{\Gamma} \colon \Gamma \to \ft(\Gamma)$ of a C-system.

\begin{exa}[CE-system on finite sets]\label{eg:CE-fin-set}
Let $\FF$ be the category whose objects are natural numbers, and whose morphisms $f \colon m \to n$
are functions $f \colon \std{m} \to \std{n}$ from the standard finite set of $m$ elements to the standard finite set of $n$ elements.
Consider the identity-on-objects functor $\fin{-} \colon (\mathbb{N},\geq) \to \FF\opcat$ given, on $n+k \geq n$, by the opposite of the initial-segment inclusion, which we write $i_n^{n+k}\colon \fin{n+k} \to \fin{n}$.

We equip it with the structure of a CE-system as follows.
The chosen pullback of a family $n+k \geq n$ and an arrow $f \colon \fin{m} \to \fin{n}$ in $\FF\opcat$ is
\begin{linenomath*}
\[\begin{tikzcd}[row sep=3em,column sep=6em]
\fin{m+k}	\ar[d,"i_m^{m+k}"] \ar[r,"{\pi_2(f,n+k \geq n)}"]
&	\fin{n+k}	\ar[d,"i_n^{n+k}"]
\\
\fin{m}	\ar[r,"f"]
&	\fin{n}
\end{tikzcd}\]
\end{linenomath*}
where the morphism $\pi_2(f,n+k \geq n)$ is the opposite of the arrow $[f,1_k] \colon \fin{n+k} \to \fin{m+k}$ in $\FF$
obtained from the universal property of the coproduct $\fin{n+k}$.
Functoriality follows immediately from the definitions.

This CE-system is, of course, rooted ---
as $\fin{0}$ is terminal in $\FF\opcat$ ---
and stratified in the sense of \cref{def:stratCE} ---
as initial-segment inclusions factor uniquely into arrows $i_n^{n+1}$ which are \indecarr in the sense of \cref{def:individual-ar}.
Note also that the choice of pullback squares is forced by \cref{rmk:grassmann}.

We can think of this example as the category of renamings, that is, variable-for-variable substitutions, of a untyped (or uni-typed) theory;
see, for instance, \cite{DBLP:conf/lics/FiorePT99,lamiaux2024introduction}.
\end{exa}

\begin{exa}
Categories with attributes~\cite{cartmell_phd}, or type categories~\cite{PittsCatLog}, produce examples of CE-systems which are rooted but not stratified.
A category with attributes consists of a category $\cat{C}$ with a terminal object $1$ together with a set of ``types'' $T(X)$ for each object of $\cat{C}$
such that each $A\in T(X)$ is assigned an arrow $p_A$ in $\cat{C}$ with codomain $X$.
Moreover a strictly functorial choice of pullbacks of these arrows along any arrow in $\cat{C}$ is required.
A CE-system is obtained by taking as $\cat{F}$ the free category on the arrows of the form $p_A$ and as $I$ the obvious functor into $\cat{C}$.
Another CE-system is obtained by taking as $\cat{F}$ the subcategory of $\cat{C}$ spanned by the arrows of the form $p_A$.
In this case the functor $I$ is simply the inclusion.

Display map categories~\cite{Taylor1999} and clans~\cite{Joyal2017clan} also produce examples of rooted non-stratified CE-systems, as soon as the choice of pullbacks is strictly functorial.
Recall that a display map category consists of a category $\cat{C}$ together with a class of arrows $\cat{D}$ such that pullbacks of arrows in $\cat{D}$ along any arrow in $\cat{C}$ exist and are again in $\cat{D}$.
Clans also have a terminal object $1$ and require $\cat{D}$ to be closed under composition and to contain all arrows towards $1$.
When the choice of pullbacks is strictly functorial,
the wide subcategory of $\cat{C}$ on the arrows in $\cat{D}$ together with the inclusion $\cat{D}\to\cat{C}$ provides an example of a rooted, non-stratified CE-system.
\end{exa}

\begin{defi}\label{def:CEhom}
Let $\sys{A}$ and $\sys{B}$ be two CE-systems.
A \define{CE-homomorphism} $F \colon \sys{A} \to \sys{B}$
consists of a commutative square of functors
\begin{linenomath*}
\begin{equation*}
\begin{tikzcd}[column sep=large]
\CEfam1{A}	\ar[d,swap,"\CEfun1{A}"] \ar[r,"\hCEfam{F}"]
&	\CEfam1{B}	\ar[d,"\CEfun1{B}"]
\\
\CEcat1{A}	\ar[r,"\hCEcat{F}"]	&	\CEcat1{B}
\end{tikzcd}
\end{equation*}
\end{linenomath*}
such that,
\begin{enumerate}
\item\label{def:CEhom:t}
$\hCEfam{F}(\CEroot1{A}) \jdeq \CEroot1{B}$, and
\item\label{def:CEhom:pb}
for every $A \in \CEfam1{A}/\Gamma$ and $f \colon \Delta \to \Gamma$,
it is
\begin{linenomath*}
\[\hCEfam{F}(\CEpb00{f}{A}) \jdeq \CEpb11{\hCEcat{F}f}{\hCEfam{F}A}
\qquad \text{and} \qquad
\hCEcat{F}(\CEqar{f}{A}) \jdeq \CEqar{\hCEcat{F}f}{\hCEfam{F}A}.
\]
\end{linenomath*}
\end{enumerate}
\end{defi}

\begin{rem}\label{rmk:CEhom-root}
If $F$ is a CE-homomorphism between \rtdCE CE-systems \sys{A} and \sys{B},
then $\hCEcat{F}(\CEroot1{A}) \jdeq \CEroot1{B}$ and
$\hCEcat{F}$ preserves terminal objects in the usual categorical sense.
\end{rem}

\begin{defi}
We write $\CEsys$ for the category of CE-systems and CE-system homomorphisms
and $\rCEsys$ for its full subcategory on \rtdCE CE-systems.
\end{defi}

For the comparison of CE-systems with C-systems, the notion of stratification of a CE-system is needed:

\begin{defi}\label{def:stratCE}
A CE-system \sys{A} is \define{stratified} if its category of families $\F$
is stratified in the sense of \cref{def:stratification} and,
for every $f \colon \Delta \to \Gamma$ in $\C$, the functor
\[\begin{tikzcd}[column sep=4em]
\F/\Gamma	\ar[r,"f^*"]	&	\F/\Delta
\end{tikzcd}\]
induced by the functorial choice of pullbacks is stratified
with respect to the stratification induced on slices in \cref{lem:strat-slice}.

A CE-homomorphism between stratified CE-systems is \define{stratified}
if its component on families is a stratified functor.
\end{defi}

\begin{rem}
  It follows from~\cref{prop:strat-is-prop} that CE-systems are stratified in at most one way.
\end{rem}

\begin{defi}
  We denote by $\strCEsys \hookrightarrow \CEsys$ and $\strrCEsys  \hookrightarrow \rCEsys$
  the respective subcategories spanned by \emph{stratified} (rooted) CE-systems and \emph{stratified} CE-homomorphisms between them.
\end{defi}

\begin{rem}\label{rmk:grassmann}
In a stratified CE-system,
for every $f \colon \Delta \to \Gamma$ in $\C$ and $A \in \F/\Gamma$ we have
\[
L(\Delta.f^*A) \jdeq L(\Delta) + L(\Gamma.A) - L(\Gamma).
\]
\end{rem}

\begin{lem}\label{lem:CEhom-ind}
Let \sys{A} and \sys{B} be two stratified CE-system.
A commuting square of functors
\begin{linenomath*}
\[\begin{tikzcd}[column sep=large]
\CEfam1{A}	\ar[d,swap,"\CEfun1{A}"] \ar[r,"\hCEfam{F}"]
&	\CEfam1{B}	\ar[d,"\CEfun1{B}"]
\\
\CEcat1{A}	\ar[r,"\hCEcat{F}"]	&	\CEcat1{B}
\end{tikzcd}\]
\end{linenomath*}
is a stratified CE-homomorphism $\sys{A} \to \sys{B}$ if and only if 
\begin{enumerate}
\item
$\hCEfam{F}$ is a stratified functor, and
\item
for every \indecarr arrow $A \in \CEfam1{A}/\Gamma$
and every $f \colon \Delta \to \Gamma$,
we have
\begin{linenomath*}
\[\hCEfam{F}(\CEpb00{f}{A}) \jdeq \CEpb11{\hCEcat{F}f}{\hCEfam{F}A}
\qquad \text{and} \qquad
\hCEcat{F}(\CEqar{f}{A}) \jdeq \CEqar{\hCEcat{F}f}{\hCEfam{F}A}.
\]
\end{linenomath*}
\end{enumerate}
\end{lem}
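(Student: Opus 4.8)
The plan is to prove the two directions separately, with essentially all of the work concentrated in the ``if'' direction. The ``only if'' direction is immediate: a stratified CE-homomorphism is by definition a CE-homomorphism whose family component $\hCEfam{F}$ is a stratified functor, which is exactly condition (1); and \cref{def:CEhom}.\ref{def:CEhom:pb} provides the pullback compatibility for \emph{all} families $A \in \CEfam1{A}/\Gamma$, hence in particular for the individual ones, which is condition (2).

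For the converse, suppose (1) and (2) hold. First I would observe that \cref{def:CEhom}.\ref{def:CEhom:t} comes for free: since $\hCEfam{F}$ is a stratified functor it preserves the chosen terminal object (by \cref{lem:strat-funct}, and because in a stratified category the terminal object is the unique one of length $0$, cf.\ \cref{def:stratification}.\ref{def:stratification:term}), so $\hCEfam{F}(\CEroot1{A}) \jdeq \CEroot1{B}$. It then remains only to upgrade condition (2) from individual families to arbitrary families, that is, to establish \cref{def:CEhom}.\ref{def:CEhom:pb} for every $A \in \CEfam1{A}/\Gamma$; together with (1) this exhibits $F$ as a stratified CE-homomorphism. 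I would carry out this upgrade by induction on the length $L(\ctxext{\Gamma}{A}) - L(\Gamma)$ of the family $A$, which is legitimate because $\sys{A}$ is stratified and so $\F$ is stratified in the sense of \cref{def:stratification}. When the length is $0$ the family is the identity $\catid{\Gamma}$, and compatibility reduces to functoriality of $\hCEfam{F}$ and $\hCEcat{F}$ together with \cref{def:CEsys}.\ref{def:CEsys:a}; the length-$1$ case is precisely hypothesis (2).

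For the inductive step, stratification of $\F$ lets me factor $A$ as a composite family $A \jdeq \CEfcmp{B}{P}$ with $B \in \F/\Gamma$ an individual family and $P \in \F/\ctxext{\Gamma}{B}$ of strictly smaller length. Applying the functoriality axiom \cref{def:CEsys}.\ref{def:CEsys:d} expresses both $\CEpb00{f}{A}$ and $\CEqar{f}{A}$ in terms of the pullback of the individual family $B$ along $f$ and the pullback of $P$ along the section $\CEqar{f}{B}$. I then apply hypothesis (2) to the individual piece $B$ and the inductive hypothesis to $P$ (with base arrow $\CEqar{f}{B}$), and finally reassemble using the \emph{same} axiom \cref{def:CEsys}.\ref{def:CEsys:d}, this time read in $\sys{B}$ applied to $\hCEfam{F}(A) \jdeq \CEfcmp{\hCEfam{F}B}{\hCEfam{F}P}$. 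This last factorisation is valid purely by functoriality of $\hCEfam{F}$, which also preserves context extensions since $\hCEfam{F}$ and $\hCEcat{F}$ agree on objects (both underlying functors $\CEfun1{A},\CEfun1{B}$ being identity-on-objects); the identical bookkeeping settles the $\CEqar{}{}$-component.

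The main obstacle is organisational rather than conceptual: one must keep the two halves of \cref{def:CEsys}.\ref{def:CEsys:d} --- the identity for the family $\CEpb00{}{}$ and the one for the section $\CEqar{}{}$ --- correctly threaded through the induction, and verify that the section $\CEqar{f}{B}$ obtained by pulling back the outer individual family $B$ is exactly the base arrow into $\ctxext{\Gamma}{B}$ to which the inductive hypothesis for $P$ applies. No genuinely new idea is required beyond the principle, recorded in \cref{rmk:stratified}.\ref{rmk:stratified:funct}, that stratified data is determined by its behaviour on individual arrows; the substance of the lemma is just to check that the chosen pullbacks of a CE-system are compatible with this reduction.
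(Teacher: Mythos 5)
Your proposal is correct and follows essentially the same route as the paper, whose proof simply states that one direction is trivial and the other is by induction on the length of the family $A \in \CEfam1{A}/\Gamma$. Your write-up fills in exactly the details the paper leaves implicit: the base cases via \cref{def:CEsys}.\ref{def:CEsys:a}, the decomposition of a longer family into an individual piece and a shorter one, and the reassembly via \cref{def:CEsys}.\ref{def:CEsys:d} together with functoriality of $\hCEfam{F}$.
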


\begin{proof}
One direction is trivial.
The other one is proved by induction on the length $n$ of an arrow $A \in \F/\Gamma$.
\end{proof}

\subsection{Characterising C-systems as stratified CE-systems}
\label{ssec:c2ce}

Recall from \cref{cor:freeproj} that
every C-system $\sys{C}$ has a stratified wide subcategory $\F$ of its underlying category $\C$.
In this section, we show that the inclusion $\F \to \C$
has the structure of a stratified CE-system (\cref{constr:C2CE-ob}).
Moreover, we prove that this correspondence is functorial (\cref{constr:C2CE-funct})
and, in fact, an equivalence between the category of C-systems and the category of stratified CE-systems (\cref{thm:CasCE}).

\begin{prob}\label{prob:C2CE-ob}
To construct a CE-system $\CtoCE(\sys{C})$
from a C-system $\sys{C} \jdeq (\C, \Cterminal,\length,\ft,\ldots)$.
\end{prob}

\begin{construction}{prob:C2CE-ob}\label{constr:C2CE-ob}
Recall from \cref{lem:freeproj} that
$\p(\sys{C})$ denotes the wide subgraph of $\C$ on the canonical projections
$\p_{\Gamma}$ for $\Gamma$ in $\C$
and let $\F$ be the free category on $\p(\sys{C})$.
In particular, $\F$ has the same objects of $\C$
and the object $\Cterminal$ is terminal in $\F$ by \cref{cor:freeproj}.
It follows that the inclusion $\p(\sys{C}) \hookrightarrow \C$
extends to an identity-on-objects functor $I \colon \cat{F} \to \C$
that maps a path of length $n>0$
in $\p(\sys{C})$, \ie a list of composable canonical projections
\begin{linenomath*}
\begin{equation}\label{constr:C2CE-ob:fact}
\begin{tikzcd}[column sep=4em]
\Gamma	\ar[r,"\p_{\Gamma}"]	&	\ft(\Gamma)	\ar[r,"\p_{\ft(\Gamma)}"]	&	\cdots	\ar[r,"\p_{\ft^2(\Gamma)}"]
&	\ft^{n-1}(\Gamma)	\ar[r,"\p_{\ft^{n-1}(\Gamma)}"]	&	\ft^n(\Gamma).
\end{tikzcd}
\end{equation}
\end{linenomath*}
to their composite in $\C$.

It remains to provide $I$ with a suitable choice of pullback squares
along an arbitrary arrow $f \colon \Delta \to \Gamma$ in $\C$.
As an arrow $p \colon \Xi \to \Gamma$ in $\F$ is a path in $\p(\sys{C})$,
we proceed by induction on the length $n$ of the path $p$,
proving also conditions~\eqref{def:CEsys:b} and~\eqref{def:CEsys:c}
from \cref{def:CEsys}.

If $n\jdeq0$, the path $p$ is the identity on $\Gamma$ and we take
$f^*(\catid{\Gamma}) :\jdeq \catid{\Delta}$ and $\pi_2(f,\catid{\Gamma}) :\jdeq f$.
This choice is clearly functorial in $f$
and it trivially gives rise to a pullback square.
It also ensures condition~\eqref{def:CEsys:a}.

For $n>0$, it is $I(p) \jdeq \p_{\Xi} \circ I(p')$ where the length of $p'$ is $n-1$.
By inductive hypothesis we have $f^*p' \in \F/\Delta$ and a chosen pullback square of $I(p')$ along $f$,
which is the lower square in the diagram below.
The upper square is the canonical pullback square~\eqref{Csys-pbsq} given by the C-system structure.
\begin{linenomath*}
\begin{equation}\label{constr:C2CE-ob:pb}
\begin{tikzcd}[column sep=6em,row sep=3em]
(\pi_2(f,p'))^*\Xi	\ar[d,"\p_{(\pi_2(f,p'))^*\Xi}"{swap}] \ar[r,"{\q(\pi_2(f,p'),\Xi)}"]
&	\Xi	\ar[d,"\p_{\Xi}"]
\\
\ctxext{\Delta}{f^*(p')}	\ar[d,"I(f^*p')"{swap}] \ar[r,"{\pi_2(f,p')}"]
&	\ft(\Xi)	\ar[d,"I(p')"]
\\
\Delta	\ar[r,"f"]	&	\Gamma
\end{tikzcd}
\end{equation}
\end{linenomath*}
Thus we define $\pi_2(f,p) :\jdeq \q(\pi_2(f,p'),\Xi)$ and
$f^*p$ to be the concatenation of $f^*p'$ with $\p_{\p_{(\pi_2(f,p'))^*\Xi}}$
so that $I(f^*p) \jdeq I(f^*p') \circ \p_{\p_{(\pi_2(f,p'))^*\Xi}}$.
Functoriality in $f$ of this choice of pullback squares follows
from the fact that both the lower and upper pullback squares are functorial
by inductive hypothesis and by assumption, respectively.
In more details:
given $g \colon \Theta \to \Delta$, the inductive hypothesis yields
$(f \circ g)^*p' \jdeq g^*(f^*p')$ and
$\pi_2(f \circ g,p') \jdeq \pi_2(f,p') \circ \pi_2(g,f^*p')$.
It follows by~\ref{def:Csys}.\ref{def:Csys:pbcmp} that
\begin{linenomath*}
\[
\pi_2(f \circ g,p')^*\Xi \jdeq \pi_2(g,f^*p')^*\inpar1{\pi_2(f,p')^*\Xi}
\]
\end{linenomath*}
and, in turn, that $(f \circ g)^*p \jdeq g^*(f^*p)$.
The other component also follows from~\ref{def:Csys}.\ref{def:Csys:pbcmp}:
\begin{linenomath*}
\begin{align*}
\pi_2(f \circ g,p) &\jdeq \q(\pi_2(f \circ g,p'),\Xi)
\\&\jdeq
\q(\pi_2(f,p'),\Xi) \circ \q(\pi_2(g,f^*p'),\pi_2(f,p')^*{\Xi})
\\&\jdeq
\pi_2(f,p) \circ \pi_2(g,f^*p).
\end{align*}
\end{linenomath*}
Finally, condition~\eqref{def:CEsys:d} for a composite $q \circ p$ in $\F$
is proven by induction on the length of the path $p$.
\end{construction}

\begin{lem}\label{lem:C2CE-ob-props}
Let \sys{C} be a C-system and $\F$ the category of families of $\CtoCE(\sys{C})$.
\begin{enumerate}
\item\label{lem:C2CE-ob-props:indiv}
The \indecarr arrows in $\F$ are of the form $\p_{\Gamma}$
for some object $\Gamma$.
\item\label{lem:C2CE-ob-props:a}
The CE-system $\CtoCE(\sys{C})$ is stratified and
$L(\Gamma) \jdeq \ell(\Gamma)$, for every object $\Gamma$.
\item\label{lem:C2CE-ob-props:c}
The CE-system $\CtoCE(\sys{C})$ is rooted.
\end{enumerate}
\end{lem}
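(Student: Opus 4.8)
The three statements all flow from the description of $\F$ as the free category on the projection graph $\p(\sys{C})$, so I would settle the stratification claim~\ref{lem:C2CE-ob-props:a} first and deduce the other two from it. That $\F$ admits a stratification, and that $\Cterminal$ is terminal in $\F$, is already recorded in \cref{cor:freeproj}. To pin down the value $L(\Gamma) \jdeq \ell(\Gamma)$ I would trace the isomorphism $\F \cong \Ffctr\Gfctr\Ctorttr(\sys{C})$ supplied by \cref{lem:freeproj} through the explicit stratification of the free category on a rooted tree exhibited in the proof of \cref{lem:rttrStrCat}, which assigns to the object $(n,\Gamma)$ the number $n$; since the level-$n$ nodes of the tree $\Ctorttr(\sys{C})$ are exactly the objects $\Gamma$ with $\ell(\Gamma) \jdeq n$, this reads off as $L \jdeq \ell$.

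It remains to check the extra condition in \cref{def:stratCE}: that each pullback functor $f^* \colon \F/\Gamma \to \F/\Delta$ is stratified for the slice stratifications of \cref{lem:strat-slice}. By \cref{def:stratification} this is the single identity $L_{\F/\Delta} \circ f^* \jdeq L_{\F/\Gamma}$. The slice stratification sends a family $A \colon \Xi \to \Gamma$ to $\ell(\Xi) - \ell(\Gamma)$, which by \cref{def:Csys}.\ref{def:Csys:lft} is exactly the number of edges of the path $A$ in $\p(\sys{C})$. The inductive clause of \cref{constr:C2CE-ob} builds $f^*p$ from $f^*p'$ by appending a single projection, so a short induction on path length shows that $f^*p$ has as many edges as $p$ (the base case $f^*(\catid{\Gamma}) \jdeq \catid{\Delta}$ being \cref{def:CEsys}.\ref{def:CEsys:a}). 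Hence $f^*$ preserves this value, the required identity holds, and claim~\ref{lem:C2CE-ob-props:a} is proved.

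The remaining two claims are then immediate. For~\ref{lem:C2CE-ob-props:indiv}, an arrow of $\F$ is a path of composable projections and each projection drops the length by exactly one by \cref{def:Csys}.\ref{def:Csys:lft}; so, with respect to $L \jdeq \ell$, such a path is individual in the sense of \cref{def:individual-ar} precisely when it has a single edge, i.e.\ is some $\p_{\Gamma}$. For~\ref{lem:C2CE-ob-props:c}, rootedness requires $I(\Cterminal) \jdeq \Cterminal$ to be terminal in $\C$; as $I$ is identity-on-objects this is just \cref{def:Csys}.\ref{def:Csys:t}.

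The only genuinely non-formal step is the verification that the pullback functors are stratified, which I have reduced to the length-preservation property of $f^*$; everything else is bookkeeping against \cref{constr:C2CE-ob} and the results of \cref{sec:preliminaries}.
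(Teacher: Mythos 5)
Your proposal is correct and follows essentially the same route as the paper: \cref{cor:freeproj} gives the stratification of $\F$ with $L \jdeq \ell$, the description of arrows in $\F$ as paths of canonical projections gives claim~(1), the inductive construction of pullbacks (appending one projection at a time) gives stratification of the functors $f^*$, and rootedness is \cref{def:Csys}.\ref{def:Csys:t}. The only cosmetic differences are the order of the claims and that you verify the identity $L_{\F/\Delta}\circ f^* \jdeq L_{\F/\Gamma}$ directly by induction on path length, where the paper instead invokes \cref{lem:strat-funct} to reduce to preservation of individual arrows; both rest on the same feature of \cref{constr:C2CE-ob}.
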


\begin{proof}~
  \begin{itemize}[align=left]
  \item [\ref*{lem:C2CE-ob-props:indiv}.]
    Immediate from the description of arrows in $\F$ in~\eqref{constr:C2CE-ob:fact}
    and~\ref*{def:Csys}.\ref{def:Csys:lft}.
   
  \item [\ref*{lem:C2CE-ob-props:a}.]
    By \cref{cor:freeproj},
    the category $\F$ is stratified
    and $L(\Gamma) \jdeq \ell(\Gamma)$.
    By \cref{lem:strat-funct}, it is enough to show that the choice of pullback squares
    in \cref{constr:C2CE-ob} preserves \indecarr arrows.
    But this follows immediately from the construction of pullbacks in~\eqref{constr:C2CE-ob:pb}
    and \eqref{lem:C2CE-ob-props:indiv} just shown.
    
  \item [\ref*{lem:C2CE-ob-props:c}.]
    The terminal object in $\F$ is terminal in $\C$ by assumption. \qedhere
    
  \end{itemize}
  
\end{proof}

\begin{prob}\label{prob:C2CE-funct}
To construct a functor $\CtoCE \colon \Csys \to \strrCEsys$
into rooted stratified CE-systems and stratified homomorphisms.
\end{prob}

\begin{construction}{prob:C2CE-funct}\label{constr:C2CE-funct}
The action of $\CtoCE$ on objects is defined in \cref{constr:C2CE-ob}.
Every morphism $F \colon \sys{C} \to \sys{D}$ of C-systems
restricts to the graphs of canonical projections
$\p(F) \colon \p(\sys{C}) \to \p(\sys{D})$
by conditions~(\ref*{def:CsysHom:t},\ref*{def:CsysHom:ft},\ref*{def:CsysHom:p})
in \cref{def:CsysHom}
and induces, in turn, a functor between free categories
$F_{\F} \colon \F_{\sys{C}} \to \F_{\sys{D}}$
whose action is determined by the action of $F$ on \indecarr arrows.
The square
\begin{linenomath*}
\[\begin{tikzcd}[column sep=3em]
\F_{\sys{C}}	\ar[d,"I_{\sys{C}}"{swap}] \ar[r,"F_{\F}"]
&	\F_{\sys{D}}	\ar[d,"I_{\sys{D}}"]
\\
\C_{\sys{C}}	\ar[r,"F"]	&	\C_{\sys{D}}
\end{tikzcd}\]
\end{linenomath*}
commutes since it does so when precomposed by the unit
$\p(\sys{C}) \to \F_{\sys{C}}$.
The functor $F_{\F}$ is stratified by~\ref{def:CsysHom}.\ref*{def:CsysHom:l}.
\Cref{lem:CEhom-ind} then ensures that the pair $\CtoCE(F):\jdeq (F_{\F},F)$
lifts to a stratified CE-homomorphism as soon as
it preserves pullbacks of \indecarr arrows.
But this is precisely condition~\ref{def:CsysHom}.\ref*{def:CsysHom:pb}.
Functoriality of $\CtoCE$ follows since $F_{\F}$ is defined by a universal property.
\end{construction}

\begin{prob}\label{prob:CEtoC-ob}
To construct a C-system $\CEtoC(\sys{A})$ from a stratified and rooted CE-system \sys{A}.
\end{prob}

\begin{construction}{prob:CEtoC-ob}\label{constr:CEtoC-ob}
Let $I \colon \F \to \C$ be the underlying functor of \sys{A}.
The underlying category of $\CEtoC(\sys{A})$ is $\C$
and the length function $\ell$ is given by the action of the stratification functor
$L$ on objects.
Since \sys{A} is rooted, the chosen terminal object $\CEroot0{}$ in $\F$ is terminal in $\C$ too.
Conditions~\eqref{def:Csys:t}~and~\eqref{def:Csys:lt} are clearly met.

Given an object $X$ with $n \jdeq L(X) > 0$,
let $X \xrightarrow{x_n} X_{n-1} \to \cdots \to X_1 \xrightarrow{x_1} \CEroot0{}$
be the factorisation of $X \to \CEroot0{}$ into $n$ \indecarr arrows in $\F$.
We define
\begin{linenomath*}
\begin{equation}\label{constr:CEtoC-ob:p}
\ft(\CEroot0{}) :\jdeq \CEroot0{},\quad
\ft(X) :\jdeq X_{n-1}\quad \text{and}\quad
\p_X :\jdeq I(x_n).
\end{equation}
\end{linenomath*}
Conditions~\eqref{def:Csys:lft} and~\eqref{def:Csys:ftt}
hold by construction.

Given also $f \colon Y \to \ft(X)$,
let $Y \xrightarrow{y_n} Y_{n-1} \to \cdots \to Y_1 \xrightarrow{y_1} \CEroot0{}$
be the factorisation of $Y \to \CEroot0{}$ into \indecarr arrows and
consider the pullback square below.
\begin{linenomath*}
\begin{equation}\label{constr:CEtoC-ob:pb}
\begin{tikzcd}[column sep=3em]
Y.f^*\!x_n	\ar[d,"I(f^*\!x_n)"{swap}] \ar[r,"{\pi_2(f,x_n)}"]	&	X	\ar[d,"I(x_n)"]
\\
Y	\ar[r,"f"]	&	\ft(X)
\end{tikzcd}
\end{equation}
\end{linenomath*}
It is $L(Y.f^*(x_n)) \jdeq L(Y) + 1$
by \cref{rmk:grassmann}, thus
$Y.f^*x_n \xrightarrow{f^*\!x_n} Y \xrightarrow{y_n} Y_{n-1} \to \cdots \to Y_1 \xrightarrow{y_1} \CEroot0{}$
is the factorisation of $Y.f^*\!x_n \to \CEroot0{}$ into \indecarr arrows.
It follows that $\ft(Y.f^*\!x_n) \jdeq Y$ and $\p_{f^*\!x_n} \jdeq I(f^*\!x_n)$.
Condition~\eqref{def:Csys:pb} follows defining $f^*X :\jdeq Y.f^*(x_n)$ and $\q(f,X) :\jdeq \pi_2(f,x_n)$.
Condition~\eqref{def:Csys:pbid} holds by~\ref{def:CEsys}.\ref{def:CEsys:b} since $\ft(X).x_n \jdeq X$,
and~\eqref{def:Csys:pbcmp} by~\ref{def:CEsys}.\ref{def:CEsys:c} as below:
\begin{linenomath*}
\begin{align*}
(f \circ g)^*X &\jdeq Z.(f \circ g)^*x_n \jdeq Z.\inpar1{g^*(f^*\!x_n)}
\\&\jdeq
g^*(f^*X)
\\[1ex]
\q(f \circ g, X) &\jdeq \pi_2(f \circ g, x_n) \jdeq \pi_2(f, x_n) \circ \pi_2(g,f^*\!x_n)
\\&\jdeq
\q(f,X) \circ \q(g,f^*X).
\qedhere
\end{align*}
\end{linenomath*}
\end{construction}

\begin{lem}\label{lem:CEtoC-fun}
Let $F \colon \sys{A} \to \sys{B}$ be a stratified homomorphism
of rooted stratified CE-systems.
Then the underlying functor
$F \colon \CEcat1A \to \CEcat1B$
is a homomorphism of C-systems
$\CEtoC(F) \colon \CEtoC(\sys{A}) \to \CEtoC(\sys{B})$.
\end{lem}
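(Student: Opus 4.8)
The plan is to take $\hCEcat{F}\colon \CEcat1A \to \CEcat1B$ as the underlying functor of the candidate C-homomorphism and verify the five conditions of \cref{def:CsysHom} in turn. First I would record two bookkeeping facts. Since both structural functors $I_{\sys A}, I_{\sys B}$ are identity-on-objects and the defining square of \cref{def:CEhom} commutes, the object-assignments of $\hCEfam{F}$ and $\hCEcat{F}$ coincide, $\ob{\hCEfam{F}} \jdeq \ob{\hCEcat{F}}$; moreover, for every family $\phi$ in $\CEfam1A$ one has $\hCEcat{F}(I_{\sys A}\phi) \jdeq I_{\sys B}(\hCEfam{F}\phi)$. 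Condition~\ref{def:CsysHom}.\ref{def:CsysHom:t} (preservation of the chosen terminal object) is then immediate from $\hCEfam{F}(\CEroot1A) \jdeq \CEroot1B$ (\ref{def:CEhom}.\ref{def:CEhom:t}) together with the agreement on objects, and condition~\ref{def:CsysHom}.\ref{def:CsysHom:l} (preservation of the length function) is exactly the statement that $\hCEfam{F}$ is stratified, $L_{\sys A} \jdeq L_{\sys B}\circ\hCEfam{F}$, read on objects.

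The key step is to compare the individual-arrow factorisations used to define $\ft$ and $\p$ in \cref{constr:CEtoC-ob}. Given $X$ with $L(X) \jdeq n > 0$, let $X \xrightarrow{x_n} X_{n-1} \to \cdots \xrightarrow{x_1} \CEroot1A$ be the factorisation of $X \to \CEroot1A$ into individuals. Because $\hCEfam{F}$ is stratified it preserves the root and individual arrows (\cref{lem:strat-funct}), so applying $\hCEfam{F}$ yields a factorisation of $\hCEfam{F}(X) \to \CEroot1B$ into individuals; by uniqueness of such factorisations in a stratified category (\cref{prop:strat-is-prop}) this is the very factorisation computing $\ft_{\sys B}$ and $\p$ for $\hCEfam{F}(X)$. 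In particular the unique individual out of $\hCEfam{F}(X)$ is $\hCEfam{F}(x_n)$. Reading off fathers gives $\ft_{\sys B}(\hCEcat{F}X) \jdeq \hCEfam{F}(X_{n-1}) \jdeq \hCEcat{F}(\ft_{\sys A}X)$, which is condition~\ref{def:CsysHom}.\ref{def:CsysHom:ft}; reading off the first individual and using the commuting square gives $\hCEcat{F}(\p_X) \jdeq \hCEcat{F}(I_{\sys A}x_n) \jdeq I_{\sys B}(\hCEfam{F}x_n) \jdeq \p_{\hCEcat{F}X}$, which is condition~\ref{def:CsysHom}.\ref{def:CsysHom:p}.

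For condition~\ref{def:CsysHom}.\ref{def:CsysHom:pb} I would recall from \cref{constr:CEtoC-ob} that, for $f\colon Y \to \ft_{\sys A}(X)$, the C-system pullback is literally the CE-system's chosen pullback of the individual family $x_n$ along $f$, namely $f^*X \jdeq \CEctxext{Y}{\CEpb00{f}{x_n}}$ and $\q(f,X) \jdeq \CEqar{f}{x_n}$. A stratified CE-homomorphism preserves chosen pullbacks of families, and by \cref{lem:CEhom-ind} it suffices that this holds on individuals, so $\hCEfam{F}(\CEpb00{f}{x_n}) \jdeq \CEpb11{\hCEcat{F}f}{\hCEfam{F}x_n}$ and $\hCEcat{F}(\CEqar{f}{x_n}) \jdeq \CEqar{\hCEcat{F}f}{\hCEfam{F}x_n}$. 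Since $\hCEfam{F}(x_n)$ is exactly the individual associated to $\hCEcat{F}X$ by the previous paragraph, the right-hand sides are precisely the data defining the $\CEtoC(\sys B)$-pullback of $\hCEcat{F}X$ along $\hCEcat{F}f$; comparing domains in the first equation (using $\ob{\hCEfam{F}} \jdeq \ob{\hCEcat{F}}$) yields $\hCEcat{F}(f^*X) \jdeq (\hCEcat{F}f)^*(\hCEcat{F}X)$, and the second equation is $\hCEcat{F}(\q(f,X)) \jdeq \q(\hCEcat{F}f, \hCEcat{F}X)$.

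I expect the only genuine subtlety to be the identification in the middle paragraph: that $F$ carries the canonical individual over $X$ to the canonical individual over $\hCEcat{F}X$. This is where stratification is essential --- it is uniqueness of factorisation into individuals, not mere preservation of individuals, that pins down $\hCEfam{F}(x_n)$ as the arrow used to define $\ft$, $\p$, and the chosen pullbacks on the codomain side. Once this identification is in place, every condition of \cref{def:CsysHom} reduces to transporting the defining equations of \cref{constr:CEtoC-ob} across the commuting square of \cref{def:CEhom}, with no further computation.
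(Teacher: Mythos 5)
Your proof is correct and follows essentially the same route as the paper's: verify the conditions of \cref{def:CsysHom} one by one, using that a stratified functor preserves the terminal object, lengths, and individual arrows (\cref{lem:strat-funct}), and that a CE-homomorphism preserves chosen pullbacks, which in particular covers pullbacks along the individual families used to define $\q$ in \cref{constr:CEtoC-ob}. One small citation slip: uniqueness of the factorisation into individuals is \cref{def:stratification}.\ref{def:stratification:fact}, not \cref{prop:strat-is-prop} (which asserts uniqueness of the stratification itself), and the appeal to \cref{lem:CEhom-ind} is unnecessary since the hypothesis already gives preservation of all chosen pullbacks.
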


\begin{proof}
We verify the conditions in \cref{def:CsysHom}.
\eqref{def:CsysHom:t}
The functor $F$ maps the chosen terminal object of $\sys{A}$
to the one of $\sys{B}$ by assumption.
\eqref{def:CsysHom:l}
Since $F$ is stratified,
its action on objects commutes with the length functions.
(\ref{def:CsysHom:ft}--\ref{def:CsysHom:p})
The action on objects also
preserves \indecarr arrows by \cref{lem:strat-funct},
thus it commutes with the father functions
and preserves canonical projections.
\eqref{def:CsysHom:pb}
$F$ maps chosen pullback squares in $\sys{A}$
to chosen ones in $\sys{B}$ by \ref{def:CEhom}.\ref{def:CEhom:pb}.
In particular, it preserves the choice of pullbacks along \indecarr arrows.
\end{proof}

\begin{defi}\label{def:CEtoC-fun}
Let $\CEtoC \colon \strrCEsys \to \Csys$ be the functor given
by \cref{constr:CEtoC-ob} and \cref{lem:CEtoC-fun}.
\end{defi}

\begin{lem}\label{lem:csys-retr}
For every C-system $\sys{C}$, 
the identity functor on the underlying strict category of $\sys{C}$
is an isomorphism $\CEtoC(\CtoCE(\sys{C})) \cong \sys{C}$ of C-systems,
naturally in $\sys{C}$.
\end{lem}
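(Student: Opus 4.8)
The plan is to observe first that $\CEtoC(\CtoCE(\sys{C}))$ and $\sys{C}$ share the very same underlying \scat, namely $\C$. Indeed, $\CtoCE$ produces the identity-on-objects functor $I \colon \F \to \C$ whose codomain is the original category $\C$, and $\CEtoC$ takes precisely this codomain as the underlying category of the resulting C-system. It therefore suffices to check that the \emph{identity} functor $\catid{\C}$ satisfies the conditions of \cref{def:CsysHom}, that is, that the C-system structure reconstructed by the round trip coincides on the nose with the original data $(\Cterminal, \ell, \ft, \p, \q)$. Being an identity, the functor is automatically invertible, so once it is shown to be a homomorphism we obtain the claimed isomorphism.

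Then I would dispatch the structure-preservation conditions one at a time, each being a direct unwinding of the two constructions. The terminal object is preserved since the chosen root $\CEroot0{}$ of $\CtoCE(\sys{C})$ is $\Cterminal$ (by \cref{cor:freeproj}) and $\CEtoC$ reuses it. The length functions agree because $\CtoCE(\sys{C})$ is stratified with $L(\Gamma) \jdeq \ell(\Gamma)$ by \cref{lem:C2CE-ob-props}, and $\CEtoC$ defines its length to be exactly $L$. For the father function and the projections, the key input is again \cref{lem:C2CE-ob-props}: the individual arrows of $\F$ are precisely the canonical projections $\p_\Gamma$, so the factorisation of $X \to \CEroot0{}$ into individuals in $\F$ is the tower $\p_X, \p_{\ft(X)}, \dots$; hence $\CEtoC$ recovers $\ft(X) \jdeq X_{n-1}$ as the original $\ft(X)$, and $\p_X \jdeq I(\p_X) \jdeq \p_X$, the original projection.

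The one condition requiring genuine care is the preservation of the pullback operation. Here I would trace the chosen pullbacks through both constructions. In $\CEtoC$, the pullback of $X$ along $f \colon Y \to \ft(X)$, with $x_n \jdeq \p_X$ the leading individual, is defined as $f^*X \jdeq \ctxext{Y}{f^*x_n}$ and $\q(f,X) \jdeq \pi_2(f, x_n)$, using the CE-pullback of the \emph{individual} family $\p_X$. The crucial point is that this CE-pullback is exactly the base step of the inductive \cref{constr:C2CE-ob}: for the length-one path $p \jdeq \p_X$ one has $p' \jdeq \catid{\ft(X)}$, whence $\pi_2(f, \catid{\ft(X)}) \jdeq f$ and the chosen square reduces to the canonical C-system pullback square~\eqref{Csys-pbsq}. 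This gives $\pi_2(f, \p_X) \jdeq \q(f, X)$ and $\ctxext{Y}{f^*\p_X} \jdeq f^*X$ with the original C-system data. I expect this matching of two very differently presented pullback operations to be the main obstacle, as it is where the nested inductive definition in $\CtoCE$ must be collapsed to the single square supplied by the C-system.

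Finally, naturality is immediate. A homomorphism $F \colon \sys{C} \to \sys{D}$ is sent by $\CEtoC \circ \CtoCE$ to a homomorphism whose underlying functor on $\C$ is again $F$, by the explicit descriptions in \cref{constr:C2CE-funct} and \cref{lem:CEtoC-fun}. Hence the naturality square has the identity isomorphisms as horizontal legs and $F$ on both vertical legs, so it commutes on the nose.
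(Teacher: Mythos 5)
Your proposal is correct and follows essentially the same route as the paper's proof: both verify the homomorphism conditions of \cref{def:CsysHom} for $\catid{\C}$ by noting the underlying category, terminal object and length function coincide, invoke \cref{lem:C2CE-ob-props} to identify individual arrows with the canonical projections (hence recovering $\ft$ and $\p$), and observe that the inductive choice of pullbacks in \cref{constr:C2CE-ob} collapses at length one to the original C-system square, with naturality following from $\CEtoC(\CtoCE(F)) \jdeq F$. Your explicit unwinding of the base case $p' \jdeq \catid{\ft(X)}$ is a slightly more detailed spelling-out of the same argument.
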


\begin{proof}
Let $\C$ be the underlying strict category of $\sys{C}$.
To see that the identity functor $\catid{\C}$ is a \sys{C}-homomorphism note first that
the category $\C$, its terminal object and the length function
are the same in $\CEtoC(\CtoCE(\sys{C}))$ and $\sys{C}$.
Since \indecarr arrows in $\CtoCE(\sys{C})$
coincide with the canonical projections $\p_{\Gamma}$
by \cref{lem:C2CE-ob-props},
factorisations in $\CtoCE(\sys{C})$ into \indecarr arrows
are of the form in \eqref{constr:C2CE-ob:fact}.
It follows that the function $\ft$ and the canonical projections
as defined in~\eqref{constr:CEtoC-ob:p}
are equal to the ones from $\sys{C}$.
Since the choice of pullback squares in $\CtoCE(\sys{C})$
is defined inductively by the choice along \indecarr arrows in~\eqref{constr:C2CE-ob:pb},
the choice of pullbacks along canonical projections in~\eqref{constr:CEtoC-ob:pb}
coincides with the one in $\sys{C}$.

Naturality follows from the fact that $\CEtoC(\CtoCE(F)) \jdeq F$
for every C-homomorphism $F$.
\end{proof}

\begin{thm}\label{thm:CasCE}
The functor $\CtoCE \colon \Csys \to \strrCEsys$
from \cref{constr:C2CE-funct}
is an equivalence.
\end{thm}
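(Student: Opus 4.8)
The plan is to exhibit the functor $\CEtoC$ from \cref{def:CEtoC-fun} as a quasi-inverse to $\CtoCE$. One of the two required natural isomorphisms is already available: \cref{lem:csys-retr} provides a natural isomorphism $\CEtoC \circ \CtoCE \cong \mathrm{id}_{\Csys}$, witnessed componentwise by the identity functor on the underlying strict category. It therefore remains to construct a natural isomorphism $\CtoCE \circ \CEtoC \cong \mathrm{id}_{\strrCEsys}$, and the equivalence then follows formally.

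Fix a rooted stratified CE-system $\sys{A}$ with underlying functor $I \colon \F \to \C$. Applying $\CEtoC$ and then $\CtoCE$ yields a CE-system whose context category is again $\C$, since neither functor alters the $\CEcat0{}$-component, and whose family category $\F'$ is, by \cref{constr:C2CE-ob}, the free category on the graph $\p(\CEtoC(\sys{A}))$ of canonical projections of the C-system $\CEtoC(\sys{A})$. Accordingly, the component of the desired natural isomorphism at $\sys{A}$ will be the identity on the $\C$-component together with a comparison isomorphism $\F \to \F'$, and I would produce the latter as follows. Because $\F$ is stratified, \cref{lem:rttrStrCat} identifies $\F$ with the free category on the graph of its individual arrows. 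By the recipe~\eqref{constr:CEtoC-ob:p} in \cref{constr:CEtoC-ob}, the canonical projection $\p_X$ of $\CEtoC(\sys{A})$ is exactly $I(x)$ for the unique individual arrow $x$ out of $X$ in $\F$; since $I$ is the identity on objects and individual arrows out of a fixed object are unique in a stratified category, $x \mapsto I(x)$ is a bijection from the individual arrows of $\F$ onto the edges of $\p(\CEtoC(\sys{A}))$, and hence a graph isomorphism over the shared vertex set $\ob{\C}$. Passing to free categories gives an isomorphism $\F \xrightarrow{\cong} \F'$ commuting with $I$ and with the inclusion $I'$ of $\F'$, so that the defining square of a CE-homomorphism commutes with the identity on contexts.

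To conclude I would check that this square is a stratified CE-\emph{iso} and that it is natural. By \cref{lem:CEhom-ind}, being a stratified CE-homomorphism reduces to two conditions: the family component is stratified, which is clear as it is induced by a graph isomorphism respecting lengths into $\N$; and it preserves the chosen pullbacks of individual arrows. For the latter, the base step of the inductive pullback choice in \cref{constr:C2CE-ob} along an individual is precisely the C-system pullback square~\eqref{Csys-pbsq}, while \cref{constr:CEtoC-ob} defines that very C-system pullback from the CE-pullback of $\sys{A}$ along an individual via~\eqref{constr:CEtoC-ob:pb}; the two therefore agree, so pullbacks of individuals are preserved and, again by \cref{lem:CEhom-ind}, so are all pullbacks. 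The chosen root is preserved since both constructions retain the same terminal object, and the comparison is invertible because the underlying graph isomorphism is. Naturality holds because $\CtoCE \circ \CEtoC$ acts on the context component of a morphism exactly as the original morphism, while the family component is determined on generators by the graph isomorphism.

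The main obstacle is this last compatibility check, reconciling the two pullback conventions: the inductively defined choice of~\cref{constr:C2CE-ob} against the factorisation-based choice of~\cref{constr:CEtoC-ob}. Everything else is formal once \cref{lem:rttrStrCat} and \cref{lem:CEhom-ind} have reduced the verification to the level of individual arrows.
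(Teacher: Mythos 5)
Your proposal is correct and follows essentially the same route as the paper: one half is delegated to \cref{lem:csys-retr}, and the other half is the comparison isomorphism between $\F$ and the free category on the individual arrows (the paper's $\mathsf{comp}$/$\mathsf{fact}$ pair), verified to be a stratified CE-isomorphism via \cref{lem:CEhom-ind} by reducing to individual arrows. Your explicit reconciliation of the two pullback conventions on individuals is exactly the point the paper's proof relies on (it is spelled out there in \cref{lem:csys-retr} and left terser in the theorem's proof), so there is no gap.
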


\begin{proof}
By \cref{lem:csys-retr}, it is enough to find,
for every stratified rooted CE-system \sys{A},
an isomorphism $\CtoCE(\CEtoC(\sys{A})) \cong \sys{A}$
natural in $\sys{A}$.
Let $I \colon \F \to \C$ be the underlying functor of $\sys{A}$
and let $\F_{\mathrm{i}} :\jdeq \p(\CEtoC(\sys{A}))$
be the subgraph of $\F$ on the \indecarr arrows.
The CE-system $\CtoCE(\CEtoC(\sys{A}))$ consists, in particular,
of a functor $\widehat{I} \colon \widehat{\F} \to \C$,
where $\widehat{\F}$ is the free category on $\F_{\mathrm{i}}$,
and $\widehat{I}$ maps a list of composable \indecarr arrows
to the composite of their images in $\C$ under $I$,
by~\eqref{constr:CEtoC-ob:p},~\eqref{constr:C2CE-ob:fact} and \cref{lem:C2CE-ob-props}.\ref{lem:C2CE-ob-props:a}.

The inclusion $\F_{\mathrm{i}} \hookrightarrow \F$ induces
an identity-on-objects functor $\mathsf{comp} \colon \widehat{\F} \to \F$.
Conversely, the factorisation into \indecarr arrows~\eqref{def:stratification:fact}
in $\sys{A}$ yields an identity-on-objects functor $\mathsf{fact} \colon \F \to \widehat{\F}$,
which is a (strict) inverse of $\mathsf{comp}$.
Since $I$ is a functor, the squares
\begin{linenomath*}
\begin{equation}\label{prop:CasCE:natiso}
\begin{tikzcd}[column sep=3em,row sep=2em]
\widehat{\F}		\ar[d,"\widehat{I}"{swap}] \ar[r,"\mathsf{comp}"]	&	\F	\ar[d,"I"]
\\
\C	\ar[r,"\catid{\C}"]	&	\C
\end{tikzcd}
\hspace{5em}
\begin{tikzcd}[column sep=3em,row sep=2em]
\F		\ar[d,"I"{swap}] \ar[r,"\mathsf{fact}"]	&	\widehat{\F}		\ar[d,"\widehat{I}"]
\\
\C	\ar[r,"\catid{\C}"]	&	\C
\end{tikzcd}
\end{equation}
\end{linenomath*}
commute.
Since both functors $\mathsf{comp}$ and $\mathsf{fact}$
are identities on objects and on \indecarr arrows,
the squares above are stratified CE-homomorphisms
$\CtoCE(\CEtoC(\sys{A})) \to \sys{A}$ and $\sys{A} \to \CtoCE(\CEtoC(\sys{A}))$,
respectively, by \cref{lem:CEhom-ind}.
Therefore $\CtoCE(\CEtoC(\sys{A})) \cong \sys{A}$.

To see that this isomorphism is natural in $\sys{A}$,
note that $(\mathsf{comp},\catid{\C})$ is natural in $\sys{A}$
since $\mathsf{comp}$ is equivalently defined as the composite of
the counit of the free-forgetful adjunction at $\F$
with the image under the left adjoint
of the graph inclusion $\F_{\mathrm{i}} \hookrightarrow \F$.
\end{proof}

\section{The category of B-systems}\label{sec:b-sys}

In this section, we study Voevodsky's B-systems.

In~\cref{ssec:b-sys} we review the definition of B-systems and their homomorphisms.
In~\cref{ssec:e-sys} we introduce the notion of E-system and their homomorphisms.
Intuitively, E-systems model type theory with strict $\Sigma$-types, see~\cref{ssec:e2ce-pair}.
Finally, in~\cref{ssec:b2e} we construct an equivalence between the category of B-systems and the subcategory of ``stratified'' E-systems.

In order to simplify the construction of such equivalence,
we structure the definitions in the next sections in three steps.
In the case of B-systems, for example,
we first introduce some piece of structure on sets
consisting of functions,
which we refer to as \emph{pre}-B-systems,
see \cref{def:b-frame-structure}.
Then we define morphisms between these structures,
also called \emph{pre}-homomorphisms,
and finally we define B-systems as those pre-B-systems
whose structure functions are themselves pre-homomorphisms.
Homomorphisms are then just pre-homomorphisms between B-systems.
We shall follow the same pattern
when introducing each of the structures that give rise to an E-system,
in \cref{sssec:substsys,sssec:weaksys,sssec:projsys},
and when defining E-systems in \cref{sssec:esys}.

\subsection{The category of B-systems}
\label{ssec:b-sys}

In this section, we review the definition of Voevodsky's B-systems~\cite{VV_B-systems}.
We shall rephrase his definition in order to introduce a few auxiliary intermediate structures
which we will use in later constructions.
An explicit comparison is in \cref{rem:Bsys-compar}.

\begin{defi}
  A \define{B-frame} $\sys{B}$ is a diagram of sets of the following form:
  \begin{linenomath*}
  \begin{equation*}
    \begin{tikzcd}[column sep=1ex]
      &[4ex] & & \tilde{B}_1 \arrow[dl,swap,"\bd"] & & \tilde{B}_2 \arrow[dl,swap,"\bd"] \\
      \{\ast\}\cong B_0 & & B_1 \arrow[ll,"\ft"] & & B_2 \arrow[ll,"\ft"] & & \cdots \arrow[ll,"\ft"]
    \end{tikzcd}
  \end{equation*}
  \end{linenomath*}
  In other words, a B-frame consists of:
  \begin{enumerate}
  \item for all $n\in\mathbb{N}$ two sets $B_n$ and $\tilde{B}_{n+1}$. 
  \item for all $n\in\mathbb{N}$ functions of the form
    \begin{linenomath*}
      \begin{align*}
        \ft[n] & : B_{n+1}\to B_n \\
        \bd[n] & : \tilde{B}_{n+1}\to B_{n+1}.
      \end{align*}
    \end{linenomath*}
    called \define{father functions} and \define{boundary functions}, respectively.
  \item $B_0$ is a singleton.
  \end{enumerate}
  For $m,n\in\mathbb{N}$, we denote the composition $\ft[n]\circ\cdots\circ\ft[n+m]:B_{n+m+1}\to B_n$ by $\ft[n]^m$.

  A \define{homomorphism $H:\sys{B}\to\sys{A}$ of B-frames} is a natural transformation of B-frames, i.e., it consists of maps
  $H_n:B_n\to A_n$ and $\tilde{H}_{n+1}:\tilde{B}_{n+1}\to\tilde{A}_{n+1}$ such that
  \begin{linenomath*}
  \begin{align*}
  \ft(H(X)) & \jdeq H(\ft(X)) \\
  \bd(\tilde{H}(x)) & \jdeq H(\bd(x))
  \end{align*}
  \end{linenomath*}
  for any $X\in B_n$ and $x\in\tilde{B}_{n+1}$. The category of B-frames is
  denoted by $\Bfr$. 
\end{defi}

As we already did in the above definition, we shall often omit the subscripts from father and boundary functions and from homomorphisms of B-frames,
since these can be easily inferred from the context (often from their argument).

We should remark that we are abusing notation when denoting the pieces of structure of B-frames and, later on, of B-systems.
Regarding a B-frame as a diagram on the ``comb category''
\[\begin{tikzcd}[column sep=small,row sep=tiny]
      & & & \tilde{1} \arrow[dl,swap] & & \tilde{2} \arrow[dl,swap] \\
      0 & & 1 \arrow[ll] & & 2 \arrow[ll] & & \cdots \arrow[ll]
\end{tikzcd}\]
we denote the value of a B-frame at $n$ by changing the font from blackboard bold to roman,
and the value at $\tilde{n}$ by also adding a tilde.
This (can) only apply to the blackboard bold letter:
for instance, the values of the slice B-frame (defined in \cref{def:Bfr-slice}) $\sys{B}/X$
are denoted $(B/X)_m$ and $(\tilde{B}/X)_{m+1}$.

We shall consider specific B-frames (and B-systems) only as individual examples, and we do not need to give names to them.
Otherwise, we shall only deal with generic B-frames (and B-systems),
denoted by capital blackboard letters.
Therefore we do not expect this abuse of notation to create any inconvenience.

To provide some intuition for B-frames, we look back at the introduction, where we constructed, implicitly, a B-frame from a module over a monad.

\begin{exa}\label{eg:b-frame-R-LM}
  Recall from \cref{sec:b-syst-constr} the two sets $B(R,LM)$ (see \cref{eq:B-sets-of-module}) and $\wt{B}(R,LM)$ (see \cref{eq:Btilde-sets-of-module}).
  From these sets, we obtain a B-frame with the following sets of families (note the shift in the indexing of $\tilde{B}$),
  \begin{linenomath*}
  \begin{align*}
    B_n &\defeq B(R,LM)_n \defeq \prod_{i=0}^{n-1}LM(\ff{i})
    \\
    \tilde{B}_{n+1} &\defeq \wt{B}(R,LM)_n \defeq \prod_{i=0}^{n-1}LM(\ff{i})\times R(\ff{n})\times LM(\ff{n})
  \end{align*}
  \end{linenomath*}
  and the obvious maps for $\ft$ and $\bd$. We call this B-frame the \emph{B-frame generated by a module $LM$ over a monad $R$}.
  We write elements of $B_n$ as $A_0,\ldots,A_{n-1} \vdash A_n$,
  and elements of $\wt{B}_{n+1}$ as $A_0,\ldots,A_{n} \vdash t : A_{n+1}$, where $t \in R(\ff{n})$.
\end{exa}

More generally, the elements of $B_{n+1}$ of a B-frame can be thought of as a pair of a context of length $n$, and a type in that context.
Hence, the elements of $B_1$ are the types in the empty context.
Just like with C-systems, there is no explicit structure to denote types in a given context.
An element $t \in \tilde{B}_{n+1}$ is then a term, and the context and type $t$ lives in is given by $\bd[n+1](t)$.

\begin{exa}\label{eg:b-frame-finsets}
Recall from \cref{eg:CE-fin-set} that $\std{n}$ denotes the set $\{0,\ldots,n-1\}$.
We shall consider the B-frame defined, for each $n \in \N$,
by $B_n \defeq \{n\}$ and $\tilde{B}_{n+1} \defeq \std{n}$.
\end{exa}

\begin{exa}\label{eg:b-frames-equiv-type-term}
  B-frames are the same as Garner's ``type-and-term structures''~\cite[Def.~8]{GARNER20151885}.
  Garner~\cite[Prop.~13]{GARNER20151885} constructs an equivalence between the category of type-and-term structures
  and the category of $\emptyset$-GATs, that is, of Generalized Algebraic Theories~\cite{DBLP:journals/apal/Cartmell86} without weakening, projection, and substitution rules, and interpretations between them.
\end{exa}

We now define more structure on B-frames which represents operations on syntax.

The first operation could be called ``slicing''; given a B-frame $\sys{B}$ and a ``context'' $X \in B_n$ in that B-frame, we construct the slice of $\sys{B}$ over $X$:
\begin{defi}\label{def:Bfr-slice}
  For every B-frame $\sys{B}$ and any $X\in B_n$, there is a B-frame
  $\sys{B}/X$ given by
  \begin{linenomath*}
    \begin{align*}
      (B/X)_{m} & \defeq \{Y\in B_{n+m}\mid\ft^{m}(Y)\jdeq X\}\\
      (\tilde{B}/X)_{m+1} & \defeq \{y\in \tilde{B}_{n+m+1}\mid\ft^{m+1}(\bd(y))\jdeq X\}.
    \end{align*}
  \end{linenomath*}
  Also, for any homomorphism $H:\sys{B}\to\sys{A}$ of B-frames and any
  $X\in B_n$, there is a homomorphism $H/X:\sys{B}/X\to\sys{A}/H(X)$
  defined in the obvious way.
\end{defi}

Note that for $X\in B_n$ and $Y\in B_{n+m}$ such that $\ft^m(Y)\jdeq X$, 
we have an isomorphism $(\sys{B}/X)/Y\cong B/Y$ of B-frames, constructed in the obvious way, which is natural in the sense that for any homomorphism $H:\sys{B}\to\sys{A}$ of B-frames, the square
\begin{linenomath*}
\begin{equation*}
\begin{tikzcd}
(\sys{B}/X)/Y \arrow[r,"\cong"] \arrow[d,swap,"(H/X)/Y"] &  \sys{B}/Y \arrow[d,"H/Y"] \\
(\sys{A}/H(X))/H(Y) \arrow[r,"\cong"] & \sys{A}/H(Y)
\end{tikzcd}
\end{equation*}
\end{linenomath*}
commutes.

\begin{defi}\label{def:B-to-rttr}
Every B-frame $\sys{B}$ has an underlying rooted tree given by the sets
$B_n$ and the functions $\ft[n] \colon B_{n+1} \to B_n$, for $n \in \N$.
Similarly, a homomorphism of B-frames is in particular a homomorphism of rooted trees.
Thus we define
\begin{linenomath*}
\[\begin{tikzcd}[column sep=4em]
\Bfr	\ar[r,"\Btorttr"]	&	\rttr
\end{tikzcd}\]
\end{linenomath*}
to be the forgetful functor from B-frames to rooted trees.
\end{defi}

We will now consider different type-theoretic structures on B-frames, specifically
substitution, weakening, and projection.
Garner considers similar structures in terms of algebras of suitable monads on the category of B-frames a.\,k.\,a.~type-and-term structures.
We have not established a precise relationship (e.g., an equivalence) between our structures and the ones obtained by Garner as the algebras for his monads.

\begin{defi}\label{def:b-frame-structure}
  \begin{enumerate}
  \item A \define{substitution structure} on a B-frame $\sys{B}$ is a collection of homomorphisms
    \begin{linenomath*}
    \begin{equation*}
      S_x : \sys{B}/\bd(x) \to \sys{B}/\ft(\bd(x))
    \end{equation*}
    \end{linenomath*}
    for all $x\in \tilde{B}_{n+1}$ and all $n\in\mathbb{N}$.
  \item A \define{weakening structure} on a B-frame $\sys{B}$ is a collection of homomorphisms
    \begin{linenomath*}
    \begin{equation*}
      W_X : \sys{B}/\ft(X)\to\sys{B}/X
    \end{equation*}
    \end{linenomath*}
    for all $X\in B_{n+1}$ and all $n\in\mathbb{N}$.
  \item The \define{structure of generic elements} on a B-frame $\sys{B}$ equipped with weakening structure $W$ is a collection of functions
    \begin{linenomath*}
    \begin{align*}
      \delta_n & : B_{n+1}\to \tilde{B}_{n+2}
    \end{align*}
    \end{linenomath*}
    such that $\bd(\delta_n(X))\jdeq W_{X}(X)$ for any $X\in B_{n+1}$.
  \end{enumerate}
  A \define{pre-B-system} $\sys{B}$ is a B-frame equipped with weakening structure, substitution structure, and generic elements.
\end{defi}

We shall often omit the subscript from the functions $\delta_n$,
since it can be easily inferred from the context.

\begin{exa}
  Consider the B-frame generated by a module $LM$ over a monad $R$ of  \cref{eg:b-frame-R-LM}.
  Given an element $x \in \tilde{B}_{n+1}$, and hence in particular, a term $t \in R(\ff{n})$,
  we obtain a substitution map $S_x : \sys{B}/\bd(x) \to \sys{B}/\ft(\bd(x))$ that substitutes the term $t$ for the ``last'' free variable in any element of $\sys{B}$ lying ``over'' $\bd(x)$.
  For instance, taking $x$ to be $A_0 \vdash t_1 : A_1$, the substitution $S_x$ maps the element $A_0,A_1 \vdash s : A_2$ to $A_0 \vdash s[t_1] : A_2[t_1]$.

  For weakening, consider $X \in B_{1+1}$ to be a context $A_0 \vdash A_1$.
  The weakening $W_X$ maps any context of the form $A_0,A'_1,\ldots,A'_n\vdash A'_{n+1}$ to the weakened context $A_0,A_1,A'_1,\ldots,A'_n\vdash A'_{n+1}$, and similar for elements in $\wt{B}$.

  For the generic element, consider, for instance, a context $X = A_0 \vdash A_1$ in $B_2$.
  This context induces the generic element $A_0,A_1 \vdash \textsf{var}(1) : A_1$, where $\eta(1)\in R(\ff{2})$ is the ``de Bruijn'' variable $1$ bound by $A_1$ in the context, and considered as a term by being wrapped in an application of the monadic unit $\eta$ of the monad $R$ (the inclusion of variables into terms).
  We have
  \begin{linenomath*}
    \[ \bd(A_0,A_1 \vdash \textsf{var}(1) : A_1) \enspace = \enspace A_0, A_1 \vdash A_1 \enspace = \enspace W_{A_0 \vdash A_1}(A_0 \vdash A_1).\]
  \end{linenomath*}
\end{exa}

\begin{exa}\label{eg:b-frame-struct-finsets}
Recall the B-frame of finite sets defined in \cref{eg:b-frame-finsets}.
Here we construct structures of substitution, weakening and generic elements on it.

Note first that its slice on the (unique) element $n$ in $B_n$
is such that
\begin{linenomath*}
\[
(\tilde{B}/n)_{m+1} \jdeq \tilde{B}_{n+m+1} \jdeq \std{n+m}.
\]
\end{linenomath*}
It follows that a substitution structure
must consist of a family of functions
$S_{x,j} \colon \std{n+1+j} \to \std{n+j}$,
for $n,j\in\N$ and $x \in \tilde{B}_{n+1}$.
We define $S_{x,j} \defeq s_x + \catid{j} $,
where $s_x$ is the function $[\catid{n},x] \colon \std{n+1} \to \std{n}$
given by the universal property of the coproduct $\std{n+1}$.
In other words, $S_{x,j}$ lists all elements in $\std{n+j}$
repeating the element $x\in\std{n}$ in position $n+1$.
In particular, it fixes the first $n$ elements,
and decreases the last $j$ by $1$.

Similarly,
a weakening structure
must consist of a family of functions $W_{n,j} \colon \std{n+j} \to \std{n+1+j}$.
We define $W_{n,j}$ to be the function $i_n + \catid{j}$,
where $i_n \colon \std{n} \to \std{n+1}$ is the initial-segment inclusion.
In other words, it lists all elements in $\std{n+1+j}$ except for $n$.
Equivalently, it fixes the first $n$ elements,
and increases the remaining $j$ by $1$.

Finally, the structure of generic elements is given by
an element $\delta_{n} \in \tilde{B}_{n+2} \jdeq \std{n+1}$
for every $n \in \N$,
which we define to be its maximum,
that is, $\delta_{n} \defeq n$.

Taking advantage of the fact that finite sets are finite coproducts,
and slightly abusing notation,
we find it convenient to write
\begin{linenomath*}
\[\begin{tikzcd}[row sep=0pt,column sep=4em]
n	\ar[dr] \ar[dd,phantom,"+"{description}]
\\&
	n	\ar[dd,phantom,"+"{description}]
\\
1	\ar[ur,"x"{swap}] \ar[dd,phantom,"+"{description}]
\\&
	j
\\
j	\ar[ur]
\end{tikzcd}
\hspace{8em}
\begin{tikzcd}[row sep=0pt,column sep=4em]
&	n	\ar[dd,phantom,"+"{description}]
\\
n	\ar[ur] \ar[dd,phantom,"+"{description}]
\\&
	1	\ar[dd,phantom,"+"{description}]
\\
j	\ar[dr]
\\&
	j
\end{tikzcd}\]
\end{linenomath*}
for the functions
$S_{x,j} \colon \std{n+1+j} \to \std{n+j}$
and $W_{n,j} \colon \std{n+j} \to \std{n+1+j}$, respectively.
\end{exa}

\begin{defi}\label{def:Bsys-pres}~
  \begin{enumerate}
  \item\label{def:Bsys-pres:sub}
    Consider two B-frames $\sys{B}$ and $\sys{A}$, both equipped with substitution structure. A homomorphism $H:\sys{B}\to\sys{A}$ of B-frames is said to \define{preserve the substitution structure} if the diagram
    \begin{linenomath*}
    \begin{equation*}
      \begin{tikzcd}[column sep=huge]
        \sys{B}/\bd(x) \arrow[r,"H/\bd(x)"] \arrow[d,swap,"S_x"] & \sys{A}/\bd(H(X)) \arrow[d,"S_{\tilde{H}(x)}"] \\
        \sys{B}/\ft(\bd(x)) \arrow[r,swap,"H/\ft(\bd(x))"] & \sys{A}/\ft(\bd(H(X)))
      \end{tikzcd}
    \end{equation*}
    \end{linenomath*}
    of B-frame homomorphisms commutes for every $x\in\tilde{B}_{n+1}$ and every $n\in\mathbb{N}$.
  \item\label{def:Bsys-pres:weak}
    Consider two B-frames $\sys{B}$ and $\sys{A}$, both equipped with weakening structure. A homomorphism $H:\sys{B}\to\sys{A}$ of B-frames is said to \define{preserve the weakening structure} if the diagram
    \begin{linenomath*}
    \begin{equation*}
      \begin{tikzcd}[column sep=huge]
        \sys{B}/X \arrow[r,"H/X"] & \sys{A}/H(X) \\
        \sys{B}/\ft(X) \arrow[u,"W_X"] \arrow[r,swap,"H/\ft(X)"] & \sys{A}/\ft(H(X)) \arrow[u,swap,"W_{H(X)}"]
      \end{tikzcd}
    \end{equation*}
    \end{linenomath*}
    of B-frame homomorphisms commutes for all $X\in B_n$ and all $n\in\mathbb{N}$.
  \item\label{def:Bsys-pres:gen}
    Consider two B-frames $\sys{B}$ and $\sys{A}$, both equipped with weakening structure, and both equipped with generic elements. A B-frame homomorphism $H:\sys{B}\to\sys{A}$ is said to \define{preserve the generic elements} if
    \begin{linenomath*}
    \begin{equation*}
      \tilde{H}(\delta(X))\jdeq\delta(H(X))
    \end{equation*}
    \end{linenomath*}
    for any $X\in B_{n+1}$ and any $n\in\mathbb{N}$.
  \end{enumerate}
  A \define{pre-B-homomorphism} $H:\sys{B}\to\sys{A}$ is a homomorphism of pre-B-systems preserving the weakening structure, substitution structure and the generic elements.
\end{defi}

\begin{defi}\label{def:Bsys}
A \define{B-system} is a pre-B-system for which the following conditions
hold:
\begin{enumerate}
  \item\label{def:Bsys:sub}
    Every $S_x$ is a pre-B-homomorphism.
  \item\label{def:Bsys:weak}
    Every $W_X$ is a pre-B-homomorphism.
  \item\label{def:Bsys:SxW}
    For every $x\in \tilde{B}_{n+1}$ one has $S_x\circ W_{\bd(x)}\jdeq
    \catid{\sys{B}/\ft(\bd(x))}$. 
  \item\label{def:Bsys:SxId}
    For every $x\in\tilde{B}_{n+1}$ one has $S_x(\delta(\bd(x)))\jdeq x$.
  \item\label{def:Bsys:SIdW}
    For every $X\in B_{n+1}$ one has $S_{\delta(X)}\circ W_X/X\jdeq
    \catid{\sys{B}/X}$. 
  \end{enumerate}
  \define{B-homomorphisms} are pre-B-homomorphisms between B-systems.
  We denote the category of B-systems by $\Bsys$.
\end{defi}

The idea is that first substitution and weakening preserve all the structure of a\linebreak[5] (pre-)B-system.
The third axiom asserts that substitution in weakened type families is constant.
Furthermore, the generic elements should behave like internal identity morphisms.
Axioms \ref{def:Bsys:SxId} and \ref{def:Bsys:SIdW} are akin to two of the well-known monadic laws of substitution.

\begin{rem}\label{rem:Bsys-compar}
Here we provide an explicit comparison of \cref{def:Bsys} with Voevodsky's definition of B-system in the arXiv version of~\cite{VV_B-systems}.

Conditions~1--3 in~\cite[Def.~2.1]{VV_B-systems} and condition~1 in~\cite[Def.~2.5]{VV_B-systems} define a B-frame $\sys{B}$.
The functions $T$ and $\widetilde{T}$ from condition~4  in~\cite[Def.~2.1]{VV_B-systems}
together with conditions~2 and~3 in~\cite[Def.~2.5]{VV_B-systems} define a weakening structure $W$ on $\sys{B}$.
The functions $S$ and $\widetilde{S}$ from condition~4  in~\cite[Def.~2.1]{VV_B-systems}
together with conditions~4 and~5 in~\cite[Def.~2.5]{VV_B-systems} define a substitution structure $S$ on $\sys{B}$.
The function $\delta$ in~\cite[Def.~2.1]{VV_B-systems} together with the condition in~\cite[Def.~2.6]{VV_B-systems} defines a structure of generic elements on $\sys{B}$ with weakening structure $W$.
Therefore what we call a pre-B-system is a unital B0-system in~\cite{VV_B-systems}.

Consider now the conditions in~\cite[Def.~3.1 and~3.2]{VV_B-systems}.
The TT-condition amounts to saying that every $W_X$ preserves the weakening structure,
the ST-condition amounts to saying that every $W_X$ preserves the substitution structure,
and the $\delta$T-condition amounts to saying that every $W_X$ preserves the generic elements.
Therefore condition~\eqref{def:Bsys:weak} in \cref{def:Bsys} unfolds to~\cite[3.1.1, 3.1.4, 3.2.1]{VV_B-systems}.
Similarly, the SS-condition amounts to saying that every $S_x$ preserves the substitution structure,
the TS-condition amounts to saying that every $S_x$ preserves the weakening structure,
and the $\delta$S-condition amounts to saying that every $S_x$ preserves the generic elements.
Therefore condition~\eqref{def:Bsys:sub} in \cref{def:Bsys} unfolds to~\cite[3.1.2, 3.1.3, 3.2.2]{VV_B-systems}.
Finally, conditions~\eqref{def:Bsys:SxW},~\eqref{def:Bsys:SxId}, and~\eqref{def:Bsys:SIdW} in \cref{def:Bsys}
unfold to conditions~3.1.5,~3.2.3, and~3.2.4, respectively, in~\cite{VV_B-systems}.
\end{rem}

\begin{exa}[B-frames with structure and $D$-GATs]\label{eg:b-frames-structure}

Garner \cite{GARNER20151885} constructs an equivalence between the category of GATs and a category of algebras for a monad on B-frames.
We expect B-systems to be equivalent to Garner's algebras.
\end{exa}

\begin{lem}\label{lem:BsysForget}
The forgetful functor $\Bsys \to \Bfr$ is faithful.
\end{lem}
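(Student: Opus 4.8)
The plan is to observe that a $B$-homomorphism carries no more \emph{data} than its underlying homomorphism of $B$-frames, so that the forgetful functor is injective on hom-sets, which is exactly faithfulness.

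First I would unfold the definitions. By \cref{def:Bsys}, a $B$-homomorphism $H \colon \sys{B} \to \sys{A}$ is a pre-$B$-homomorphism, and by \cref{def:Bsys-pres} a pre-$B$-homomorphism is a homomorphism of $B$-frames---that is, the families of maps $H_n \colon B_n \to A_n$ and $\tilde{H}_{n+1} \colon \tilde{B}_{n+1} \to \tilde{A}_{n+1}$ commuting with $\ft$ and $\bd$---subject to the three preservation requirements \ref{def:Bsys-pres:sub}, \ref{def:Bsys-pres:weak}, and \ref{def:Bsys-pres:gen}. The key point is that each of these three requirements is a \emph{property} of the underlying $B$-frame homomorphism, not additional structure: conditions \ref{def:Bsys-pres:sub} and \ref{def:Bsys-pres:weak} assert the commutativity of squares built from the maps $H_n$ and $\tilde{H}_n$, while \ref{def:Bsys-pres:gen} is the equation $\tilde{H}(\delta(X)) \jdeq \delta(H(X))$.

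Since equality of $B$-frame homomorphisms is equality of their component maps, any two $B$-homomorphisms $H, K \colon \sys{B} \to \sys{A}$ that are sent to the same homomorphism by the forgetful functor have identical component families $(H_n, \tilde{H}_{n+1}) \jdeq (K_n, \tilde{K}_{n+1})$. As the preservation conditions are propositions, they provide no further data by which $H$ and $K$ could differ, so $H \jdeq K$ already as pre-$B$-homomorphisms, hence as $B$-homomorphisms. Thus the action on hom-sets $\Bsys(\sys{B}, \sys{A}) \to \Bfr(\sys{B}, \sys{A})$ is injective for every pair of $B$-systems, which is precisely the assertion that the forgetful functor is faithful. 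There is no genuine obstacle here; the only thing to verify is that nothing in \cref{def:Bsys-pres,def:Bsys} constitutes data beyond the $B$-frame homomorphism itself, which the above reading confirms.
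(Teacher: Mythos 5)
Your proposal is correct and matches the paper's proof, which likewise observes that the action on morphisms only forgets a property; you have simply spelled out the unfolding of \cref{def:Bsys-pres,def:Bsys} in more detail.
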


\begin{proof}
  This functor faithful because its action on morphisms only forgets a property.
\end{proof}

\begin{exa}\label{eg:b-sys-finsets}
The structures given in \cref{eg:b-frame-struct-finsets}
make the B-frame defined in \cref{eg:b-frame-finsets}
into a B-system as follows.
This B-system and the category of renamings from~\cref{eg:CE-fin-set} correspond to each other under the equivalence between B-systems and C-systems in \cref{thm:BeqvC}, in the sense that each of them is isomorphic to the image of the other (under the correct functor).
This B-system can thus be regarded as the B-system of renamings of an untyped theory.

Consider first homomorphism of B-frames
$S_y \colon \sys{B}/(k+1) \to \sys{B}/k$,
for $k \in \N$ and $y \in \std{k}$.
The homomorphism $S_y$ preserves the substitution structure if,
for every $n\in\N$, $x \in \std{k+1+n}$ and $j\in\N$,
the square
\begin{linenomath*}
\[\begin{tikzcd}[column sep=5em]
\std{k+1+n+1+j}	\ar[d,"S_{x,j}"{swap}] \ar[r,"S_{y,n+1+j}"]
&	\std{k+n+1+j}	\ar[d,"S_{S_{y,n}(x),j}"]
\\
\std{k+1+n+j}	\ar[r,"S_{y,n+j}"]	&	\std{k+n+j}
\end{tikzcd}\]
\end{linenomath*}
commutes.
This can be readily verified in the three cases
$x<k$, $x\jdeq k$ or $k<x<n+1+k$.
For example, in the last case $S_{y,n}(x) \jdeq x-1$ and
\begin{linenomath*}
\[\begin{tikzcd}[row sep=0pt,column sep=4em]
k	\ar[dr] \ar[dd,phantom,"+"{description}]
\\&
	k	\ar[dr] \ar[dd,phantom,"+"{description}]
\\
1	\ar[dr] \ar[dd,phantom,"+"{description}]
&&		k	\ar[dd,phantom,"+"{description}]
\\&
	1	\ar[ur,"y"{swap}] \ar[dd,phantom,"+"{description}]
\\
n	\ar[dr] \ar[dd,phantom,"+"{description}]
&&		n	\ar[dd,phantom,"+"{description}]
\\&
	n	\ar[ur] \ar[dd,phantom,"+"{description}]
\\
1	\ar[ur,"x"{swap}] \ar[dd,phantom,"+"{description}]	&&	j
\\&
	j	\ar[ur]
\\
j	\ar[ur]
\end{tikzcd}
\quad\jdeq\quad
\begin{tikzcd}[row sep=1em,column sep=5em]
k	\ar[dr] \ar[d,phantom,"+"{description}]
\\
1	\ar[r,"y"{swap}] \ar[d,phantom,"+"{description}]
&	k	\ar[d,phantom,"+"{description}]
\\
n	\ar[r] \ar[d,phantom,"+"{description}]
&	n	\ar[d,phantom,"+"{description}]
\\
1	\ar[ur,"x-1"{swap,near start}] \ar[d,phantom,"+"{description}]
&	j
\\
j	\ar[ur]
\end{tikzcd}
\quad\jdeq\quad
\begin{tikzcd}[row sep=0pt,column sep=4em]
k	\ar[dr] \ar[dd,phantom,"+"{description}]
\\&
	k	\ar[dr] \ar[dd,phantom,"+"{description}]
\\
1	\ar[ur,"y"{swap}] \ar[dd,phantom,"+"{description}]
&&		k	\ar[dd,phantom,"+"{description}]
\\&
	n	\ar[dr] \ar[dd,phantom,"+"{description}]
\\
n	\ar[ur] \ar[dd,phantom,"+"{description}]
&&		n	\ar[dd,phantom,"+"{description}]
\\&
	1	\ar[ur,"S_{y,n}(x)"{swap,near start}] \ar[dd,phantom,"+"{description}]
\\
1	\ar[ur] \ar[dd,phantom,"+"{description}]
&&		j
\\&
	j	\ar[ur]
\\
j	\ar[ur]
\end{tikzcd}\]
\end{linenomath*}
The homomorphism $S_y$
preserves the weakening structure if
for every $n,j\in\N$, the square
\begin{linenomath*}
\[\begin{tikzcd}[column sep=5em]
\std{k+1+n+1+j}	\ar[r,"S_{y,n+1+j}"]	&	\std{k+n+1+j}
\\
\std{k+1+n+j}	\ar[u,"W_{k+1+n,j}"] \ar[r,"S_{y,n+j}"]
&	\std{k+n+j}	\ar[u,"W_{k+n,j}"{swap}]
\end{tikzcd}\]
\end{linenomath*}
commutes. This is indeed the case:
\begin{linenomath*}
\[\begin{tikzcd}[row sep=0pt,column sep=4em]
&	k	\ar[dr]
\\
k	\ar[ur]
&&		k
\\&
	1	\ar[ur,"y"{swap}]
\\
1	\ar[ur]
&&		n
\\&
	n	\ar[ur]
\\
n	\ar[ur]
&&		1
\\&
	1	\ar[ur]
\\
j	\ar[dr]
&&		j
\\&
	j	\ar[ur]
\end{tikzcd}
\quad\jdeq\quad
\begin{tikzcd}[row sep=1em,column sep=5em]
k	\ar[r]
&		k
\\
1	\ar[ur,"y"{swap}]
&		n
\\
n	\ar[ur]
&		1
\\
j	\ar[r]
&		j
\end{tikzcd}
\quad\jdeq\quad
\begin{tikzcd}[row sep=0pt,column sep=4em]
k	\ar[dr]
&&		k
\\&
	k	\ar[ur]
\\
1	\ar[ur,"y"{swap}]
&&		n
\\&
	n	\ar[ur]
\\
n	\ar[ur]
&&		1
\\&
	j	\ar[dr]
\\
j	\ar[ur]
&&		j
\end{tikzcd}\]
\end{linenomath*}
Finally, for every $n \in \N$, the function
$S_{y,n+1} \colon \std{k+1+n+1} \to \std{k+n+1}$
preserves the maximum.
It follows that $S_y$ preserves the generic elements.

We have shown that $S_y$ is a pre-B-homomorphism.
We leave the verification that $W_n \colon \sys{B}/n \to \sys{B}/(n+1)$
is a pre-B-homomorphism to the reader
and consider instead the remaining three conditions of \cref{def:Bsys}.

Condition~\ref{def:Bsys:SxW} amounts to the commutativity of the left-hand diagram below, for every $n,j\in\N$ and $x \in \std{n}$.
Its commutativity is shown in the right-hand diagram.
\begin{linenomath*}
\[\begin{tikzcd}[column sep=4em,row sep=3em]
\std{n+j}	\ar[dr,"\catid{}"{swap}] \ar[r,"W_{n,j}"]
&	\std{n+1+j}	\ar[d,"S_{x,j}"]
\\
&	\std{n+j}
\end{tikzcd}
\hspace{6em}
\begin{tikzcd}[row sep=0pt,column sep=4em]
&	n	\ar[dr] \ar[dd,phantom,"+"{description}]
\\
n	\ar[ur] \ar[dd,phantom,"+"{description}]
&&	n	\ar[dd,phantom,"+"{description}]
\\&
	1	\ar[ur,"x"{swap}] \ar[dd,phantom,"+"{description}]
\\
j	\ar[dr]	&&	j
\\&
	j	\ar[ur]
\end{tikzcd}\]
\end{linenomath*}

Condition~\ref{def:Bsys:SxId} holds since
$S_{x,0}(\delta_n) \jdeq S_{x,0}(n) \jdeq x$,
for every $n\in\N$ and $x \in \std{n}$.

Condition~\ref{def:Bsys:SIdW}  amounts to the commutativity of the left-hand diagram below, for every $n,j\in\N$.
Its commutativity is shown in the right-hand diagram.
\begin{linenomath*}
\[\begin{tikzcd}[column sep=5em,row sep=3em]
\std{n+1+j}	\ar[dr,"\catid{n+1+j}"{swap}] \ar[r,"W_{n,1+j}"]
&	\std{n+2+j}	\ar[d,"S_{\delta_n,j}"]
\\
&	\std{n+1+j}
\end{tikzcd}
\hspace{6em}
\begin{tikzcd}[row sep=0pt,column sep=4em]
&	n	\ar[dr] \ar[dd,phantom,"+"{description}]
\\
n	\ar[ur] \ar[dd,phantom,"+"{description}]
		&&	n	\ar[dd,phantom,"+"{description}]
\\&
	1	\ar[dr] \ar[dd,phantom,"+"{description}]
\\
1	\ar[dr] \ar[dd,phantom,"+"{description}]
		&&	1	\ar[dd,phantom,"+"{description}]
\\&
	1	\ar[ur] \ar[dd,phantom,"+"{description}]
\\
j	\ar[dr]
		&&	j
\\&
	j	\ar[ur]
\end{tikzcd}\]
\end{linenomath*}
\end{exa}

\begin{exa}[B-systems and Generalized Algebraic Theories]

  Continuing~\cref{eg:b-frames-structure}, any Generalized Algebraic Theory (in Garner's taxonomy also known as $\{w,p,s\}$-GATs \cite[Definition~4]{GARNER20151885}) gives rise to a B-system.
  The axioms of~\cref{def:Bsys} follow mostly from the definition of substitution and the congruence rules that substitution satisfies.

  Composing Cartmell's equivalence of categories between contextual categories and GATs with our equivalence between B-systems and C-systems constructed in~\cref{ssec:eqv-b-c}, we later can establish a more precise relationship between B-systems and GATs, in the form of an equivalence of categories.
  
\end{exa}

\subsection{The category of E-systems}
\label{ssec:e-sys}

In \cref{ssec:b2e} we will show how for any B-frame
we get a category $\cat{F}$ with objects $(n,X)$ where $X\in B_n$.
As we saw in \cref{def:B-to-rttr}, the family of sets $B_n$ induces a tree,
with objects $(n,X)$, and $\cat{F}$ is the free category generated by this tree.
The sets $\tilde{B}_{n+1}$ then induce a family of sets of terms indexed by the morphisms of $\cat{F}$.
In particular for a morphism $(n+1,X)\to (n,\ft(X))$ we get a set of terms $\bd^{-1}(X)$.

In this section we will define the structure of a type theory directly on $\cat{F}$ of the kind that one gets when turning a B-system into a category. Such systems are called E-systems, and in \cref{ssec:b2e} we will show that the category of B-systems is equivalent to a subcategory of E-systems. Thus, E-systems can be seen as a generalisation of B-systems.

Just like B-systems (and different from C-systems), E-systems have an explicit structure for ``terms''.
Indeed, the first step towards the definition of E-system is that of a ``term structure'':

\begin{defi}
  A \define{category with term structure} is a category $\cat{F}$ equipped with a family of sets $(T(A))_{A \in \Mor{\cat{F}}}$ indexed by the morphisms $\Mor{\cat{F}}$ of $\cat{F}$. Given two categories $\cat{F}$ and $\cat{D}$ with term structure, a \define{functor with term structure} from $\cat{F}$ to $\cat{D}$ is a functor $F:\cat{F}\to\cat{D}$ equipped with a family of functions $T(A)\to T(F(A))$ for every morphism $A$ in $\cat{F}$.
\end{defi}

Any B-frame, and hence any B-system, generates a category with term structure;
details will be given in \cref{constr:BtoTrmStr}.

The identity functor with term structure $\catid{\cat{F}}:\cat{F}\to\cat{F}$ is the identity functor on $\cat{F}$ equipped with the identity functions $T(A)\to T(A)$ indexed by the morphisms $A$ in $\cat{F}$. Similarly, the composition $G\circ F$ of two functors $F$ and $G$ with term structure is defined to be the composition of the underlying functors, equipped with the composites
\begin{linenomath*}
\begin{equation*}
  \begin{tikzcd}
    T(A) \arrow[r] & T(F(A)) \arrow[r] & T(G(F(A))).
  \end{tikzcd}
\end{equation*}
\end{linenomath*}

\begin{defi}\label{def:slT}
Let $\cat{F}$ be a \scat with term structure and $\Gamma$ an object of $\cat{F}$.
The \define{slice term structure} on the \sslcat $\cat{F}/\Gamma$ 
is given by $T_{\cat{F}/\Gamma}(A)\jdeq T_\cat{F}(A)$.
\end{defi}

\begin{rem}\label{rmk:slT}
Every functor $F:\cat{F}\to\cat{F}'$ with term structure
gives rise to a functor with term structure $F/X:\cat{F}/X\to\cat{F}'/F(X)$.
\end{rem}

In order to illustrate the additional structure
that we shall consider on a category with term structure,
we introduce the following example.

\begin{exa}\label{eg:fin-set-trmstr}
Consider the the poset $(\N,\geq)$.
We write $(n,k) \colon n+k \geq n$ for arrows in $(\N,\geq)$.
Let $\mathcal{N}$ be the category with term structure 
which consists of the poset $(\N,\geq)$ and
the term structure given by $T(n,k) \defeq \Set([k],[n])$,
\ie the set of functions from the standard set with $k$ elements
to the standard set with $n$ elements.

This category with term structure, equipped with the additional structure described in this section, corresponds (up to isomorphism) to the B-system of renaming from \cref{eg:b-sys-finsets} under the equivalence between B-systems and stratified E-sytems in \cref{thm:BasE}.
\end{exa}

\begin{exa}\label{eg:esys-display-trmstr}
Consider a category $\cat{C}$ with a terminal object
together with a class of arrows $\cat{F}$
such that pullbacks along arrows in $\cat{F}$ exist in $\cat{C}$,
$\cat{F}$ is closed under composition and pullback,
and it contains all isomorphisms and arrows towards a terminal object.
This is a type-theoretic fibration category in the sense of~\cite{Shulman2015},
or a clan in the sense~\cite{Joyal2017clan},
or a display map category~\cite{Taylor1999} which models $\Sigma$-types in the sense of~\cite{DBLP:journals/mscs/North19}.

If we also denote by $\cat{F}$ the wide subcategory of $\cat{C}$
on the arrows that occur in $\cat{F}$,
then we can equip $\cat{F}$ with a term structure $T$ by requiring $T(A)$ to be the set of sections of $A$, that is, those arrows $x\colon\Gamma\to\Gamma.A$ in $\cat{C}$ such that $A\circ x=\catid{\Gamma}$.
\end{exa}

\subsubsection{Substitution systems}
\label{sssec:substsys}

Given a (strict) category $\cat{F}$, an object $\Gamma\in\cat{F}$ and an object $A\in\cat{F}/\Gamma$, we will write $\ctxext{\Gamma}{A}$ for the domain of $A$. In other words, $A$ is a morphism $\ctxext{\Gamma}{A}\to \Gamma$.

\begin{defi}
A \define{pre-substitution structure} on a \scat with term structure $\cat{F}$ consists
of a functor with term structure $S_x:\cat{F}/\ctxext{\Gamma}{A}\to\cat{F}/\Gamma$ for every $x\in T(A)$ and
$A\in\cat{F}/\Gamma$, such that $S_x(\catid{\ctxext{\Gamma}{A}})\jdeq\catid{\Gamma}$.

A \define{pre-substitution system} is a \scat with term structure together with a
pre-substitution structure. 
\end{defi}

\begin{defi}
A \define{pre-substitution homomorphism} $F:\cat{F}\to\cat{D}$ is a functor with term structure for
which the diagram
\begin{linenomath*}
\begin{equation*}
\begin{tikzcd}[column sep=large]
\cat{F}/\ctxext{\Gamma}{A}
\arrow[r,"F/\ctxext{\Gamma}{A}"]
\arrow[d,swap,"S_x"]
&
\cat{D}/F(\ctxext{\Gamma}{A})
\arrow[d,"S_{F(x)}"]
\\
\cat{F}/\Gamma
\arrow[r,swap,"F/\Gamma"]
&
\cat{D}/F(\Gamma)
\end{tikzcd}
\end{equation*}
\end{linenomath*}
commutes for every $x\in T(A)$ and $A\in\cat{F}/\Gamma$.
\end{defi}

\begin{defi}\label{def:sliceSsys}
Let $\cat{F}$ be a pre-substitution system and $\Gamma$ an object of  $\cat{F}$.
The \define{slice pre-substitution structure}
on the \sslcat with term structure $\cat{F}/\Gamma$ from \cref{def:slT}
is given by $S(\cat{F}/\Gamma)_x \jdeq S(\cat{F})_x$,
for every $A\in\cat{F}/\Gamma$, $P\in\cat{F}/\ctxext{\Gamma}{A}$
and $x \in T_{\cat{F}}(P)$.
\end{defi}

\begin{defi}\label{def:subst}
A \define{substitution system} is a pre-substitution system for which each
$S_x$
is a pre-substitution homomorphism. A \define{substitution homomorphism} is a
pre-substitution homomorphism between substitution systems.
\end{defi}

\begin{cor}\label{cor:sliceSsys}
For any object $\Gamma$ of a substitution system $\cat{F}$,
the slice pre-substitution system $\cat{F}/\Gamma$ from \cref{def:sliceSsys} is a substitution system, called the \define{slice substitution system} on $\Gamma$.
\end{cor}

\begin{rem}\label{rmk:substitution}
The condition that every $S_x$ is a substitution homomorphism, asserts that
the diagram
\begin{linenomath*}
\begin{equation*}
\begin{tikzcd}[column sep=large]
\cat{F}/\ctxext{\ctxext{\ctxext{\Gamma}{A}}{P}}{Q}
\arrow[r,"S_x/\ctxext{P}{Q}"]
\arrow[d,swap,"S_y"]
&
\cat{F}/\ctxext{\ctxext{\Gamma}{S_x(P)}}{S_x(Q)}
\arrow[d,"S_{S_x(y)}"]
\\
\cat{F}/\ctxext{\ctxext{\Gamma}{A}}{P}
\arrow[r,swap,"S_x/P"]
&
\cat{F}/\ctxext{\Gamma}{S_x(P)}
\end{tikzcd}
\end{equation*}
\end{linenomath*}
commutes for every $y\in T(Q)$.
\end{rem}

\begin{exa}\label{eg:fin-set-subst}
We can equip the category with term structure $\mathcal{N}$ from \cref{eg:fin-set-trmstr}
with a substitution structure as follows.
Consider the functor $-k \colon \N/(n+k) \to \N/n$
that maps $(n+k+j,l)$ to $(n+j,l)$.
It preserves terminal objects
since an arrow $(m,i)$ is an identity if and only if $i=0$.
Given $(n,k) \colon n+k \geq n$ and a function $f \colon [k] \to [n]$,
define $S_f \colon \mathcal{N}/(n+k) \to \mathcal{N}/n$ as the functor $-k$
together with functions $T(n+k+j,l) \to T(n+j,l)$ defined by postcomposition
\begin{linenomath*}
\[
\begin{tikzcd}[row sep=2ex,column sep=3ex]
{[l]}	\ar[dd,"h"{swap},""{name=D}]
&&	{[l]}	\ar[dd,"\,{S_f(h)}"{swap,name=C}] \ar[dr,"h"]
&\\
&&&	{[n+k+j]}	\ar[dl,"{[\catid{n},f] + \catid{j}}"]
\\
{[n+k+j]}	&&	{[n+j]}	\ar[mapsto,from=D,to=C]	&
\end{tikzcd}
\]
\end{linenomath*}
where $[\catid{n},f]$ is the function given by the universal property of the coproduct $[n] \leftarrow [n+k] \to [k]$ in $\Set$,
and similarly for $[\catid{n},f] + \catid{j}$.

The fact that $S_f$ is a pre-substitution homomorphism follows from the fact
that postcomposition distributes on $[\blank,\blank]$ as shown below:
given $g \colon [l] \to [n+k+j]$, then
\begin{linenomath*}
\[
S_f/(n+k,j) \circ S_g \jdeq S_{S_f(g)} \circ S_f/(n+k,j+l)
\]
\end{linenomath*}
since
\begin{linenomath*}
\begin{align*}
\left( [\catid{n},f] + \catid{j} \right) [\catid{n+k+j},g]
&\jdeq
[ [\catid{n},f] + \catid{j}, S_f(g) ]
\\&\jdeq
[\catid{n+j}, S_f(g)] \left( [\catid{n},f] + \catid{j+l} \right).
\end{align*}
\end{linenomath*}
\end{exa}

\begin{exa}\label{eg:esys-display-subst}
Given a clan $(\cat{C},\cat{F})$,
consider the induced category with term structure from \cref{eg:esys-display-trmstr},
where $T(A)$ is the set of all sections of $A\in\cat{F}/\Gamma$.

Say that a choice of pullbacks of arrows in $\cat{F}$ is locally functorial if,
for every $f\colon\Delta\to\Gamma$ in $\cat{C}$, we have
$f^*(\catid{\Gamma}) \jdeq \catid{\Delta}$, $f^{\catid{\Gamma}}=f$,
and, for every composable $A,P$ in $\cat{F}$, we also have
$f^*(A\circ P) \jdeq f^*\!A\circ (f^A)^*P$ and $f^{A\circ P} \jdeq (f^A)^P$,
where $f^*\!A$ and $f^A$ are the first and second leg, respectively, of the chosen pullback of $A$ along $f$.

Every choice of pullbacks of arrows in $\cat{F}$ uniquely determines a choice of pullbacks of sections of arrows in $\cat{F}$.
It follows that the category with term structure from \cref{eg:esys-display-trmstr} can be equipped with a pre-substitution structure, by setting $S_x\coloneqq x^*$ for every $x\in T(A)$.
This pre-substitution structure gives rise to a substitution system if the choice of pullbacks is functorial,
\ie~such that $(f\circ g)^*\!A \jdeq g^*(f^*\!A)$
and $(f\circ g)^A \jdeq f^A\circ g^{f^*A}$.
Note that every choice of pullbacks can be made normal,
\ie~such that $\catid{\Gamma}^{\,*}A \jdeq A$ and $\catid{\Gamma}^A \jdeq \catid{\Gamma.A}$
(this holds true more generally for every cleavage on a Grothendieck fibration).
\end{exa}

The following is not an intended example, but rather a surprising one.

\begin{exa}\label{eg:esys-as-group}
  Consider a group $G$. A term structure on $G$ consists of a set $T(g)$ for every element $g$ of $G$.

A pre-substitution structure on $G$ consists of a functor with term structure $S_x: G / \bullet \to G / \bullet$ (where $\bullet$ denotes the only object in $G$ viewed as a category) for every $g \in G$ and every $x \in T(g)$ such that $S_x(\catid{\bullet}) = \catid{\bullet}$. One can show that such functors $S_x: G / \bullet \to G / \bullet$ correspond to functions $G \to G$ which preserve the identity, so a pre-substitution structure amounts to functions $S_x : G \to G$ for every $g \in G$, $x \in T(g)$ preserving the identity together with functions $S_x: T(h) \to T(S_x(h))$ for every $g,h \in G$, $x \in T(g)$.

A substitution structure $T$ on $G$ is a pre-substitution structure $S$ as described above such that the following diagrams commute for all $g,h,k \in G$, $x \in T(g)$, and $y \in T(h)$.
\begin{linenomath*}
\[
   \begin{tikzcd}
      G \ar[r,"S_x"] \ar[d,"S_y"] & G \ar[d,"S_{S_x(y)}"]
      \\ 
      G \ar[r,"S_x"] & G
   \end{tikzcd}
   \hspace{5em}
   \begin{tikzcd}
    T(k) \ar[r,"S_x"] \ar[d,"S_y"] & T(S_x k) \ar[d,"S_{S_x(y)}"]
    \\ 
    T(S_y k) \ar[r,"S_x"] & T(S_x S_y k)
 \end{tikzcd}
\]
\end{linenomath*}
Now for a particular example, suppose that each $T(g)$ is $\mathrm{Aut}(G)$, that each $S_x: G \to G$ is just the automorphism $x$, and that each $S_x : T(h) \to T(S_x h)$ takes $y \in T(h)$ to $x y x^{-1}$. Then we find indeed that the first diagram commutes since $(xy x^{-1}) x = xy$ for all $x \in S_x = \mathrm{Aut}(G)$ and all $y \in S_y = \mathrm{Aut}(G)$. The second diagram commutes since $(xy x^{-1}) x z x^{-1}(xy x^{-1})^{-1} = x y z y^{-1} x^{-1}$ for all $x \in T(g) = \mathrm{Aut}(G)$, $y \in T(h) = \mathrm{Aut}(G)$, and $z \in T(k) = \mathrm{Aut}(G)$.
\end{exa}

\subsubsection{Weakening systems}
\label{sssec:weaksys}

\begin{defi}\label{def:preweak}
Consider a category $\cat{F}$ with term structure $T$. 
A \define{pre-weakening structure} on $\cat{F}$ is a family of functors with term structure $W_A:\cat{F}/\Gamma\to \cat{F}/\ctxext{\Gamma}{A}$ indexed by the morphisms $A:\ctxext{\Gamma}{A}\to \Gamma$ in $\cat{F}$ such 
that
\begin{enumerate}
\item\label{def:preweak:id}
$W_{\catid{\Gamma}}\jdeq \catid{\cat{F}/\Gamma}$ for every object $\Gamma\in\cat{F}$.
\item\label{def:preweak:funct}
$W_{A\circ P}\jdeq W_P\circ W_A$ for every $P\in\cat{F}/\ctxext{\Gamma}{A}$ and $A\in\cat{F}/\Gamma$.
\item\label{def:preweak:presterm}
$W_A$ strictly preserves the final object, i.e., $W_A(\catid{\Gamma})\jdeq \catid{\ctxext{\Gamma}{A}}$.
\end{enumerate}
A \define{pre-weakening system} is a \scat with term structure equipped with a pre-weakening structure.
\end{defi}

\begin{defi}
A \define{pre-weakening homomorphism} $F:\cat{F}\to\cat{D}$ between pre-weakening systems
is a functor $F:\cat{F}\to\cat{D}$ with term structure such that the square
\begin{linenomath*}
\begin{equation*}
\begin{tikzcd}[column sep=large]
\cat{F}/\ctxext{\Gamma}{A}
\arrow[r,"F/\ctxext{\Gamma}{A}"]
&
\cat{D}/F(\ctxext{\Gamma}{A})
\\
\cat{F}/\Gamma
\arrow[u,"W_A"]
\arrow[r,swap,"F/\Gamma"]
&
\cat{D}/F(\Gamma)
\arrow[u,swap,"W_{F(A)}"]
\end{tikzcd}
\end{equation*}
\end{linenomath*}
of functors with term structure commutes for any $A\in\cat{F}/\Gamma$.
\end{defi}

\begin{defi}\label{def:sliceWsys}
Let $\cat{F}$ be a pre-weakening system and $\Gamma$ an object of $\cat{F}$.
The \define{slice pre-weakening system} on the \sslcat with term structure from \cref{def:slT}
$\cat{F}/\Gamma$ is given by $W(\cat{F}/\Gamma)_P\jdeq W(\cat{F})_P$ for every $P\in\cat{F}/\ctxext{\Gamma}{A}$ and $A\in\cat{F}/\Gamma$.
\end{defi}

\begin{defi}
A \define{weakening system} is a pre-weakening system $\cat{F}$ such that $W_A$ is a pre-weakening homomorphism for every morphism $A$ in $\cat{F}$. A \define{weakening homomorphism} is a pre-weakening 
homomorphism between weakening systems.
\end{defi}

\begin{rem}
  The condition that every $W_A$ is a pre-weakening homomorphism implies that the square
  \begin{linenomath*}
  \begin{equation*}
    \begin{tikzcd}[column sep=huge]
      \cat{F}/\ctxext{\ctxext{\Gamma}{B}}{Q}
      \arrow[r,"W_A/\ctxext{B}{Q}"]
      &
      \cat{F}/\ctxext{\ctxext{\Gamma}{A}}{W_A(\ctxext{B}{Q})}
      \\
      \cat{F}/\ctxext{\Gamma}{B}
      \arrow[u,"W_Q"]
      \arrow[r,swap,"W_A/B"]
      &
      \cat{F}/\ctxext{\ctxext{\Gamma}{A}}{W_A(B)}
      \arrow[u,swap,"W_{W_A(Q)}"]
    \end{tikzcd}
  \end{equation*}
  \end{linenomath*}
  commutes for each $A,B\in\cat{F}/\Gamma$ and $Q\in\cat{F}/\ctxext{\Gamma}{B}$. On objects, this 
  property asserts that for any $k\in \cat{F}/\mathrm{E}$, the dotted arrows in the diagram
  \begin{linenomath*}
\begin{equation*}
\begin{tikzcd}[column sep=tiny]
\bullet \arrow[dr,densely dotted,swap,near start,"(W_A/\ctxext{B}{Q})(W_Q(R))"] & & \bullet \arrow[dl,densely dotted,near start,"W_{(W_A/B)(Q)}((W_A/B)(R))"] & & & & & & & & & & \bullet \arrow[dl,"W_Q(R)"] \\
& \bullet \arrow[dr,swap,"(W_A/B)(Q)"] & & \bullet \arrow[dl,"(W_A/B)(R)"] & & & & & & & & \bullet \arrow[dr,swap,"Q"] & & \bullet \arrow[dl,"R"] \\
& & \bullet \arrow[d,swap,"W_A(B)"] & & & & & & & & & & \ctxext{\Gamma}{B} \arrow[d,"B"] \\
& & \ctxext{\Gamma}{A} \arrow[rrrrrrrrrr,swap,"A"] & & & & & & & & & & \Gamma
\end{tikzcd}
\end{equation*}
\end{linenomath*}
are equal.

A useful special case of this property is where $B\jdeq \catid{\Gamma}$. Thus, if $W$ is
a weakening system, then the diagram
\begin{linenomath*}
\begin{equation*}
\begin{tikzcd}[column sep=huge]
\cat{F}/\ctxext{\Gamma}{C}
\arrow[r,"W_A/C"]
&
\cat{F}/\ctxext{\ctxext{\Gamma}{A}}{W_A(C)}
\\
\cat{F}/\Gamma
\arrow[u,"W_C"]
\arrow[r,swap,"W_A"]
&
\cat{F}/\ctxext{\Gamma}{A}
\arrow[u,swap,"W_{W_A(C)}"]
\end{tikzcd}
\end{equation*}
\end{linenomath*}
commutes for every $A,C\in\cat{F}/\Gamma$. In particular, we see
that $W_A(W_C(D))\jdeq W_{W_A(C)}(W_A(D))$ for any $D\in\cat{F}/\Gamma$,
i.e.~that weakening is a self-distributive operation. 
\end{rem}

\begin{cor}\label{cor:sliceWsys}
For any object $\Gamma$ of a weakening system $\cat{F}$,
the slice pre-weakening system on $\cat{F}/\Gamma$ from \cref{def:sliceWsys} is a weakening system, called the \define{slice weakening system} on $\Gamma$.
\end{cor}

\begin{exa}\label{eg:fin-set:weak}
Consider the category with term structure $\mathcal{N}$ from \cref{eg:fin-set-trmstr}.
We can equip $\mathcal{N}$ with a weakening structure as follows.
Consider the functor $+k \colon \N/n \to \N/(n+k)$
that maps $(n+j,l)$ to $(n+k+j,l)$.
It preserves terminal objects as in \cref{eg:fin-set-subst}.
Given $(n,k) \colon n+k \geq n$,
define $W_{n,k} \colon \mathcal{N}/n \to \mathcal{N}/(n+k)$ as the functor $+k$
together with functions $T(n+j,l) \to T(n+k+j,l)$ defined by postcomposition
\begin{linenomath*}
\[
\begin{tikzcd}[row sep=2ex,column sep=3ex]
{[l]}	\ar[dd,"h"{swap},""{name=D}]
&&	{[l]}	\ar[dd,"\,{W_{n,k}(h)}"{swap,name=C}] \ar[dr,"h"]
&\\
&&&	{[n+j]}	\ar[dl,"{i_n^{n+k} + \catid{j}}"]
\\
{[n+j]}	&&	{[n+k+j]}	\ar[mapsto,from=D,to=C]	&
\end{tikzcd}
\]
\end{linenomath*}
where $i_n^{n+k} \colon [n] \to [n+k]$ is the initial-segment inclusion and
$i_n^{n+k} + \catid{j}$ is the function given by the universal property of the coproduct $[n] \leftarrow [n+j] \to [j]$ in $\Set$.

The fact that $W_{n,k}$ is a pre-weakening homomorphism follows from 
the fact that initial-segment inclusions factor uniquely into inclusions whose images have codimension 1:
given $(n+j,l)$ in $\mathcal{N}/n$, then
\begin{linenomath*}
\[
W_{n,k}/(n,j+l) \circ W_{n+j,l}	\jdeq W_{W_{n,k}(n+j,l)} \circ W_{n,k}/(n,j)
\]
\end{linenomath*}
since
\begin{linenomath*}
\[
(i_n^{n+k} + \catid{j+l}) i_{n+j}^{n+j+l} \jdeq
i_{n+k+j}^{n+k+j+l} (i_n^{n+k} + \catid{j}).
\]
\end{linenomath*}
\end{exa}

\begin{exa}\label{eg:esys-display-weak}
Given a clan $(\cat{C},\cat{F})$,
the induced category with term structure from \cref{eg:esys-display-trmstr}
can be made into a weakening system if there is a choice of pullbacks in $\cat{F}$ which is functorial and locally functorial in the sense of \cref{eg:esys-display-subst}.

Note that, since $\cat{F}$ is closed under pullbacks,
every clan with a choice of pullbacks that gives rise to a substitution structure also has an induced weakening structure.
\end{exa}

\begin{exa}\label{eg:esys-as-group-weak}
Consider the situation of \cref{eg:esys-as-group} above where the underlying category is a group $G$ with term structure $S$.

A pre-weakening structure on $G$ is a family of functions $W_g : G \to G$ for each $g \in G$ which preserves the identity in each coordinate (i.e. $W_e(g) = W_g(e) = g$ for any $g \in G$) and where $W_{hg} = W_g \circ W_h$ together with term structure $W_g : T(h) \to T(W_g(h))$ for any $g, h \in G$.

If each $W_g: G \to G$ is a group homomorphism, this structure is a weakening system when the following diagrams commute 
for every $g,h,k \in G$.
\begin{linenomath*}
\begin{equation*}
\begin{tikzcd}
G \arrow[r,"W_h"]
&
G
\\
G \arrow[u,"W_g"]
\arrow[r,swap,"W_h"]
&
G \arrow[u,swap,"W_{W_h(g)}"]
\end{tikzcd} \hspace{5em}
\begin{tikzcd}
  T(W_g k) \arrow[r,"W_h"]
  &
  T(W_{gh} k)
  \\
  T(k) \arrow[u,"W_g"]
  \arrow[r,swap,"W_h"]
  &
  T(W_h k) \arrow[u,swap,"W_{W_h(g)}"]
  \end{tikzcd}
\end{equation*}
\end{linenomath*}
Now consider the more particular example discussed in \cref{eg:esys-as-group}, where $G$ is still an arbitrary group, but $T(g) = \mathrm{Aut}(G)$ for all $g \in G$. We can let each $W_g : G \to G$ be $\phi_g$, the conjugation automorphism sending $h$ to $ghg^{-1}$, and we can let each $W_g : T(k) \to T(k)$ be `conjugation by conjugation' taking each automorphism $x \in T(k)$ to $\phi_g x \phi_g^{-1}$. Since $\phi_h \phi_g = \phi_{\phi_h(g)} \phi_h$
, the left-hand diagram above commutes, and using that equation we find that 
$\phi_h \phi_g (-) \phi_g^{-1} \phi_h^{-1} = \phi_{\phi_h(g)} \phi_h (-) \phi_{\phi_h(g)}^{-1} \phi_h^{-1}$ so the right-hand diagram commutes.

\end{exa}

\subsubsection{Projection systems}
\label{sssec:projsys}

\begin{defi}
A \define{pre-projection system} is a pre-weakening system $\cat{F}$ equipped with an element
$\tfid{A}\in T(W_A(A))$ for every $A\in\cat{F}/\Gamma$ and $\Gamma\in\cat{F}$. 
\end{defi}

\begin{defi}
A \define{pre-projection homomorphism} $F:\cat{F}\to\cat{D}$ is a pre-weakening homomorphism for which
\begin{linenomath*}
\begin{equation*}
F(\tfid{A})\jdeq \tfid{F(A)}
\end{equation*}
\end{linenomath*}
for every $A\in\cat{F}/\Gamma$ and $\Gamma\in\cat{F}$.
\end{defi}

\begin{defi}\label{def:slicePsys}
Let $\cat{F}$ be a pre-projection system and $\Gamma$ an object of $\cat{F}$.
The \define{slice pre-projection structure} on $\cat{F}/\Gamma$
is given by the slice pre-weakening structure in \cref{def:sliceWsys}
together with $\tfid{P}^{\Gamma} \coloneqq \tfid{P}$,
for every $P \in \cat{F}/\ctxext{\Gamma}{A}$ and $A \in \cat{F}/\Gamma$.
\end{defi}

\begin{defi}
A \define{projection system} is a pre-projection system for which every $W_A$ is
a pre-projection homomorphism. A \define{projection homomorphism} is a pre-projection homomorphism
between projection systems.
\end{defi}

\begin{cor}\label{cor:slicePsys}
For any object $\Gamma$ of a projection system $\cat{F}$,
the slice pre-projection system on $\cat{F}/\Gamma$ from \cref{def:slicePsys} is a projection system, called the \define{slice projection system} on $\Gamma$.
\end{cor}

\begin{exa}\label{eg:fin-set-proj}
Consider the weakening system on $\mathcal{N}$ from \cref{eg:fin-set:weak}.
We can equip it with a projection structure defining,
for every $(n,k)$ in $(\N,\geq)$,
the element $\tfid{n,k} \in T(W_{n,k}(n,k)) \jdeq \Set([k],[n+k])$
to be the final-segment inclusion
\begin{linenomath*}
\[\begin{tikzcd}[column sep=4em]
{[k]}	\ar[r,"i_k^{n+k}"]	&	{[n+k]}
\end{tikzcd}\]
\end{linenomath*}
The fact that each $W_{n,j} \colon \mathcal{N}/n \to \mathcal{N}/(n+j)$ is a projection homomorphism is readily verified:
\begin{linenomath*}
\begin{align*}
W_{n,j}(\tfid{n+m,k})	&\jdeq
(i_n^{n+j} + \catid{m+k}) i_k^{n+m+k}
\\&\jdeq
i_k^{n+m+j+k}
\\&\jdeq
\tfid{W_{n,j}(n+m,k)}.
\end{align*}
\end{linenomath*}
\end{exa}

\begin{exa}\label{eg:esys-display-proj}
Given a clan $(\cat{C},\cat{F})$,
the weakening system from \cref{eg:esys-display-weak} can be upgraded to a projection system by defining the element $\tfid{A}\in T(W_A(A))$
to be the unique section of the pullback of $A$ along itself induced by the pair of identity arrows on $\Gamma.A$.

Note that no additional condition on the choice of pullbacks has to be imposed, besides those mentioned in \cref{eg:esys-display-weak}.
In particular, every clan with a choice of pullbacks that gives rise to a weakening structure has also an induced projection structure.
\end{exa}

\begin{exa}\label{eg:esys-as-group-proj}
  Consider the particular example discussed in \cref{eg:esys-as-group-weak} where the underlying category is an arbitrary group $G$, each $T(g)$ is the set of automorphisms of $G$, and $W_g$ is conjugation by $G$ both on elements of $G$ and terms (automorphisms of $G$).

  A pre-projection system consists of an element $\tfid{g} \in \mathrm{Aut}(G)$ for every $g \in G$. We will let $\tfid{g}$ be the identity automorphism on $G$.

  For a projection system on a group $G$, we need $W_g (\tfid{h}) = \tfid{W{g}(h)}$ for every $g,h \in G$. In our particular example, this means $g 1_G g^{-1} = 1_G$, which holds.
\end{exa}

\subsubsection{The definition of E-systems}
\label{sssec:esys}

We can now give the definition of E-systems.

\begin{defi}\label{def:preEsys}
A \define{pre-E-system} \sys{E} is a \scat $\cat{F}$ with term structure
equipped with a chosen terminal object $\Eroot{}$ in $\cat{F}$, the structure of a pre-substitution system $S$,
the structure of a pre-weakening system $W$, and the structure of a pre-projection system $\tfid{}$.
\end{defi}

\begin{defi}\label{def:preEhom}
A \define{pre-E-homomorphism} from $\sys{E}$ to $\sys{D}$ is a functor
$H:\Efam{E}\to \Efam{D}$ between the underlying categories
with term structure such that $F(\Erootsys{E}) = \Erootsys{D}$, which is a pre-substitution homomorphism, a pre-weakening homomorphism, and a pre-projection homomorphism.
\end{defi}

\begin{defi}\label{def:sliceEsys}
For every object $\Gamma$ in a pre-E-system $\sys{E}$,
the \define{slice pre-E-system} $\sys{E}/\Gamma$ on the slice category with term structure $\Efam{E}/\Gamma$
is given by the slice structures from \cref{def:sliceSsys,def:sliceWsys,def:slicePsys},
and the identity on $\Gamma$ as terminal object.
\end{defi}

\begin{defi}\label{def:Esys}
An \define{E-system} is a pre-E-system \sys{E} such that
\begin{enumerate}
\item\label{def:Esys:sub}
  each $S_x$ is a pre-E-homomorphism, %
\item\label{def:Esys:weak}
  each $W_A$ is a pre-E-homomorphism, %
\item\label{def:Esys:SxW} %
  $S_x\circ W_A\jdeq \catid{\sys{E}/\Gamma}$ for any $x\in T(A)$ and $A\in\cat{F}/\Gamma$,
\item\label{def:Esys:SxId}
  $S_x(\tfid{A})\jdeq x$ for any $x\in T(A)$ and $A\in\cat{F}/\Gamma$, and
\item\label{def:Esys:SIdW} %
  $S_{\tfid{A}}\circ W_A/A\jdeq \catid{\sys{E}/\ctxext{\Gamma}{A}}$ for any $A\in\cat{F}/\Gamma$.
\end{enumerate}

An \define{E-homomorphism} $H:\mathbb{E}\to\mathbb{D}$
is a pre-E-homomorphism from an E-system $\mathbb{E}$ to an E-system $\mathbb{D}$.
We write $\Esys$ for the category of E-systems and E-homomorphisms.
\end{defi}

\begin{rem}
The condition that each $W_A$ is a substitution homomorphism asserts that
the diagram
\begin{linenomath*}
\begin{equation*}
\begin{tikzcd}[column sep=huge]
\cat{F}/\ctxext{\ctxext{\Gamma}{B}}{Q}
\arrow[r,"W_A/\ctxext{B}{Q}"]
\arrow[d,swap,"S_y"]
&
\cat{F}/\ctxext{\ctxext{\ctxext{\Gamma}{A}}{W_A(B)}}{W_A(Q)}
\arrow[d,"S_{W_A(y)}"]
\\
\cat{F}/\ctxext{\Gamma}{B}
\arrow[r,swap,"W_A/B"]
&
\cat{F}/\ctxext{\ctxext{\Gamma}{A}}{W_A(B)}
\end{tikzcd}
\end{equation*}
\end{linenomath*}
of functors with term structure commutes for every 
$Q\in\cat{F}/\ctxext{\Gamma}{B}$, $B\in\cat{F}/\Gamma$ and each $y\in T(Q)$.

Likewise, the condition that each $S_x$ is a weakening homomorphism
asserts that the diagram
\begin{linenomath*}
\begin{equation*}
\begin{tikzcd}[column sep=huge]
\cat{F}/\ctxext{\ctxext{\Gamma}{A}}{P}
\arrow[r,"S_x/P"]
\arrow[d,swap,"W_Q"]
&
\cat{F}/\ctxext{\Gamma}{S_x(P)}
\arrow[d,"W_{S_x(Q)}"]
\\
\cat{F}/\ctxext{\ctxext{\ctxext{\Gamma}{A}}{P}}{Q}
\arrow[r,swap,"S_x/\ctxext{P}{Q}"]
&
\cat{F}/\ctxext{\ctxext{\Gamma}{S_x(P)}}{S_x(Q)}
\end{tikzcd}
\end{equation*}
\end{linenomath*}
of functors with term structure commutes for every 
$Q\in\cat{F}/\ctxext{\ctxext{\Gamma}{A}}{P}$.
\end{rem}

\begin{cor}\label{cor:sliceEsys}
For any object $\Gamma$ of a E-system $\cat{F}$,
the slice pre-E-system on $\cat{F}/\Gamma$ from \cref{def:sliceEsys} is an E-system, called the \define{slice E-system} on $\Gamma$.
\end{cor}

\begin{exa}\label{eg:fin-set-Esys}
We can finally show that the category with term structure $\mathcal{N}$ from \cref{eg:fin-set-trmstr} can be equipped with the structure of an E-system.
It can be turned into a pre-E-system because of
\cref{eg:fin-set-subst,eg:fin-set:weak,eg:fin-set-proj}.
The terminal object is $[0]$.
Conditions \ref{def:Esys:sub}~and~\ref{def:Esys:weak} of \cref{def:Esys} are left to the reader.
The other ones are verified as follows:
\begin{enumerate}
\item[\ref{def:Esys:SxW}.]
Given $f \colon [k] \to [n]$,
it is $S_f \circ W_{n,k} \jdeq \catid{\mathcal{N}/n}$
since $[\catid{n},f] i_n^{n+k} \jdeq \catid{n}$.
\item[\ref{def:Esys:SxId}.]
Given $f \colon [k] \to [n]$,
it is $S_f(\tfid{n,k}) \jdeq [\catid{n},f] i_k^{n,k} \jdeq f$.
\item[\ref{def:Esys:SIdW}.]
Given $(n,k)$ in $\mathcal{N}$,
it is $S_{\tfid{n,k}} \circ W_{n,k}/(n,k) \jdeq \catid{\mathcal{N}/(n+k)}$
since $[\catid{n+k},i_k^{n+k}] (i_n^{n+k} + \catid{k}) \jdeq \catid{n+k}$.
\end{enumerate}
\end{exa}

\begin{exa}\label{eg:esys-display-Esys}
Given a clan $(\cat{C},\cat{F})$ together with a functorial and locally functorial choice of pullbacks of arrows in $\cat{F}$ along arrows in $\cat{C}$ in the sense of \cref{eg:esys-display-subst},
the induced category with term structure from \cref{eg:esys-display-trmstr} can be made into an E-system as follows.
It is a pre-E-system because of \cref{eg:esys-display-subst,eg:esys-display-weak,eg:esys-display-proj}.
Conditions~\eqref{def:Esys:sub} and~\eqref{def:Esys:weak} are satisfied
since weakening is a particular case of substitution.
Condition~\eqref{def:Esys:SxW} follows from $A\circ x=\catid{}$
and the functoriality conditions on the choice of pullbacks.
In particular, the left-hand vertical square in the commutative diagram below is a pullback,
since so is the right-hand one.
It follows
that the upper square is a pullback too.
Therefore condition~\eqref{def:Esys:SxId} is also satisfied.
Finally, condition~\eqref{def:Esys:SIdW} follows from the commutativity of the upper triangle involving $\tfid{A}$ in the commutative diagram below,
together, again, with the functoriality conditions on the choice of pullbacks.
\[\begin{tikzcd}
	\Gamma && {\Gamma.A}
	&&[3ex]\\
	& {\Gamma.A} && {\Gamma.A.W_AA} & {\Gamma.A}
	\\[3ex]
	& \Gamma && {\Gamma.A} & \Gamma
	\arrow["x", from=1-1, to=1-3]
	\arrow["x"{pos=0.6}, from=1-1, to=2-2]
	\arrow["{\catid{}}"'{near end}, from=1-1, to=3-2, bend right=4ex]
	\arrow["{\tfid{A}}"{description}, from=1-3, to=2-4]
	\arrow["{\catid{}}"{pos=.85}, from=1-3, to=2-5, bend left=3ex]
	\arrow["{\catid{}}"'{near end}, from=1-3, to=3-4, bend right=4ex]
	\arrow[from=2-2, to=2-4, crossing over]
	\arrow["A"{near end}, from=2-2, to=3-2]
	\arrow[from=2-4, to=2-5]
	\arrow["{W_AA}"{near end}, from=2-4, to=3-4]
	\arrow["A"{near end}, from=2-5, to=3-5]
	\arrow["x", from=3-2, to=3-4]
	\arrow["A", from=3-4, to=3-5]
	\arrow["{\catid{}}"'{pos=.8}, from=2-2, to=2-5, bend right=4ex, crossing over]
	\arrow["{\catid{}}"'{pos=.8}, from=3-2, to=3-5, bend right=4ex]
\end{tikzcd}\]
\end{exa}

\begin{exa}
  Consider the situation in \cref{eg:esys-as-group-proj} where our underlying category is an arbitrary group $G$, the terms of each $g \in G$ are $\mathrm{Aut}$, substitution $S_x : G \to G$ is given by the automorphism $x$ itself and substitution $S_x : T(h) \to T(S_x h)$, weakening $W_g : G \to G$, and weakening $W_g : T(h) \to T(W_g h)$ are given by conjugation.

  Understood as a category, $G$ does not have a terminal object (unless it is trivial), but we can still understand the conditions of \cref{def:Esys}. The condition~\ref{def:Esys:weak} in that definition means that the following diagrams must commute for $g,h,k \in G$ and $x \in T(h)$
  \begin{linenomath*}
  \[
     \begin{tikzcd}
        G \ar[r,"W_g"] \ar[d,"S_x"] & G \ar[d,"S_{W_g(x)}"]
        \\ 
        G \ar[r,"W_g"] & G
     \end{tikzcd}
     \hspace{5em}
     \begin{tikzcd}
      T(k) \ar[r,"W_g"] \ar[d,"S_x"] & T(W_g k) \ar[d,"S_{W_g(x)}"]
      \\ 
      T(S_x k) \ar[r,"W_g"] & T(S_{W_g(x)} W_g k)
   \end{tikzcd}
  \]
  \end{linenomath*}
Since 
$\phi_g x = \phi_g x \phi_g^{-1} \phi_g$, the left-hand diagram above commutes, and since $\phi_g x (-) x^{-1} \phi_g^{-1} = (\phi_g x \phi_g^{-1}) \phi_g (-) \phi_g^{-1} (\phi_g x \phi_g^{-1} )^{-1}$, the right-hand square above commutes.

The condition~\ref{def:Esys:sub} in \cref{def:Esys} means that the following diagrams must commute for $g,h,k \in G$ and $x \in T(h)$.
\begin{linenomath*}
\[
   \begin{tikzcd}
      G \ar[r,"S_x"] \ar[d,"W_g"] & G \ar[d,"W_{S_x(g)}"]
      \\ 
      G \ar[r,"S_x"] & G
   \end{tikzcd}
   \hspace{5em}
   \begin{tikzcd}
    T(k) \ar[r,"S_x"] \ar[d,"W_g"] & T(S_x k) \ar[d,"W_{S_x(g)}"]
    \\ 
    T(W_g k) \ar[r,"S_x"] & T(S_x W_g k)
 \end{tikzcd}
\]
\end{linenomath*}
Since $x \phi_g = \phi_{x(g)} x$, the left-hand diagram commutes, and since 
then $ x \phi_g (-) \phi_g^{-1} x^{-1} = \phi_{x(g)} x (-) x^{-1} \phi_{x(g)}^{-1} $, the right-hand diagram commutes.

Condition~\ref{def:Esys:SxW} does not hold since (on $G$) $S_x \circ W_g = x \phi_g$, and in general this is not the identity.

Condition~\ref{def:Esys:SxId} does not hold since $S_x \tfid{g} = x 1_G x^{-1}$ which is not in general $x$.

Condition~\ref{def:Esys:SIdW} does not hold since (on $G$) $S_{\tfid{g}}  W_g = 1_G \phi_g = \phi_g$ which is not the identity in general.
\end{exa}

We introduce more convenient notation for weakening and substitution.

\begin{defi}\label{def:not-weak}
Let $A\in\cat{F}/\Gamma$. Recall that $W_A:\cat{F}/\Gamma\to\cat{F}/\ctxext{\Gamma}{A}$ acts on objects,
morphisms and terms. We introduce the infix form of weakening by $A\in\cat{F}/\Gamma$ to be
$\ctxwk{A}{\blank}$. Thus, we will write
\begin{linenomath*}
\begin{align*}
\ctxwk{A}{B} & \defeq W_A(B) & & \text{for $B\in\cat{F}/\Gamma$} \\
\ctxwk{A}{Q} & \defeq W_A(Q) & & \text{for $B\in\cat{F}/\Gamma$ and $Q\in\cat{F}/\ctxext{\Gamma}{B}$}\\
\ctxwk{A}{g} & \defeq W_A(g) & & \text{for $B\in\cat{F}/\Gamma$, $Q\in\cat{F}/\ctxext{\Gamma}{B}$ and $g\in T(Q)$}
\end{align*}
\end{linenomath*}
\end{defi}

\begin{defi}\label{def:not-subst}
Let $x\in T(A)$ for a family $A\in\cat{F}/\Gamma$. The infix form of substitution
by $x$ is taken to be $\subst{x}{\blank}$. Thus, we will write
\begin{linenomath*}
\begin{align*}
\subst{x}{P} & \defeq S_x(P) & & \text{for $P\in\cat{F}/\ctxext{\Gamma}{A}$} \\
\subst{x}{Q} & \defeq S_x(Q) & & \text{for $P\in\cat{F}/\ctxext{\Gamma}{A}$ and $Q\in\cat{F}/\ctxext{\ctxext{\Gamma}{A}}{P}$} \\
\subst{x}{g} & \defeq S_x(g) & & \text{for $P\in\cat{F}/\ctxext{\Gamma}{A}$, $Q\in\cat{F}/\ctxext{\ctxext{\Gamma}{A}}{P}$ and $g\in T(Q)$}
\end{align*}
\end{linenomath*}
\end{defi}

\begin{defi}\label{def:stratEsys}
A (pre-)E-system is \define{stratified} if its underlying category is
stratified in the sense of \cref{def:stratification}
and the underlying functor of each $W_A$ and $S_x$ is stratified with respect to the stratification induced on slices.

A morphism of stratified (pre-)E-systems is \define{stratified} if its underlying functor is stratified.

The category of stratified E-systems and stratified E-homomorphisms between them is denoted by $\strEsys$.
\end{defi}

\begin{exa}\label{eg:fin-set-stratEsys}
The E-system on $\mathcal{N}$ from \cref{eg:fin-set-Esys} is stratified
by the identity functor.
\end{exa}

\subsubsection{Pairing and the projections}
\label{ssec:e2ce-pair}

The composition $\ctxext{A}{P}$ of $A\in\cat{F}/\Gamma$ and $P\in\cat{F}/\ctxext{\Gamma}{A}$
behaves like a strict $\Sigma$-type.
In this section we define the pairing term
$\text{pair}^{A,P}\defeq\tfid{\ctxext{A}{P}}\in T(W_P(W_A(\ctxext{A}{P})))$
and the projections and prove several useful properties about them. The strictness
is found, among other things, in the fact that we can prove judgmental $\eta$-equality,
and that pairing is strictly associative.

In this section we make use of the infix form of the
weakening and substitution operations introduced in \cref{def:not-weak,def:not-subst}.

\begin{defi}\label{def:E-pair}
Let $x\in T(A)$ and $u\in T(S_x(P))$ for $A\in\cat{F}/\Gamma$ and $P\in\cat{F}/\ctxext{\Gamma}{A}$. 
We define the \define{term extension of $x$ and $u$} to be
\begin{linenomath*}
\begin{equation*}
\tmext{x}{u}\defeq \subst{u}{\subst{x}{\tfid{\ctxext{A}{P}}}}\in T(\ctxext{A}{P}).
\end{equation*}
\end{linenomath*}
It is well defined since
\begin{linenomath*}
\begin{equation}\label{pairing}
\begin{tikzcd}
T(W_{\ctxext{A}{P}}(\ctxext{A}{P}))	\arrow[r,"S_x"]
&	T(W_{\subst{x}{P}}(\ctxext{A}{P}))	\arrow[r,"S_u"]
&	T(\ctxext{A}{P})
\end{tikzcd}
\end{equation}
\end{linenomath*}
where $S_x \circ W_{\ctxext{A}{P}} \jdeq W_{\subst{x}{P}}$
because $S_x$ is a weakening homomorphism
and $S_x \circ W_A = \mathrm{Id}$.
\end{defi}

To prove anything about the term $\tmext{x}{u}$, we need the following property.

\begin{thm}\label{subst_by_tmext}
Let $x\in T(A)$ and $u\in T(S_x(P))$ for $A\in\cat{F}/\Gamma$ and $P\in\cat{F}/\ctxext{\Gamma}{A}$.
Then we have
\begin{linenomath*}
\begin{equation*}
S_{\tmext{x}{u}}\jdeq S_u\circ (S_x/P)
\colon \sys{E}/\ctxext{\Gamma}{\ctxext{A}{P}}\to \sys{E}/\Gamma
\end{equation*}
\end{linenomath*}
\end{thm}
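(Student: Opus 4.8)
Write $B \jdeq \ctxext{A}{P}$, so that $\ctxext{\Gamma}{B} \jdeq \ctxext{\ctxext{\Gamma}{A}}{P}$ and both sides of the claimed equation are functors with term structure $\cat{F}/\ctxext{\Gamma}{B} \to \cat{F}/\Gamma$. By \cref{def:E-pair} we have $\tmext{x}{u} \jdeq S_u\bigl((S_x/P)(\tfid{B})\bigr)$, where the inner operation is the sliced substitution $S_x/P$ and we use the identity $S_x \circ W_{B} \jdeq W_{\subst{x}{P}}$ recorded there. Set $v \jdeq (S_x/P)(\tfid{B}) \in T(W_{\subst{x}{P}}(B))$, so that $\tmext{x}{u} \jdeq S_u(v)$. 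The plan is to rewrite $S_{\tmext{x}{u}}$ using the substitution-preservation property enjoyed by every $S_x$ in an E-system (\cref{def:Esys}, condition~\ref{def:Esys:sub}, spelled out in \cref{rmk:substitution}), and then to collapse the auxiliary sliced factors using the ``section'' axioms of \cref{def:Esys}.

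The first move applies substitution-preservation to the outer substitution $S_u$. Specialising the commuting square of \cref{rmk:substitution} to the case where the intermediate family is terminal gives, for any substitution $S_z$ and any $y \in T(Q)$, the identity $S_{S_z(y)} \circ (S_z/Q) \jdeq S_z \circ S_y$. With $z \jdeq u$, $y \jdeq v$ and $Q \jdeq W_{\subst{x}{P}}(B)$ this reads $S_{\tmext{x}{u}} \circ \bigl(S_u/W_{\subst{x}{P}}(B)\bigr) \jdeq S_u \circ S_v$. Now condition~\ref{def:Esys:SxW} ($S_u \circ W_{\subst{x}{P}} \jdeq \catid{}$) shows, by functoriality of slicing at $B$, that $W_{\subst{x}{P}}/B$ is a section of $S_u/W_{\subst{x}{P}}(B)$; precomposing with this section yields $S_{\tmext{x}{u}} \jdeq S_u \circ S_v \circ (W_{\subst{x}{P}}/B)$.

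The second move identifies $S_v \circ (W_{\subst{x}{P}}/B)$ with $S_x/P$. Since $S_x$ is a pre-substitution homomorphism, so is its restriction $S_x/P$; applying its substitution-preservation square to the generic element $\tfid{B}$ gives $S_v \circ \bigl((S_x/P)/W_{B}(B)\bigr) \jdeq (S_x/P) \circ S_{\tfid{B}}$. Precomposing with $W_B/B$ and using condition~\ref{def:Esys:SIdW} ($S_{\tfid{B}} \circ W_B/B \jdeq \catid{}$) cancels the substitution by the generic element, leaving the single check that the sliced weakening $\bigl((S_x/P)/W_B(B)\bigr) \circ (W_B/B)$ coincides with $W_{\subst{x}{P}}/B$, which follows from $S_x$ being a weakening homomorphism (condition~\ref{def:Esys:sub}). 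Combining the two moves gives $S_{\tmext{x}{u}} \jdeq S_u \circ (S_x/P)$, as required.

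The main obstacle is the slice bookkeeping: the three substitutions involved act on three different slice categories ($\cat{F}/\ctxext{\ctxext{\Gamma}{A}}{P}$, $\cat{F}/\ctxext{\Gamma}{\subst{x}{P}}$, and $\cat{F}/\ctxext{\Gamma}{B}$), and the argument hinges on two facts that must be verified carefully at the level of sliced functors: that $S_u/W_{\subst{x}{P}}(B)$ admits $W_{\subst{x}{P}}/B$ as a section, and that the two sliced weakenings produced along the way agree. Once these are in place, the collapsing is forced by conditions~\ref{def:Esys:SxW} and~\ref{def:Esys:SIdW}; since we are comparing functors with term structure, every identity must be read simultaneously on objects, morphisms, and terms, which is where the bulk of the verification lies.
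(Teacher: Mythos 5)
Your proof is correct and follows essentially the same route as the paper's: both unfold $\tmext{x}{u}\jdeq S_u(S_x(\tfid{\ctxext{A}{P}}))$, apply the substitution-preservation squares for $S_u$ and then for $S_x/P$, and cancel the auxiliary factors using conditions~\ref{def:Esys:SxW} and~\ref{def:Esys:SIdW} together with the identity $S_x\circ W_{\ctxext{A}{P}}\jdeq W_{\subst{x}{P}}$. The only difference is presentational --- you precompose with explicit sections where the paper inserts identity composites --- and your version is in fact more explicit about the slice decorations that the paper's computation elides.
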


\begin{proof}
\begin{linenomath*}
\begin{align*}
S_{\tmext{x}{u}} & \jdeq S_{S_u(S_x(\tfid{\ctxext{A}{P}}))}
\tag{By~\ref{def:E-pair}}
\\
& \jdeq S_{S_u(S_x(\tfid{\ctxext{A}{P}}))}\circ (S_u\circ W_{S_x(P)})\circ (S_x\circ W_{A})
\tag{By~\ref{def:Esys}.\ref{def:Esys:SxW}}
\\
& \jdeq S_u \circ S_{S_x(\tfid{\ctxext{A}{P}})} \circ W_{S_x(P)}\circ S_x\circ W_A
\tag{By~\ref{def:Esys}.\ref{def:Esys:sub}}
\\
& \jdeq S_u \circ (S_x/P) \circ S_{\tfid{\ctxext{A}{P}}} \circ W_P\circ W_A
\tag{By~\ref{def:Esys}.\ref{def:Esys:sub}}
\\
& \jdeq S_u \circ (S_x/P) \circ S_{\tfid{\ctxext{A}{P}}} \circ W_{\ctxext{A}{P}}
\tag{By~\ref{def:preweak}.\ref{def:preweak:funct}}
\\
& \jdeq S_u \circ (S_x/P).
\tag{By~\ref{def:Esys}.\ref{def:Esys:SIdW}}
\end{align*}
\end{linenomath*}
\end{proof}

\begin{cor}\label{cor:tmext_assoc}
For every $x\in T(A)$, $u\in T(S_x(P))$ and $v\in T(S_{\tmext{x}{u}}(Q))$ we have
\begin{linenomath*}
\begin{equation*}
\tmext{(\tmext{x}{u})}{v}\jdeq \tmext{x}{(\tmext{u}{v})}\in T(\ctxext{\ctxext{A}{P}}{Q}).
\end{equation*}
\end{linenomath*}
\end{cor}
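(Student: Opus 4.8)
The plan is to reduce both nested term extensions to a single expression --- the ``triple substitution'' $\subst{v}{\subst{u}{\subst{x}{\tfid{\ctxext{\ctxext{A}{P}}{Q}}}}}$ of the identity term of the full composite family --- and then read off their equality. Fix $A\in\cat{F}/\Gamma$, $P\in\cat{F}/\ctxext{\Gamma}{A}$ and $Q\in\cat{F}/\ctxext{\ctxext{\Gamma}{A}}{P}$, and recall from \cref{def:E-pair} that $\tmext{\xi}{\upsilon}\jdeq\subst{\upsilon}{\subst{\xi}{\tfid{\ctxext{\mathcal{A}}{\mathcal{P}}}}}$ whenever $\xi\in T(\mathcal{A})$ and $\upsilon\in T(S_\xi(\mathcal{P}))$. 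Two strictness facts make both sides typecheck into the same term set: context extension is strictly associative, so $\ctxext{\ctxext{A}{P}}{Q}\jdeq\ctxext{A}{\ctxext{P}{Q}}$, and substitution is functorial, so $\subst{x}{\ctxext{P}{Q}}\jdeq\ctxext{\subst{x}{P}}{\subst{x}{Q}}$. Moreover \cref{subst_by_tmext} gives $S_{\tmext{x}{u}}(Q)\jdeq S_u(\subst{x}{Q})$, so that $v\in T(S_u(\subst{x}{Q}))$ and $\tmext{u}{v}$ is indeed defined, landing in $T(\ctxext{\subst{x}{P}}{\subst{x}{Q}})\jdeq T(\subst{x}{\ctxext{P}{Q}})$; this is exactly the hypothesis needed to form the outer extension $\tmext{x}{(\tmext{u}{v})}$.

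First I would unfold the left-hand side with \cref{def:E-pair} (taking $\mathcal{A}\jdeq\ctxext{A}{P}$ and $\mathcal{P}\jdeq Q$), obtaining $\tmext{(\tmext{x}{u})}{v}\jdeq\subst{v}{(S_{\tmext{x}{u}}/Q)(\tfid{\ctxext{\ctxext{A}{P}}{Q}})}$. Rewriting $S_{\tmext{x}{u}}\jdeq S_u\circ(S_x/P)$ by \cref{subst_by_tmext} and distributing the slice over the composite via the formal identity $(G\circ F)/Q\jdeq(G/F(Q))\circ(F/Q)$ turns $S_{\tmext{x}{u}}/Q$ into $(S_u/\subst{x}{Q})\circ((S_x/P)/Q)$. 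Symmetrically, unfolding the right-hand side (taking $\mathcal{A}\jdeq A$ and $\mathcal{P}\jdeq\ctxext{P}{Q}$) gives $\tmext{x}{(\tmext{u}{v})}\jdeq\subst{\tmext{u}{v}}{(S_x/\ctxext{P}{Q})(\tfid{\ctxext{A}{\ctxext{P}{Q}}})}$, and \cref{subst_by_tmext} applied to $\tmext{u}{v}$ rewrites $S_{\tmext{u}{v}}\jdeq S_v\circ(S_u/\subst{x}{Q})$. Both sides are thereby brought to the common shape $\subst{v}{(S_u/\subst{x}{Q})\bigl(F(\tfid{\ctxext{\ctxext{A}{P}}{Q}})\bigr)}$, with $F\jdeq(S_x/P)/Q$ on the left and $F\jdeq S_x/(\ctxext{P}{Q})$ on the right. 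Equality then follows from the purely formal compatibility of slicing with composition for functors with term structure, $(S_x/P)/Q\jdeq S_x/(\ctxext{P}{Q})$, under the canonical identification of $((\cat{F}/\ctxext{\Gamma}{A})/P)/Q$ with $(\cat{F}/\ctxext{\Gamma}{A})/\ctxext{P}{Q}$, after which the two expressions are literally identical.

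The conceptually routine but technically delicate part, and the main obstacle, is the bookkeeping of the slice categories and of the canonical identifications between iterated slices and slices over composites: these are precisely what make expressions such as $(S_x/P)/Q$, $\subst{x}{Q}$ and $S_x/(\ctxext{P}{Q})$ well typed and directly comparable at all. Each rewrite above tacitly uses one such identification together with the fact that the sliced substitution functors are themselves functors with term structure preserving $\tfid{\blank}$. Verifying that the identifications are strictly compatible with $\ctxext{\blank}{\blank}$, with substitution, and with the projection terms is where care is required, since the whole argument depends on all the relevant equalities holding on the nose rather than merely up to isomorphism.
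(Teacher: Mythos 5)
Your proposal is correct and follows essentially the same route as the paper: both arguments rest on \cref{subst_by_tmext}, rewriting $S_{\tmext{x}{u}}$ and $S_{\tmext{u}{v}}$ as composites of sliced substitution functors and using the strict identification $(S_x/P)/Q\jdeq S_x/\ctxext{P}{Q}$ to bring the two sides to the same expression. The paper phrases this as the single chain $S_v\circ (S_{\tmext{x}{u}}/Q)\jdeq S_v\circ (S_u/\subst{x}{Q})\circ (S_x/\ctxext{P}{Q}) \jdeq S_{\tmext{u}{v}}\circ (S_x/\ctxext{P}{Q})$ and reads off the conclusion; your version just makes explicit the evaluation at $\tfid{\ctxext{\ctxext{A}{P}}{Q}}$ and the slice bookkeeping that the paper leaves implicit.
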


\begin{proof}
By \cref{subst_by_tmext}, we have
$S_v\circ (S_{\tmext{x}{u}}/Q)\jdeq S_v\circ (S_u/\subst{x}{Q})\circ (S_x/\ctxext{P}{Q}) \jdeq S_{\tmext{u}{v}}\circ (S_x/\ctxext{P}{Q})$,
so associativity of term extension follows.
\end{proof}

\begin{defi}\label{def:E-proj}
  Let $A\in\cat{F}/\Gamma$ and $P\in\cat{F}/\ctxext{\Gamma}{A}$. We define
  \begin{linenomath*}
\begin{align*}
\cprojfstf{A}{P} & \defeq \ctxwk{P}{\tfid{A}}\in T(\ctxwk{\ctxext{A}{P}}{A})
\\
\cprojsndf{A}{P} & \defeq \tfid{P}\in T(\ctxwk{P}{P})
\end{align*}
\end{linenomath*}
\end{defi}

\begin{lem}\label{lem:Ehpres-pairproj}
Let $F \colon \sys{E} \to \sys{D}$ be an E-homomorphism.
For every $A\in\cat{F}/\Gamma$, $P\in\cat{F}/\ctxext{\Gamma}{A}$,
$x\in T(A)$ and $u\in T(S_x(P))$, it is
\begin{linenomath*}
\[
F(\tmext{x}{u})\jdeq \tmext{F(x)}{F(u)},
\qquad
F(\cprojfstf{A}{P}) \jdeq \cprojfstf{F(A)}{F(P)},
\qquad\text{and}\qquad
F(\cprojsndf{A}{P}) \jdeq \cprojsndf{F(A)}{F(P)}
\]
\end{linenomath*}
\end{lem}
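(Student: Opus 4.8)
The plan is to unfold the three relevant definitions---\cref{def:E-pair} for the term extension and \cref{def:E-proj} for the two projections---and then push $F$ through each expression using the three preservation properties that $F$ enjoys by being a pre-E-homomorphism (\cref{def:preEhom}): that $F$ is a pre-substitution homomorphism, a pre-weakening homomorphism, and a pre-projection homomorphism. Since these homomorphism conditions are phrased as commuting squares of functors \emph{with term structure}, they hold in particular on terms, which is exactly the level at which all three identities in the statement live. Functoriality of the underlying functor supplies the only remaining ingredient.

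I would dispatch the two projections first, since they are immediate. For the second projection, $\cprojsndf{A}{P} \jdeq \tfid{P}$ by \cref{def:E-proj}, so preservation of identity terms gives
\begin{linenomath*}
\[
F(\cprojsndf{A}{P}) \jdeq F(\tfid{P}) \jdeq \tfid{F(P)} \jdeq \cprojsndf{F(A)}{F(P)}.
\]
\end{linenomath*}
For the first projection, $\cprojfstf{A}{P} \jdeq \ctxwk{P}{\tfid{A}} \jdeq W_P(\tfid{A})$; applying first that $F$ is a pre-weakening homomorphism and then that it preserves identity terms yields $F(W_P(\tfid{A})) \jdeq W_{F(P)}(F(\tfid{A})) \jdeq W_{F(P)}(\tfid{F(A)}) \jdeq \cprojfstf{F(A)}{F(P)}$.

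For the term extension, recall from \cref{def:E-pair} that $\tmext{x}{u} \jdeq S_u(S_x(\tfid{\ctxext{A}{P}}))$. I would compute $F(\tmext{x}{u})$ by pushing $F$ inward across the two substitutions and then across the identity term:
\begin{linenomath*}
\begin{align*}
F(\tmext{x}{u})
&\jdeq S_{F(u)}\bigl(F(S_x(\tfid{\ctxext{A}{P}}))\bigr)
\jdeq S_{F(u)}\bigl(S_{F(x)}(F(\tfid{\ctxext{A}{P}}))\bigr)
\\
&\jdeq S_{F(u)}\bigl(S_{F(x)}(\tfid{F(\ctxext{A}{P})})\bigr)
\jdeq S_{F(u)}\bigl(S_{F(x)}(\tfid{\ctxext{F(A)}{F(P)}})\bigr)
\jdeq \tmext{F(x)}{F(u)}.
\end{align*}
\end{linenomath*}
The one step that requires a moment's care is the identification $F(\ctxext{A}{P}) \jdeq \ctxext{F(A)}{F(P)}$ used in the penultimate equality: since $\ctxext{A}{P}$ denotes the composite $A \circ P$ in $\cat{F}$, this is nothing more than functoriality of $F$, and it brings the expression into the shape demanded by \cref{def:E-pair}. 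I do not expect any genuine obstacle here; the only thing to watch is that each intermediate term sits in the correct slice term structure, so that the substitution maps $S_{F(x)}$ and $S_{F(u)}$ are applied between the right sets, exactly as the typing displayed in \eqref{pairing} prescribes.
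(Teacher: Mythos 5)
Your proof is correct and follows the same route as the paper's: unfold \cref{def:E-pair} and \cref{def:E-proj}, then push $F$ through using the pre-substitution, pre-weakening, and pre-projection homomorphism conditions (the paper compresses all three into ``since $F$ is an E-homomorphism''). Your extra care about $F(\ctxext{A}{P}) \jdeq \ctxext{F(A)}{F(P)}$ being plain functoriality is a correct, if implicit, step in the paper's version as well.
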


\begin{proof}
  We compute:
\begin{linenomath*}
\begin{gather*}
F(\tmext{x}{u}) \jdeq F(\subst{u}{\subst{x}{\tfid{\Efcmp{A}{P}}}}) \jdeq \subst{Fu}{\subst{Fx}{\tfid{\Efcmp{FA}{FP}}}} \jdeq \tmext{Fx}{Fu},
\\[1ex]
F(\cprojfstf{A}{P}) \jdeq F(\ctxwk{P}{\tfid{A}}) \jdeq \ctxwk{FP}{\tfid{FA}} \jdeq \cprojfstf{FA}{FP},
\\[1ex]
F(\cprojsndf{A}{P}) \jdeq F(\tfid{P}) \jdeq \tfid{FP} \jdeq \cprojsndf{FA}{FP}
\end{gather*}
\end{linenomath*}
where the outer equalities hold by definition,
and the inner ones since $F$ is an E-homomorphism.
\end{proof}

\begin{lem}\label{lem:pairproj}
For every $A\in\cat{F}/\Gamma$, $P\in\cat{F}/\ctxext{\Gamma}{A}$,
$x \in T(A)$ and $u \in  T(S_x(P))$, it is
\begin{linenomath*}
\begin{align*}
\subst{\tmext{x}{u}}{\cprojfstf{A}{P}} &\jdeq x,
\\
\subst{\tmext{x}{u}}{\cprojsndf{A}{P}} &\jdeq u,
\\                                           
\tmext{\cprojfstf{A}{P}}{\cprojsndf{A}{P}} &\jdeq
\tfid{\ctxext{A}{P}}.
\end{align*}
\end{linenomath*}
\end{lem}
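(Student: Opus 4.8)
The plan is to reduce each of the three equations to the E-system axioms of \cref{def:Esys} by rewriting the substitution functor attached to a term extension. The single most useful tool is \cref{subst_by_tmext}, which gives $S_{\tmext{x}{u}} \jdeq S_u \circ (S_x/P)$; together with the fact that each $S_x$ and each $W_A$ is a pre-E-homomorphism (\ref{def:Esys}.\ref{def:Esys:sub} and~\ref{def:Esys}.\ref{def:Esys:weak}), this lets me push $S_{\tmext{x}{u}}$ through the projection terms of \cref{def:E-proj} and then collapse the outcome with axioms~\ref{def:Esys}.\ref{def:Esys:SxW} and~\ref{def:Esys}.\ref{def:Esys:SxId}.

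For the first two equations I unfold $\cprojfstf{A}{P} \jdeq \ctxwk{P}{\tfid{A}}$ and $\cprojsndf{A}{P} \jdeq \tfid{P}$. Using that the pre-weakening-homomorphism square for $S_x$ at the family $P$ reads $(S_x/P)\circ W_P \jdeq W_{S_x(P)}\circ S_x$, I compute
\begin{align*}
\subst{\tmext{x}{u}}{\cprojfstf{A}{P}}
&\jdeq S_u\bigl((S_x/P)(W_P(\tfid{A}))\bigr)
\tag{\cref{subst_by_tmext}}\\
&\jdeq S_u\bigl(W_{S_x(P)}(S_x(\tfid{A}))\bigr)
\tag{$S_x$ a weakening hom.}\\
&\jdeq S_u(W_{S_x(P)}(x))
\tag{\ref{def:Esys}.\ref{def:Esys:SxId}}\\
&\jdeq x
\tag{\ref{def:Esys}.\ref{def:Esys:SxW}}
\end{align*}
and, using that the term action of $S_x/P$ coincides with that of $S_x$ (\cref{rmk:slT}), so that the projection-homomorphism clause gives $(S_x/P)(\tfid{P}) \jdeq S_x(\tfid{P}) \jdeq \tfid{S_x(P)}$,
\begin{align*}
\subst{\tmext{x}{u}}{\cprojsndf{A}{P}}
&\jdeq S_u\bigl((S_x/P)(\tfid{P})\bigr)
\tag{\cref{subst_by_tmext}}\\
&\jdeq S_u(\tfid{S_x(P)})
\tag{$S_x$ a projection hom.}\\
&\jdeq u.
\tag{\ref{def:Esys}.\ref{def:Esys:SxId}}
\end{align*}

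The third equation is the strict $\eta$-law, and it is the step I expect to be the main obstacle. Both sides lie in $T(W_{\Efcmp{A}{P}}(\Efcmp{A}{P}))$; to read the left-hand side as a term extension I first check, using the definitional identity $\ctxext{\Gamma}{\Efcmp{A}{P}} \jdeq \ctxext{\ctxext{\Gamma}{A}}{P}$ that expresses strictness of the $\Sigma$-type, that $\cprojfstf{A}{P}$ has type $W_{\Efcmp{A}{P}}(A) \jdeq W_P(W_A(A))$ and that $\cprojsndf{A}{P}$ carries the matching substituted type. Unfolding via \cref{def:E-pair} turns the left-hand side into the iterated substitution $S_{\cprojsndf{A}{P}}\bigl(S_{\cprojfstf{A}{P}}(\tfid{W_{\Efcmp{A}{P}}(\Efcmp{A}{P})})\bigr)$ of the generic element of the weakened $\Sigma$-type, which I would reduce one projection at a time. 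The decisive ingredient is axiom~\ref{def:Esys}.\ref{def:Esys:SIdW}, the only $\eta$-flavoured axiom, which collapses the weakened generic element back to $\tfid{\Efcmp{A}{P}}$; the homomorphism clauses~\ref{def:Esys}.\ref{def:Esys:sub} and~\ref{def:Esys}.\ref{def:Esys:weak}, together with the projection identities of \cref{lem:Ehpres-pairproj}, are then used to commute the nested weakenings and substitutions past one another. I expect the real difficulty to be combinatorial rather than conceptual: keeping the indices of the iterated weakenings and substitutions aligned. Once that bookkeeping is in place, axioms~\ref{def:Esys}.\ref{def:Esys:SxW}, \ref{def:Esys}.\ref{def:Esys:SxId} and~\ref{def:Esys}.\ref{def:Esys:SIdW} finish the computation.
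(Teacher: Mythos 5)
Your proofs of the first two identities are correct and essentially the same as the paper's: unfold the projections via \cref{def:E-proj}, apply \cref{subst_by_tmext}, push $S_x$ through the weakening (resp.\ through $\tfid{P}$) using that $S_x$ is a pre-E-homomorphism, and finish with axioms \ref{def:Esys}.\ref{def:Esys:SxW} and \ref{def:Esys}.\ref{def:Esys:SxId}. Only the order in which the last two axioms are invoked differs from the paper, which is immaterial.

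The third identity is where the real content of the lemma lies, and there your argument has a gap: you name the right ingredients but explicitly defer the computation ("once that bookkeeping is in place\dots"), so the $\eta$-law is asserted rather than proved. Concretely, after the (correct) reduction of the left-hand side to $S_{\cprojsndf{A}{P}}\bigl(S_{\cprojfstf{A}{P}}(\tfid{\ctxwk{\ctxext{A}{P}}{\ctxext{A}{P}}})\bigr)$ and the observation that $\tfid{\ctxwk{\ctxext{A}{P}}{\ctxext{A}{P}}} \jdeq (W_{\ctxext{A}{P}}/\ctxext{A}{P})(\tfid{\ctxext{A}{P}})$ (the projection-homomorphism property of $W_{\ctxext{A}{P}}$), the step that must actually be verified is the functor identity
\[
S_{\cprojfstf{A}{P}} \circ \bigl(W_{\ctxext{A}{P}}/\ctxext{A}{P}\bigr) \jdeq W_P/P,
\]
which the paper obtains by factoring $W_{\ctxext{A}{P}} \jdeq W_P \circ W_A$ via~\ref{def:preweak}.\ref{def:preweak:funct}, commuting $S_{\ctxwk{P}{\tfid{A}}}$ past $W_P$ using that $W_P$ is a substitution homomorphism (\ref{def:Esys}.\ref{def:Esys:weak}), and collapsing $S_{\tfid{A}} \circ W_A/A$ with~\ref{def:Esys}.\ref{def:Esys:SIdW}. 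A second application of~\ref{def:Esys}.\ref{def:Esys:SIdW} then gives $S_{\cprojsndf{A}{P}} \circ (W_P/P) \jdeq \catid{}$, and chasing $\tfid{\ctxext{A}{P}}$ around the resulting square of term-set maps yields the claim. None of this is beyond the tools you list, and your citation of \cref{lem:Ehpres-pairproj} is not what is needed here (the relevant homomorphisms are $W_P$ and $S_{\tfid{A}}$ acting on slices, not an ambient E-homomorphism); you should write out this chain of equalities explicitly to close the argument.
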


\begin{proof}
To show that $\subst{\tmext{x}{u}}{\cprojfstf{A}{P}} \jdeq x$, we use that
$S_{\tmext{x}{u}}\jdeq S_u\circ S_x/P$ to show that
\begin{linenomath*}
\begin{align*}
\subst{\tmext{x}{u}}{\cprojfstf{A}{P}}
  & \jdeq 
\subst{\tmext{x}{u}}{\left(\ctxwk{P}{\tfid{A}}\right)}
\tag{By~\ref{def:E-proj}}
\\
  & \jdeq
\subst{u}{\subst{x}{\left(\ctxwk{P}{\tfid{A}}\right)}}
\tag{By~\cref{subst_by_tmext}}
\\
  & \jdeq
\subst{u}{\left(\ctxwk{\subst{x}{P}}{\subst{x}{\tfid{A}}}\right)}
\tag{By~\ref{def:Esys}.\ref{def:Esys:sub}}
\\
  & \jdeq
\subst{x}{\tfid{A}}
\tag{By~\ref{def:Esys}.\ref{def:Esys:SxW}}
\\
  & \jdeq
x
\tag{By~\ref{def:Esys}.\ref{def:Esys:SxId}}
\end{align*}
\end{linenomath*}

To show that $\subst{\tmext{x}{u}}{\cprojsndf{A}{P}}\jdeq u$, note that
\begin{linenomath*}
\[
\subst{\tmext{x}{u}}{\cprojsndf{A}{P}}
  \jdeq
\subst{u}{\subst{x}{\tfid{P}}}
  \jdeq
\subst{u}{\tfid{\subst{x}{P}}}
  \jdeq
u
\]
\end{linenomath*}
Finally note that
\begin{linenomath*}
\[
\ctxext{\ctxwk{\ctxext{A}{P}}{A}}{(W_{\ctxext{A}{P}}/A)(P)}
\jdeq \ctxwk{\ctxext{A}{P}}{\ctxext{A}{P}}
\colon\Gamma.A.P.\ctxwk{\ctxext{A}{P}}{(\ctxext{A}{P})} \to \Gamma.A.P
\]
\end{linenomath*}
and $\tfid{\ctxwk{\ctxext{A}{P}}{\ctxext{A}{P}}}
\jdeq (W_{\ctxext{A}{P}}/\ctxext{A}{P})(\tfid{\ctxext{A}{P}})$.
Thus $\tmext{\cprojfstf{A}{P}}{\cprojsndf{A}{P}}\jdeq \tfid{\ctxext{A}{P}}$
follows from the commutativity of the outer square in the diagram below.
\begin{linenomath*}
\[\begin{tikzcd}[column sep=large,row sep=large]
T( W_{\ctxwk{\ctxext{A}{P}}{\ctxext{A}{P}}}
		(\ctxwk{\ctxext{A}{P}}{\ctxext{A}{P}}) )
	\arrow[r,"S_{\cprojfstf{A}{P}}"]
&	T(W_P/P (\ctxwk{\ctxext{A}{P}}{\ctxext{A}{P}}))
	\arrow[d,"S_{\cprojsndf{A}{P}}"]
\\
T(\ctxwk{\ctxext{A}{P}}{\ctxext{A}{P}})
	\arrow[u,"W_{\ctxext{A}{P}}/\ctxext{A}{P}"]
	\arrow[ur,"W_P/P"]
	\arrow[r,"\catid{}"]
&	T(\ctxwk{\ctxext{A}{P}}{\ctxext{A}{P}})
\end{tikzcd}\]
\end{linenomath*}
The bottom-right triangle commutes by~\ref{def:Esys}.\ref*{def:Esys:SIdW}.
For the top-left one:
\begin{linenomath*}
\begin{align*}
S_{\ctxwk{P}{\tfid{A}}} \circ W_{\ctxext{A}{P}}/\ctxext{A}{P}
&\jdeq
S_{\ctxwk{P}{\tfid{A}}} \circ W_P/(W_A(\ctxext{A}{P}))
\circ W_A/\ctxext{A}{P}
\tag{By~\ref{def:preweak}.\ref{def:preweak:funct}}
\\&\jdeq
\left( W_P \circ S_{\tfid{A}} \circ W_A/A \right)/P
\tag{By~\ref{def:Esys}.\ref{def:Esys:weak}}
\\&\jdeq
W_P/P.
\tag{By~\ref{def:Esys}.\ref{def:Esys:SIdW}}
\end{align*}
\end{linenomath*}
\end{proof}

\begin{thm}\label{thm:pairing}
For every $A\in\cat{F}/\Gamma$ and $P\in\cat{F}/\ctxext{\Gamma}{A}$,
the map
\begin{linenomath*}
\[\begin{tikzcd}[row sep=tiny]
\coprod_{x\in T(A)} T(\subst{x}{P}) \arrow[r]
&	T(\ctxext{A}{P})
\\
(x,u) \arrow[r,mapsto]	&	\tmext{x}{u}
\end{tikzcd}\]
\end{linenomath*}
is a bijection.
\end{thm}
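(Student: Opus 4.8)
The plan is to exhibit an explicit inverse to the pairing map $(x,u)\mapsto\tmext{x}{u}$, assembled from the two projections of \cref{def:E-proj}, and then to read off bijectivity from the $\beta$- and $\eta$-laws already recorded in \cref{lem:pairproj}. Concretely, I would define the candidate inverse
\[
\beta \colon T(\ctxext{A}{P}) \longrightarrow \coprod_{x\in T(A)} T(\subst{x}{P}),
\qquad
t \longmapsto \bigl(\subst{t}{\cprojfstf{A}{P}},\ \subst{t}{\cprojsndf{A}{P}}\bigr),
\]
where $\subst{t}{\blank}\jdeq S_t$ is the substitution operation attached to the term $t\in T(\ctxext{A}{P})$, acting on the slice over $\ctxext{A}{P}$. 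The first component lands in $T(A)$: since $\cprojfstf{A}{P}\jdeq W_P(\tfid{A})$ carries the family $W_P(W_A(A))\jdeq W_{\ctxext{A}{P}}(A)$ by functoriality of weakening (\ref{def:preweak}.\ref{def:preweak:funct}), and $S_t\circ W_{\ctxext{A}{P}}\jdeq\catid{}$ by \ref{def:Esys}.\ref{def:Esys:SxW}, the image $\subst{t}{\cprojfstf{A}{P}}$ is a term of $A$.

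For the well-typedness of the second component — that $\subst{t}{\cprojsndf{A}{P}}$ is a term of $\subst{x}{P}$ where $x\jdeq\subst{t}{\cprojfstf{A}{P}}$ — I would not check this by hand, but instead extract it from \cref{lem:Ehpres-pairproj}, whose hypotheses force the image of a well-formed term extension under an E-homomorphism to again be a well-formed term extension. The key observation here is that each $S_t$ is a pre-E-homomorphism by \ref{def:Esys}.\ref{def:Esys:sub}, hence an E-homomorphism between the slice E-systems, so \cref{lem:Ehpres-pairproj} applies to it.

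That $\beta$ is a one-sided inverse is immediate from \cref{lem:pairproj}: in $\beta(\tmext{x}{u})$ the single operation $S_{\tmext{x}{u}}$ is applied to the two projections, and the first two equations of \cref{lem:pairproj} return $x$ and $u$ respectively, whence $\beta(\tmext{x}{u})\jdeq(x,u)$. For the other composite I would apply $S_t$ to the $\eta$-law $\tmext{\cprojfstf{A}{P}}{\cprojsndf{A}{P}}\jdeq\tfid{\ctxext{A}{P}}$ (the third equation of \cref{lem:pairproj}) and use that $S_t$ preserves term extension by \cref{lem:Ehpres-pairproj}, obtaining
\[
\tmext{\subst{t}{\cprojfstf{A}{P}}}{\subst{t}{\cprojsndf{A}{P}}}
\jdeq S_t\bigl(\tmext{\cprojfstf{A}{P}}{\cprojsndf{A}{P}}\bigr)
\jdeq S_t(\tfid{\ctxext{A}{P}})
\jdeq t,
\]
the last step by \ref{def:Esys}.\ref{def:Esys:SxId}. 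Hence the pairing map is a bijection.

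The main obstacle is purely bookkeeping of slices and fibers: one must confirm that the slices over $\ctxext{A}{P}$ on which $S_t$ operates are the correct E-systems, and that the families carrying $\cprojfstf{A}{P}$ and $\cprojsndf{A}{P}$ are sent by $S_t$ back to $A$ and $P$, so that $\beta$ genuinely targets the stated coproduct $\coprod_{x\in T(A)}T(\subst{x}{P})$. All of these identifications are supplied by the substitution-preservation built into \cref{lem:Ehpres-pairproj} together with \ref{def:Esys}.\ref{def:Esys:SxW}; once they are in place, every displayed equality above is a direct citation of \cref{lem:pairproj}, \cref{lem:Ehpres-pairproj}, or an E-system axiom, requiring no further computation.
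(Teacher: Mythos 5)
Your proposal is correct and follows essentially the same route as the paper: the same candidate inverse $w\mapsto(\subst{w}{\cprojfstf{A}{P}},\subst{w}{\cprojsndf{A}{P}})$, with one composite handled by the first two equations of \cref{lem:pairproj} and the other by applying \cref{lem:Ehpres-pairproj} to the E-homomorphism $S_w$ together with the $\eta$-law $\tmext{\cprojfstf{A}{P}}{\cprojsndf{A}{P}}\jdeq\tfid{\ctxext{A}{P}}$ and axiom \ref{def:Esys}.\ref{def:Esys:SxId}. The only difference is that you spell out the well-typedness bookkeeping for the inverse, which the paper leaves implicit.
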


\begin{proof}
The inverse
to the given map is defined
by $w\mapsto (\subst{w}{\cprojfstf{A}{P}},\subst{w}{\cprojsndf{A}{P}})$.
Thanks to \cref{lem:pairproj} it is enough to show
that, for every $w\in T(\ctxext{A}{P})$, one has
\begin{linenomath*}
\begin{equation*}
\tmext{\subst{w}{\cprojfstf{A}{P}}}{\subst{w}{\cprojsndf{A}{P}}} \jdeq w.
\end{equation*}
\end{linenomath*}
\cref{lem:Ehpres-pairproj} gives us that
y\begin{linenomath*}
\begin{equation*}
\tmext{\cprojfst{A}{P}{w}}{\cprojsnd{A}{P}{w}}
  \jdeq
\subst{w}{(\tmext{\cprojfstf{A}{P}}{\cprojsndf{A}{P}})}.
\end{equation*}
\end{linenomath*}
Thus the claim follows from
$\tmext{\cprojfstf{A}{P}}{\cprojsndf{A}{P}}\jdeq \tfid{\ctxext{A}{P}}$,
which holds again by \cref{lem:pairproj}.
\end{proof}

One consequence of this theorem is that the set $T(\catid{\Gamma})$ has exactly one element, see \cref{cor:Etrm-homiso}.

\subsection{Characterising B-systems as stratified E-systems}
\label{ssec:b2e}

In this section we construct an equivalence of categories between B-systems and the subcategory of $\Esys$ on the stratified E-systems and stratified homomorphisms.
The functor from B-systems to stratified E-systems is constructed in \cref{ssec:b2se}, the one in the other direction in \cref{ssec:se2b}.
That these form  an equivalence is shown in \cref{ssec:b2e:eqv}.

\subsubsection{From B-systems to stratified E-systems}
\label{ssec:b2se}

Note first that
we obtain a functor $\bfr\to\Cat$ as the composition
\begin{linenomath*}
\[\begin{tikzcd}[column sep=3em]
\bfr \ar[r,"\Btorttr"]
&	\rttr \ar[r,"\Gfctr"]
&	\grph \ar[r,"\Ffctr"]
&	\Cat
\end{tikzcd}\]
\end{linenomath*}
where $\Gfctr$ and $\Ffctr$ are the functors from \cref{def:rttr-to-cat}
and $\Rfctr$ is the forgetful functor from \cref{def:B-to-rttr}.
Arrows in $\Ffctr\Gfctr\Btorttr(\sys{B})$ are of the form
$(X,k) \colon (n+k,X) \to (n,\ft^k(X))$, for $X \in B_{n+k}$.

We begin by equipping $\Ffctr\Gfctr\Btorttr(\sys{B})$ with a term structure.
The B-frame $\sys{B}$ already provides us with sets of terms for the edges of $\Gfctr\Btorttr(\sys{B})$, namely $T(X,1) \defeq \partial^{-1}(X)$.
In order to construct sets of terms for $(X,k)$ for each $k$,
which we do in \cref{constr:BtoTrmStr},
we assume that $\sys{B}$ comes with a substitution structure in the sense of \cref{def:subst}.
We then show in \cref{constr:BtoTrmStrFun}
that $\Ffctr\Gfctr\Btorttr$ gives rise to a functor $\BtoTC$
from B-frames with substitution to \scats with term structure.
Next, in \cref{constr:BtopreE} we provide $\BtoTC(\sys{B})$ with a pre-E-system structure when $\sys{B}$ is a B-system,
and prove in \cref{lem:BtoEprehom}
that $\BtoTC$ preserves and reflects weakening and projection homomorphisms.
Finally,
we show in \cref{lem:BtoE-fun} that
the functor $\BtoTC$ lifts to a full and faithful functor from B-systems to stratified E-systems.

\begin{prob}\label{prob:BtoTrmStr}
For every B-frame $\sys{B}$ with substitution structure $S$,
to construct a term structure $T$ on the strict category
$\F_{\sys{B}} :\jdeq \Ffctr\Gfctr\Btorttr(\sys{B})$ and to construct, for any $t \in T(X,k)$, a homomorphism of B-frames
$\Slist{t}{k} \colon \sys{B}/X \to \sys{B}/\ft^k(X)$.
\end{prob}

\begin{construction}{prob:BtoTrmStr}\label{constr:BtoTrmStr}
We define the term structure by induction on $n \in \N$.
More precisely, for any $X \in B_n$ and $k \leq n$ we will define a set
$T(X,k)$ and, for any $t \in T(X,k)$, a homomorphism of B-frames
$\Slist{t}{k} \colon \sys{B}/X \to \sys{B}/\ft^k(X)$.

For every $n$ and $X \in B_n$, let
$T(X,0) :\jdeq \{*\}$ and
$S_*^0 :\jdeq \catid{} \colon \sys{B}/X \to \sys{B}/X$.

For every $n$ and $X \in B_{n+1}$, let
\begin{linenomath*}
\begin{equation}\label{constr:BtoTrmStr:T1}
T(X,1) :\jdeq \bd^{-1}(X) \subseteq \tilde{B}_{n+1}
\end{equation}
\end{linenomath*}
and $S_x^1 \coloneqq S_x \colon \sys{B}/X \to \sys{B}/\ft(X)$
which is a homomorphism of B-frames by assumption.

Suppose now that, for every $m \leq n$ and $Y \in B_m$, we have defined sets
$T(Y,k)$ for $k \leq m$ and, for every $t \in T(Y,k)$,
a homomorphism of B-frames $\Slist{t}{k} \colon \sys{B}/Y \to \sys{B}/\ft^k(Y)$.
Let $X \in B_{n+1}$ and define, for $1 \leq k \leq n$,
\begin{linenomath*}
\begin{equation}\label{constr:BtoTrmStr:Tsuc}
T(X,k+1) :\jdeq \coprod_{t \in T(\ft(X),k)} T(S_t(X),1).
\end{equation}
\end{linenomath*}
and, for $(t,x) \in T(X,k+1)$, a homomorphism of B-frames
$\Slist{(t,x)}{k+1}$ as the composite below
\begin{linenomath*}
\begin{equation}\label{constr:BtoTrmStr:Str}
\begin{tikzcd}
\sys{B}/X	\ar[rr,"\Slist{(t,x)}{k+1}"] \ar[dr,"{\Slist{t}{k}/X}"{swap}]
&&	\sys{B}/\ft^{k+1}(X)
\\
&	\sys{B}/\Slist{t}{k}(X)	\ar[ur,"S_x"{swap}]	&
\end{tikzcd}
\end{equation}
\end{linenomath*}
where $S_x$ comes from the substitution structure
and $\Slist{t}{k}$ from the inductive hypothesis.
\end{construction}

\begin{rems}\label{rmk:BtoTrmStr}
\hfill
\begin{enumerate}
\item\label{rmk:BtoTrmStr:pressl}
For every B-frame $\sys{B}$ and $X \in B_n$,
we have an isomorphism of strict categories
$\Ffctr\Gfctr\Btorttr(\sys{B}/X) \cong \Ffctr\Gfctr\Btorttr(\sys{B})/(n,X)$
natural in $\sys{B}$ which maps $(i,Y)$ to $(n+i,Y)$
and it is the identity on arrows.
It follows that, when $\sys{B}$ is a B-system,
we can choose the identity as the action on the term structure.
Therefore this isomorphism of categories lifts to an
isomorphism of categories with term structure
$(\Ffctr\Gfctr\Btorttr(\sys{B}/X),T_{\sys{B}}) \cong (\Ffctr\Gfctr\Btorttr(\sys{B})/(n,X),T_{\sys{B}})$.

Once we establish an E-system structure on $\Ffctr\Gfctr\Btorttr(\sys{B})$,
we will see that this isomorphism is in fact an isomorphism of E-systems.

\item\label{rmk:BtoTrmStr:pressub}
Let $\sys{A}$ and $\sys{B}$ be B-frames with substitution structure
and $H \colon \sys{A} \to \sys{B}$ be a homomorphism of B-frames.
If $H$ preserves the substitution structure,
then for every $X\in B_{n+k}$ and $t \in T(X,k)$ the square
\begin{linenomath*}
\begin{equation}\label{rmk:BtoTrmStr:pressub:sq}
\begin{tikzcd}[column sep=6em,row sep=3em]
\sys{A}/X	\ar[d,"\Slist{t}{k}"{swap}] \ar[r,"H/X"]
&	\sys{B}/H(X)	\ar[d,"\Slist{\tilde{H}(t)}{k}"]
\\
\sys{A}/\ft^k(X)	\ar[r,"{H/\ft^k(X)}"]
& \sys{B}/\ft^kH(X)
\end{tikzcd}
\end{equation}
\end{linenomath*}
commutes in $\Bfr$, where
$\tilde{H}(t) :\jdeq (\tilde{H}(t_1),\ldots,\tilde{H}(t_k))$.
Indeed, by definition of $\Slist{t}{k}$ in~\eqref{constr:BtoTrmStr:Str},
the square~\eqref{rmk:BtoTrmStr:pressub:sq} factors vertically into $k$ squares of the form in
\cref{def:Bsys-pres}.\ref{def:Bsys-pres:sub},
each of which commutes if $H$ preserves the substitution structure.

\item\label{rmk:BtoTrmStr:presweak}
Let $\sys{A}$ and $\sys{B}$ be B-frames with substitution structure
and $H \colon \sys{A} \to \sys{B}$ be a homomorphism of B-frames.
Suppose that $\sys{A}$ and $\sys{B}$ have weakening structure and define,
for every $X \in B_{n+k}$, the homomorphism of B-frames
$W_X^k \colon \sys{B}/\ft^k(X) \to \sys{B}/X$ as the composite
\begin{linenomath*}
\begin{equation}\label{rmk:BtoTrmStr:W}
\begin{tikzcd}[column sep=4em]
\sys{B}/\ft^k(X) \ar[rrr,bend left=3ex,"W_X^k"] \ar[r,"{W_{\ft^{k-1}(X)}}"{swap}]
&	\cdots	\ar[r,"{W_{\ft(X)}}"{swap}]	&	\sys{B}/\ft(X)	\ar[r,"{W_X}"{swap}]
&	\sys{B}/X
\end{tikzcd}
\end{equation}
\end{linenomath*}
which we take to be $\catid{\sys{B}}$ if $n\jdeq k \jdeq 0$.

If $H$ preserves weakening structure,
then for every $X\in B_{n+k}$ the square
\begin{linenomath*}
\begin{equation}\label{rmk:BtopreE:presw:sq}
\begin{tikzcd}[column sep=6em,row sep=3em]
\sys{A}/X	\ar[r,"{H/X}"]
& \sys{B}/H(X)
\\
\sys{A}/\ft^k(X)	\ar[u,"W_X^k"] \ar[r,"H/\ft^k(X)"]
&	\sys{B}/\ft^k H(X)	\ar[u,"W_{H(X)}^k"{swap}]
\end{tikzcd}
\end{equation}
\end{linenomath*}
commutes in $\Bfr$.
Indeed, by definition of $W_X^k$ in~\eqref{rmk:BtoTrmStr:W}
the square~\eqref{rmk:BtopreE:presw:sq} factors vertically into $k$ squares of the form in
\cref{def:Bsys-pres}.\ref{def:Bsys-pres:weak},
each of which commutes if $H$ preserves the weakening structure.
\end{enumerate}
\end{rems}

\begin{prob}\label{prob:BtoTrmStrFun}
To lift the functor $\Ffctr\Gfctr\Btorttr \colon \Bfr \to \Cat$
to a functor $\BtoTC \colon \catof{SubBfr} \to \catof{TCat}$
from the category $\catof{SubBfr}$ of B-frames with substitution structure
and homomorphisms of B-frames that preserve the substitution structure,
to the category of \scats with term structure.
\end{prob}

\begin{construction}{prob:BtoTrmStrFun}\label{constr:BtoTrmStrFun}
Let $\sys{A}$ and $\sys{B}$ be B-frames with substitution structure.
For every homomorphism of B-frames $H \colon \sys{A} \to \sys{B}$,
the functor $\Ffctr\Gfctr\Btorttr(H) \colon \F_\sys{A} \to \F_\sys{B}$,
maps an object $(n,X)$ to $(n,H(X))$ and an arrow $(X,k)$ to $(H(X),k)$.
Since $H(*) \jdeq *$, the functor $\Ffctr\Gfctr\Btorttr(H)$ strictly preserves the (unique) terminal object.

To make $\BtoTC(H) :\jdeq \Ffctr\Gfctr\Btorttr(H)$ into a functor with term structure
note that, for every $t \jdeq (t_1,\ldots,t_k) \in T(X,k)$ and $1 \leq j \leq k$,
the function $\tilde{H}$ restricts as follows
\begin{linenomath*}
\begin{equation}\label{constr:BtoTrmStrFun:T1}
\begin{tikzcd}[column sep=2em,row sep=3em]
T(\ft^{k-j} S_{t_{j-1}}\cdots S_{t_1}(X),1) \ar[r,hook] \ar[d,"\tilde{H}"]
&	\tilde{A}_{n-k+1}	\ar[d,"\tilde{H}"]
\\
T(\ft^{k-j} S_{\tilde{H}(t_{j-1})}\cdots S_{\tilde{H}(t_1)} H(Y),1)	\ar[r,hook]
&	\tilde{B}_{n-k+1}
\end{tikzcd}
\end{equation}
\end{linenomath*}
since $H$ commutes with the functions $\ft$ and
preserves the substitution structure in the sense of \cref{def:Bsys-pres}.
It follows that
\begin{linenomath*}
\begin{equation}\label{constr:BtoTrmStrFun:Tsuc}
\BtoTC(H)(t) :\jdeq (\tilde{H}(t_1),\ldots, \tilde{H}(t_k)) \in T(\tilde{H}(X),k).
\end{equation}
\end{linenomath*}
This makes $\BtoTC(H) \colon \F/(n,X) \to \F/(n-k,\ft^k(X))$
into a functor with term structure.

The action of $H$ on the sets $T(X,k)$ is clearly functorial in $H$.
\end{construction}

\begin{rems}\label{rmk:BtoE}
\hfill
\begin{enumerate}
\item\label{rmk:BtoE:faith}
The functor $\BtoTC \colon \catof{SubBfr} \to \catof{TCat}$
from \cref{constr:BtoTrmStrFun} is faithful,
since the functors $\Btorttr \colon \Bfr \to \rttr$,
$\Gfctr \colon \rttr \to \grph$
and $\Ffctr \colon \grph \to \Cat$ are faithful and
the sets $T(X,1)$ for $X \in B_n$ form a partition of $\tilde{B}_n$.
\item\label{rmk:BtoE:strat}
For every B-frame with substitution structure $\sys{B}$,
it follows by \cref{prop:strat-is-prop,lem:rttrStrCat} that
the underlying category of $\BtoTC(B)$ is stratified by the functor
that maps $(X,k) \colon (n+k,X) \to (n,\ft^k(X))$ to $n+k \geq n$.
\end{enumerate}
\end{rems}

\begin{prob}\label{prob:BtopreE}
For every B-system $\sys{B}$,
to construct a pre-E-system structure on the category with term structure
$\BtoTC(\sys{B})$ from \cref{constr:BtoTrmStr}.
\end{prob}

\begin{construction}{prob:BtopreE}\label{constr:BtopreE}
\Cref{constr:BtoTrmStr} provides a homomorphism of B-frames
$\Slist{t}{k} \colon \sys{B}/X \to \sys{B}/\ft^k(X)$
for every $X \in B_{n+k}$ and $t \in T(X,k)$.
The homomorphism $\Slist{t}{k}$ preserves the substitution structure
since it factors, as in \cref{rmk:BtoTrmStr}.\ref{rmk:BtoTrmStr:pressub},
into $k$ B-homomorphisms of the form $S_{x_j}$, where $x_j \in \tilde{B}_{n+k-j}$ for $j < k$.
\Cref{constr:BtoTrmStrFun} and \cref{rmk:BtoTrmStr}.\ref{rmk:BtoTrmStr:pressl}
then yield a functor with term structure
\begin{linenomath*}
\begin{equation}\label{constr:BtopreE:sub}
\begin{tikzcd}[column sep=4em]
(\BtoTC(\sys{B})/(n+k,X),T_{\sys{B}}) \ar[r,"S_t :\jdeq \BtoTC(\Slist{t}{k})"]
&	(\BtoTC(\sys{B})/(n,\ft^k(X)),T_{\sys{B}})
\end{tikzcd}
\end{equation}
\end{linenomath*}
as required.

To construct the pre-weakening structure,
consider the homomorphism of B-frames
$W_X^k \colon \sys{B}/\ft^k(X) \to \sys{B}/X$
defined in \cref{rmk:BtoTrmStr}.\ref{rmk:BtoTrmStr:presweak}.
Since $\sys{B}$ is a B-system, each factor of $W_X^k$ in~\eqref{rmk:BtoTrmStr:W}
is a homomorphism of B-systems and so is $W_X^k$.
\Cref{constr:BtoTrmStrFun} and \cref{rmk:BtoTrmStr}.\ref{rmk:BtoTrmStr:pressl}
provide a functor with term structure
\begin{linenomath*}
\begin{equation}\label{constr:BtopreE:weak}
\begin{tikzcd}[column sep=8em]
\BtoTC(\sys{B})/(n,\ft^k(X))	\ar[r,"{W_{(X,k)} :\jdeq \BtoTC(W_X^k)}"]
&	\BtoTC(\sys{B})/(n,X).
\end{tikzcd}
\end{equation}
\end{linenomath*}

It remains to construct the pre-projection structure.
In fact, we will prove a little bit more.
We construct by induction on $n\in\N$,
for every $X \in B_n$ and $k\leq n$, an element
$\tfid{(X,k)}\in T(W_{(X,k)}(X,k)) \jdeq T(W_X^k(X),k)$
with the property that the triangle of B-homomorphisms
\begin{linenomath*}
\[\begin{tikzcd}\label{constr:BtopreE:Sid}
\sys{B}/X	\ar[dr,"W_X^k/X"{swap}] \ar[rr,"\catid{}"]
&&	\sys{B}/X
\\
&	\sys{B}/W_X^k(X)	\ar[ur,"\Slist{\tfid{(X,k)}}{k}"{swap}]	&
\end{tikzcd}\]
\end{linenomath*}
commutes.
This additional condition is needed in the inductive construction.
For every $n$ and $X \in B_n$, let
\begin{linenomath*}
\[
\tfid{(X,0)} :\jdeq * \in T(W_{(X,0)}(X,0),0) \jdeq T(X,0).
\]
\end{linenomath*}
For every $n$ and $X \in B_{n+1}$, it is
$\bd \circ \delta(X) \jdeq W_X(X) \in B_{n+2}$.
Thus we can define
\begin{linenomath*}
\begin{equation}\label{constr:BtopreE:idtm-base}
\tfid{(X,1)} :\jdeq \delta(X) \in T(W_{(X,1)}(X,1)) \jdeq T(W_X(X),1)
\end{equation}
\end{linenomath*}
and $\Slist{\tfid{(X,1)}}{1} \circ W_X/X \jdeq \catid{\sys{B}/X}$
by condition~\ref{def:Bsys:SIdW} in \cref{def:Bsys}.

Suppose now that we have defined,
for every $m\leq n$, $Y \in B_m$ and $i \leq m$,
an element $\tfid{(Y,i)} \in T(W_{(Y,i)}(Y,i))$ such that
$\Slist{\tfid{(Y,i)}}{i}\circ W_Y^i/Y \jdeq \catid{\mathcal{T}/(m,Y)}$.
Let $X \in B_{n+1}$. It follows from~\eqref{constr:BtoTrmStr:Tsuc} that,
for every $1\leq k \leq n$
\begin{linenomath*}
\[
T(W_{(X,k+1)}(X,k+1)) \jdeq
\coprod_{t\in T(W_X^{k+1}(\ft(X)),k)} T(\Slist{t}{k} \circ W_X^{k+1}(X),1).
\]
\end{linenomath*}
But $W_X^{k+1} \jdeq W_X \circ W_{\ft(X)}^k$,
thus
\begin{linenomath*}
\[
\bar{t} :\jdeq \tilde{W}_X (\tfid{(\ft(X),k)}) \in T(W_X^{k+1}(\ft(X)),k)
\]
\end{linenomath*}
and
\begin{linenomath*}
\begin{equation}\label{constr:BtopreE:Sstep}
\begin{split}
\Slist{\bar{t}}{k} \circ W_X^{k+1}/\ft(X) &\jdeq
\Slist{\bar{t}}{k} \circ W_X/W_{\ft(X)}^k(\ft(X)) \circ W_{\ft(X)}^k/\ft(X)
\\&\jdeq
W_X \circ \Slist{\tfid{(\ft(X),k)}}{k} \circ W_{\ft(X)}^k/\ft(X)
\\&\jdeq
W_X
\end{split}
\end{equation}
\end{linenomath*}
by \cref{rmk:BtoTrmStr}.\ref{rmk:BtoTrmStr:pressub}
and the fact that $W_X$ preserves the substitution structure,
and assumption~\eqref{constr:BtopreE:Sid}.
In particular, $T(S_{\bar{t}} \circ W_X^{k+1}(X),1) \jdeq T(W_X(X),1)$
and we can define
\begin{linenomath*}
\begin{equation}\label{constr:BtopreE:idtm-suc}
\tfid{(X,k+1)}\defeq (\bar{t},\delta(X)).
\end{equation}
\end{linenomath*}
It remains to check that
$\Slist{\tfid{(X,k+1)}}{k+1}\circ  W_X^{k+1}/X \jdeq \catid{\sys{B}/X}$.
This is indeed the case by~\eqref{constr:BtoTrmStr:Str},
~\eqref{constr:BtopreE:Sstep} and condition~\ref{def:Bsys:SIdW} in \cref{def:Bsys}:
\begin{linenomath*}
\begin{align*}
\Slist{\tfid{(X,k+1)}}{k+1}\circ W_X^{k+1}/X
&\jdeq
S_{\delta(X)}\circ \left(\Slist{\bar{t}}{k} \circ W_X^{k+1}/\ft(X)\right)/X
\\&\jdeq
S_{\delta(X)}\circ W_X/X
\\&\jdeq
\catid{\sys{B}/X}.
\end{align*}
\end{linenomath*}
This completes the construction of the pre-E-system structure.
\end{construction}

\begin{lem}\label{lem:BtoEprehom}
Let $\sys{A}$ and $\sys{B}$ be B-systems and $H \colon \sys{A} \to \sys{B}$
a homomorphism of B-frames that preserves the substitution structure.
\begin{enumerate}
\item\label{lem:BtoEprehom:sub}
The functor with term structure $\BtoTC(H) \colon \F_{\sys{A}} \to \F_{\sys{B}}$
is a pre-substitution homomorphism.
\item\label{lem:BtoEprehom:weak}
$H$ preserves the weakening structure if and only if
$\BtoTC(H)$ is a pre-weakening homomorphism.
\item\label{lem:BtoEprehom:gen}
$H$ preserves the structure of generic elements if and only if
$\BtoTC(H)$ is a pre-projection homomorphism.
\end{enumerate}
\end{lem}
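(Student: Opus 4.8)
The plan is to transfer all three statements to the level of B-frames via the functor $\BtoTC$ and the natural slice isomorphisms $\BtoTC(\sys{B}/Y) \cong \BtoTC(\sys{B})/(m,Y)$ of \cref{rmk:BtoTrmStr}.\ref{rmk:BtoTrmStr:pressl}, reducing each condition on $\BtoTC(H)$ to one of the naturality squares already recorded in \cref{rmk:BtoTrmStr}. For \ref{lem:BtoEprehom:sub}, recall that for an object $A \jdeq (X,k)$ of $\F_{\sys A}/\Gamma$ with $\Gamma \jdeq (n,\ft^k(X))$ and $x \in T(A) \jdeq T(X,k)$, the substitution functor is $S_x \jdeq \BtoTC(\Slist{x}{k})$ while $\BtoTC(H)$ acts on terms componentwise by $\tilde H$. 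Under the slice isomorphisms, the pre-substitution square for $\BtoTC(H)$ at $(A,x)$ is precisely the image under the functor $\BtoTC$ of the square~\eqref{rmk:BtoTrmStr:pressub:sq}. Since $H$ preserves the substitution structure, that square commutes by \cref{rmk:BtoTrmStr}.\ref{rmk:BtoTrmStr:pressub}, and its maps preserve substitution, so $\BtoTC$ applies and preserves commutativity. As every slice object and every term arises as such a pair $(A,x)$, this shows $\BtoTC(H)$ is a pre-substitution homomorphism.

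For \ref{lem:BtoEprehom:weak}, the forward implication is identical in spirit: the pre-weakening square for $\BtoTC(H)$ at $A \jdeq (X,k)$ is, via the slice isomorphisms, the image under $\BtoTC$ of~\eqref{rmk:BtopreE:presw:sq}, which commutes whenever $H$ preserves weakening by \cref{rmk:BtoTrmStr}.\ref{rmk:BtoTrmStr:presweak}. For the converse I would specialise the pre-weakening homomorphism condition to the individual arrows $A \jdeq (X,1)$, where $W_{(X,1)} \jdeq \BtoTC(W_X)$ since $W_X^1 \jdeq W_X$; the resulting commuting square of functors with term structure is exactly $\BtoTC$ applied to the weakening-preservation square of \cref{def:Bsys-pres}.\ref{def:Bsys-pres:weak}. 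Since $\BtoTC$ is faithful (\cref{rmk:BtoE}.\ref{rmk:BtoE:faith}) and every map in that square lies in its image, commutativity of the image forces commutativity of the original, whence $H$ preserves weakening.

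Finally, for \ref{lem:BtoEprehom:gen}, recall that a pre-projection homomorphism is a pre-weakening homomorphism that additionally satisfies $F(\tfid{A}) \jdeq \tfid{F(A)}$; by \ref{lem:BtoEprehom:weak} the pre-weakening part is equivalent to $H$ preserving weakening, so it remains to show that $\BtoTC(H)$ preserves the projection terms $\tfid{(X,k)}$ if and only if $H$ preserves generic elements. The backward direction of this remaining equivalence is the base case $k \jdeq 1$: by~\eqref{constr:BtopreE:idtm-base} we have $\tfid{(X,1)} \jdeq \delta(X)$, so $\BtoTC(H)(\tfid{(X,1)}) \jdeq \tilde H(\delta(X))$ equals $\tfid{(H(X),1)} \jdeq \delta(H(X))$ for all $X$ precisely when $H$ preserves generic elements in the sense of \cref{def:Bsys-pres}.\ref{def:Bsys-pres:gen}. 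For the forward direction I would induct on $k$, following the construction~\eqref{constr:BtopreE:idtm-suc} of $\tfid{(X,k+1)} \jdeq (\bar t,\delta(X))$ with $\bar t \jdeq \tilde W_X(\tfid{(\ft(X),k)})$: applying $\BtoTC(H)$ componentwise via $\tilde H$, one uses that $\tilde H$ commutes with $\tilde W_X$ (weakening preservation, from \ref{lem:BtoEprehom:weak}), the inductive hypothesis $\tilde H(\tfid{(\ft(X),k)}) \jdeq \tfid{(\ft(H(X)),k)}$ together with $H\ft \jdeq \ft H$, and $\tilde H(\delta(X)) \jdeq \delta(H(X))$, to conclude $\BtoTC(H)(\tfid{(X,k+1)}) \jdeq \tfid{(H(X),k+1)}$. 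The main obstacle is precisely this induction: one must keep the dependent-pair bookkeeping of~\eqref{constr:BtoTrmStr:Tsuc} and~\eqref{constr:BtopreE:idtm-suc} straight and check that weakening- and generic-element-preservation interact exactly as needed at the inductive step. A secondary subtlety worth flagging is the logical packaging of \ref{lem:BtoEprehom:gen}: as ``pre-projection homomorphism'' presupposes ``pre-weakening homomorphism'', the stated equivalence is to be read in conjunction with \ref{lem:BtoEprehom:weak}, so that preservation of weakening is available precisely when the forward induction requires it.
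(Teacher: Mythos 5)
Your proposal is correct and follows essentially the same route as the paper: parts \ref{lem:BtoEprehom:sub} and \ref{lem:BtoEprehom:weak} reduce, exactly as in the paper's proof, to the commuting squares \eqref{rmk:BtoTrmStr:pressub:sq} and \eqref{rmk:BtopreE:presw:sq} of \cref{rmk:BtoTrmStr} via the slice isomorphisms, with faithfulness of $\BtoTC$ giving the converse in \ref{lem:BtoEprehom:weak}. For part \ref{lem:BtoEprehom:gen} the paper compresses the argument into the observation that $\BtoTC(H)$ acts componentwise as $\tilde H$ on the terms $\tfid{(X,k+1)}$, whose components are (weakened) generic elements; your explicit induction on $k$, and your remark that the forward direction genuinely consumes weakening preservation from \ref{lem:BtoEprehom:weak} (since the inductive step passes through $\tilde W_X$), is the same argument with the bookkeeping the paper's ``it follows that'' leaves implicit spelled out.
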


\begin{proof}
\hfill
\begin{enumerate}
\item
By definition of the pre-substitution structure in \cref{constr:BtopreE} and
\cref{rmk:BtoTrmStr}.\ref{rmk:BtoTrmStr:pressl},
$\BtoTC(H)$ is a pre-substitution homomorphism if
every image under $\BtoTC$ of any square in $\Bfr$ of the 
form~\eqref{rmk:BtoTrmStr:pressub:sq} commutes.
By \cref{rmk:BtoTrmStr}.\ref{rmk:BtoTrmStr:pressub}, such squares commute since
$H$ preserves the substitution structure.
\item
By definition of the pre-weakening structure~\eqref{rmk:BtoTrmStr:W} and
\cref{rmk:BtoTrmStr}.\ref{rmk:BtoTrmStr:pressl},
$\BtoTC(H)$ is a pre-weakening homomorphism if and only if
the image under $\BtoTC \colon \Bsys \to \catof{TCat}$
of any square in $\Bfr$ of the form~\eqref{rmk:BtopreE:presw:sq} commutes.
By \cref{rmk:BtoTrmStr}.\ref{rmk:BtoTrmStr:presweak}, such squares commute if
$H$ preserves the weakening structure.
The converse holds since $\BtoTC$ is faithful by \cref{rmk:BtoE}.\ref{rmk:BtoE:faith}.
\item
By~\eqref{constr:BtoTrmStrFun:T1} and~\eqref{constr:BtoTrmStrFun:Tsuc},
$\BtoTC(H)$ acts componentwise as $\tilde{H}$ on a term $t \in T(X,t)$.
It follows that $\BtoTC(H)$ preserves the terms
$\tfid{(X,k+1)} \jdeq (\delta(\ft^k(X)),\ldots,\delta(X))$
for $X \in B_{n}, k < n$ if and only if
$H$ preserves generic elements.
\qedhere
\end{enumerate}
\end{proof}

\begin{lem}\label{lem:BtoE-obj}
For every B-system $\sys{B}$, the pre-E-system constructed 
in~\ref{constr:BtopreE} is a stratified E-system.
\end{lem}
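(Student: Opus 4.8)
The plan is to verify, for the pre-E-system structure on $\BtoTC(\sys{B})$ built in \cref{constr:BtopreE}, the five conditions of \cref{def:Esys} together with the stratification requirements of \cref{def:stratEsys}. In each case the statement about $\BtoTC(\sys{B})$ is reduced to a property of the B-system $\sys{B}$, using \cref{lem:BtoEprehom}, the functoriality and faithfulness of $\BtoTC$, and the recursive definitions of $\Slist{t}{k}$, $W_X^k$ and $\tfid{(X,k)}$.

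First I would treat conditions~\ref{def:Esys:sub} and~\ref{def:Esys:weak}, that each $S_x$ and each $W_A$ is a pre-E-homomorphism. By construction $S_x = \BtoTC(\Slist{t}{k})$ and $W_A = \BtoTC(W_X^k)$, and both $\Slist{t}{k}$ and $W_X^k$ are composites of (slices of) the single-step operations $S_{x_j}$ and $W_{\ft^i(X)}$ of $\sys{B}$. Since $\sys{B}$ is a B-system, each such $S_{x_j}$ and $W_{\ft^i(X)}$ is a pre-B-homomorphism by conditions~\ref{def:Bsys:sub} and~\ref{def:Bsys:weak} of \cref{def:Bsys}, hence preserves the substitution structure, the weakening structure, and the generic elements; composites preserve all three as well, so $\Slist{t}{k}$ and $W_X^k$ do too. \cref{lem:BtoEprehom} then yields that $S_x$ and $W_A$ are pre-substitution, pre-weakening and pre-projection homomorphisms, that is, pre-E-homomorphisms.

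Next come the equational conditions~\ref{def:Esys:SxW}, \ref{def:Esys:SxId} and~\ref{def:Esys:SIdW}, which I would establish by induction on the length $k$ of the family $A = (X,k)$, reducing them to the B-system axioms~\ref{def:Bsys:SxW}, \ref{def:Bsys:SxId} and~\ref{def:Bsys:SIdW}. Condition~\ref{def:Esys:SIdW} is in fact already in hand: the identity $\Slist{\tfid{(X,k)}}{k}\circ W_X^k/X = \catid{\sys{B}/X}$ proved during \cref{constr:BtopreE} is exactly its translation. For~\ref{def:Esys:SxW} one unfolds $S_x\circ W_A = \BtoTC(\Slist{t}{k}\circ W_X^k)$ and shows $\Slist{t}{k}\circ W_X^k = \catid{}$; the base case $k=1$ is precisely $S_x\circ W_{\bd(x)} = \catid{}$ with $\bd(x) = X$ (axiom~\ref{def:Bsys:SxW}), and the inductive step uses $W_X^{k+1} = W_X\circ W_{\ft(X)}^k$ and the homomorphism properties just established to telescope the composite. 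Condition~\ref{def:Esys:SxId} is handled by unfolding $\tfid{(X,k)}$ as the tuple of generic elements $(\delta(\ft^{k-1}(X)),\dots,\delta(X))$ and reducing to iterated application of axiom~\ref{def:Bsys:SxId}.

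Finally, for stratification, the underlying category of $\BtoTC(\sys{B})$ is stratified by \cref{rmk:BtoE}.\ref{rmk:BtoE:strat}, sending $(X,k)$ to $n+k\geq n$. Each $S_x = \BtoTC(\Slist{t}{k})$ and $W_A = \BtoTC(W_X^k)$ is the image under $\BtoTC$ of a homomorphism of B-frames, hence of a homomorphism of rooted trees; such homomorphisms preserve the $\N$-grading, so on slices the induced functors preserve length and are therefore stratified with respect to the slice stratification of \cref{lem:strat-slice}, as \cref{def:stratEsys} demands. The genuinely delicate part will be the index bookkeeping in the inductions for~\ref{def:Esys:SxW} and~\ref{def:Esys:SxId}: one must match the recursive decomposition $t = (t',x)$ of a term in $T(X,k)$ against the decomposition of $W_X^k$ and against the inductive definition of $\tfid{(X,k)}$, while commuting substitutions past weakenings via the pre-E-homomorphism properties from the first step; everything else follows essentially immediately from \cref{lem:BtoEprehom} and the stratification remarks.
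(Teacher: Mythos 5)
Your proposal follows essentially the same route as the paper's proof: conditions~\ref{def:Esys:sub} and~\ref{def:Esys:weak} via \cref{lem:BtoEprehom} and the fact that $\Slist{t}{k}$ and $W_X^k$ are composites of B-system homomorphisms, condition~\ref{def:Esys:SIdW} by construction, conditions~\ref{def:Esys:SxW} and~\ref{def:Esys:SxId} by induction on $k$ using the pre-E-homomorphism property to commute substitutions past weakenings, and stratification from \cref{rmk:BtoE}.\ref{rmk:BtoE:strat} together with preservation of the $\N$-grading. The argument is correct and matches the paper's proof in structure and in all key steps.
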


\begin{proof}
First, we need to verify conditions~\ref*{def:Esys:sub}--\ref*{def:Esys:SxId}
in \cref{def:Esys}, as condition~\ref*{def:Esys:SIdW} holds by construction.

\begin{itemize}
\item [\ref*{def:Esys:sub}.]
It follows from \cref{lem:BtoEprehom} and~\eqref{constr:BtopreE:sub} since $\Slist{t}{k}$,
as defined in~\eqref{constr:BtoTrmStr:Str}, is a homomorphism of B-systems
when $\sys{B}$ is a B-system.

\item [\ref*{def:Esys:weak}.]
As above, it follows by \cref{lem:BtoEprehom} and~\eqref{constr:BtopreE:weak}
since $W_X^k$, as defined in~\eqref{rmk:BtoTrmStr:W},
is a homomorphism of B-systems.

\item [\ref*{def:Esys:SxW}.]
The case $X \in B_n, *\in T(X,0)$ holds trivially.
The case $X \in B_{n+1}, x\in T(X,1)$ follows from
condition~\ref{def:Bsys:SxW} in \cref{def:Bsys} and functoriality of $\BtoTC$.
The case $X \in B_{n+k+1}, (t,x)\in T(X, \linebreak[5] k+1)$,
where $t \in T(\ft(X),k)$ and $x \in T(\Slist{t}{k}(X),1)$,
holds by induction and functoriality of $\BtoTC$ as
\begin{linenomath*}
\[\begin{split}
\Slist{(t,x)}{k} \circ W_X^{k+1}
&\jdeq
S_x \circ \Slist{t}{k}/X \circ W_X \circ W_{\ft(X)}^k
\\&\jdeq
S_x \circ W_{S_t(X)} \circ \Slist{t}{k} \circ W_{\ft(X)}^k
\\&\jdeq
\catid{\sys{B}/\ft^{k+1}(X)}
\end{split}\]
\end{linenomath*}
by~\eqref{constr:BtoTrmStr:Str} and~\eqref{rmk:BtoTrmStr:W},
the fact that $S_t$ is a pre-E-homomorphism,
and \cref{def:Bsys}.2 and the inductive hypothesis.

\item [\ref*{def:Esys:SxId}.]
As above, the case $k=0$ holds trivially and
the case $k=1$ holds by condition~\ref{def:Bsys:SxId} in \cref{def:Bsys}.
For $X \in B_{n+1},\ k \leq n$ and $(t,x) \in T(X,k+1)$,
\begin{linenomath*}
\[\begin{split}
\SlistT{(t,x)}{k+1}(\tfid{(X,k+1)})
&\jdeq
\tilde{S}_x \circ \SlistT{t}{k}( \tilde{W}_X(\tfid{(\ft(X),k)}) , \delta(X) )
\\&\jdeq
\tilde{S}_x(\tilde{W}_{\Slist{t}{k}(X)}\circ \SlistT{t}{k}(\tfid{(\ft(X),k)}) , \delta(\Slist{t}{k}(X)) )
\\&\jdeq
( \tilde{S}_x \circ \tilde{W}_{\Slist{t}{k}(X)}(t) , \tilde{S}_x(\delta(\Slist{t}{k}(X))) )
\\&\jdeq
(t,x)
\end{split}\]
\end{linenomath*}
by~\eqref{constr:BtoTrmStr:Str} and~\eqref{constr:BtopreE:idtm-suc},
the fact that $S_t$ is a pre-E-homomorphism, the inductive  hypothesis,
and conditions \ref{def:Bsys:SxW} and \ref{def:Bsys:SxId} in \cref{def:Bsys}.

\end{itemize}
Finally, the underlying category $\F \jdeq \Ffctr\Gfctr\Btorttr(\sys{B})$ is stratified by \cref{rmk:BtoE}.\ref{rmk:BtoE:strat}.
By definition, weakening and substitution functors preserve
the $\N$-component of objects and arrows.
It follows that $\BtoTC(\sys{B})$ is a stratified E-system.
\end{proof}

\begin{lem}\label{lem:BtoE-fun}
\hfill
\begin{enumerate}
\item\label{lem:BtoE-fun:fun}
The functor $\BtoTC \colon \catof{SubBfr} \to \catof{TCat}$
described in \cref{constr:BtoTrmStrFun}
lifts to a functor $\BtoE \colon \Bsys \to \strEsys$.
\item\label{lem:BtoE-fun:ff}
The functor $\BtoE$ is full and faithful.
\end{enumerate}
\end{lem}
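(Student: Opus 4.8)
The plan is to treat the two claims in \cref{lem:BtoE-fun} separately, assembling part~\ref{lem:BtoE-fun:fun} from work already done and reserving the real effort for fullness in part~\ref{lem:BtoE-fun:ff}. For the lift, I would first recall that \cref{lem:BtoE-obj} already shows $\BtoTC(\sys{B})$ carries the structure of a stratified E-system for every B-system $\sys{B}$, so the action on objects is settled. For the action on morphisms, let $H \colon \sys{A} \to \sys{B}$ be a B-homomorphism; by definition it is a pre-B-homomorphism and hence preserves substitution, weakening, and generic elements. \Cref{lem:BtoEprehom} then yields that $\BtoTC(H)$ is simultaneously a pre-substitution homomorphism, a pre-weakening homomorphism, and a pre-projection homomorphism. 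Since $\BtoTC(H)$ sends the terminal object to the terminal object (as $H(\ast) \jdeq \ast$ and $B_0$ is a singleton) and preserves the $\N$-grading, it is a stratified pre-E-homomorphism; because its source and target are E-systems it is in fact a stratified E-homomorphism. Functoriality of the resulting $\BtoE$ is inherited from that of $\BtoTC$ established in \cref{constr:BtoTrmStrFun}.

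Faithfulness is then immediate: the underlying functor with term structure of $\BtoE(H)$ is $\BtoTC(H)$, and $\BtoTC$ is faithful by \cref{rmk:BtoE}.\ref{rmk:BtoE:faith}. Combined with the fact that the forgetful functor $\Bsys \to \catof{SubBfr}$ only forgets a property of morphisms, this shows that $H \mapsto \BtoE(H)$ is injective on each hom-set, so $\BtoE$ is faithful.

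The substance of the lemma is fullness. Given B-systems $\sys{A},\sys{B}$ and a stratified E-homomorphism $G \colon \BtoE(\sys{A}) \to \BtoE(\sys{B})$, I would reconstruct a candidate B-homomorphism $H$ as follows. Since objects of $\BtoTC(\sys{A})$ are pairs $(n,X)$ with $X \in B_n$ and $G$ preserves the stratification, $G$ acts as $(n,X) \mapsto (n,H_n(X))$, which defines the components $H_n \colon B_n^{\sys{A}} \to B_n^{\sys{B}}$. Because the sets $T(X,1) \jdeq \bd^{-1}(X)$ for $X \in B_{n+1}$ partition $\tilde{B}_{n+1}$ (\cref{rmk:BtoE}.\ref{rmk:BtoE:faith}), the action of $G$ on these length-one term sets assembles into functions $\tilde{H}_{n+1} \colon \tilde{B}_{n+1}^{\sys{A}} \to \tilde{B}_{n+1}^{\sys{B}}$. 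I would then verify that $(H,\tilde{H})$ commutes with $\ft$ (because $G$ preserves individual arrows and their targets) and with $\bd$ (because $G$ maps $T(X,1)$ into $T(H(X),1)$, and $T(Y,1) \jdeq \bd^{-1}(Y)$), so $(H,\tilde{H})$ is a B-frame homomorphism; and that it preserves the substitution, weakening, and generic-element structures, using the ``only if'' directions of \cref{lem:BtoEprehom} together with the hypothesis that $G$ is a pre-substitution, pre-weakening, and pre-projection homomorphism.

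Finally I would check $\BtoE(H) \jdeq G$. Both are stratified functors with term structure, hence by \cref{rmk:stratified}.\ref{rmk:stratified:funct} are determined by their action on individual arrows, on which they agree by construction of $H$ on objects; they also agree on the length-one term sets $T(X,1)$ by construction of $\tilde{H}$. It then remains to see that agreement on length-one terms forces agreement on all $T(X,k)$, which follows by induction on $k$ from the recursive description \eqref{constr:BtoTrmStr:Tsuc} of $T(X,k+1)$ as a coproduct built from the $T(\blank,1)$ via the substitution maps \eqref{constr:BtoTrmStr:Str}, using that both $G$ and $\BtoE(H)$ are pre-substitution homomorphisms and hence commute with the $S_t$. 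I expect this last reconstruction step to be the main obstacle: one must track the inductive bookkeeping of \eqref{constr:BtoTrmStr:Tsuc}--\eqref{constr:BtoTrmStr:Str} carefully to confirm that a stratified E-homomorphism can do nothing on higher terms beyond what its behaviour on objects and length-one terms already dictates.
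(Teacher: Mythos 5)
Your proposal is correct and follows essentially the same route as the paper: part (1) via \cref{lem:BtoE-obj,lem:BtoEprehom}, faithfulness via \cref{rmk:BtoE}, and fullness by reconstructing $H$ from the stratified action on objects together with the partition $\tilde{A}_{n+1} \jdeq \coprod_X T(X,1)$, then checking $\BtoE(H) \jdeq G$ by induction on term length. The one caveat is that \cref{lem:BtoEprehom} states no converse for the substitution structure (and its converses for weakening and generic elements presuppose that substitution is already preserved), so that step must be verified by the direct computation using $K \circ S_x \jdeq S_{K(x)} \circ K$ on objects and terms, as the paper does, rather than by citing the lemma.
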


\begin{proof}~
  \begin{enumerate}
  \item [\ref*{lem:BtoE-fun:fun}.]
By \cref{lem:BtoE-obj}, it is enough to show that,
for every homomorphism of B-systems
$H \colon \sys{A} \to \sys{B}$,
the functor with term structure
$\BtoTC(H) \colon \BtoTC(\sys{A}) \to \BtoTC(\sys{B})$
is a stratified homomorphism of E-systems.
By \cref{lem:BtoEprehom}, $\BtoTC(H)$ is a homomorphism of E-systems.
It is stratified since it preserves the $\N$-component of objects and arrows
by definition.

\item [\ref*{lem:BtoE-fun:ff}.]
The functor $\BtoE$ is faithful by \cref{rmk:BtoE}.\ref{rmk:BtoE:faith}.
Let then $K \colon \BtoE(\sys{A}) \to \BtoE(\sys{B})$
be a stratified homomorphism of E-systems.
Since $K$ is stratified,
the function on objects $K \colon \coprod_m A_m \to \coprod_n B_n$
is the identity on indices and gives rise to a family of functions
$H \colon \prod_n (A_n \to B_n)$ such that,
for every object $(n,X)$ and arrow $(X,k)\colon(n+k,X)\to(n,\ft^k(X))$
\begin{linenomath*}
\begin{equation}\label{constr:BtoE-full:fullty}
K(n,X) \jdeq (n,H_n(X))
\qquad\text{and}\qquad
K(X,k)\jdeq (H_{n+k}(X),k).
\end{equation}
\end{linenomath*}
We shall show that $H$ is a morphism of B-system such that $\BtoE(H)=K$.

The functions $H_n$ commute with the father functions $\ft$
since, for every $X \in A_{n+1}$, the arrow
$K(X,1) \colon (n+1,H_{n+1}(X)) \to (n,H_n(\ft(X)))$ in $\F_\sys{B}$
is necessarily of the form $(n+1,Y) \to (n,\ft(Y))$.

The family of sets $T(X,1)$ indexed on $X \in A_{n+1}$ forms a partition of $\tilde{A}_{n+1}$.
Therefore the functions $K_X \colon T(X,1) \to T(H_{n+1}(X),1)$
glue together to form a function
$\tilde{H}_{n+1} \colon \tilde{A}_{n+1} \to \tilde{B}_{n+1}$
such that
\begin{linenomath*}
\begin{equation}\label{constr:BtoE-full:fulltm}
\tilde{H}_{n+1}(x) \jdeq K_{\bd(x)}(x).
\end{equation}
\end{linenomath*}
It follows that $\bd \circ \tilde{H} \jdeq H \circ \bd$
since $\tilde{H}_{n+1}(x) \in T(H_{n+1}(\bd(x)),1)$.
Therefore $H$ is a homomorphism of B-frames from $\sys{A}$ to $\sys{B}$.

Let $x \in \tilde{B}_{n+1}$.
Since $K$ is a substitution homomorphism,
for every $Y \in B_{n+k+1}$ such that $\ft^k(Y) \jdeq \bd(x)$, it is
\begin{linenomath*}
\[\begin{split}
(n+k,\tilde{H}_{n+k} \circ S_x(Y)) %
&\jdeq
K \circ S_x(n+k+1,Y)
\\&\jdeq
S_{K_{\bd(x)}(x)} \circ K(n+k+1,Y)
\\&\jdeq
(n+k,S_{\tilde{H}_{n+1}(x)} \circ H_{n+k+1}(Y))
\end{split}\]
\end{linenomath*}
and, for every $y \in \tilde{B}_{n+k+1}$ such that
$\ft^k \circ \bd(y) \jdeq \bd(x)$, it is
\begin{linenomath*}
\[\begin{split}
\tilde{H}_{n+k} \circ \tilde{S}_x(y) &\jdeq
K_{\bd \circ S_x(y)} \circ S_x(y)
\\&\jdeq
S_{K_{\bd(x)}(x)} \circ K_{\bd(y)}(y)
\\&\jdeq
\tilde{S}_{\tilde{H}_{n+1}(x)} \circ \tilde{H}_{n+k+1}(y).
\end{split}\]
\end{linenomath*}
It follows that the homomorphism of B-frames $H$ preserves the substitution structure.

We can thus apply \cref{constr:BtoTrmStrFun} to $H$
and observe that $\BtoTC(H) \jdeq K$.
Indeed $\BtoTC(H)$ and $K$ have the same action on objects and arrows
because of~\eqref{constr:BtoE-full:fullty} and \cref{constr:BtoTrmStrFun}.
To see that they also agree on the term structure, recall from \cref{constr:BtoTrmStr} that the term structure of an E-system of the form $\BtoE(\sys{B})$ is given by lists of elements in the sets $\tilde{B}_n$,
and then use~\eqref{constr:BtoE-full:fulltm}
and~\eqref{constr:BtoTrmStrFun:Tsuc}.
Therefore $\BtoE(H) \jdeq \BtoTC(H) \jdeq K$
once we show that $H$ is a homomorphism of B-systems.

It remains to verify that $H$ also preserve the weakening structure
and the structure of generic elements.
Since $K$ is a projection homomorphism,
for every $X \in B_{n+1}$ it is
\begin{linenomath*}
\[
\BtoTC(H/X \circ W_X) \jdeq K/(n+1,X) \circ W_{(X,1)}
\jdeq W_{K(X,1)} \circ K/(n,\ft(X))
\jdeq \BtoTC(W_{H_{n+1}(X)} \circ H/\ft(X)).
\]
\end{linenomath*}
The first claim then follows from faithfulness of $\BtoE$.
Finally, $H$ preserves generic elements
\begin{linenomath*}
\[
\tilde{H}_{n+2} \circ \delta(X) \jdeq K_{W_X(X)}(\tfid{(X,1)})
\jdeq \tfid{K(X,1)}
\jdeq \delta \circ H_{n+1}(X)
\]
\end{linenomath*}
since $K$ is a projection homomorphism.
\qedhere
\end{enumerate}
\end{proof}

\subsubsection{From stratified E-sytems to B-systems}
\label{ssec:se2b}

We have constructed a full and faithful functor $\Bsys \to \strEsys$.
Here we construct a functor in the opposite direction.
We begin in \cref{constr:TCtoBfr-fun}
with a functor $\EtoB$ from stratified categories with term structures to B-frames.
In \cref{constr:preEtopreB}
we consider substitution, weakening and projection structures and
prove in \cref{lem:preEtopreBhom} that
$\EtoB$ maps homomorphisms into homomorphisms.
This allows us to lift $\EtoB$ to a functor $\strEsys \to \Bsys$ in \cref{constr:EtoB-fun}.

\begin{prob}\label{prob:TCtoBfr-ob}
Given a stratified category with term structure $(\F,T)$,
to construct a B-frame $\BtoE(\F,T)$.
\end{prob}

\begin{construction}{prob:TCtoBfr-ob}\label{constr:TCtoBfr-ob}
For every object $X$ in $\F$,
let $\overline{X}$ denote the unique \indecarr arrow with domain $X$ given by \cref{lem:strat-alt}.
For every $n\in \N$, define sets
\begin{linenomath*}
\begin{align}
\label{constr:TCtoBfr-obj:B}
B(\F,T)_n &\defeq \left\{X\in \ob{\F}\mid L(X)\jdeq n\right\}
\\[1ex]
\label{constr:TCtoBfr-obj:Bt}
\tilde{B}(\F,T)_{n+1} &\defeq \coprod_{X\in B(\F,T)_{n+1}} T(\overline{X})
\end{align}
\end{linenomath*}
and functions $\ft[n] \colon B(\F,T)_{n+1} \to B(\F,T)_n$
and $\bd[n] \colon \tilde{B}(\F,T)_{n+1} \to B(\F,T)_{n+1}$ by
\begin{linenomath*}
\begin{align}
\label{constr:TCtoBfr-obj:ft}
\ft(X) &\defeq \mathrm{cod}(\overline{X})
\\[1ex]
\label{constr:TCtoBfr-obj:bd}
\bd(X,x) &\defeq X.
\end{align}
\end{linenomath*}           
These definitions give rise to a B-frame $\EtoB(\F,T)$.
\end{construction}

\begin{prob}\label{prob:TCtoBfr-fun}
To construct a functor $\EtoB \colon \catof{TCat_s} \to \Bfr$
from the category of stratified categories with term structure
and stratified functors with term structure
to the category of B-frames and homomorphisms.
\end{prob}

\begin{construction}{prob:TCtoBfr-fun}\label{constr:TCtoBfr-fun}
The action on objects is given by \cref{constr:TCtoBfr-ob}.
Let then $F \colon (\F,T) \to (\F',T')$ be a stratified functor with term structure.
We need to construct a homomorphism of B-frames
$\EtoB(F) \colon \EtoB(\F,T) \to \EtoB(\F',T')$.
Since $F$ is stratified, it maps $B(\F,T)_n$ into $B(\F',T')_n$.
For every $X \in B(\F,T)_{n+1}$, the functor $F$ maps
the \indecarr arrow $\overline{X}$ to the \indecarr arrow $\overline{F(X)}$
by \cref{lem:strat-funct}.
It follows first that
\begin{linenomath*}
\[\begin{split}
F \circ \ft(X) &\jdeq F \circ \mathrm{cod}(\overline{X})
\\&\jdeq
\mathrm{cod}(\overline{F(X)})
\\&\jdeq
\ft \circ F(X),
\end{split}\]
\end{linenomath*}
and secondly that
we can define, for every $n \in \N$, a function
$\tilde{F} \colon \tilde{B}(\F,T)_{n+1} \to \tilde{B}(\F',T')_{n+1}$
such that $\bd \circ \tilde{F}(X,t) \jdeq F \circ \bd(X,t)$ by
\begin{linenomath*}
\begin{equation}\label{constr:TCtoBfr-fun:T}
\tilde{F}(X,t) :\jdeq (F(X),F(t)).
\end{equation}
\end{linenomath*}
This defines a homomorphism of B-frames $\EtoB(F) :\jdeq (F,\tilde{F})$.
\end{construction}

\begin{prob}\label{prob:preEtopreB}
Let $(\F,T)$ be a stratified category with term structure
and consider the B-frame $\EtoB(\F,T)$ from \cref{constr:TCtoBfr-ob}
\begin{enumerate}
\item\label{prob:preEtopreB:sub}
From a stratified pre-substitution structure on $(\F,T)$,
construct a substitution structure on $\EtoB(\F,T)$.
\item\label{prob:preEtopreB:weak}
From a stratified pre-weakening structure on $(\F,T)$,
construct a weakening structure on $\EtoB(\F,T)$.
\item\label{prob:preEtopreB:gen}
From a pre-projection structure on $(\F,T)$,
construct a structure of generic elements on $\EtoB(\F,T)$.
\end{enumerate}
\end{prob}

\begin{construction}{prob:preEtopreB}\label{constr:preEtopreB}~
  \begin{enumerate}
  \item [\ref*{prob:preEtopreB:sub}.]
For every $(X,t) \in \tilde{B}(\F,T)_{n+1}$,
the functor with term structure
$S_t \colon (\F,T)/X \to (\F,T)/\ft(X)$ is stratified.
\Cref{constr:TCtoBfr-fun} then yields a homomorphism of B-frames
\begin{linenomath*}
\begin{equation}\label{constr:preEtopreB:Sdef}
\begin{tikzcd}[column sep=6em]
\EtoB(\F,T)/X	\ar[r,"{S_{(X,t)} :\jdeq \EtoB(S_t)}"]
&	\EtoB(\F,T)/\ft(X).
\end{tikzcd}
\end{equation}
\end{linenomath*}
\item [\ref*{prob:preEtopreB:weak}.]
For every $X \in B(\F,T)_n$,
the functor with term structure
$W_{\overline{X}} \colon (\F,T)/\ft(X) \to (\F,T)/X$ is stratified,
where $\overline{X}$ denotes the unique \indecarr arrow with domain $X$.
\Cref{constr:TCtoBfr-fun} then yields a homomorphism of B-frames
\begin{linenomath*}
\begin{equation}\label{constr:preEtopreB:Wdef}
\begin{tikzcd}[column sep=6em]
\EtoB(\F,T)/\ft(X)	\ar[r,"{W_X :\jdeq \EtoB(W_{\overline{X}})}"]
&	\EtoB(\F,T)/X.
\end{tikzcd}
\end{equation}
\end{linenomath*}
\item [\ref*{prob:preEtopreB:gen}.]
For every $X \in B(\F,T)_{n+1}$,
we can define
\begin{linenomath*}
\begin{equation}\label{constr:preEtopreB:Gdef}
\delta(X) :\jdeq (W_X(X),\tfid{\overline{X}}) \in \tilde{B}(\F,T)_{n+2}
\end{equation}
\end{linenomath*}
since $\overline{W_X(X)} \jdeq W_{\overline{X}}(\overline{X})$.
\qedhere
\end{enumerate}
\end{construction}

\begin{lem}\label{lem:preEtopreBhom}
Let $F \colon (\F,T) \to (\F',T')$ be a stratified functor with term structure.
\begin{enumerate}
\item\label{lem:preEtopreBhom:sub}
If $(\F,T)$ and $(\F',T')$ have stratified pre-substitution structure and
$F$ is a pre-substitution homomorphism,
then $\EtoB(F) \colon \EtoB(\F,T) \to \EtoB(\F',T')$
preserves the substitution structure.
\item\label{lem:preEtopreBhom:weak}
If $(\F,T)$ and $(\F',T')$ have stratified pre-weakening structure and
$F$ is a pre-weakening homomorphism,
then $\EtoB(F) \colon \EtoB(\F,T) \to \EtoB(\F',T')$
preserves the weakening structure.
\item\label{lem:preEtopreBhom:gen}
If $(\F,T)$ and $(\F',T')$ have stratified pre-projection structure and
$F$ is a pre-projection homomorphism,
then $\EtoB(F) \colon \EtoB(\F,T) \to \EtoB(\F',T')$
preserves the structure of generic elements.
\end{enumerate}
\end{lem}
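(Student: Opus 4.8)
The plan is to prove all three parts by transporting, along the functor $\EtoB$ of \cref{constr:TCtoBfr-fun}, the hypothesis placed on $F$. In each case the property required of $\EtoB(F)$ is one of the conditions of \cref{def:Bsys-pres}, and I claim that each such condition is simply the $\EtoB$-image of the diagram witnessing that $F$ is a pre-substitution, pre-weakening, or pre-projection homomorphism.

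First I would set up the bookkeeping that justifies this transport. For every object $X$ of a stratified category with term structure $(\F,T)$, the slice $(\F,T)/X$, stratified by $L_X$ as in \cref{lem:strat-slice} and carrying the slice term structure of \cref{def:slT}, satisfies $\EtoB((\F,T)/X) \cong \EtoB(\F,T)/X$, naturally in $F$. This is the $\EtoB$-analogue of \cref{rmk:BtoTrmStr}.\ref{rmk:BtoTrmStr:pressl}; it holds because in a stratified category an arrow $Y \to X$ is unique when it exists and then equals the composite of individual arrows, so that objects over $X$ of length-difference $m$ correspond canonically to the objects $Y$ with $\ft^m(Y) \jdeq X$. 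This identification is exactly the one already built into the definitions $S_{(X,t)} :\jdeq \EtoB(S_t)$ and $W_X :\jdeq \EtoB(W_{\overline X})$ of \cref{constr:preEtopreB}, and under it $\EtoB(F/X) \jdeq \EtoB(F)/X$.

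For part \ref{lem:preEtopreBhom:sub} I would instantiate the defining square of a pre-substitution homomorphism at the individual arrow $A \jdeq \overline X$ (so that its domain is $X$ and its codomain $\ft(X)$) and the term $t \in T(\overline X)$, obtaining $S_{F(t)} \circ (F/X) \jdeq (F/\ft(X)) \circ S_t$; here $F(\ft(X)) \jdeq \ft(F(X))$ because $F$ preserves individual arrows by \cref{lem:strat-funct}. Applying $\EtoB$ and the slice identification turns this into $S_{(F(X),F(t))} \circ (\EtoB(F)/X) \jdeq (\EtoB(F)/\ft(X)) \circ S_{(X,t)}$, which, since $\bd(X,t) \jdeq X$ and the term-component of $\EtoB(F)$ sends $(X,t)$ to $(F(X),F(t))$, is precisely the square of \cref{def:Bsys-pres}.\ref{def:Bsys-pres:sub} at $x \jdeq (X,t)$. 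Part \ref{lem:preEtopreBhom:weak} is proved the same way: starting from the defining square of a pre-weakening homomorphism at $A \jdeq \overline X$ and using $F(\overline X) \jdeq \overline{F(X)}$ (so $W_{F(X)} \jdeq \EtoB(W_{\overline{F(X)}})$), applying $\EtoB$ yields the square of \cref{def:Bsys-pres}.\ref{def:Bsys-pres:weak}.

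Part \ref{lem:preEtopreBhom:gen} is a direct computation. Writing $\tilde F$ for the term-component of $\EtoB(F)$, and unfolding $\delta(X) \jdeq (W_X(X), \tfid{\overline X})$ from \cref{constr:preEtopreB} together with $\tilde F(Y,y) \jdeq (F(Y),F(y))$, I get $\tilde F(\delta(X)) \jdeq (F(W_X(X)), F(\tfid{\overline X}))$. The first coordinate equals $W_{F(X)}(F(X))$ because $F$ is a pre-weakening homomorphism, and the second equals $\tfid{\overline{F(X)}}$ because $F$ is a pre-projection homomorphism and $F(\overline X) \jdeq \overline{F(X)}$; hence $\tilde F(\delta(X)) \jdeq \delta(F(X))$, which is \cref{def:Bsys-pres}.\ref{def:Bsys-pres:gen}. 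The main obstacle is not any individual calculation but the careful matching, in parts \ref{lem:preEtopreBhom:sub} and \ref{lem:preEtopreBhom:weak}, of the slice identifications above with the indexing of \cref{def:Bsys-pres}: one must check that the $\EtoB$-images of the E-system squares land, after the identification, on exactly the slices $\sys{B}/\bd(x)$, $\sys{B}/\ft(\bd(x))$, $\sys{B}/\ft(X)$ and $\sys{B}/X$ named in the B-frame preservation conditions. Once the naturality and slice-compatibility of $\EtoB$ are established, each part collapses to functoriality of $\EtoB$ applied to data that is already part of $F$.
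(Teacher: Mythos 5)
Your proposal is correct and follows essentially the same route as the paper: in all three parts one applies the functor $\EtoB$ to the defining square (or equation) witnessing that $F$ is a pre-substitution, pre-weakening, or pre-projection homomorphism, instantiated at the individual arrow $\overline{X}$, and identifies the result with the corresponding condition of \cref{def:Bsys-pres} via the definitions in \cref{constr:preEtopreB}. Your extra care in making the slice identification $\EtoB((\F,T)/X) \cong \EtoB(\F,T)/X$ explicit is a harmless elaboration of what the paper leaves implicit in the definitions $S_{(X,t)} :\jdeq \EtoB(S_t)$ and $W_X :\jdeq \EtoB(W_{\overline{X}})$.
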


\begin{proof}~
  \begin{enumerate}
  \item [\ref*{lem:preEtopreBhom:sub}.]
We need to show that, for every $(X,t) \in \tilde{B}(\F,T)_{n+1}$, it is
$\EtoB(F)/\ft(X) \circ S_{(X,t)} \jdeq S_{(F(X),F(t))} \circ \EtoB(F)/X$.
This follows from~\eqref{constr:preEtopreB:Sdef}, functoriality of $\EtoB$ and
$F/\ft(X) \circ S_t \jdeq S_{F(t)} \circ F/X$,
which holds because $F$ is a pre-substitution homomorphism.

\item [\ref*{lem:preEtopreBhom:weak}.]
We need to show that, for every $X \in B(\F,T)_n$, it is
$\EtoB(F)/X \circ W_X \jdeq W_{F(X)} \circ \EtoB(F)/\ft(X)$.
This follows from~\eqref{constr:preEtopreB:Wdef}, functoriality of $\EtoB$ and
$F/X \circ W_{\overline{X}} \jdeq W_{\overline{F(X)}} \circ F/\ft(X)$,
which holds because $F$ is a pre-substitution homomorphism
and $\overline{F(X)} \jdeq F(\overline{X})$.

\item [\ref*{lem:preEtopreBhom:gen}.]
  For every $X \in B(\F,T)_{n+1}$, it is
  \begin{linenomath*}
\[
\EtoB(F) \circ \delta(X) \jdeq (F(W_X(X)),F(\tfid{\overline{X}}))
\jdeq (W_{F(X)}(F(X)), \tfid{\overline{F(X)}}) \jdeq \delta \circ \EtoB(F)(X)
\]
\end{linenomath*}
where the first and last equality hold
by~\eqref{constr:TCtoBfr-fun:T} and~\eqref{constr:preEtopreB:Gdef},
and the middle one because $F$ is a pre-projection homomorphism.
\qedhere
\end{enumerate}
\end{proof}

\begin{prob}\label{prob:EtoB-fun}
To lift the functor $\EtoB \colon \catof{TCat_s} \to \Bfr$
to a functor $\EtoB \colon \strEsys \to \Bsys$.
\end{prob}

\begin{construction}{prob:EtoB-fun}\label{constr:EtoB-fun}
Let $\sys{E}$ be a stratified E-system.
Then $\EtoB(\F,T)$ can be given the structure of a pre-B-system
$\EtoB(\sys{E})$ by \cref{constr:preEtopreB}.
To show that $\EtoB(\sys{E})$ is a B-system,
we need to verify conditions \ref{def:Bsys:sub}--\ref{def:Bsys:SIdW} of \cref{def:Bsys}.

\begin{enumerate}
\item [\ref*{def:Bsys:sub},\ref*{def:Bsys:weak}.]
Since, for every $A\in\F/\Gamma$ and $t\in T(A)$, the morphisms $W_A$ and $S_t$ are stratified E-homomorphism, it follows by \cref{lem:preEtopreBhom}
that the homomorphisms of B-frames constructed
in~\eqref{constr:preEtopreB:Sdef} and~\eqref{constr:preEtopreB:Wdef}
are homomorphisms of B-systems.

\item [\ref*{def:Bsys:SxW}.]
For $(X,t) \in \tilde{B}(\sys{E})_{n+1}$,
it is
\begin{linenomath*}
\[
S_{(X,t)} \circ W_X \jdeq \EtoB(S_t \circ W_{\overline{X}})
\jdeq \catid{\EtoB(\sys{E})/\ft(X)}
\]
\end{linenomath*}
by~\eqref{constr:preEtopreB:Sdef},~\eqref{constr:preEtopreB:Wdef},
functoriality of $\EtoB$ and~\ref{def:Esys}.\ref*{def:Esys:SxW}.

\item [\ref*{def:Bsys:SxId}.]
For $(X,t) \in \tilde{B}(\sys{E})_{n+1}$,
it is
\begin{linenomath*}
\[
S_{(X,t)} \circ \delta(X)
\jdeq ((S_{(X,t)} \circ W_X)(X), S_t(\tfid{\overline{X}}))
\jdeq (X,t)
\]
\end{linenomath*}
by~\eqref{constr:preEtopreB:Sdef},~\eqref{constr:preEtopreB:Gdef},
condition~\ref*{def:Bsys:SxW} just proved and~\ref{def:Esys}.\ref*{def:Esys:SxId}.

\item [\ref*{def:Bsys:SIdW}.]
  For every $X \in B(\sys{E})_{n+1}$, it is
  \begin{linenomath*}
\[
S_{\delta(X)} \circ W_X/X
\jdeq \EtoB( S_{\tfid{\overline{X}}} \circ W_{\overline{X}}/X )
\jdeq \catid{\EtoB(\sys{E})/X}
\]
\end{linenomath*}
by~(\ref{constr:preEtopreB:Sdef}--\ref{constr:preEtopreB:Gdef}),
functoriality of $\EtoB$ and~\ref{def:Esys}.\ref*{def:Esys:SIdW}.
\end{enumerate}
Finally, for every stratified E-homomorphism $F \colon \sys{E} \to \sys{D}$,
the homomorphism of B-frames
$\EtoB(F) \colon \EtoB(\sys{E}) \to \EtoB(\sys{D})$
is a homomorphism of B-systems by \cref{lem:preEtopreBhom}.
\end{construction}

\subsubsection{Equivalence of B-systems and stratified E-systems}
\label{ssec:b2e:eqv}

Here we show in \cref{thm:BasE} that
the functors $\BtoE$ from \cref{lem:BtoE-fun}
and $\EtoB$ from \cref{constr:EtoB-fun} form an equivalence of categories.
We do so by showing in \cref{constr:BtoE-surj} that
$\EtoB \colon \strEsys \to \Bsys$ is an
essential section of the full and faithful functor $\BtoE$.

\begin{prob}\label{prob:BtoE-surj}
For every stratified E-system $\sys{E}$,
to construct an isomorphism of stratified E-systems $\BtoE(\EtoB(\sys{E})) \cong \sys{E}$, natural in $\sys{E}$.
\end{prob}

\begin{construction}{prob:BtoE-surj}\label{constr:BtoE-surj}
In this construction we decorate the structures from $\BtoE(\EtoB(\sys{E}))$
with a hat, as in $\hat{\F}$.
Since $\F$ is stratified,
the function mapping $(n,X) \in \coprod_{n}B(\sys{E})_n$ to $X$
extends to an isomorphism $\varphi$ between
the underlying \scat $\hat{\F}$ of $\BtoE(\EtoB(\sys{E}))$,
constructed in~\ref{constr:BtoTrmStr}, and $\F$.
In particular, it maps an arrow $(X,k)$
to the arrow $\overline{X}^k :\jdeq \overline{\ft^{k-1}(X)} \circ \cdots \circ \overline{X} \colon X \to \ft^k(X)$ in $\F$ as in~\eqref{lem:strat-alt:fact}.

In order to lift $\varphi$ to an isomorphism of categories with term structure,
we need to show that $\hat{T}(X,k) \cong T(\overline{X}^k)$
for every $X \in B(\sys{E})_n$ and $k \leq n$,
where $\hat{T}(X,k)$ is the set defined in \cref{constr:BtoTrmStr}.
For every $X \in B_{n+1}$, by~\eqref{constr:BtoTrmStr:T1} it is
\begin{linenomath*}
\[
\hat{T}(X,1) \jdeq \bd^{-1}(X) \jdeq
\left\{ (Y,y) \in \tilde{B}(\sys{E})_{n+1}
\mid Y \jdeq X,\, y \in T(\overline{Y}) \right\}
\cong T(\overline{X}).
\]
\end{linenomath*}
Suppose that $\hat{T}(Y,j) \cong T(\overline{Y}^j)$
for every $m < n, Y \in B_m$ and $j \leq m$.
It follows by~\eqref{constr:BtoTrmStr:Tsuc} that
\begin{linenomath*}
\[
\hat{T}(X,k+1) \jdeq \coprod_{t \in T(\ft(X),k)}T(S_t(X),1) \cong
\coprod_{t \in T(\overline{\ft(X)}^k)} T(\overline{S_t(X)})
\cong T(\overline{X}^{k+1})
\]
\end{linenomath*}
where the last bijection follows from \cref{thm:pairing}
since $\overline{X}^{k+1} \jdeq \overline{\ft(X)}^k \circ \overline{X}$
and $\overline{S_t(X)} \jdeq S_t(\overline{X})$.
In other words, elements of $\hat{T}(X,k)$ are lists of length $k$
of pairs $(Y,y) \in \tilde{B}(\sys{E})_{n+j}$ for $j\jdeq 1,\dots,k$,
where $y \in T(\overline{Y})$,
and the action on terms of $\varphi$ first acts componentwise
dropping the first component of each pair and then applies
the bijection from \cref{thm:pairing}.

Next, we show that this choice of isos is natural in $\sys{E}$.
Given a stratified E-homomorphism $F \colon \sys{E} \to \sys{D}$, we need to show that
$\varphi_{\sys{D}} \circ \BtoE(\EtoB(F)) \jdeq F \circ \varphi_{\sys{E}}$.
The functor $\BtoE(\EtoB(F))$ maps an arrow $(X,k)$ to $(F(X),k)$, thus
\begin{linenomath*}
\[
\varphi_{\sys{D}} \circ \BtoE(\EtoB(F))(X,k)
\jdeq \overline{F(X)}^k
\jdeq F(\overline{X}^k)
\jdeq F \circ \varphi_{\sys{E}}(X,k).
\]
\end{linenomath*}
since $F$ preserves \indecarr arrows by \cref{lem:strat-funct}.
The functor with term structure $\BtoE(\EtoB(F))$ maps
$(X,x) \in \hat{T}(X,1)$ to $(F(X),F(x))$
by~\eqref{constr:BtoTrmStrFun:T1} and~\eqref{constr:TCtoBfr-fun:T}, thus
\begin{linenomath*}
\[
\varphi_{\sys{D}} \circ \BtoE(\EtoB(F))(X,x)
\jdeq F(x)
\jdeq F \circ \varphi_{\sys{E}}(X,x).
\]
\end{linenomath*}
Suppose now that, for every $m\leq n$, $Y \in B(\sys{E})_m$, $i\leq m$
and $(Y,t) \in \hat{T}(Y,i)$, it is
$\varphi_{\sys{D}} \circ \BtoE(\EtoB(F))(Y,t) \jdeq F \circ \varphi_{\sys{E}}(Y,t)$.
Let $X \in B(\sys{E})_{n+1}$ and $(t,(X,x)) \in \hat{T}(X,k+1)$, then
\begin{linenomath*}
\[\begin{split}
\varphi_{\sys{D}} \circ \BtoE(\EtoB(F))(t,(X,x))
&\jdeq \tmext{\inpar1{\varphi_{\sys{D}} \circ \BtoE(\EtoB(F))(t)}}{F(x)}
\\&\jdeq
\tmext{\inpar1{F \circ \varphi_{\sys{E}}(t)}}{F(x)}
\\&\jdeq
F(\tmext{\varphi_{\sys{E}}(t)}{x})
\\&\jdeq
F \circ \varphi_{\sys{E}}(t,(X,x))
\end{split}
\]
\end{linenomath*}
by definition of $\varphi$, inductive hypothesis, \cref{lem:Ehpres-pairproj},
and definition of $\varphi$ again.
Therefore we conclude that, for every E-homomorphism $F \colon \sys{E} \to \sys{D}$,
\begin{linenomath*}
\begin{equation}\label{constr:BtoE-surj:nat}
\varphi_{\sys{D}} \circ \BtoE(\EtoB(F)) \jdeq F \circ \varphi_{\sys{E}}.
\end{equation}
\end{linenomath*}
It remains to show that each component $\varphi_{\sys{E}}$ is an E-homomorphism.

To show that $\varphi$ is a weakening homomorphism,
note that for every $X \in B(\sys{E})_{n+k}$, it is
$\hat{W}_{(X,k)} \jdeq \BtoE(W_X^k)$ by~\eqref{constr:BtopreE:weak} and
\begin{linenomath*}
\[\begin{split}
W_X^k &\jdeq W_{\ft^{k-1}(X)} \circ \cdots W_X
\\&\jdeq
\EtoB( W_{\overline{\ft^{k-1}(X)}} \circ \cdots \circ W_{\overline{X}} )
\\&\jdeq
\EtoB( W_{\varphi(X,k)} )
\end{split}\]
\end{linenomath*}
by, in order,~\eqref{rmk:BtoTrmStr:W};~\eqref{constr:preEtopreB:Wdef} and
functoriality of $\EtoB$;
condition~\ref{def:preweak}.\ref*{def:preweak:id} in the case $k=0$ and
condition~\ref{def:preweak}.\ref*{def:preweak:funct} for $k>0$;
and definition of $\varphi$.
Moreover, $W_{\varphi(X,k)}$ is an E-homomorphism, thus
$\varphi$ is a weakening homomorphism by~\eqref{constr:BtoE-surj:nat}.

To show that $\varphi$ is a substitution homomorphism we reason by induction.
The case $X \in B(\sys{E})_n$ and $* \in \hat{T}(X,0)$ is trivial.
For every $X \in B(\sys{E})_{n+1}$ and $(X,x) \in \hat{T}(X,1)$, it is
$\hat{S}_{(X,x)} \jdeq \BtoE(S_{(X,x)})$ by~\eqref{constr:BtopreE:sub} and
\begin{linenomath*}
\[
S_{(X,x)} \jdeq \EtoB( S_x )
\jdeq \EtoB( S_{\varphi(X,x)} )
\]
\end{linenomath*}
by~\eqref{constr:preEtopreB:Sdef} and definition of $\varphi$.
Suppose now that, for every $m\leq n$, $Y \in B(\sys{E})_m$, $i \leq m$
and $t \in \hat{T}(Y,i)$, it is $S_t \jdeq \EtoB(S_{\varphi(t)})$
as homomorphisms of B-systems.
Then for every $X \in B(\sys{E})_{n+1}$, $k \leq n$ and $(t,(X,x)) \in \hat{T}(X,k+1)$,
it is $\hat{S}_{(t,(X,x))} \jdeq \BtoE(S_{(t,(X,x))})$ by~\eqref{constr:BtopreE:sub} and
\begin{linenomath*}
\[\begin{split}
S_{(t,(X,x))} &\jdeq S_{(X,x)} \circ S_t/X
\\&\jdeq
\EtoB( S_x \circ S_{\varphi(t)}/X )
\\&\jdeq
\EtoB( S_{\tmext{\varphi(t)}{x}} )
\\&\jdeq
\EtoB( S_{\varphi(t,(X,x))} )
\end{split}\]
\end{linenomath*}
by, in order,~\eqref{constr:BtoTrmStr:Str};
inductive hypothesis,~\eqref{constr:preEtopreB:Sdef} and functoriality of $\EtoB$;
\cref{subst_by_tmext}; and definition of $\varphi$.
Therefore $S_t \jdeq \BtoE(\EtoB(S_{\varphi(t)}))$
for every $X \in B(\sys{E})_{n+k}$ and $t \in \hat{T}(X,k))$.
We conclude that $\varphi$ is a substitution homomorphism by naturality~\eqref{constr:BtoE-surj:nat}.

To show that $\varphi$ is a projection homomorphism we reason by induction.
The case $(X,0)$ for $X \in B(\sys{E})_n$ is again trivial.
Let $X \in B(\sys{E})_{n+1}$, then
\begin{linenomath*}
\[
\hat{\tfid{(X,1)}} \jdeq \delta(X) \jdeq (W_X(X),\tfid{\overline{X}})
\]
\end{linenomath*}
by~\eqref{constr:BtopreE:idtm-base} and~\eqref{constr:preEtopreB:Gdef}.
Therefore $\varphi(\hat{\tfid{(X,1)}}) \jdeq \tfid{\varphi(X,1)}$
by definition of $\varphi$.
Suppose that, for every $m\leq n$, $Y \in B(\sys{E})_m$, $i \leq m$,
it is $\varphi(\hat{\tfid{(Y,i)}}) \jdeq \tfid{\varphi(Y,i)}$.
Let $X \in B(\sys{E})_{n+1}$ and $k \leq n$. Then
\begin{linenomath*}
\[\begin{split}
\varphi(\hat{\tfid{(X,k+1)}}) &\jdeq
\varphi(W_{(X,1)}(\hat{\tfid{(\ft(X),k)}}), \delta(X))
\\&\jdeq
\tmext{\inpar1{W_{\overline{X}}(\varphi(\hat{\tfid{(\ft(X),k)}}))}}{\tfid{\overline{X}}}
\\&\jdeq
\tmext{\inpar1{W_{\overline{X}}(\tfid{\varphi(\ft(X),k))}}}{\tfid{\overline{X}}}
\\&\jdeq
\tfid{\overline{\ft(X)}^k \circ \overline{X}}
\\&\jdeq
\tfid{\varphi(X,k+1)}
\end{split}\]
\end{linenomath*}
by~\eqref{constr:BtopreE:idtm-suc},
definition of $\varphi$ and the fact that $\varphi$ is a weakening homomorphism,
the inductive hypothesis, \cref{lem:E-CE-pair-proj},
and definition of $\varphi$ again.
Therefore $\varphi(\hat{\tfid{(X,k)}}) \jdeq \tfid{\varphi(X,k)}$
for every $X \in B(\sys{E})_{n+k}$.
This concludes the proof that $\varphi$ is an E-homomorphism.
\end{construction}

Finally we reach the main result of this section.

\begin{thm}\label{thm:BasE}
The functors $\BtoE \colon \Bsys \to \strEsys$ from \cref{lem:BtoE-fun}
and $\EtoB \colon \strEsys \to \Bsys$ from \cref{constr:EtoB-fun}
form an equivalence of categories.
\end{thm}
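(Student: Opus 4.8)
The plan is to deduce the theorem from the work already done, treating it essentially as the assembly of \cref{lem:BtoE-fun} and \cref{constr:BtoE-surj}. Recall that $\BtoE$ was shown to be full and faithful in \cref{lem:BtoE-fun}.\ref{lem:BtoE-fun:ff}, and that \cref{constr:BtoE-surj} provides a natural isomorphism $\varphi \colon \BtoE \circ \EtoB \Rightarrow \catid{\strEsys}$ (the naturality being exactly equation~\eqref{constr:BtoE-surj:nat}). Since every stratified E-system $\sys{E}$ is therefore isomorphic to $\BtoE(\EtoB(\sys{E}))$, the functor $\BtoE$ is essentially surjective; being also full and faithful, it is already an equivalence of categories, with $\EtoB$ supplying the object-assignment of a quasi-inverse.

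To present $\EtoB$ explicitly as the quasi-inverse named in the statement, I would construct the remaining natural isomorphism $\psi \colon \EtoB \circ \BtoE \Rightarrow \catid{\Bsys}$. Fixing a B-system $\sys{B}$ and evaluating $\varphi$ at the stratified E-system $\BtoE(\sys{B})$ yields an isomorphism
\begin{linenomath*}
\[
\varphi_{\BtoE(\sys{B})} \colon \BtoE(\EtoB(\BtoE(\sys{B}))) \xrightarrow{\ \cong\ } \BtoE(\sys{B})
\]
\end{linenomath*}
in $\strEsys$. Because $\BtoE$ is full and faithful, this morphism is the image under $\BtoE$ of a unique morphism $\psi_{\sys{B}} \colon \EtoB(\BtoE(\sys{B})) \to \sys{B}$ in $\Bsys$; and since a full and faithful functor reflects isomorphisms, $\psi_{\sys{B}}$ is itself an isomorphism.

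Naturality of $\psi$ follows by a standard diagram chase using faithfulness. For a B-homomorphism $H \colon \sys{B} \to \sys{B}'$, the two composites $\psi_{\sys{B}'} \circ \EtoB\BtoE(H)$ and $H \circ \psi_{\sys{B}}$ have the same image under $\BtoE$, namely the two sides of the naturality square for $\varphi$ at $\BtoE(H)$, so faithfulness of $\BtoE$ forces them to coincide. Together with $\varphi$, this exhibits $\BtoE$ and $\EtoB$ as an equivalence; if an adjoint equivalence is wanted, one of the two transformations can be adjusted in the usual way to satisfy the triangle identities.

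The step to be careful about, rather than a genuine obstacle since all the substantive constructions on sets, categories, and term structures are already complete, is this transport of the natural isomorphism across $\BtoE$: one must invoke \emph{both} halves of ``full and faithful'', fullness to obtain $\psi_{\sys{B}}$, faithfulness to see it is uniquely determined and to verify naturality, and the derived fact that fully faithful functors reflect isomorphisms. No further computation is required.
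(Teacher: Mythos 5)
Your proof is correct and follows essentially the same route as the paper: the paper's proof likewise combines full faithfulness of $\BtoE$ from \cref{lem:BtoE-fun} with the natural isomorphism $\BtoE\circ\EtoB\cong\mathrm{Id}$ of \cref{constr:BtoE-surj}, observing that a fully faithful functor with an essential section is an equivalence. Your explicit construction of $\psi$ by transporting $\varphi_{\BtoE(\sys{B})}$ across the fully faithful $\BtoE$ is just the standard argument the paper leaves implicit.
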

\begin{proof}
As the functor $\BtoE$ is fully faithful by \cref{lem:BtoE-fun}.\ref{lem:BtoE-fun:ff},
it is enough to show that $\EtoB$ is an essential section of $\BtoE$.
This holds by \cref{constr:BtoE-surj}.
\end{proof}

\section{Equivalence between B- and C-systems}
\label{sec:equiv-b-c}

In this \lcnamecref{sec:equiv-b-c}, we construct an equivalence between B-systems and C-systems,
in several steps.
We first construct an adjunction between the categories of CE-systems and of E-systems.
To this end, we construct, in~\cref{ssec:ce2e}, a functor from CE-systems to E-systems,
and, in~\cref{ssec:e2ce}, a functor in the other direction, from E-systems to CE-systems.
In~\cref{ssec:eqv-e-ce} we show that these functors form an adjunction that restricts to an equivalence when considering \emph{rooted} CE-systems.
Finally, in~\cref{ssec:eqv-b-c}, we give our equivalence between B-systems and C-systems, obtained by restricting the aforementioned equivalence to \emph{stratified} rooted CE-systems and E-systems, respectively.

\subsection{From CE-sytems to E-systems}
\label{ssec:ce2e}

\begin{defi}\label{defn:CEslice}
Let $\sys{A}$ be a CE-system.
For any $\Gamma\in\cat{C}$,
we define the \define{slice CE-system} $\CEsl{A}{\Gamma}$ as follows.
Let $\CEslcat1{A}{\Gamma}$ be the
\scat with the same objects as $\CEfam1{A}/\Gamma$
and with all arrows from $I(A)$ to $I(B)$ in $\CEcat1{A}/\Gamma$ as arrows from $A$ to $B$.
The functor $I/\Gamma \colon \CEfam1{A}/\Gamma \to \CEcat1{A}/\Gamma$
factors as an identity-on-objects $\CEslfun{\Gamma}$ %
followed by a full and faithful one as shown in the diagram below.
\begin{linenomath*}
\[\begin{tikzcd}
\CEfam1{A}/\Gamma	\ar[rr,"I/\Gamma"] \ar[dr,"\CEslfun{\Gamma}"{swap}]	&&	\CEcat1{A}/\Gamma
\\
&	\CEslcat1{A}{\Gamma}	\ar[ur]	&
\end{tikzcd}\]
\end{linenomath*}
We take $\CEslfun{\Gamma}$ to be the underlying functor of $\sys{A}/\Gamma$.
The choice of pullback squares is induced by $\sys{A}$.

We shall omit the subscript $\sys{A}$ from $\CEslcat1{A}{\Gamma}$ whenever the CE-system is clear from context.
\end{defi}

\begin{rems}\label{rmk:CEslice}
Let $\sys{A}$ be a CE-system.
\begin{enumerate}
\item
For every object $\Gamma$, the identity $\catid{\Gamma}$
is terminal in $\CEslcat1{A}{\Gamma}$.
It follows that any slice CE-system is \rtdCE.
\item
For every $f\colon\Delta\to\Gamma$ in $\cat{C}$,
the functor $\CEpbf0f \colon \cat{F}/\Gamma \to \cat{F}/\Delta$ lifts to a functor
$\CEpbf0f \colon \CEslcat0{}{\Gamma} \to \CEslcat0{}{\Delta}$ making the square below commute.
\begin{linenomath*}
\begin{equation}\label{eq:square-ce-system}
\begin{tikzcd}
\cat{F}/\Gamma	\ar[d,"\CEslfun{\Gamma}",swap]\ar[r,"\CEpbf0f"]	&	\cat{F}/\Delta	\ar[d,"\CEslfun{\Delta}"]
\\
\CEslcat0{}{\Gamma}	\ar[r,"\CEpbf0f"]	&	\CEslcat0{}{\Delta}
\end{tikzcd}
\end{equation}
\end{linenomath*}
\item\label{rmk:CEslice:hom}
For every $f\colon\Delta\to\Gamma$
the commutative square in \eqref{eq:square-ce-system}
lifts to a CE-homomorphism $\CEpbf0f \colon \CEsl{A}{\Gamma} \to \CEsl{A}{\Delta}$.
\end{enumerate}
\end{rems}

\begin{lem}\label{lem:CEhom-distributive}
Let $F \colon \sys{A} \to \sys{B}$ be a CE-homomorphism.
Then for every $f\colon\Delta\to\Gamma$ in $\CEcat1{A}$
the diagram below commutes.
\begin{linenomath*}
\begin{equation*}
\begin{tikzcd}[column sep=large]
&	\CEfam1{A}/\Gamma	\ar[dl,"\CEslfun{\Gamma}",swap]\ar[dd,swap,"{\CEpbf0f}",near end]\ar[rr,"{\hCEfam{F}/\Gamma}"]
&&	\CEfam1{B}/F\Gamma	\ar[dl,"\CEslfun{F\Gamma}",swap]\ar[dd,"{\CEpbf1{Ff}}",near end]
\\
\CEslcat1{A}{\Gamma} \ar[rr,crossing over,"{\hCEcat{F}/\Gamma}",near end] \ar[dd,swap,"{\CEpbf0f}"]
&& \CEslcat1{B}{F\Gamma}
&\\
&	\CEfam1{A}/\Delta	\ar[dl,"\CEslfun{\Delta}"]\ar[rr,"{\hCEfam{F}/\Delta}",near start]
&&	\CEfam1{B}/F\Delta	\ar[dl,"\CEslfun{F\Delta}"]
\\
\CEslcat1{A}{\Delta} \ar[rr,swap,"{F/\Delta}"]	&& \CEslcat1{B}{\hCEcat{F}\Delta}	\ar[from=uu,crossing over,"{\CEpbf1{Ff}}",near end]	&
\end{tikzcd}
\end{equation*}
\end{linenomath*}
\end{lem}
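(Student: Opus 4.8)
The plan is to read the displayed diagram as a cube and to verify that it commutes face by face, reducing everything to the defining properties of CE-homomorphisms. The left face---built entirely from $\sys{A}$, with the functors $\CEslfun{\Gamma}$, $\CEslfun{\Delta}$ and the two copies of $\CEpbf0f$---and the right face---built entirely from $\sys{B}$, with $\CEslfun{F\Gamma}$, $\CEslfun{F\Delta}$ and the two copies of $\CEpbf1{Ff}$ (where $Ff\jdeq\hCEcat{F}f$)---are exactly the commuting squares supplied by \cref{rmk:CEslice} for the CE-systems $\sys{A}$ and $\sys{B}$ along the arrows $f$ and $\hCEcat{F}f$, respectively. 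The top and bottom faces, which relate $\hCEfam{F}$ to its slice lift $\hCEcat{F}/\Gamma$ (resp.\ $F/\Delta$) through the functors $\CEslfun{}$, commute because the defining square of the CE-homomorphism $F$ in \cref{def:CEhom} restricts to each slice and the $\CEslfun{}$ are identity-on-objects: on an arrow $a$ of $\CEfam1{A}/\Gamma$ both composites return $\hCEcat{F}(\CEfun1{A}(a))\jdeq\CEfun1{B}(\hCEfam{F}a)$. This leaves the back face and the front face.

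The back face asserts $\CEpbf1{\hCEcat{F}f}\comp(\hCEfam{F}/\Gamma)\jdeq(\hCEfam{F}/\Delta)\comp\CEpbf0f$ as functors $\CEfam1{A}/\Gamma\to\CEfam1{B}/F\Delta$. On objects this is precisely the first equation of \cref{def:CEhom}.\ref{def:CEhom:pb}, namely $\hCEfam{F}(\CEpb00{f}{A})\jdeq\CEpb11{\hCEcat{F}f}{\hCEfam{F}A}$; on arrows it follows from functoriality of $\hCEfam{F}$ together with the explicit action of the pullback functor $\CEpbf0f$ on morphisms.

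The front face is the substantive part. It asserts $\CEpbf1{\hCEcat{F}f}\comp(\hCEcat{F}/\Gamma)\jdeq(F/\Delta)\comp\CEpbf0f$ as functors between the slice \scats $\CEslcat1{A}{\Gamma}$ and $\CEslcat1{B}{F\Delta}$, whose arrows are by definition arrows of $\cat{C}$. On objects it again reduces to \cref{def:CEhom}.\ref{def:CEhom:pb}. For an arrow $g\colon A\to B$ in $\CEslcat1{A}{\Gamma}$---that is, an arrow $g\colon\Gamma.A\to\Gamma.B$ over $\Gamma$ in $\cat{C}$---I would use that the lifted functor $\CEpbf0f$ of \cref{rmk:CEslice} sends $g$ to the unique arrow $\CEpbf0f(g)$ determined, via the universal property of the chosen pullback square of $B$ along $f$ in \cref{def:CEsys}, by $\CEqar{f}{B}\comp\CEpbf0f(g)\jdeq g\comp\CEqar{f}{A}$ and $\CEfun1{A}(\CEpb00{f}{B})\comp\CEpbf0f(g)\jdeq\CEfun1{A}(\CEpb00{f}{A})$. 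Applying $\hCEcat{F}$ to these two equations and using that $F$ preserves the chosen pullbacks---$\hCEcat{F}(\CEqar{f}{B})\jdeq\CEqar{\hCEcat{F}f}{\hCEfam{F}B}$ from \cref{def:CEhom}.\ref{def:CEhom:pb}, and $\hCEcat{F}\comp\CEfun1{A}\jdeq\CEfun1{B}\comp\hCEfam{F}$ from the defining square---shows that $\hCEcat{F}(\CEpbf0f(g))$ satisfies the two equations characterising $\CEpbf1{\hCEcat{F}f}(\hCEcat{F}g)$. Uniqueness in the universal property of the chosen pullback of $\hCEfam{F}B$ along $\hCEcat{F}f$ then gives the equality on arrows.

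The main obstacle is exactly this arrow-level verification on the front face. One cannot simply deduce it from the other five faces: chasing an object of $\CEslcat1{A}{\Gamma}$ around the cube does settle the front face on objects, since the $\CEslfun{\Gamma}$ are identity-on-objects and hence surjective on objects, but they are in general \emph{not} full (they are full only when $\CEfun1{A}$ is), so no epimorphism argument reaches the morphisms of the slice \scats. The equality on arrows must therefore be established directly through the universal property of the chosen pullback squares, and the care lies in transporting the two characterising equations for $\CEpbf0f(g)$ across $F$ by means of \cref{def:CEhom}.\ref{def:CEhom:pb}.
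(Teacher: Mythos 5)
Your proof is correct and follows exactly the same strategy as the paper's: a face-by-face verification of the cube, with the back face reduced to the pullback-preservation clause of \cref{def:CEhom} and the front face settled by the universal property of the chosen pullback squares (the paper's own proof is just a two-line summary of precisely these points). Your additional observation that the front face cannot be inferred from the other five because $\CEslfun{\Gamma}$ need not be full is a correct and worthwhile clarification of why the universal-property argument is genuinely needed.
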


\begin{proof}
Commutativity of the back face follows from the fact that $\CEpb11{Ff}{FA} \jdeq F(\CEpb00fA)$ for every $A \in \CEfam1{A}/\Gamma$,
commutativity of the front face follows from the universal property of pullbacks,
and commutativity of the other faces is immediate.
\end{proof}

\begin{lem}\label{lem:pb_selfdistributive}
Let $\sys{A}$ be a CE-system. For every $f\colon\Delta\to\Gamma$ in $\cat{C}$
and every $g\colon A\to B$ in $\CEslcat0{}{\Gamma}$ the diagram below commutes.
\begin{linenomath*}
\begin{equation*}
\begin{tikzcd} %
&	\cat{F}/\ctxext{\Gamma}{B}	\ar[dl,"I/\ctxext{\Gamma}{B}",swap]\ar[dd,swap,"{\CEpbf0g}",near end]\ar[rr]
&&	\cat{F}/\ctxext{\Delta}{\CEpb00{f}{B}}	\ar[dl,"I/\ctxext{\Delta}{\CEpb00{f}{B}}",swap]\ar[dd,"{\CEpbf1{\CEpb00fg}}",near end]
\\
\CEslcat0{}{\ctxext{\Gamma}{B}} \ar[rr,crossing over,"{\CEpbf0f/B}",near end] \ar[dd,swap,"{\CEpbf0g}",near end]
&& \CEslcat0{}{\ctxext{\Delta}{\CEpb00fB}}
&\\
&	\cat{F}/\ctxext{\Gamma}{A}	\ar[dl,"I/\ctxext{\Gamma}{A}"]\ar[rr]
&&	\cat{F}/\ctxext{\Delta}{\CEpb00{f}{A}}	\ar[dl,"I/\ctxext{\Delta}{\CEpb00{f}{A}}"]
\\
\CEslcat0{}{\ctxext{\Gamma}{A}} \ar[rr,swap,"{\CEpbf0f/A}",near end]
&& \CEslcat0{}{\ctxext{\Delta}{\CEpb00{f}{A}}}	\ar[from=uu,crossing over,"{\CEpbf1{\CEpb00fg}}",swap,near end]	&
\end{tikzcd}
\end{equation*}
\end{linenomath*}
\end{lem}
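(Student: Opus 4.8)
The plan is to prove that the cube commutes by checking its six faces separately, exactly as in the proof of \cref{lem:CEhom-distributive}. Four of the faces are formal. The top and bottom faces are the commuting $I$-squares of \cref{def:CEhom} witnessing that $\CEpbf0f/B$ and $\CEpbf0f/A$ are CE-homomorphisms $\CEsl{A}{\ctxext{\Gamma}{B}} \to \CEsl{A}{\ctxext{\Delta}{\CEpb00fB}}$ and $\CEsl{A}{\ctxext{\Gamma}{A}} \to \CEsl{A}{\ctxext{\Delta}{\CEpb00fA}}$, obtained by slicing the CE-homomorphism $\CEpbf0f$ of \cref{rmk:CEslice} over $B$ and over $A$. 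The two side faces, whose vertical legs are the pullback functors $\CEpbf0g$ and $\CEpbf1{\CEpb00fg}$, are instances of the commuting square of \cref{rmk:CEslice}, applied to the arrows $g \colon \ctxext{\Gamma}{A} \to \ctxext{\Gamma}{B}$ and $\CEpb00fg \colon \ctxext{\Delta}{\CEpb00fA} \to \ctxext{\Delta}{\CEpb00fB}$, respectively. So all four are immediate.

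The content lies in the back and front faces. For the back face I would first unwind the horizontal maps: by condition~\ref{def:CEsys}.\ref{def:CEsys:d}, the family component of $\CEpbf0f/B$ is the pullback functor $\CEpbf0{\CEqar{f}{B}}$ along the projection $\CEqar{f}{B}$, and likewise $\CEpbf0f/A$ restricts to pullback along $\CEqar{f}{A}$. With this identification the back face becomes
\begin{linenomath*}
\[
\CEpbf1{\CEpb00fg} \circ \CEpbf0{\CEqar{f}{B}}
\jdeq
\CEpbf0{\CEqar{f}{A}} \circ \CEpbf0g
\colon \cat{F}/\ctxext{\Gamma}{B} \to \cat{F}/\ctxext{\Delta}{\CEpb00fA}.
\]
\end{linenomath*}
By functoriality of the chosen pullbacks (condition~\ref{def:CEsys}.\ref{def:CEsys:c}), the left-hand side is pullback along $\CEqar{f}{B} \circ \CEpb00fg$ and the right-hand side is pullback along $g \circ \CEqar{f}{A}$, so it suffices to prove the naturality square $\CEqar{f}{B} \circ \CEpb00fg \jdeq g \circ \CEqar{f}{A}$ of the projections. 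This last equality is precisely the universal property defining $\CEpb00fg$ as the image of $g$ under $\CEpbf0f$, applied to the chosen pullback over $B$. The front face agrees with the back face on objects; on morphisms, which are arbitrary arrows of $\cat{C}$ between families, both composites return the arrow singled out by the universal property of the relevant pullback squares, so the front face commutes by the universal property of pullbacks, just as in \cref{lem:CEhom-distributive}.

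The main obstacle is not a calculation but the bookkeeping of the back face: correctly recognizing the family components of the sliced functors $\CEpbf0f/B$ and $\CEpbf0f/A$ as pullback along the projections $\CEqar{f}{B}$ and $\CEqar{f}{A}$ (condition~\ref{def:CEsys}.\ref{def:CEsys:d}), and then reducing the remaining equality, via functoriality (condition~\ref{def:CEsys}.\ref{def:CEsys:c}), to the naturality of $\CEqar{f}{\blank}$. Once these identifications are in place, every face is either an instance of an already-established commuting square or a direct consequence of the universal property of pullbacks.
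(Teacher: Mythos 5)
Your proof is correct, but it takes a more self-contained route than the paper, whose entire proof is a two-line citation: the cube is \cref{lem:CEhom-distributive} applied to $\CEpbf0f$ regarded as a CE-homomorphism $\CEsl{A}{\Gamma}\to\CEsl{A}{\Delta}$ via \cref{rmk:CEslice}.\ref{rmk:CEslice:hom}. You instead verify the faces directly: the four formal faces as you say, and the back face by identifying the family components of $\CEpbf0f/B$ and $\CEpbf0f/A$ with $\CEpbf0{\CEqar{f}{B}}$ and $\CEpbf0{\CEqar{f}{A}}$ via~\ref{def:CEsys}.\ref{def:CEsys:d}, then collapsing both composites with~\ref{def:CEsys}.\ref{def:CEsys:c} to pullback along the two legs of the naturality square $\CEqar{f}{B}\circ\CEpb00{f}{g}\jdeq g\circ\CEqar{f}{A}$, which holds by the universal property defining $\CEpb00{f}{g}$. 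This is a genuine gain in explicitness: the paper's citation silently relies on the unproved claim of \cref{rmk:CEslice}.\ref{rmk:CEslice:hom} that $\CEpbf0f$ preserves the chosen pullbacks of the slice CE-systems --- which is essentially the back and front faces of this very cube --- whereas your argument derives exactly that from the axioms~\ref{def:CEsys}.\ref{def:CEsys:c} and~\ref{def:CEsys}.\ref{def:CEsys:d}, so it simultaneously justifies the remark. The only cosmetic caveat is that, like the paper, you treat the back face at the level of objects; strictly one should note that morphisms of a slice are themselves families, so conditions~\ref{def:CEsys}.\ref{def:CEsys:c} and~\ref{def:CEsys}.\ref{def:CEsys:d} cover them as well.
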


\begin{proof}
This is \cref{lem:CEhom-distributive} applied to $\CEpbf0f$
seen as a homomorphism of CE-systems thanks to \cref{rmk:CEslice}.\ref{rmk:CEslice:hom}.
\end{proof}

\begin{prob}\label{problem:CE2E-fun}
To construct a functor $\CEtorE \colon \CEsys \to \rEsys$.
\end{prob}

\begin{construction}{problem:CE2E-fun}\label{constr:CE2E-fun}
Let \sys{A} be a CE-system with underlying functor $I\colon \cat{F} \to \cat{C}$.
The underlying category of the E-system $\CEtorE(\sys{A})$ is $\cat{F}$.
The chosen terminal object is the one in \sys{A}.
To equip $\cat{F}$ with a term structure we define,
for every $A\in\cat{F}/\Gamma$, the set
\begin{linenomath*}
\begin{equation}\label{constr:CEtoE-fun:term}
T(A)\defeq\{x\colon\Gamma\to\ctxext{\Gamma}{A}\mid I(A)\circ x\jdeq\catid{\Gamma}\}.
\end{equation}
\end{linenomath*}
We define for any $A\in\cat{F}/\Gamma$, the functor
\begin{linenomath*}
\begin{equation}\label{constr:CEtoE-fun:weak}
W_A\defeq \CEpbf0A \colon \cat{F}/\Gamma\to\cat{F}/\ctxext{\Gamma}{A}.
\end{equation}
\end{linenomath*}
Likewise, we define for any $x\in T(A)$, the functor
\begin{linenomath*}
\begin{equation}\label{constr:CEtoE-fun:sub}
S_x\defeq \CEpbf0x \colon \cat{F}/\ctxext{\Gamma}{A}\to\cat{F}/\Gamma.
\end{equation}
\end{linenomath*}
These clearly extend to functors with term structure.
We also define $\tfid{A}\colon T(W_A(A))$ by the universal property of pullbacks
as in the diagram below.
\begin{linenomath*}
\begin{equation*}
\begin{tikzcd}[row sep=2em]
\ctxext{\Gamma}{A} \arrow[drrr,bend left=15,"\catid{\ctxext{\Gamma}{A}}"]
	\arrow[ddr,bend right=15,swap,"\catid{\ctxext{\Gamma}{A}}"]
	\arrow[dr,densely dotted,near end,"{\tfid{A}}"] \\
& \ctxext{{\Gamma}{A}}{W_A(A)} \arrow[rr,"{\pi_2(I(A),A)}"{swap}] \arrow[d,fib]
&& \ctxext{\Gamma}{A} \arrow[d,fib] \\
& \ctxext{\Gamma}{A} \arrow[rr,fib,swap,"I(A)"] && \Gamma
\end{tikzcd}
\end{equation*}
\end{linenomath*}
As an immediate consequence of \cref{lem:pb_selfdistributive}, we get that
each functor $W_A$ and $S_x$ is both a weakening functor and a substitution functor.
It follows by the definitions that
weakening and substitution preserve the terms $\tfid{A}$.

It remains to verify the remaining conditions of E-systems.
\begin{itemize}
\item[\ref{def:Esys:SxW}.] To show that substitution in weakened families is constant,
  note that
\begin{linenomath*}
\begin{equation*}
S_x\circ W_A\jdeq x^\ast\circ A^\ast\jdeq (A\circ x)^\ast\jdeq (\catid{\Gamma})^\ast\jdeq \catid{\cat{C}_{\cat{F}}/\Gamma}.
\end{equation*}
\end{linenomath*}
\item[\ref{def:Esys:SIdW}.] The identity terms are neutral for pre-composition:
\begin{linenomath*}
\begin{equation*}
S_{\tfid{A}}\circ W_A/A \jdeq S_{\tfid{A}}\circ \pi_2(A,A)^\ast \jdeq (\pi_2(A,A)\circ\tfid{A})^\ast \jdeq
(\catid{\ctxext{\Gamma}{A}})^\ast \jdeq \catid{\cat{C}_{\cat{F}}/\ctxext{\Gamma}{A}}.
\end{equation*}
\end{linenomath*}
\item[\ref{def:Esys:SxId}.] The identity terms behave like identity functions: by the universal property,
  $S_x(\tfid{A})$ is the unique section of $A$ such that the square
\begin{linenomath*}
\begin{equation*}
\begin{tikzcd}[column sep=6em]
\Gamma \arrow[d,swap,"{S_x(\tfid{A})}"] \arrow[r,"{\pi_2(x,\catid{\ctxext{\Gamma}{A}})}"] & \ctxext{\Gamma}{A} \arrow[d,"{\tfid{A}}"] \\
\ctxext{\Gamma}{A} \arrow[r,swap,"{\pi_2(x,W_A(A))}"] & \ctxext{{\Gamma}{A}}{W_A(A)}
\end{tikzcd}
\end{equation*}
\end{linenomath*}
commutes. Thus, it suffices to show that this square also commutes with $x$ in the place of
$S_x(\tfid{A})$. Note that $\pi_2(x,\catid{\ctxext{\Gamma}{A}})\jdeq x$. 
Since $\ctxext{{\Gamma}{A}}{W_A(A)}$ is itself a pullback, it suffices
and it is straightforward to verify the equalities
\begin{linenomath*}
\begin{align*}
W_A(A)\circ\pi_2(x,W_A(A))\circ x & \jdeq W_A(A)\circ\tfid{A}\circ x
\\
\pi_2(A,A)\circ\pi_2(x,W_A(A))\circ x & \jdeq \pi_2(A,A)\circ\tfid{A}\circ x.
\end{align*}
\end{linenomath*}
\end{itemize}

Let now $F \colon \sys{A} \to \sys{B}$ be a CE-system homomorphism.
The underlying functor of $\CEtorE(F)$ is
$\hCEfam{F}\colon \CEfam1{A} \to \CEfam1{B}$,
which clearly preserves the choice of terminal objects,
while the action on terms is given by $\hCEcat{F}$.
This functor with term structure is both a weakening and a substitution homomorphism
because of \cref{lem:CEhom-distributive}.
Note that commutativity of the front square in the diagram in \cref{lem:CEhom-distributive}
is needed for the equations on the action on terms.
Finally, it is a projection homomorphism since it preserves identities.
\end{construction}

\begin{rem}\label{rmk:CE2Eslim}
It follows immediately from the above construction that,
for every CE-system \sys{A}, the E-system $\CEtorE(\sys{A})$ has the property that
$T(\catid{\Gamma})$ is a singleton set for every $\Gamma \in \cat{F}$.
As we shall see in \cref{cor:Etrm-homiso}, this is true for every E-system.
In fact, it will follow from \cref{thm:E-CE}\eqref{thm:E-CE:adj}
that $\CEtorE$ is essentially surjective on objects.
\end{rem}

\subsection{From E-systems to CE-systems}
\label{ssec:e2ce}

In this section we construct a functor from $\Esys$ to $\CEsys$. We proceed in several steps: In \cref{ssec:e2ce-intmor} we define the \scat of internal morphisms of an E-system. There are two kinds of morphisms in this category: internal morphisms from $A$ to $B$ in context $\Gamma$, and for any internal morphism $f:A\to B$ in context $\Gamma$ there are morphisms over $f$. There are also two kinds of composition, and in \cref{ssec:e2ce-verhorcmp} we prove an interchange law for them. In \cref{ssec:E2CE-functor} we complete the construction of the functor from $\Esys$ to $\CEsys$.

\subsubsection{The \scat of internal morphisms of an E-sytem}
\label{ssec:e2ce-intmor}

In this section we define for every E-system $\mathbb{E}$, and every context $\Gamma$ in $\mathbb{E}$,
a category $\EtrmCat{E}{\Gamma}$. This goal is accomplished in \cref{thm:internal_hom}.
The empty context $\Eroot{}$ of \sys{E}, i.e.\ a terminal object in $\cat{F}$,
allows us to
have a non-trivial category structure on the contexts of $\mathbb{E}$.
In this case, the category structure is inherited from the category
$\EtrmCatcl{E}\defeq \EtrmCat{E}{\Eroot{}}$.

\begin{defi}\label{def:Esys-intmor}
For every $A,B\in\cat{F}/\Gamma$ we define the set 
\begin{linenomath*}
\begin{equation*}
\thom{A}{B}\defeq T(\ctxwk{A}{B}).
\end{equation*}
\end{linenomath*}
An element $f\in \thom{A}{B}$ is called an
\define{internal morphism in context $\Gamma$}.
We sometimes write $\jhom{\Gamma}{A}{B}{f}$ to indicate that $f$ is
an internal morphism over $\Gamma$, or we may draw a diagram of the form
\begin{linenomath*}
\begin{equation*}
\begin{tikzcd}[column sep=tiny]
A \arrow[rr,"f"] \arrow[dr,fib] & & B \arrow[dl,fib] \\
& \Gamma
\end{tikzcd}
\end{equation*}
\end{linenomath*}
or we may omit the arrows down to $\Gamma$ and say instead that we have
a diagram in context $\Gamma$.
Note however that this is not (yet) a diagram in any category:
the double-head arrows are arrows in $\Efam{}$,
but the other ones are just elements in some $\thom{A}{B}$.
\end{defi}

\begin{rem}
Note that $\thom{\catid{\Gamma}}{A}\jdeq T(A)$ for any $A\in\cat{F}/\Gamma$, 
because we have 
$W_{\catid{\ctxext{\Gamma}{A}}}\jdeq \catid{\cat{F}/\ctxext{\Gamma}{A}}$.

Note also that $\thom{\ctxext{A}{P}}{B}\jdeq \thom{P}{\ctxwk{A}{B}}$ 
for any $P\in\cat{F}/\ctxext{\Gamma}{A}$ and $B\in\cat{F}/\Gamma$,
because $W_{\ctxext{A}{P}}\jdeq W_P\circ W_A$.
Once we have established a \scat of which the morphisms are given by 
$\thom{\blank}{\blank}$, we therefore get that
\begin{linenomath*}
\begin{equation*}
\ctxext{A}{(\blank)}\dashv W_A.
\end{equation*}
\end{linenomath*}
The right adjoint to weakening by $A$, if it exists, will be the dependent
product $\Pi_A$. 
\end{rem}

\begin{defi}\label{def:Esys-precmp}
Let $A,B\in\cat{F}/\Gamma$.
For any $f\in\thom{A}{B}$ we define the \define{pre-composition} E-homomorphism
\begin{linenomath*}
\begin{equation*}
f^\ast \defeq S_f\circ W_A/B : \sys{E}/\ctxext{\Gamma}{B}\to \sys{E}/\ctxext{\Gamma}{A}.
\end{equation*}
\end{linenomath*}
We shall denote the action of $\Epcmpf0f$ on a family $Q \in \cat{F}/\ctxext{\Gamma}{B}$ as $\jcomp{}{f}{Q}$.
Similarly, for every $C \in \cat{F}/\Gamma$, we shall write $\jcomp{}{f}{g}$
for the action of $\Epcmpf0f$ on $g \in \thom{B}{C} \jdeq T(W_B(C))$.
\end{defi}

\begin{lem}\label{lem:Ehpres-pcmp}
Let $F \colon \sys{E} \to \sys{D}$ be an E-homomorphism.
Then for every $\jhom{\Gamma}{A}{B}{f}$ in $\sys{E}$,
the square of E-homomorphisms below commutes.
\begin{linenomath*}
\[\begin{tikzcd}[column sep=3em]
\sys{E}/\ctxext{\Gamma}{B}	\ar[r,"F/\ctxext{\Gamma}{B}"]	\ar[d,"\Epcmpf0f",swap]
&	\sys{D}/\ctxext{F\Gamma}{FB}	\ar[d,"\Epcmpf1{Ff}"]
\\
\sys{E}/\ctxext{\Gamma}{A}	\ar[r,"F/\ctxext{\Gamma}{A}"]	&	\sys{D}/\ctxext{F\Gamma}{FA}
\end{tikzcd}\]
\end{linenomath*}
\end{lem}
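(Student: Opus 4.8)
The plan is to unfold the pre-composition operation into its two defining factors and then paste together the two naturality squares that an E-homomorphism satisfies for substitution and weakening. Recall from \cref{def:Esys-precmp} that $\Epcmpf0f = S_f \circ W_A/B$, where $f \in \thom{A}{B} = T(\ctxwk{A}{B})$, and likewise $\Epcmpf1{Ff} = S_{Ff} \circ W_{FA}/FB$. This last expression is well defined because $F$ is a pre-weakening homomorphism, so $F(\ctxwk{A}{B}) = \ctxwk{FA}{FB}$ and hence $Ff \in T(\ctxwk{FA}{FB}) = \thom{FA}{FB}$. Reading off the square, the asserted commutativity is the identity
\[
(F/\ctxext{\Gamma}{A}) \circ S_f \circ (W_A/B) \jdeq S_{Ff} \circ (W_{FA}/FB) \circ (F/\ctxext{\Gamma}{B}).
\]

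First I would dispatch the substitution half. Since $F$ is a pre-substitution homomorphism, its defining square commutes at the context $\ctxext{\Gamma}{A}$, the family $\ctxwk{A}{B} \in \cat{F}/\ctxext{\Gamma}{A}$, and the term $f \in T(\ctxwk{A}{B})$, which reads
\[
(F/\ctxext{\Gamma}{A}) \circ S_f \jdeq S_{Ff} \circ (F/\ctxext{(\ctxext{\Gamma}{A})}{\ctxwk{A}{B}}).
\]
Next I would obtain the weakening half by slicing the pre-weakening homomorphism square of $F$ over $B$. Slicing of functors with term structure is functorial and compatible with iteration (see \cref{rmk:slT}), so applying $(\blank)/B$ to the identity $(F/\ctxext{\Gamma}{A}) \circ W_A = W_{FA} \circ (F/\Gamma)$, together with the canonical identifications $(F/\ctxext{\Gamma}{A})/\ctxwk{A}{B} = F/\ctxext{(\ctxext{\Gamma}{A})}{\ctxwk{A}{B}}$ and $(F/\Gamma)/B = F/\ctxext{\Gamma}{B}$, yields
\[
(F/\ctxext{(\ctxext{\Gamma}{A})}{\ctxwk{A}{B}}) \circ (W_A/B) \jdeq (W_{FA}/FB) \circ (F/\ctxext{\Gamma}{B}).
\]

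Pasting these two squares along the common functor $F/\ctxext{(\ctxext{\Gamma}{A})}{\ctxwk{A}{B}}$ closes the argument: the left-hand side of the target equals $S_{Ff} \circ (F/\ctxext{(\ctxext{\Gamma}{A})}{\ctxwk{A}{B}}) \circ (W_A/B)$ by the substitution square, which in turn equals $S_{Ff} \circ (W_{FA}/FB) \circ (F/\ctxext{\Gamma}{B})$ by the weakening square. I expect the only genuine obstacle to be the bookkeeping of the iterated slices: checking that $(F/\ctxext{\Gamma}{A})/\ctxwk{A}{B}$ and $F/\ctxext{(\ctxext{\Gamma}{A})}{\ctxwk{A}{B}}$ agree under the identification $(\cat{F}/\ctxext{\Gamma}{A})/\ctxwk{A}{B} \cong \cat{F}/\ctxext{(\ctxext{\Gamma}{A})}{\ctxwk{A}{B}}$, and that the codomains of the two squares match precisely because $F(\ctxwk{A}{B}) = \ctxwk{FA}{FB}$. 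Everything else is a direct consequence of $F$ being simultaneously a pre-substitution and a pre-weakening homomorphism, so no axiom of an E-system beyond these two preservation properties is needed.
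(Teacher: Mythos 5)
Your proposal is correct and matches the paper's proof essentially verbatim: both unfold $\Epcmpf0f$ as $S_f\circ(W_A/B)$, apply the pre-substitution homomorphism square of $F$ at the family $\ctxwk{A}{B}$ and term $f$, and then the pre-weakening homomorphism square of $F$ sliced over $B$, pasting along $F/\ctxext{\ctxext{\Gamma}{A}}{W_A(B)}$. Your extra remarks on the slice bookkeeping are accurate and simply make explicit what the paper leaves implicit.
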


\begin{proof}
As $F$ is both a weakening and a substitution homomorphism, it is
\begin{linenomath*}
\begin{align*}
(F/\ctxext{\Gamma}{A}) \circ \Epcmpf0f &\jdeq (F/\ctxext{\Gamma}{A}) \circ S_f \circ (W_A/B) \jdeq S_{Ff} \circ (F/\ctxext{\ctxext{\Gamma}{A}}{W_A(B)}) \circ (W_A/B)
\\
&\jdeq S_{Ff} \circ (W_{FA}/FB) \circ (F/\ctxext{\Gamma}{B}) \jdeq \Epcmpf1{Ff} \circ (F/\ctxext{\Gamma}{B}).\qedhere
\end{align*}
\end{linenomath*}
\end{proof}

\begin{defi}
Let $A,B\in\cat{F}/\Gamma$, $Q \in \cat{F}/\ctxext{\Gamma}{A}$ and $R \in \cat{F}/\ctxext{\Gamma}{B}$.
For every $f\in\thom{A}{B}$ we define
\[
\thomd{f}{Q}{R} \defeq \thom{Q}{\jcomp{}{f}{R}}.
\]
\end{defi}

\begin{rems}
\hfill
\begin{enumerate}
\item
The terms $\cprojfstf{A}{P}$ and $\cprojsndf{A}{P}$ from \cref{def:E-proj}
are internal morphisms:
\begin{linenomath*}
\[
\cprojfstf{A}{P} \in \thom{\ctxext{A}{P}}{A}
\qquad \text{and}\qquad
\cprojsndf{A}{P} \in \thomd{\cprojfstf{A}{P}}{\catid{\ctxext{A}{P}}}{P}.
\]
\end{linenomath*}
\item
Note that for $\jhom{\Gamma}{B}{C}{g}$, 
we have $\jcomp{}{f}{g}\in T(S_f(W_A/B(W_B(C))))$, whereas we would like that
$\jcomp{}{f}{g}\in\thom{A}{C}$. More generally, we can show that
\begin{linenomath*}
\begin{equation*}
S_f\circ (W_A/B)\circ W_B\jdeq W_A.
\end{equation*}
\end{linenomath*}
Since weakening is a weakening homomorphism, we have
\begin{linenomath*}
\begin{equation*}
S_f\circ (W_A/B)\circ W_B\jdeq S_f\circ W_{W_A(B)}\circ W_A.
\end{equation*}
\end{linenomath*}
By condition~\ref*{def:Esys:SxW} in \cref{def:Esys} we get that
\begin{linenomath*}
\begin{equation*}
S_f\circ W_{W_A(B)}\circ W_A\jdeq W_A.
\end{equation*}
\end{linenomath*}
\end{enumerate}
\end{rems}

\begin{rem}
Note that condition \ref*{def:Esys:SIdW} in \cref{def:Esys} asserts precisely
that ${(\tfid{A})}^\ast\jdeq \catid{\cat{F}/\ctxext{\Gamma}{A}}$ for any $A\in\cat{F}/\Gamma$. In
particular, it follows that $g\circ\tfid{A}\jdeq g$ for any $g\in\thom{A}{B}$
\end{rem}

\begin{lem}\label{lem:compcomp}
For any $f\in\thom{A}{B}$ and $g\in\thom{B}{C}$ we have $f^\ast\circ g^\ast\jdeq (\jcomp{}{f}{g})^\ast$. 
\end{lem}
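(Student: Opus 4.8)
The plan is to deduce this contravariant functoriality of precomposition from \cref{lem:Ehpres-pcmp}, applied to the E-homomorphism $f^\ast$ itself. Recall that $f^\ast \jdeq S_f \circ (W_A/B) \colon \sys{E}/\ctxext{\Gamma}{B} \to \sys{E}/\ctxext{\Gamma}{A}$ is an E-homomorphism, that $g^\ast \jdeq S_g \circ (W_B/C)$, and that $g \circ f$ is by definition the term $f^\ast(g) \in \thom{A}{C} \jdeq T(W_A(C))$, where we use the identity $S_f\circ (W_A/B)\circ W_B \jdeq W_A$, \ie $f^\ast\circ W_B \jdeq W_A$, recorded in the remark above (itself a consequence of $W_A$ being a weakening homomorphism together with axiom~\ref{def:Esys}.\ref{def:Esys:SxW}).

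The key move is to reinterpret $g$ inside the slice E-system $\sys{E}/\ctxext{\Gamma}{B}$. As $W_{\catid{\ctxext{\Gamma}{B}}} \jdeq \catid{}$ by~\ref{def:preweak}.\ref{def:preweak:id}, the internal morphisms from the terminal object $\catid{\ctxext{\Gamma}{B}}$ to $W_B(C)$ in this slice form the set $T(W_{\catid{}}(W_B(C))) \jdeq T(W_B(C))$; thus $g$ presents as such an internal morphism, and its precomposition is just $S_g$. Applying \cref{lem:Ehpres-pcmp} to the E-homomorphism $f^\ast$ and this internal morphism transports $g$ to $f^\ast(g) \jdeq g\circ f$ --- itself an internal morphism out of the terminal object of $\sys{E}/\ctxext{\Gamma}{A}$, with precomposition $S_{g\circ f}$ --- and, using that the slice of a functor at a terminal object is the functor itself, yields
\begin{linenomath*}
\[
f^\ast \circ S_g \jdeq S_{g\circ f} \circ (f^\ast/W_B(C)).
\]
\end{linenomath*}

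To conclude, I would expand $g^\ast$ and apply the displayed identity:
\begin{linenomath*}
\[
f^\ast \circ g^\ast \jdeq f^\ast \circ S_g \circ (W_B/C) \jdeq S_{g\circ f} \circ (f^\ast/W_B(C)) \circ (W_B/C).
\]
\end{linenomath*}
Slicing the identity $f^\ast \circ W_B \jdeq W_A$ at $C$ and using functoriality of slicing gives $(f^\ast/W_B(C)) \circ (W_B/C) \jdeq W_A/C$; hence the right-hand side equals $S_{g\circ f} \circ (W_A/C) \jdeq (g\circ f)^\ast$, as required.

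The difficulty is bookkeeping rather than conceptual. The main point to get right is the reinterpretation of $g$, and afterwards of $g\circ f$, as internal morphisms out of the respective terminal objects, so that \cref{lem:Ehpres-pcmp} applies with $F \jdeq f^\ast$ and both in-slice precompositions collapse to the bare substitutions $S_g$ and $S_{g\circ f}$. Once that identification is in place, the only remaining computation is the slice-functoriality identity $(f^\ast/W_B(C)) \circ (W_B/C) \jdeq W_A/C$, which is routine.
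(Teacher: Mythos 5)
Your proof is correct, and it arrives at the same algebraic content as the paper's, but by a different route: the paper proves \cref{lem:compcomp} by a direct equational computation from the E-system axioms (commuting $W_A/B$ past $S_g$ using that $W_A$ is a substitution homomorphism, then $S_f$ past $S_{W_A(g)}$ using that $S_f$ is one, and finally collapsing $S_f\circ (W_A/B)\circ W_B$ to $W_A$), whereas you obtain the key identity $f^\ast\circ S_g\jdeq S_{g\circ f}\circ (f^\ast/W_A(W_B(C)))$ as an instance of \cref{lem:Ehpres-pcmp} applied to the E-homomorphism $f^\ast$ itself, with $g$ reinterpreted as an internal morphism out of the terminal object of the slice $\sys{E}/\ctxext{\Gamma}{B}$. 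This is a genuine (and pleasant) repackaging --- it exhibits the lemma as naturality of precomposition applied to precomposition --- though it bottoms out in the same axioms, since the proof of \cref{lem:Ehpres-pcmp} at $F\jdeq f^\ast$ is exactly the middle steps of the paper's computation. Two small points to make explicit if you write this up: the application presupposes that $f^\ast$ is an E-homomorphism between the slice E-systems (asserted in \cref{def:Esys-precmp}, but its verification is itself a computation of the same kind), and your final step uses $f^\ast\circ W_B\jdeq W_A$, which the paper records in the remark preceding the lemma (and later as \cref{lem:compW_W}); the bookkeeping identifying $(\sys{E}/X)/\catid{X}$ with $\sys{E}/X$ and $F/1$ with $F$ is, as you say, routine.
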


\begin{proof}
\begin{linenomath*}
\begin{align*}
f^\ast\circ g^\ast & \jdeq S_f\circ (W_A/B) \circ S_g \circ (W_B/C)\\
& \jdeq S_f\circ S_{W_A(g)}\circ(W_A/\ctxext{B}{W_B(C)})\circ (W_B/C) \\
& \jdeq S_{S_f(W_A(g))}\circ (S_f/W_A(W_B(C)))\circ (W_A/\ctxext{B}{W_B(C)}) \circ W_B/C \\
& \jdeq S_{S_f(W_A(g))}\circ ((S_f\circ (W_A/B) \circ W_B)/C) \\
& \jdeq S_{S_f(W_A(g))}\circ ((S_f\circ W_{W_A(B)}\circ W_A)/C) \\
& \jdeq S_{S_f(W_A(g))}\circ W_A/C \\
& \jdeq (\jcomp{}{f}{g})^\ast.\qedhere
\end{align*}
\end{linenomath*}
\end{proof}

\begin{thm}\label{thm:internal_hom}
\hfill
\begin{enumerate}
\item \label{item:scat-of-internals}
For every E-system $\sys{E}$ and every object $\Gamma$
in its underlying \scat $\cat{F}$,
objects of $\cat{F}/\Gamma$ and internal morphisms of \sys{E} over $\Gamma$
form a \scat $\EtrmCat{E}{\Gamma}$.
\item \label{item:ehom-induces-functor}
Every E-homomorphism $F \colon \sys{E} \to \sys{D}$ induces a functor
$\Ehtrm{F}{\Gamma} \colon \EtrmCat{E}{\Gamma}\to \EtrmCat{D}{F(\Gamma)}$
for every $\Gamma$ in $\Efam{E}$.
\end{enumerate}
\end{thm}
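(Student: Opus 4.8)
The plan is to write down $\EtrmCat{E}{\Gamma}$ explicitly and verify the category axioms by invoking the precomposition machinery already in place. First I would record the data: the objects are the families $A \in \cat{F}/\Gamma$, the hom-set from $A$ to $B$ is $\thom{A}{B} \jdeq T(\ctxwk{A}{B})$, the identity on $A$ is the projection term $\tfid{A} \in T(\ctxwk{A}{A}) \jdeq \thom{A}{A}$, and the composite of $f \in \thom{A}{B}$ with $g \in \thom{B}{C}$ is $\jcomp{}{f}{g} \jdeq f^\ast(g)$, the action on $g$ of the precomposition homomorphism $f^\ast = S_f \comp W_A/B$ of \cref{def:Esys-precmp}. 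Before checking any axiom I would confirm that composition is well-typed, i.e.\ that $\jcomp{}{f}{g} \in \thom{A}{C}$: this is precisely the identity $S_f \comp (W_A/B) \comp W_B \jdeq W_A$ recorded in the remark following \cref{def:Esys-precmp}, evaluated at $C$, which forces $f^\ast(\ctxwk{B}{C}) \jdeq \ctxwk{A}{C}$.

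The associativity axiom is where the real work has already been done: for $h \in \thom{C}{D}$ it amounts to $f^\ast(g^\ast(h)) \jdeq (\jcomp{}{f}{g})^\ast(h)$, which holds because \cref{lem:compcomp} supplies $f^\ast \comp g^\ast \jdeq (\jcomp{}{f}{g})^\ast$. The unit laws I would treat separately. The right unit $\jcomp{}{\tfid{A}}{f} \jdeq f$ is immediate from condition~\ref{def:Esys:SIdW} of \cref{def:Esys}, which states exactly that $(\tfid{A})^\ast \jdeq \catid{\sys{E}/\ctxext{\Gamma}{A}}$. For the left unit I would unfold $\jcomp{}{f}{\tfid{B}} \jdeq S_f\bigl((W_A/B)(\tfid{B})\bigr)$; since each $W_A$ is an E-homomorphism by condition~\ref{def:Esys:weak}, its slice $W_A/B$ preserves projection terms, so $(W_A/B)(\tfid{B}) \jdeq \tfid{\ctxwk{A}{B}}$, and then condition~\ref{def:Esys:SxId} gives $S_f(\tfid{\ctxwk{A}{B}}) \jdeq f$. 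As every equality involved is strict, $\EtrmCat{E}{\Gamma}$ is genuinely a \scat, which settles part~\ref{item:scat-of-internals}.

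For part~\ref{item:ehom-induces-functor} I would define $\Ehtrm{F}{\Gamma}$ on objects by $A \mapsto F(A)$ and on morphisms by $f \mapsto F(f)$. This lands in the correct hom-set because $F$ is a weakening homomorphism, whence $F(\ctxwk{A}{B}) \jdeq \ctxwk{FA}{FB}$ and so $F(f) \in T(\ctxwk{FA}{FB}) \jdeq \thom{FA}{FB}$. Preservation of identities is the equation $F(\tfid{A}) \jdeq \tfid{FA}$, which holds since $F$ is a pre-projection homomorphism, and preservation of composition, $F(\jcomp{}{f}{g}) \jdeq \jcomp{}{Ff}{Fg}$, is exactly the commutativity on terms of the square in \cref{lem:Ehpres-pcmp}.

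The main obstacle I anticipate is the left unit law: whereas associativity and the right unit drop straight out of \cref{lem:compcomp} and condition~\ref{def:Esys:SIdW}, the left unit needs the additional observation that slicing a weakening functor still preserves identity terms, so that $(W_A/B)(\tfid{B})$ may be matched with $\tfid{\ctxwk{A}{B}}$ before condition~\ref{def:Esys:SxId} applies. Everything else is formal bookkeeping over the strict equalities, and no coherence or choice issues intervene.
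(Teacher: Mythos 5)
Your proposal is correct and follows essentially the same route as the paper: associativity from \cref{lem:compcomp}, one unit law from condition~\ref{def:Esys:SIdW} read as $(\tfid{A})^\ast \jdeq \catid{}$, the other from $W_A$ preserving projection terms followed by condition~\ref{def:Esys:SxId}, and functoriality of $\Ehtrm{F}{\Gamma}$ from \cref{lem:Ehpres-pcmp} together with $F$ being a projection homomorphism. The well-typedness of composition that you check up front is handled in the paper by the remark preceding the theorem via the identity $S_f\circ (W_A/B)\circ W_B\jdeq W_A$, exactly as you argue.
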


\begin{proof}~
\begin{itemize}
\item [\ref{item:scat-of-internals}.]
  For $A,B \in \cat{F}/\Gamma$,
the set of arrows from $A$ to $B$ is $\thom{A}{B}$.
The fact that composition is associative is a direct corollary of
\cref{lem:compcomp}. The axiom $(\tfid{A})^\ast\jdeq\catid{\ctxext{\Gamma}{A}}$
implies that the identity morphisms satisfy the right identity law.
It remains to show that $\jcomp{}{\tfid{B}}{f}\jdeq f$, which is a simple calculation:
\begin{linenomath*}
\begin{equation*}
\jcomp{}{\tfid{B}}{f}
\jdeq
S_f \circ W_A (\tfid{B})
\jdeq
S_f(\tfid{W_A B})
\jdeq
f.
\end{equation*}
\end{linenomath*}
\item [\ref{item:ehom-induces-functor}.]
The action of $\Ehtrm{F}{\Gamma}$ on arrows is given by the term structure of $F$.
Functoriality of $\Ehtrm{F}{\Gamma}$ follows from \cref{lem:Ehpres-pcmp} and the fact that $F$ is a projection homomorphism.
\qedhere
\end{itemize}
\end{proof}

Now that we have a category structure,
we can state and prove the following consequence of  \cref{thm:pairing}.

\begin{cor}\label{cor:Etrm-homiso}
\hfill
\begin{enumerate}
\item\label{cor:Etrm-homiso:sect}
Let $A \in \cat{F}/\Gamma$ and $Q\in\cat{F}/\ctxext{\Gamma}{B}$,
then for every $f\in\thom{A}{B}$ there is a bijection
\begin{linenomath*}
\[\begin{tikzcd}[row sep=1ex,column sep=3em]
\varphi \colon T(\jcomp{A}{f}{Q})	\ar[r,"\sim"]
&	\left\{h\in\thom{A}{\Efcmp{B}{Q}}\mid \jcomp{A}{h}{\cprojfstf{B}{Q}}\jdeq f\right\}.
\end{tikzcd}\]
\end{linenomath*}
given by $\varphi(t) \jdeq \tmext{f}{t}$.
\item\label{cor:Etrm-homiso:singl}
For every object $\Gamma$,
$T(\catid{\Gamma}) \jdeq \{\tfid{\catid{\Gamma}}\}$.
\end{enumerate}
\end{cor}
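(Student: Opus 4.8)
Both parts will be read off from the pairing bijection of \cref{thm:pairing}: part~\ref{cor:Etrm-homiso:sect} is its ``relative'' form, obtained by applying it to weakened families, while part~\ref{cor:Etrm-homiso:singl} is a degenerate instance that I prefer to argue directly.

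For part~\ref{cor:Etrm-homiso:sect}, the plan is to apply \cref{thm:pairing} not to $B$ and $Q$ but to their weakenings along $A$, i.e.\ in the slice E-system over $\ctxext{\Gamma}{A}$ with $W_A(B)$ in the role of the base family and $(W_A/B)(Q)$ in the role of the family over it. This gives a bijection
\begin{linenomath*}
\[
\coprod_{x \in T(W_A(B))} T\bigl(S_x((W_A/B)(Q))\bigr) \;\cong\; T\bigl(\ctxext{W_A(B)}{(W_A/B)(Q)}\bigr),
\qquad (x,u)\mapsto \tmext{x}{u}.
\]
\end{linenomath*}
I then rewrite both sides. On the left, $T(W_A(B)) = \thom{A}{B}$ by definition, and since precomposition is $f^{\ast}=S_f\circ W_A/B$ we have $S_x((W_A/B)(Q)) = \jcomp{A}{x}{Q}$. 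On the right, functoriality of weakening together with the slice functor of \cref{rmk:slT} gives $\ctxext{W_A(B)}{(W_A/B)(Q)} = W_A(\Efcmp{B}{Q})$, so the codomain is $T(W_A(\Efcmp{B}{Q})) = \thom{A}{\Efcmp{B}{Q}}$. The bijection thus becomes
\begin{linenomath*}
\[
\thom{A}{\Efcmp{B}{Q}} \;\cong\; \coprod_{x \in \thom{A}{B}} T(\jcomp{A}{x}{Q}).
\]
\end{linenomath*}

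The crux -- and the step I expect to be the main obstacle -- is to verify that the first component $x$ assigned to an internal morphism $h\in\thom{A}{\Efcmp{B}{Q}}$ is exactly $\jcomp{A}{h}{\cprojfstf{B}{Q}}$. Granting this, restricting the last bijection to the fibre over $x=f$ yields the asserted bijection $\{h \mid \jcomp{A}{h}{\cprojfstf{B}{Q}} = f\} \cong T(\jcomp{A}{f}{Q})$, whose forward map $(x,u)\mapsto\tmext{x}{u}$ specialises to $\varphi(t)=\tmext{f}{t}$. To prove the identification, recall that the inverse in \cref{thm:pairing} sends $h$ to $\bigl(S_h(\cprojfstf{W_A(B)}{(W_A/B)(Q)}),\,S_h(\cprojsndf{W_A(B)}{(W_A/B)(Q)})\bigr)$. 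Since $W_A$ is an E-homomorphism by~\ref{def:Esys}.\ref{def:Esys:weak}, \cref{lem:Ehpres-pairproj} gives $W_A(\cprojfstf{B}{Q}) = \cprojfstf{W_A(B)}{(W_A/B)(Q)}$, where $W_A(\cprojfstf{B}{Q})$ is the term-structure action of $W_A$, that is $(W_A/\Efcmp{B}{Q})(\cprojfstf{B}{Q})$ by \cref{rmk:slT}. Unfolding $\jcomp{A}{h}{\cprojfstf{B}{Q}} = h^{\ast}(\cprojfstf{B}{Q}) = S_h\bigl((W_A/\Efcmp{B}{Q})(\cprojfstf{B}{Q})\bigr)$ then identifies it with the first component above; that $\varphi(t)=\tmext{f}{t}$ indeed lands in the correct fibre can also be checked directly from the first identity of \cref{lem:pairproj}.

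For part~\ref{cor:Etrm-homiso:singl}, I would argue directly. Fix $x\in T(\catid{\Gamma})$. Taking $A=\catid{\Gamma}$ in the identity $S_x\circ W_A = \catid{}$ of~\ref{def:Esys}.\ref{def:Esys:SxW}, and using $W_{\catid{\Gamma}} = \catid{\cat{F}/\Gamma}$ from~\ref{def:preweak}.\ref{def:preweak:id}, shows that $S_x = \catid{\cat{F}/\Gamma}$ as a functor with term structure; in particular $S_x$ fixes the term $\tfid{\catid{\Gamma}}\in T(W_{\catid{\Gamma}}(\catid{\Gamma})) = T(\catid{\Gamma})$. On the other hand $S_x(\tfid{\catid{\Gamma}}) = x$ by~\ref{def:Esys}.\ref{def:Esys:SxId}. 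Hence $x = \tfid{\catid{\Gamma}}$, and as $x$ was arbitrary while $\tfid{\catid{\Gamma}}\in T(\catid{\Gamma})$, the set $T(\catid{\Gamma})$ is the singleton $\{\tfid{\catid{\Gamma}}\}$. (Equivalently, this is the instance $A=P=\catid{\Gamma}$ of \cref{thm:pairing}, where the pairing map degenerates to the first projection $T(\catid{\Gamma})\times T(\catid{\Gamma})\to T(\catid{\Gamma})$.)
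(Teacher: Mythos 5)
Your proof is correct. For part~(\ref{cor:Etrm-homiso:sect}) you take essentially the same route as the paper: both arguments apply \cref{thm:pairing} to the weakened families $\ctxwk{A}{B}$ and $(W_A/B)(Q)$, identify $T(\Efcmp{\ctxwk{A}{B}}{\ctxwk{A}{Q}})$ with $\thom{A}{\Efcmp{B}{Q}}$ via functoriality of weakening, and then check that the first component of the inverse pairing map sends $h$ to $\jcomp{A}{h}{\cprojfstf{B}{Q}}$ because $W_A$ preserves the projections. The step you flag as the crux is exactly the paper's closing computation
$\cprojfst{\ctxwk{A}{B}}{\ctxwk{A}{Q}}{h} \jdeq \subst{h}{\ctxwk{A}{\cprojfstf{B}{Q}}} \jdeq \jcomp{A}{h}{\cprojfstf{B}{Q}}$,
and your justification via \cref{lem:Ehpres-pairproj} applied to the E-homomorphism $W_A$ is the right one. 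For part~(\ref{cor:Etrm-homiso:singl}) you genuinely diverge: the paper specialises part~(\ref{cor:Etrm-homiso:sect}) to $B \jdeq A$, $Q \jdeq \catid{\ctxext{\Gamma}{A}}$ and $f \jdeq \tfid{A}$, uses $\cprojfstf{A}{\catid{\ctxext{\Gamma}{A}}} \jdeq \tfid{A}$ to collapse the right-hand set to $\{\tfid{A}\}$, and then sets $A \jdeq \catid{\Gamma}$, whereas you read the singleton property directly off axioms \ref{def:Esys}.\ref{def:Esys:SxW} and \ref{def:Esys}.\ref{def:Esys:SxId}: since $W_{\catid{\Gamma}} \jdeq \catid{}$ forces $S_x \jdeq \catid{}$ as a functor with term structure, one gets $x \jdeq S_x(\tfid{\catid{\Gamma}}) \jdeq \tfid{\catid{\Gamma}}$. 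Your version is shorter and independent of the pairing theorem, so it would make part~(\ref{cor:Etrm-homiso:singl}) available before any of the $\Sigma$-type machinery; the paper's version has the mild structural advantage of exhibiting both statements as instances of one bijection.
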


\begin{proof}~
  \begin{itemize}
  \item [\ref*{cor:Etrm-homiso:sect}.]
    \Cref{thm:pairing} yields the following bijection:
    \begin{linenomath*}
\begin{align*}
\thom{A}{\ctxext{B}{Q}} 
& \jdeq T(\ctxwk{A}{(\Efcmp{B}{Q})}) \\
& \jdeq T(\Efcmp{\ctxwk{A}{B}}{\ctxwk{A}{Q}}) \\
& \cong \coprod_{f\in T(\ctxwk{A}{B})}T(\subst{f}{\ctxwk{A}{Q}})
\\
& \jdeq \coprod_{f\in\thom{A}{B}}T(\jcomp{A}{f}{Q}).
\end{align*}
\end{linenomath*}
Also, we find $\cprojfst{\ctxwk{A}{B}}{\ctxwk{A}{Q}}{h}\jdeq
\subst{h}{\ctxwk{A}{\cprojfstf{B}{Q}}}\jdeq
\jcomp{A}{h}{\cprojfstf{B}{Q}}$.

\item [\ref*{cor:Etrm-homiso:singl}.]
The above bijection becomes in this case
\begin{linenomath*}
\[
T(\catid{\ctxext{\Gamma}{A}})
\cong
\{h\in\thom{A}{A}\mid
	\jcomp{A}{h}{\cprojfstf{A}{\catid{\ctxext{\Gamma}{A}}}} \jdeq \tfid{A}\}
\jdeq
\{\tfid{A}\}
\]
\end{linenomath*}
where the second equality follows from
$\cprojfstf{A}{\catid{\ctxext{\Gamma}{A}}} \jdeq \tfid{A}$.
Since $\catid{\Gamma} \jdeq W_{\catid{\Gamma}}(\catid{\Gamma})$,
the only element in $T(\catid{\Gamma})$ is $\tfid{\catid{\Gamma}}$.
\qedhere
\end{itemize}
\end{proof}

\begin{thm}\label{thm:precomp_by_proj}
Let $A\in\cat{F}/\Gamma$ and $P\in\cat{F}/\ctxext{\Gamma}{A}$.
Precomposition with $\cprojfstf{A}{P}$ is weakening by $P$, i.e.
\begin{linenomath*}
\[\begin{tikzcd}[column sep=6em]
\sys{E}/\ctxext{\Gamma}{A} \arrow[r,"\inpar1{\cprojfstf{A}{P}}^\ast \jdeq\, W_P"]
&	\sys{E}/\ctxext{\ctxext{\Gamma}{A}}{P}
\end{tikzcd}\]
\end{linenomath*}
\end{thm}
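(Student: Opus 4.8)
The plan is to unfold the definition of precomposition and collapse the resulting composite using functoriality of weakening together with the E-system axiom \cref{def:Esys}.\ref{def:Esys:SIdW}; the computation is essentially the $A$-sliced analogue of the top-left triangle already worked out in the proof of \cref{lem:pairproj}. Since $\cprojfstf{A}{P} \in \thom{\ctxext{A}{P}}{A}$ is an internal morphism with domain $\ctxext{A}{P}$ and codomain $A$, \cref{def:Esys-precmp} gives
\[
\inpar1{\cprojfstf{A}{P}}^\ast \jdeq S_{\cprojfstf{A}{P}} \circ W_{\ctxext{A}{P}}/A .
\]
First I would factor the weakening: by \cref{def:preweak}.\ref{def:preweak:funct} we have $W_{\ctxext{A}{P}} \jdeq W_P \circ W_A$, and since the slice operation $(-)/A$ on functors with term structure respects composition, this yields $W_{\ctxext{A}{P}}/A \jdeq (W_P/W_A(A)) \circ (W_A/A)$.

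The crucial step exploits the identity $\cprojfstf{A}{P} \jdeq \ctxwk{P}{\tfid{A}} \jdeq W_P(\tfid{A})$ from \cref{def:E-proj}, together with the fact that $W_P$ is a pre-substitution homomorphism, which is exactly \cref{def:Esys}.\ref{def:Esys:weak}. Applying the pre-substitution homomorphism square for $W_P$ to the family $W_A(A) \in \cat{F}/\ctxext{\Gamma}{A}$ and the term $\tfid{A} \in T(W_A(A))$ gives the commutation
\[
S_{\cprojfstf{A}{P}} \circ (W_P/W_A(A)) \jdeq W_P \circ S_{\tfid{A}} .
\]

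Combining these, the whole composite telescopes:
\[
\inpar1{\cprojfstf{A}{P}}^\ast \jdeq S_{\cprojfstf{A}{P}} \circ (W_P/W_A(A)) \circ (W_A/A) \jdeq W_P \circ S_{\tfid{A}} \circ (W_A/A) \jdeq W_P ,
\]
where the last equality is the E-system axiom \cref{def:Esys}.\ref{def:Esys:SIdW}, namely $S_{\tfid{A}} \circ W_A/A \jdeq \catid{\sys{E}/\ctxext{\Gamma}{A}}$. The only genuine hazard is the bookkeeping of slices: one must check that each weakening and substitution functor has matching domain and codomain, and in particular that the pre-substitution square for $W_P$ is read off in the slice over $\ctxext{\Gamma}{A}$ with exactly the family $W_A(A)$ and term $\tfid{A}$, so that its image $W_P(\tfid{A})$ is indeed $\cprojfstf{A}{P}$. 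Once the indices are lined up, every step is a direct invocation of a definition or axiom, and I expect no obstacle beyond this indexing.
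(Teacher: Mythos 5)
Your proposal is correct and follows exactly the same route as the paper's proof: unfold $\inpar1{\cprojfstf{A}{P}}^\ast$ via \cref{def:Esys-precmp}, factor $W_{\ctxext{A}{P}}/A$ through $W_P/W_A(A) \circ W_A/A$, commute $S_{\ctxwk{P}{\tfid{A}}}$ past $W_P$ using that $W_P$ is a pre-substitution homomorphism, and collapse the remainder with \cref{def:Esys}.\ref{def:Esys:SIdW}. The slice bookkeeping you flag as the only hazard is handled in the paper in precisely the way you describe.
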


\begin{proof}
\begin{linenomath*}
\begin{align*}
\inpar1{\cprojfstf{A}{P}}^\ast
&\jdeq S_{\cprojfstf{A}{P}} \circ W_{\ctxext{A}{P}}/A
\\&\jdeq S_{\ctxwk{P}{\tfid{A}}} \circ W_P/W_A(A) \circ W_A/A
\\&\jdeq W_P \circ S_{\tfid{A}} \circ W_A/A
\\&\jdeq W_P
\qedhere
\end{align*}
\end{linenomath*}
\end{proof}

We conclude this section
with a description of the projections and the pairing operation of an E-system
in the image of the functor $\CEtorE$ from \cref{constr:CE2E-fun}
in terms of the underlying CE-system structure.

\begin{lem}\label{lem:E-CE-pair-proj}
Let \sys{A} be a CE-system and consider the E-system $\sys{E} :\jdeq \CEtorE(\sys{A})$.
For every object $\Gamma$,
every $A \in \CEfam0{}/\Gamma$, $P \in \CEfam0{}/\ctxext{\Gamma}{A}$,
it is
\begin{linenomath*}
\begin{align*}
\cprojfstf{A}{P} &\jdeq \langle \catid{\ctxext{\ctxext{\Gamma}{A}}{P}} , P \rangle
\in \CEcat1{A}(\ctxext{\ctxext{\Gamma}{A}}{P}
			,\ctxext{\ctxext{\ctxext{\Gamma}{A}}{P}}{\ctxwk{\ctxext{A}{P}}{A}})
\\[1ex]
\cprojsndf{A}{P} &\jdeq \langle \catid{\ctxext{\ctxext{\Gamma}{A}}{P}} , \catid{\ctxext{\ctxext{\Gamma}{A}}{P}} \rangle
\in \CEcat1{A}(\ctxext{\ctxext{\Gamma}{A}}{P}
			,\ctxext{\ctxext{\ctxext{\Gamma}{A}}{P}}{\ctxwk{P}{P}})
\end{align*}
\end{linenomath*}
and, for every
$x \in T(A)$ and $u \in T(S_x P)$, it is
\begin{linenomath*}
\[
\tmext{x}{u} \jdeq \CEqar{x}{P} \circ u \in \CEcat1{A}(\Gamma,\ctxext{\ctxext{\Gamma}{A}}{P}).
\]
\end{linenomath*}
\end{lem}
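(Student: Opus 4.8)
The plan is to unwind the three equations using the concrete description of the E-system $\CEtorE(\sys{A})$ from \cref{constr:CE2E-fun}: its terms $T(B)$ are the sections of $I(B)$ (see~\eqref{constr:CEtoE-fun:term}), weakening and substitution are the pullback functors $W_B\jdeq\CEpbf0B$ and $S_x\jdeq\CEpbf0x$, and the identity term $\tfid{B}$ is by definition the diagonal section into the pullback of $I(B)$ along itself, characterised by $\CEqar{I(B)}{B}\circ\tfid{B}\jdeq\catid{}$. The auxiliary fact I would record first is the term-structure action of a pullback functor $\CEpbf0f$: on a section $t$ of $I(B)$ it returns the unique map $\langle\catid{},\,t\circ f\rangle$ into the relevant pullback, whose first leg is the identity (the section condition) and whose second leg, read off by $\CEqar{f}{B}$, is $t\circ f$.

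Granting this, the second projection is immediate: $\cprojsndf{A}{P}\jdeq\tfid{P}$ by \cref{def:E-proj}, and $\tfid{P}$ is by its construction in \cref{constr:CE2E-fun} the diagonal $\langle\catid{\ctxext{\ctxext{\Gamma}{A}}{P}},\catid{\ctxext{\ctxext{\Gamma}{A}}{P}}\rangle$. For the first projection I would compute $\cprojfstf{A}{P}\jdeq\ctxwk{P}{\tfid{A}}\jdeq W_P(\tfid{A})$ by applying the pullback-of-section formula to the diagonal $\tfid{A}$, obtaining $\langle\catid{},\tfid{A}\circ I(P)\rangle$ into the pullback defining $W_P(\ctxwk{A}{A})$. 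Since $\ctxext{A}{P}$ is the composite $A\circ P$ in $\cat{F}$, functoriality of the chosen pullbacks (\cref{def:CEsys}) identifies this object with $\ctxwk{\ctxext{A}{P}}{A}\jdeq(\ctxext{A}{P})^{*}A$ and factors $\CEqar{\ctxext{A}{P}}{A}\jdeq\CEqar{I(A)}{A}\circ\CEqar{I(P)}{\ctxwk{A}{A}}$; the second leg of $\cprojfstf{A}{P}$ then becomes $\CEqar{I(A)}{A}\circ\tfid{A}\circ I(P)$, which collapses to $I(P)$ by the diagonal property $\CEqar{I(A)}{A}\circ\tfid{A}\jdeq\catid{}$. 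This yields $\cprojfstf{A}{P}\jdeq\langle\catid{},P\rangle$.

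For the pairing I see two routes. The direct route unwinds $\tmext{x}{u}\jdeq S_u(S_x(\tfid{\ctxext{A}{P}}))$ by applying the pullback-of-section formula twice to the diagonal $\tfid{\ctxext{A}{P}}$, and then simplifies using the pullback square identity $I(P)\circ\CEqar{x}{P}\jdeq x\circ I(\subst{x}{P})$ together with the section conditions $I(A)\circ x\jdeq\catid{\Gamma}$ and $I(\subst{x}{P})\circ u\jdeq\catid{\Gamma}$, arriving at $\CEqar{x}{P}\circ u$. The alternative route uses \cref{thm:pairing} as a black box: one checks that $w\jdeq\CEqar{x}{P}\circ u$ is a section of $I(\ctxext{A}{P})\jdeq I(A)\circ I(P)$ (again by the two section conditions and the pullback square for $x$ and $P$), and then, since $w\mapsto(\subst{w}{\cprojfstf{A}{P}},\subst{w}{\cprojsndf{A}{P}})$ inverts $(x,u)\mapsto\tmext{x}{u}$ by \cref{thm:pairing}, it suffices to verify $\subst{w}{\cprojfstf{A}{P}}\jdeq x$ and $\subst{w}{\cprojsndf{A}{P}}\jdeq u$ by substituting the concrete projections just obtained. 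I would present the direct route.

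The main obstacle is organisational rather than conceptual: all three computations take place in triply-nested context extensions such as $\ctxext{\ctxext{\ctxext{\Gamma}{A}}{P}}{\ctxwk{\ctxext{A}{P}}{A}}$, so the real work is keeping track of which chosen pullback square each universal map $\CEqar{-}{-}$ and each diagonal lives in, and reindexing them correctly via the functoriality conditions of \cref{def:CEsys}. At each node the equational content is a one-line uniqueness argument for a map into a pullback, so once the diagrams are set up the verifications are routine.
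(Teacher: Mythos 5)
Your proposal is correct and follows essentially the same route as the paper: the first two identities are read off from \cref{def:E-proj} and the definitions in \cref{constr:CE2E-fun}, and the pairing identity is obtained by unwinding $S_u(S_x(\tfid{\ctxext{A}{P}}))$ through the chosen pullback squares and their functoriality, which is exactly the content of the large commutative diagram in the paper's proof. The only difference is presentational — you phrase the diagram chase as repeated applications of a ``pullback of a section'' formula and uniqueness of maps into pullbacks, while the paper assembles the same squares into a single three-dimensional diagram.
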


\begin{proof}
The first two claims follow immediately from \cref{def:E-proj}
and the definitions in \cref{constr:CE2E-fun}.
The third claim follows from commutativity of the front-left face
in the diagram below.
\begin{linenomath*}
\[\begin{tikzcd}[column sep=large]
\Gamma	\ar[dd,"\tmext{x}{u}"{swap}] \ar[rr,"u"] \ar[dr,bend left=2em,"\catid{}"{pos=.8}]
&&	\ctxext{\Gamma}{S_xP}	\ar[rr,"{\CEqar{x}{P}}"] \ar[dr,bend left=2em,"\catid{}"{pos=.8}]
&&	\ctxext{\ctxext{\Gamma}{A}}{P}	\ar[dr,bend left=2em,"\catid{}"{pos=.8}]
&\\
&	\Gamma	\ar[ddrr,bend right=2em,"\catid{}"{near end}] \ar[rr,"u"{near end}]
&&	\ctxext{\Gamma}{S_xP}	\ar[dd,"S_xP"{description,near end}] \ar[rr,"\CEqar{x}{P}"{near end}]
&&	\ctxext{\ctxext{\Gamma}{A}}{P}	\ar[dd,"P"]
\\
\ctxext{\ctxext{\Gamma}{A}}{P}
	\ar[ur,fib,"\ctxext{A}{P}"] \ar[ddrr,bend right=2em,"\catid{}"{swap}]
	\ar[rr,crossing over]
&&	\bullet
	\ar[from=uu,crossing over] \ar[dd,crossing over]
	\ar[ur,fib,"W_{S_xP}(\ctxext{A}{P})"{description}] \ar[rr,crossing over]
&&	\bullet
	\ar[from=uu,crossing over,"\quad\tfid{\ctxext{A}{P}}"{description,near end}] \ar[ur,fib,"W_{\ctxext{A}{P}}(\ctxext{A}{P})"{swap,near start}]
&\\
&&&	\Gamma	\ar[ddrr,bend right=2em,"\catid{}"{near end}] \ar[rr,"x"{near end}]
&&	\ctxext{\Gamma}{A}	\ar[dd,"A"]
\\
&&	\ctxext{\ctxext{\Gamma}{A}}{P}
		\ar[from=uuuu,bend right,crossing over,"{\CEqar{x}{P}}"{swap,near end}]
		\ar[ur,fib,"\ctxext{A}{P}"{description}] \ar[ddrr,bend right=2em,"\catid{}"{swap}] \ar[rr,crossing over]
&&	\bullet	\ar[from=uu,crossing over] \ar[dd,crossing over] \ar[ur,fib,"W_A(\ctxext{A}{P})"{swap,near start}]
&\\
&&&&&	\Gamma
\\
&&&&	\ctxext{\ctxext{\Gamma}{A}}{P}
	\ar[ur,fib,"\ctxext{A}{P}"{swap}]
	\ar[from=uuuuuu,bend right,crossing over,"\catid{}"{swap,pos=.8}]	&
\end{tikzcd}\]
\end{linenomath*}
This diagram commutes by definition, in the sense that
every square not involving the top row is a chosen pullback in $\sys{A}$,
and the remaining part commutes by definition of
$\tfid{\ctxext{A}{P}}$ and $\tmext{x}{u}$ in \cref{constr:CE2E-fun} and \cref{def:E-pair}, respectively.
In this diagram we drop occurrences of the functor $\CEfun0{}$
and freely use notation from the E-system $\CEtorE(\sys{A})$ to increase readability.
\end{proof}

\subsubsection{The interchange laws}
\label{ssec:e2ce-verhorcmp}

We are now in the position to define vertical and horizontal composition, and
prove properties of them.
In particular, we conclude the section showing in \cref{thm:prjsquare}
that every pair $\jhom{\Gamma}{A}{B}{f}$ and $\jhomd{\Gamma}{A}{B}{f}{P}{Q}{F}$ induces a morphism,
\ie a commuting square, from $\cprojfstf{A}{P}$ to $\cprojfstf{B}{Q}$.

\begin{defi}
Let $\jhom{\Gamma}{A}{B}{f}$ and $\jhomd{\Gamma}{A}{B}{f}{P}{Q}{F}$. Then we
define
\begin{linenomath*}
\begin{equation*}
\jhom{\Gamma}{\ctxext{A}{P}}{\ctxext{B}{Q}}{%
	\jvcomp{P}{f}{F} :\jdeq \tmext{(\ctxwk{P}{f})}{F}}.
\end{equation*}
\end{linenomath*}

This is well defined:
we have $\ctxwk{P}{(\jcomp{}{f}{Q})} \jdeq \jcomp{}{(\ctxwk{P}{f})}{Q} \jdeq S_{\ctxwk{P}{f}}\circ W_{\ctxext{A}{P}}/B(Q)$
since $W_P$ is a substitution homomorphism,
therefore $\jvcomp{}{f}{F}\in T(\ctxext{(\ctxwk{\ctxext{A}{P}}{B})}{W_{\ctxext{A}{P}}/B(Q)}) \jdeq T(\ctxwk{\ctxext{A}{P}}{(\ctxext{B}{Q})})$
by \cref{def:E-pair} and functoriality of $W_{\ctxext{A}{P}}$.

Whenever we say that we have a diagram of the form
\begin{linenomath*}
\begin{equation*}
\begin{tikzcd}
R \arrow[r,"f_2"] \arrow[d,fib] &
S \arrow[d,fib] \\
P \arrow[r,"f_1"] \arrow[d,fib] &
Q \arrow[d,fib] \\
A \arrow[r,"f_0"] &
B
\end{tikzcd}
\end{equation*}
\end{linenomath*}
we mean that we have
$f_0\in\thom{A}{B}$, $f_1\in\thomd{f_0}{P}{Q}$ and 
$f_2\in\thomd{\jvcomp{P}{f_0}{f_1}}{R}{S}$.
\end{defi}

\begin{lem}\label{lem:Ehpres-vcomp}
Let $H:\mathbb{E}\to\mathbb{D}$ be an E-homomorphism.
For every $\jhom{\Gamma}{A}{B}{f}$ and $\jhomd{\Gamma}{A}{B}{f}{P}{Q}{F}$ it is
\begin{linenomath*}
\begin{equation*}
H(\jvcomp{P}{f}{F})\jdeq \jvcomp{H(P)}{H(f)}{H(F)}.
\end{equation*} 
\end{linenomath*}
\end{lem}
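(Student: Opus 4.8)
The plan is to unfold the definition of vertical composition and then push $H$ through the resulting term extension, using that E-homomorphisms preserve both weakening and the pairing operation $\tmext{\blank}{\blank}$. By definition, $\jvcomp{P}{f}{F} \jdeq \tmext{\ctxwk{P}{f}}{F}$, so the task reduces to computing $H(\tmext{\ctxwk{P}{f}}{F})$ and recognising the answer as $\tmext{\ctxwk{H(P)}{H(f)}}{H(F)} \jdeq \jvcomp{H(P)}{H(f)}{H(F)}$.

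First I would apply \cref{lem:Ehpres-pairproj}, which states that every E-homomorphism commutes with term extension, with $x :\jdeq \ctxwk{P}{f}$ and $u :\jdeq F$. This gives
\[ H(\tmext{\ctxwk{P}{f}}{F}) \jdeq \tmext{H(\ctxwk{P}{f})}{H(F)}. \]
This is the one step that uses the substitution-homomorphism part of the E-homomorphism structure, since $\tmext{\blank}{\blank}$ is built from iterated substitution in \cref{def:E-pair}.

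Next I would use that $H$, being an E-homomorphism (\cref{def:preEhom}), is in particular a pre-weakening homomorphism, so that its term action commutes with weakening. Evaluating the defining square of a pre-weakening homomorphism at the family $P$ on the term $f \in T(\ctxwk{A}{B})$ yields $H(\ctxwk{P}{f}) \jdeq \ctxwk{H(P)}{H(f)}$. Substituting this into the previous display gives $\tmext{\ctxwk{H(P)}{H(f)}}{H(F)}$, which unfolds to $\jvcomp{H(P)}{H(f)}{H(F)}$, completing the argument.

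I do not expect a genuine obstacle here; the argument is a short chain of rewrites. The only point requiring a little care is well-typedness on the target side: one must check that $H(f) \in \thom{H(A)}{H(B)}$ and $H(F) \in \thomd{H(f)}{H(P)}{H(Q)}$, so that $\jvcomp{H(P)}{H(f)}{H(F)}$ is defined in the first place. Both follow from the fact that $H$ preserves weakening together with \cref{lem:Ehpres-pcmp}, which shows that $H$ commutes with the precomposition operation $\jcomp{}{f}{\blank}$ appearing in the definition of $\thomd{f}{\blank}{\blank}$.
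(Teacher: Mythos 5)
Your proposal is correct and matches the paper's proof, which is the same one-line chain of rewrites: unfold $\jvcomp{P}{f}{F}$ as $\tmext{\ctxwk{P}{f}}{F}$, push $H$ through the term extension via \cref{lem:Ehpres-pairproj}, and use that $H$ is a weakening homomorphism to rewrite $H(\ctxwk{P}{f})$ as $\ctxwk{H(P)}{H(f)}$. Your additional well-typedness check is sound but not something the paper bothers to spell out.
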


\begin{proof}
$H(\jvcomp{Q}{f}{F}) \jdeq H(\tmext{\ctxwk{Q}{f}}{F})
\jdeq \tmext{\ctxwk{HQ}{Hf}}{HF}
\jdeq \jvcomp{H(Q)}{H(f)}{H(F)}$.
\end{proof}

\begin{lem}
Vertical composition is associative.
\end{lem}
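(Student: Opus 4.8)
The plan is to unfold the definition of vertical composition, $\jvcomp{P}{f}{F}\jdeq\tmext{\ctxwk{P}{f}}{F}$, and thereby reduce the statement to three facts that are already available: functoriality of weakening, the preservation of term extension by weakening, and associativity of term extension. Concretely, I would fix a tower $\jhom{\Gamma}{A}{B}{f_0}$, $\jhomd{\Gamma}{A}{B}{f_0}{P}{Q}{f_1}$, together with a third internal morphism $f_2\in\thomd{\jvcomp{P}{f_0}{f_1}}{R}{S}$, so that both bracketings are defined and both name a morphism $\ctxext{\ctxext{A}{P}}{R}\to\ctxext{\ctxext{B}{Q}}{S}$. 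Expanding the definition turns the left-associated composite into $\tmext{\ctxwk{R}{(\tmext{\ctxwk{P}{f_0}}{f_1})}}{f_2}$ and the right-associated one into $\tmext{\ctxwk{\ctxext{P}{R}}{f_0}}{(\tmext{\ctxwk{R}{f_1}}{f_2})}$; the goal is to identify these two expressions.

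First I would work on the left-associated side and push the weakening $\ctxwk{R}{\blank}$ through the inner term extension. Since each weakening functor is a pre-E-homomorphism by condition~\ref{def:Esys:weak} in \cref{def:Esys} — and hence an E-homomorphism between the relevant slice E-systems — \cref{lem:Ehpres-pairproj} gives $\ctxwk{R}{(\tmext{\ctxwk{P}{f_0}}{f_1})}\jdeq\tmext{\ctxwk{R}{\ctxwk{P}{f_0}}}{\ctxwk{R}{f_1}}$. Substituting this in and applying associativity of term extension (\cref{cor:tmext_assoc}) moves the outer extension inside, producing $\tmext{\ctxwk{R}{\ctxwk{P}{f_0}}}{(\tmext{\ctxwk{R}{f_1}}{f_2})}$. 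Finally I would collapse the iterated weakening using functoriality, \cref{def:preweak}.\ref{def:preweak:funct}, in the form $\ctxwk{R}{\ctxwk{P}{f_0}}\jdeq\ctxwk{\ctxext{P}{R}}{f_0}$, which yields precisely the right-associated expression and closes the argument. Each of these three rewrites is a single application of an earlier result, so no genuine computation beyond rewriting is needed.

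The hard part will not be the algebra but the bookkeeping of slices and indices: I must check that $\ctxwk{P}{f_0}$, $f_1$ and $f_2$ really are terms in the correct slice term structures so that the hypotheses of \cref{cor:tmext_assoc} and \cref{lem:Ehpres-pairproj} are literally satisfied, and that the weakening $\ctxwk{R}{\blank}$ is being applied to the internal morphism $\tmext{\ctxwk{P}{f_0}}{f_1}$ regarded as an element of the slice over $\ctxext{\ctxext{\Gamma}{A}}{P}$. Once it is confirmed that both bracketings have common domain $\ctxext{\ctxext{A}{P}}{R}$ and codomain $\ctxext{\ctxext{B}{Q}}{S}$ — using that composition of families is strictly associative — the chain of equalities above delivers associativity directly.
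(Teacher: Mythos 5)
Your proof is correct and follows essentially the same route as the paper's: unfold the definition, distribute the weakening $\ctxwk{R}{\blank}$ over the inner term extension (the paper's ``weakening distributes over term extension,'' which you correctly ground in \cref{lem:Ehpres-pairproj} applied to the weakening E-homomorphism), apply \cref{cor:tmext_assoc}, and collapse the iterated weakening via \cref{def:preweak}.\ref{def:preweak:funct}. The only difference is that you name the supporting lemmas explicitly where the paper leaves them implicit.
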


\begin{proof}
Consider the diagram
\begin{linenomath*}
\begin{equation*}
\begin{tikzcd}
R \arrow[r,"f_2"] \arrow[d,fib] &
S \arrow[d,fib] \\
P \arrow[r,"f_1"] \arrow[d,fib] &
Q \arrow[d,fib] \\
A \arrow[r,"f_0"] &
B
\end{tikzcd}
\end{equation*}
\end{linenomath*}
in context $\Gamma$.
Because weakening distributes over term extension, and term extension is
associative, we have
\begin{linenomath*}
\begin{align*}
\jvcomp{R}{(\jvcomp{P}{f_0}{f_1})}{f_2}
  & \jdeq
\tmext{\ctxwk{R}{(\tmext{\ctxwk{P}{f_0}}{f_1})}}{f_2}
  \\
  & \jdeq
\tmext{(\tmext{\ctxwk{R}{\ctxwk{P}{f_0}}}{\ctxwk{R}{f_1}})}{f_2}
  \\
  & \jdeq
\tmext{\ctxwk{\ctxext{P}{R}}{f_0}}{(\tmext{\ctxwk{R}{f_1}}{f_2})}
  \tag{By \cref{cor:tmext_assoc}}
  \\
  & \jdeq
\jvcomp{{P}{R}}{f_0}{(\jvcomp{R}{f_1}{f_2})}.\qedhere
\end{align*}
\end{linenomath*}
\end{proof}

\begin{defi}
Let $\jhom{\Gamma}{A}{B}{f}$ and $\jhomd{\Gamma}{A}{B}{f}{P}{Q}{F}$. Then we
define the E-homomorphism 
\begin{linenomath*}
\begin{equation*}
F^\bullet\defeq F^\ast\circ (f^\ast/Q):
\mathbb{E}/\ctxext{\ctxext{\Gamma}{B}}{Q}\to\mathbb{E}/\ctxext{\ctxext{\Gamma}{A}}{P}.
\end{equation*}
\end{linenomath*}
The infix notation of $F^\bullet$ is taken to be $\jfcomp{\blank}{\blank}{\blank}{F}{\blank}$.
\end{defi}

\begin{lem}\label{lem:three-composition}
Let $\jhom{\Gamma}{A}{B}{f}$ and $\jhomd{\Gamma}{A}{B}{f}{P}{Q}{F}$. Then we
have the equality
\begin{linenomath*}
\begin{equation*}
F^\bullet\jdeq (\jvcomp{P}{f}{F})^\ast.
\end{equation*}
\end{linenomath*}
\end{lem}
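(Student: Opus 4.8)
The plan is to unfold both sides into composites of substitution and weakening functors and then reconcile them using \cref{subst_by_tmext} together with the E-system axioms. Recall from \cref{def:Esys-precmp} that precomposition by an internal morphism $g \in \thom{A'}{B'}$ over a context is $g^\ast \jdeq S_g \circ W_{A'}/B'$, and that by definition $F^\bullet \jdeq F^\ast \circ (f^\ast/Q)$ while $\jvcomp{P}{f}{F} \jdeq \tmext{\ctxwk{P}{f}}{F}$. Thus I must show that precomposition by the term extension $\tmext{\ctxwk{P}{f}}{F}$ agrees with the two-step operation $F^\ast \circ (f^\ast/Q)$.

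First I would expand the right-hand side as $(\jvcomp{P}{f}{F})^\ast \jdeq S_{\tmext{\ctxwk{P}{f}}{F}} \circ W_{\ctxext{A}{P}}/(\ctxext{B}{Q})$. The term extension $\tmext{\ctxwk{P}{f}}{F}$ has first component $\ctxwk{P}{f} \jdeq W_P(f) \in T(W_{\ctxext{A}{P}}(B))$ and second component $F$; here I use $W_{\ctxext{A}{P}} \jdeq W_P \circ W_A$ from \cref{def:preweak}.\ref{def:preweak:funct}, together with the fact that $W_P$ is a substitution homomorphism (\cref{def:Esys}.\ref{def:Esys:weak}), to verify that $S_{\ctxwk{P}{f}}\bigl(W_{\ctxext{A}{P}}/B\,(Q)\bigr) \jdeq W_P(f^\ast Q)$, so that $F \in \thom{P}{f^\ast Q}$ is exactly a legitimate second component. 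Applying \cref{subst_by_tmext} then splits the substitution as $S_{\tmext{\ctxwk{P}{f}}{F}} \jdeq S_F \circ \bigl(S_{\ctxwk{P}{f}}/(W_{\ctxext{A}{P}}/B\,(Q))\bigr)$.

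Next I would expand the left-hand side $F^\bullet \jdeq F^\ast \circ (f^\ast/Q) \jdeq S_F \circ \bigl(W_P/f^\ast Q\bigr) \circ (f^\ast/Q)$, distributing the slice over the composite $f^\ast \jdeq S_f \circ (W_A/B)$. The leading $S_F$ coincides on both sides by the identification of second-component families noted above. It then remains to prove the equality of the trailing functors, namely $\bigl(S_{\ctxwk{P}{f}}/(W_{\ctxext{A}{P}}/B\,(Q))\bigr) \circ W_{\ctxext{A}{P}}/(\ctxext{B}{Q}) \jdeq \bigl(W_P/f^\ast Q\bigr) \circ (f^\ast/Q)$, which is where the substance lies. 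This I would establish by functoriality of slicing, the cancellation law $S_f \circ W_A \jdeq \catid{}$ from \cref{def:Esys}.\ref{def:Esys:SxW}, and the fact that weakening and substitution are mutually homomorphic (\cref{def:Esys}.\ref{def:Esys:sub} and \ref{def:Esys:weak}).

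The main obstacle will be this final reconciliation: keeping track of which context each sliced functor lives over and aligning them correctly. None of the individual equalities is deep—each is an instance of an E-system axiom—but the indexing of the iterated slices of $W$ and $S$ is delicate, and a judicious grouping of the composite, so that \cref{def:Esys}.\ref{def:Esys:SxW} can be applied to an $S_f \circ W_A$ pair at exactly the right stage, is what makes the calculation go through.
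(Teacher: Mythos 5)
Your proposal is correct and follows essentially the same route as the paper: both arguments expand $F^\bullet$ and $(\jvcomp{P}{f}{F})^\ast$ into composites of sliced $S$'s and $W$'s, commute $W_P$ past $S_f$ using that weakening is a substitution homomorphism, collapse $W_P\circ W_A$ into $W_{\ctxext{A}{P}}$ via \cref{def:preweak}.\ref{def:preweak:funct}, and invoke \cref{subst_by_tmext} as the key lemma. The only quibble is that the cancellation law (condition~\ref*{def:Esys:SxW} in \cref{def:Esys}) you cite for the final reconciliation is not actually needed there---the substitution-homomorphism property of $W_P$ together with functoriality of slicing already closes the gap.
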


\begin{proof}
\begin{linenomath*}
\begin{align*}
F^\ast\circ (f^\ast/Q)
  & \jdeq
S_F\circ W_P\circ S_f/(W_A(Q))\circ W_A/\ctxext{B}{Q}
  \\
  & \jdeq
S_F\circ S_{W_P(f)}/W_P(W_A(Q))\circ W_P/W_A(\ctxext{B}{Q})\circ W_A/\ctxext{B}{Q}
  \\
  & \jdeq
S_F\circ S_{W_P(f)}/W_P(W_A(Q))\circ W_{\ctxext{A}{P}}/\ctxext{B}{Q}
  \\
  & \jdeq
S_{\tmext{W_P(f)}{F}}\circ W_{\ctxext{A}{P}}/\ctxext{B}{Q}
  \tag{By \cref{subst_by_tmext}}
  \\
  & \jdeq
(\jvcomp{P}{f}{F})^\ast.\qedhere
\end{align*}
\end{linenomath*}
\end{proof}

In the next theorem we prove the interchange law of horizontal and vertical composition.
Its proof uses the following fact.

\begin{lem}\label{lem:compW_W}
Let $f\in\thom{A}{B}$ be an internal morphism in context $\Gamma$. Then one has
\begin{linenomath*}
\begin{equation*}
f^\ast\circ W_B\jdeq W_A.
\end{equation*}
\end{linenomath*}
\end{lem}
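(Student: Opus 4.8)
The plan is to unfold the definition of pre-composition and reduce the claim to the two structural laws of an E-system that govern weakening. Recall from \cref{def:Esys-precmp} that $f^\ast \jdeq S_f \circ (W_A/B)$ as a functor $\sys{E}/\ctxext{\Gamma}{B} \to \sys{E}/\ctxext{\Gamma}{A}$, so that $f^\ast \circ W_B \jdeq S_f \circ (W_A/B) \circ W_B$. The whole argument then rests on rewriting $(W_A/B)\circ W_B$ and cancelling the resulting weakening against $S_f$.

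First I would invoke the self-distributivity of weakening. Since $\sys{E}$ is an E-system, each $W_A$ is a pre-weakening homomorphism by condition~\ref{def:Esys:weak} in \cref{def:Esys}; taking the special case $C \jdeq B$ of the commuting square recorded in the remark following the definition of weakening systems yields the functor identity $(W_A/B)\circ W_B \jdeq W_{W_A(B)}\circ W_A$. Substituting this into the expression above gives
\[
f^\ast \circ W_B \jdeq S_f \circ W_{W_A(B)} \circ W_A.
\]

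Next I would cancel $S_f \circ W_{W_A(B)}$. Because $f \in \thom{A}{B} \jdeq T(W_A(B))$, the family $W_A(B) \in \cat{F}/\ctxext{\Gamma}{A}$ plays the role of the ``type'' and $f$ the role of its ``term'', so condition~\ref{def:Esys:SxW} in \cref{def:Esys} (namely $S_x \circ W_A \jdeq \catid{}$) applies verbatim and gives $S_f \circ W_{W_A(B)} \jdeq \catid{\sys{E}/\ctxext{\Gamma}{A}}$. Composing with $W_A$ then yields $f^\ast \circ W_B \jdeq W_A$, as required.

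The computation is short and entirely equational; I expect the only real care needed is bookkeeping of the slices over which each functor lives, so that the two cited conditions are applied with the correct object in each role. Indeed, this is exactly the subcomputation already carried out (for a general codomain $C$) in the fourth-to-sixth lines of the proof of \cref{lem:compcomp}, so no genuinely new ingredient is involved.
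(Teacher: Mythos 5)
Your proof is correct and follows exactly the paper's own two-step calculation: unfold $f^\ast \jdeq S_f \circ (W_A/B)$, rewrite $(W_A/B)\circ W_B \jdeq W_{W_A(B)}\circ W_A$ using that $W_A$ is a weakening homomorphism, and cancel via condition~\ref{def:Esys:SxW}. The justifications you supply for each equality are the intended ones.
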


\begin{proof}
The proof is a simple calculation:
\begin{linenomath*}
\begin{equation*}
f^\ast\circ W_B\jdeq S_f\circ W_A/B\circ W_B\jdeq S_f\circ W_{W_A(B)}\circ W_A
\jdeq W_A.\qedhere
\end{equation*}
\end{linenomath*}
\end{proof}

\begin{thm}\label{thm:interchange}
Consider the diagram
\begin{linenomath*}
\begin{equation*}
\begin{tikzcd}
P \arrow[r,"F"] \arrow[d,fib] &
Q \arrow[r,"G"] \arrow[d,fib] &
R \arrow[d,fib] \\
A \arrow[r,"f"] &
B \arrow[r,"g"] &
C
\end{tikzcd}
\end{equation*}
\end{linenomath*}
in context $\Gamma$. Then the equality
\begin{linenomath*}
\begin{equation*}
{\jcomp{{A}{P}}{(\jvcomp{P}{f}{F})}{(\jvcomp{Q}{g}{G})}}
  \jdeq
{\jvcomp{P}{(\jcomp{A}{f}{g})}{(\jfcomp{A}{f}{P}{F}{G})}}
\end{equation*}
\end{linenomath*}
of morphisms from $\ctxext{A}{P}$ to $\ctxext{C}{R}$ in context $\Gamma$ holds.
\end{thm}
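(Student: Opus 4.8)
The plan is to prove that the two internal morphisms from $\ctxext{A}{P}$ to $\ctxext{C}{R}$ agree by showing that their \emph{precomposition functors} coincide, and then invoking that $(-)^\ast$ is injective on internal morphisms. Injectivity is immediate from the results already in place: for any $\jhom{\Gamma}{X}{Y}{h}$ the functor $h^\ast$ sends the identity morphism $\tfid{Y}$ to $\tfid{Y}\circ h\jdeq h$ by the left unit law established in \cref{thm:internal_hom}, and this lands in $\thom{X}{Y}$ because $h^\ast\circ W_Y\jdeq W_X$ by \cref{lem:compW_W}; hence $h\jdeq h^\ast(\tfid{Y})$, so any two internal morphisms with the same precomposition are equal. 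Writing $h_L$ and $h_R$ for the left- and right-hand sides of the claimed equality, it therefore suffices to prove $h_L^\ast\jdeq h_R^\ast$ as functors $\sys{E}/\ctxext{C}{R}\to\sys{E}/\ctxext{A}{P}$, since then $h_L\jdeq h_L^\ast(\tfid{\ctxext{C}{R}})\jdeq h_R^\ast(\tfid{\ctxext{C}{R}})\jdeq h_R$.

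For the left-hand side $h_L\jdeq\jcomp{}{(\jvcomp{P}{f}{F})}{(\jvcomp{Q}{g}{G})}$, \cref{lem:compcomp} gives $h_L^\ast\jdeq(\jvcomp{P}{f}{F})^\ast\circ(\jvcomp{Q}{g}{G})^\ast$, and applying \cref{lem:three-composition} to each factor yields $h_L^\ast\jdeq F^\bullet\circ G^\bullet$. This is the easy direction; the content lies in reducing the right-hand side to the same functor.

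For the right-hand side $h_R\jdeq\jvcomp{P}{(\jcomp{A}{f}{g})}{(\jfcomp{A}{f}{P}{F}{G})}$, \cref{lem:three-composition} gives $h_R^\ast\jdeq(\jfcomp{A}{f}{P}{F}{G})^\bullet$. I would then unfold the definition of $(-)^\bullet$, rewrite $(\jcomp{A}{f}{g})^\ast\jdeq f^\ast\circ g^\ast$ via \cref{lem:compcomp}, and use functoriality of slicing, $(f^\ast\circ g^\ast)/R\jdeq (f^\ast/g^\ast R)\circ(g^\ast/R)$, to obtain $h_R^\ast\jdeq(G\bullet F)^\ast\circ(f^\ast/g^\ast R)\circ(g^\ast/R)$. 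Since by definition $G\bullet F\jdeq F^\bullet(G)$ is $F^\ast$ applied to the image $f^\ast(G)$ of $G$, i.e.\ $f^\ast(G)\circ F$, a further application of \cref{lem:compcomp} splits $(G\bullet F)^\ast\jdeq F^\ast\circ (f^\ast(G))^\ast$. The decisive step is \cref{lem:Ehpres-pcmp}, applied to the E-homomorphism $f^\ast$ and the internal morphism $G$ in context $\ctxext{\Gamma}{B}$, which supplies $(f^\ast(G))^\ast\circ(f^\ast/g^\ast R)\jdeq(f^\ast/Q)\circ G^\ast$. Substituting collapses the whole expression to $F^\ast\circ(f^\ast/Q)\circ G^\ast\circ(g^\ast/R)\jdeq F^\bullet\circ G^\bullet$, matching $h_L^\ast$ and finishing the proof.

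The main obstacle is not any single deep idea but the careful bookkeeping of \emph{which} slice and \emph{which} context each functor is taken over: one must check that \cref{lem:Ehpres-pcmp} is being invoked with $f^\ast$ viewed as an E-homomorphism between the correct slice E-systems and with domain object $Q$ and codomain object $g^\ast R$, and that the slice-functoriality identity for $(\jcomp{A}{f}{g})^\ast/R$ is stated with matching domains and codomains. Once the types line up, every equality is a direct appeal to \cref{lem:compcomp}, \cref{lem:three-composition}, and \cref{lem:Ehpres-pcmp}.
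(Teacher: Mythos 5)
Your proof is correct, but it takes a genuinely different route from the paper's. The paper argues directly at the level of terms: it applies \cref{lem:three-composition} once to rewrite the left-hand side as $F^\bullet$ applied to $\tmext{\ctxwk{Q}{g}}{G}$, pushes the E-homomorphisms $f^\ast$ and $F^\ast$ through the term extension (this is \cref{lem:Ehpres-pairproj}, which you do not use), and then identifies the two components using \cref{lem:compW_W}. You instead argue at the level of precomposition functors: both sides are shown to have precomposition functor $F^\bullet\circ G^\bullet$, and an internal morphism $h$ is recovered from $h^\ast$ by evaluating at the identity term of its codomain, since $h^\ast(\tfid{Y})\jdeq \tfid{Y}\circ h\jdeq h$ --- a small Yoneda-style faithfulness argument that is sound here because every intermediate equality you use is an equality of functors \emph{with term structure}. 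Your decisive application of \cref{lem:Ehpres-pcmp}, with the E-homomorphism taken to be $f^\ast$ itself between slice E-systems (legitimate by \cref{def:Esys-precmp}) and the internal morphism taken to be $G$ over the root of $\sys{E}/\ctxext{\Gamma}{B}$, in effect establishes $F^\bullet\circ G^\bullet\jdeq(\jfcomp{A}{f}{P}{F}{G})^\bullet$ first and deduces the interchange law from it; the paper goes the other way, proving interchange by direct computation and then deriving that associativity of $(\blank)^\bullet$ as the theorem immediately following. Your argument reverses the logical order but is not circular, since \cref{lem:Ehpres-pcmp}, \cref{lem:compcomp} and \cref{lem:three-composition} all precede the theorem. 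What your route buys is a more conceptual proof that also yields the subsequent associativity theorem for free; what the paper's buys is a shorter self-contained calculation once the preservation lemmas are in place.
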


\begin{proof}
By \cref{lem:three-composition}, we have
\begin{linenomath*}
\begin{align*}
\jcomp{{A}{P}}{(\jvcomp{P}{f}{F})}{(\jvcomp{Q}{g}{G})}
  & \jdeq
F^\ast \circ (f^\ast/Q) (\tmext{(\ctxwk{Q}{g})}{G})
  \tag{By \cref{lem:three-composition}}
  \\
  & \jdeq
F^\ast(\tmext{(\ctxwk{\jcomp{A}{f}{Q}}{\jcomp{A}{f}{g}})}{%
(f^\ast/Q(G))})
  \\
  & \jdeq
\tmext{(F^\ast(\ctxwk{\jcomp{A}{f}{Q}}{\jcomp{A}{f}{g}}))}{(F^\ast\circ f^\ast/Q(G))}
  \\
  & \jdeq
\tmext{(F^\ast(\ctxwk{\jcomp{A}{f}{Q}}{\jcomp{A}{f}{g}}))}{(\jfcomp{A}{f}{P}{F}{G})}
  \\
  & \jdeq
\tmext{(\ctxwk{P}{\jcomp{A}{f}{g}})}{(\jfcomp{A}{f}{P}{F}{G})}
  \tag{By \cref{lem:compW_W}}
  \\
  & \jdeq
\jvcomp{P}{(\jcomp{A}{f}{g})}{(\jfcomp{A}{f}{P}{F}{G})}.
\qedhere
\end{align*}
\end{linenomath*}
\end{proof}

\begin{thm}
Consider the diagram
\begin{linenomath*}
\begin{equation*}
\begin{tikzcd}
P \arrow[r,"F"] \arrow[d,fib] &
Q \arrow[r,"G"] \arrow[d,fib] &
R \arrow[d,fib] \\
A \arrow[r,"f"] &
B \arrow[r,"g"] &
C
\end{tikzcd}
\end{equation*}
\end{linenomath*}
in context $\Gamma$. 
Then $F^\bullet\circ G^\bullet\jdeq (\jfcomp{A}{f}{P}{F}{G})^\bullet$.
In other words the composition $\jfcomp{}{}{}{\blank}{\blank}$ is associative.
\end{thm}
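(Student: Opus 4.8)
The plan is to reduce the identity to a short chain of rewrites, feeding the three results already available---\cref{lem:three-composition}, \cref{lem:compcomp}, and the interchange law \cref{thm:interchange}---into one another, so that no fresh manipulation of the operations $S$ and $W$ is required. The guiding idea is that \cref{lem:three-composition} lets us replace each $\bullet$-operator by an ordinary precomposition operator $(-)^\ast$ on families; then \cref{lem:compcomp} collapses a composite of two such operators into a single one; and the interchange law identifies the internal morphism thus produced as the one coming from $\jfcomp{A}{f}{P}{F}{G}$.

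Concretely, I would first invoke \cref{lem:three-composition} on each factor to obtain $F^\bullet \jdeq (\jvcomp{P}{f}{F})^\ast$ and $G^\bullet \jdeq (\jvcomp{Q}{g}{G})^\ast$, where $\jvcomp{P}{f}{F} \in \thom{\ctxext{A}{P}}{\ctxext{B}{Q}}$ and $\jvcomp{Q}{g}{G} \in \thom{\ctxext{B}{Q}}{\ctxext{C}{R}}$ are internal morphisms in context $\Gamma$. Applying \cref{lem:compcomp} to these two composable internal morphisms gives
\[
F^\bullet \circ G^\bullet \jdeq (\jvcomp{P}{f}{F})^\ast \circ (\jvcomp{Q}{g}{G})^\ast \jdeq \bigl(\jcomp{\ctxext{A}{P}}{(\jvcomp{P}{f}{F})}{(\jvcomp{Q}{g}{G})}\bigr)^\ast .
\]
Next I would rewrite the inner internal composite by the interchange law \cref{thm:interchange}, which yields exactly
\[
\jcomp{\ctxext{A}{P}}{(\jvcomp{P}{f}{F})}{(\jvcomp{Q}{g}{G})} \jdeq \jvcomp{P}{(\jcomp{A}{f}{g})}{(\jfcomp{A}{f}{P}{F}{G})} .
\]
Finally, applying \cref{lem:three-composition} a second time, now to the pair consisting of $\jcomp{A}{f}{g} \in \thom{A}{C}$ and $\jfcomp{A}{f}{P}{F}{G} \in \thomd{\jcomp{A}{f}{g}}{P}{R}$, identifies the right-hand side as $(\jfcomp{A}{f}{P}{F}{G})^\bullet$, which is the claimed equality.

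Since all the substantive work is packaged inside the three cited results, the only point deserving genuine care---and the step I expect to be the main, if minor, obstacle---is the bookkeeping of sources and targets: one must check that $\jvcomp{P}{f}{F}$ and $\jvcomp{Q}{g}{G}$ are indeed composable in $\EtrmCat{E}{\Gamma}$, so that \cref{lem:compcomp} applies with the correct typing, and that $\jfcomp{A}{f}{P}{F}{G}$ really is an internal morphism over $\jcomp{A}{f}{g}$ from $P$ to $R$, so that the second appeal to \cref{lem:three-composition} is legitimate. The concluding assertion about associativity is then immediate in the following sense: the displayed equality exhibits $(-)^\bullet$ as turning $\bullet$-composition into the (strictly associative) composition of E-homomorphisms, which is the precise sense in which $\jfcomp{}{}{}{\blank}{\blank}$ is associative.
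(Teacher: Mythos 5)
Your proposal is correct and follows exactly the same four-step chain as the paper's proof: \cref{lem:three-composition} on each factor, then \cref{lem:compcomp}, then \cref{thm:interchange}, then \cref{lem:three-composition} once more. The typing checks you flag are indeed the only delicate point, and they go through as you describe.
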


\begin{proof}
\begin{linenomath*}
\begin{align*}
F^\bullet\circ G^\bullet
  & \jdeq
(\jvcomp{P}{f}{F})^\ast\circ(\jvcomp{Q}{g}{G})^\ast 
  \tag{By \cref{lem:three-composition}}\\
  & \jdeq
(\jcomp{\ctxext{A}{P}}{\jvcomp{P}{f}{F}}{\jvcomp{Q}{g}{G}})^\ast 
  \tag{By \cref{lem:compcomp}}\\
  & \jdeq
(\jvcomp{}{(\jcomp{A}{f}{g})}{(\jfcomp{A}{f}{P}{F}{G})})^\ast 
  \tag{By \cref{thm:interchange}} \\
  & \jdeq
(\jfcomp{A}{f}{P}{F}{G})^\bullet.
  \tag{By \cref{lem:three-composition}}
\end{align*}
\end{linenomath*}
\end{proof}

\begin{thm}\label{thm:prjsquare}
Let $\jhom{\Gamma}{A}{B}{f}$ and $\jhomd{\Gamma}{A}{B}{f}{P}{Q}{F}$. Then
$\jvcomp{P}{f}{F}$ is the unique morphism from $\ctxext{A}{P}$ to $\ctxext{B}{Q}$
with the property that both the diagram
\begin{linenomath*}
\begin{equation*}
\begin{tikzcd}
\ctxext{A}{P}
  \ar{r}{\jvcomp{P}{f}{F}}
  \ar{d}[swap]{\cprojfstf{A}{P}}
& \ctxext{B}{Q}
  \ar{d}{\cprojfstf{B}{Q}}
  \\
A \ar{r}[swap]{f}
& B
\end{tikzcd}
\end{equation*}
\end{linenomath*}
commutes and $\jcomp{}{(\jvcomp{P}{f}{F})}{\cprojsndf{B}{Q}}\jdeq F$.
\end{thm}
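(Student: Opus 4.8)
The plan is to recognise $\jvcomp{P}{f}{F}$ as the image of $F$ under the bijection of \cref{cor:Etrm-homiso}.\ref{cor:Etrm-homiso:sect}, applied over the base $\ctxext{A}{P}$, and to read off both the commuting square and the second-projection condition directly from the structure of that bijection. This packages the universal property of the strict $\Sigma$-type $\ctxext{A}{P}$ so that existence and uniqueness are obtained simultaneously.

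First I would apply \cref{cor:Etrm-homiso}.\ref{cor:Etrm-homiso:sect} with source object $\ctxext{A}{P}$, the internal morphism $\jcomp{\ctxext{A}{P}}{\cprojfstf{A}{P}}{f}\in\thom{\ctxext{A}{P}}{B}$, and the family $Q\in\cat{F}/\ctxext{\Gamma}{A}$ lying over $B$. This yields a bijection $\varphi$, with $\varphi(t)\jdeq\tmext{(f\circ\cprojfstf{A}{P})}{t}$, between the set of internal morphisms $h\colon\ctxext{A}{P}\to\ctxext{B}{Q}$ satisfying $\jcomp{}{h}{\cprojfstf{B}{Q}}\jdeq f\circ\cprojfstf{A}{P}$ and the set $T(\jcomp{\ctxext{A}{P}}{(f\circ\cprojfstf{A}{P})}{Q})$. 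The membership condition is exactly the commutativity of the square in the statement, so the square-commuting morphisms are precisely the image of $\varphi$.

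Next I would carry out two identifications. On the one hand, by \cref{lem:compcomp} together with \cref{thm:precomp_by_proj} (precomposition with a first projection is weakening by $P$), I get $f\circ\cprojfstf{A}{P}\jdeq\jcomp{}{\cprojfstf{A}{P}}{f}\jdeq\ctxwk{P}{f}$, and consequently $\jcomp{\ctxext{A}{P}}{(f\circ\cprojfstf{A}{P})}{Q}\jdeq\ctxwk{P}{(\jcomp{A}{f}{Q})}$; hence the domain of $\varphi$ is exactly $\thomd{f}{P}{Q}$, the set in which $F$ lives. On the other hand, the inverse of $\varphi$ sends a square-commuting $h$ to its ``second component,'' which — following the same computation used in the proof of \cref{cor:Etrm-homiso} (rewriting the weakened second projection via \cref{def:E-proj} and the fact that weakening preserves identity terms, \cref{lem:Ehpres-pairproj}, and then invoking \cref{lem:pairproj}) — is precisely $\jcomp{}{h}{\cprojsndf{B}{Q}}$. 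Thus the second-projection condition in the statement asserts exactly that $\varphi^{-1}(h)\jdeq F$.

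Combining these, there is a unique square-commuting $h$ with $\jcomp{}{h}{\cprojsndf{B}{Q}}\jdeq F$, namely $h\jdeq\varphi(F)\jdeq\tmext{(f\circ\cprojfstf{A}{P})}{F}\jdeq\tmext{\ctxwk{P}{f}}{F}$, and by \cref{def:E-pair} this is exactly $\jvcomp{P}{f}{F}$. The main obstacle I anticipate is the bookkeeping in the second identification: matching $\jcomp{}{h}{\cprojsndf{B}{Q}}$, which lives at the ``second level'' of internal morphisms over $\ctxext{B}{Q}$, with the component that the pairing bijection of \cref{thm:pairing} extracts over the base $\ctxext{A}{P}$, keeping every weakening and slice aligned. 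Once that is pinned down, the rest reduces to the already-established interplay of pairing, projections, and precomposition.
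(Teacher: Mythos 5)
Your proof is correct, but it takes a genuinely different route from the paper's. The paper verifies the two defining properties of $\jvcomp{P}{f}{F}$ by direct computation: it expands $\jcomp{}{\jvcomp{P}{f}{F}}{\cprojfstf{B}{Q}}$ and $\jcomp{}{\jvcomp{P}{f}{F}}{\cprojsndf{B}{Q}}$ using \cref{lem:three-composition} (which identifies $(\jvcomp{P}{f}{F})^\ast$ with $F^\ast\circ(f^\ast/Q)$), \cref{lem:compW_W}, and \cref{thm:precomp_by_proj}, and only then invokes the pairing $\eta$-rule for uniqueness, writing any competitor $G$ as $\tmext{\inpar1{\jcomp{}{G}{\cprojfstf{B}{Q}}}}{\jcomp{}{G}{\cprojsndf{B}{Q}}}$. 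You instead obtain existence and uniqueness in one stroke from the bijection of \cref{cor:Etrm-homiso}.\ref{cor:Etrm-homiso:sect}, instantiated at the source $\ctxext{A}{P}$ and the morphism $\jcomp{}{\cprojfstf{A}{P}}{f}\jdeq\ctxwk{P}{f}$: the square condition carves out exactly the codomain of $\varphi$, and the second-projection condition says the $\varphi$-preimage is $F$. This is tidier and more conceptual, but the cost is precisely the bookkeeping you flag: the identification $\varphi^{-1}(h)\jdeq\jcomp{}{h}{\cprojsndf{B}{Q}}$ is not proved in \cref{cor:Etrm-homiso} (only the first-projection analogue is computed there), and supplying it amounts to the same calculation the paper performs directly, namely $\ctxwk{\ctxext{A}{P}}{\cprojsndf{B}{Q}}\jdeq\tfid{\ctxwk{\ctxext{A}{P}}{Q}}$ because weakening is a projection homomorphism, followed by \cref{lem:pairproj}. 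So both arguments ultimately rest on \cref{thm:pairing}, \cref{lem:pairproj} and \cref{thm:precomp_by_proj}, packaged at different levels; yours trades the explicit chain of equalities for a single appeal to the classification of sections. (A minor slip: $Q$ lives in $\cat{F}/\ctxext{\Gamma}{B}$, not $\cat{F}/\ctxext{\Gamma}{A}$.)
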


\begin{proof}
We first note that
\begin{linenomath*}
\begin{align*}
\jcomp{}{(\jvcomp{P}{f}{F})}{\cprojfstf{B}{Q}}
  & \jdeq
F^\ast \circ (f^\ast/Q) (\ctxwk{Q}{\tfid{B}})
  \tag{By \cref{lem:three-composition}} \\
  & \jdeq
F^\ast \circ \ctxwk{\jcomp{A}{f}{Q}}{(\jcomp{A}{f}{\tfid{B}})}
\\
  & \jdeq
F^\ast\circ \ctxwk{\jcomp{A}{f}{Q}}{f} \\
  & \jdeq
\ctxwk{P} f \tag{By \cref{lem:compW_W}} \\
  & \jdeq
\jcomp{\ctxext{A}{P}}{\cprojfstf{A}{P}}{f}.
  \tag{By \cref{thm:precomp_by_proj}}
\end{align*}
\end{linenomath*}
Also, we have
\begin{linenomath*}
\begin{align*}
\jcomp{}{(\jvcomp{P}{f}{F})}{\cprojsndf{B}{Q}}
  & \jdeq
F^\ast\circ (f^\ast/Q)(\tfid{Q})
  \tag{By \cref{lem:three-composition}}
  \\
  & \jdeq
\jcomp{P}{F}{\tfid{\jcomp{A}{f}{Q}}}
  \\
  & \jdeq
F.
\end{align*}
\end{linenomath*}
Thus, we conclude that $\jvcomp{P}{f}{F}$ has indeed the stated property. For
the uniqueness, let $G:\ctxext{A}{P}\to\ctxext{B}{Q}$ be a morphism such that
$\jcomp{}{G}{\cprojfstf{B}{Q}}\jdeq\jcomp{}{\cprojfstf{A}{P}}{f}$ and
$\jcomp{}{G}{\cprojsndf{B}{Q}}\jdeq F$. Then it follows that
\begin{linenomath*}
\begin{equation*}
G \jdeq \tmext{\inpar1{\jcomp{}{\cprojfstf{A}{P}}{f}}}{F}
  \jdeq \tmext{\ctxwk{P}{f}}{F}
  \jdeq \jvcomp{P}{f}{F}.\qedhere
\end{equation*}
\end{linenomath*}
\end{proof}

\subsubsection{The functor from E-systems to CE-systems}\label{ssec:E2CE-functor}

Let $\Esysptd$ be the category of pointed E-systems:
objects are pairs $(\sys{E},\Gamma)$ of an E-systems \sys{E} and an object $\Gamma$ in its underlying \scat,
and arrows are E-homomorphisms that preserve the distinguished object.
There is an evident forgetful functor $\Esysptd \to \Esys$
together with an embedding $\rEtoEp \colon \Esys \hookrightarrow \Esysptd$
which picks out the terminal object of an E-system.

\begin{prob}\label{prob:EtoCE-fun}
To construct a functor $\EptoCE \colon \Esysptd \to \CEsys$.
\end{prob}

\begin{construction}{prob:EtoCE-fun}\label{constr:EtoCE-fun}
Let $(\sys{E},\Gamma)$ be a pointed E-system and
consider the category of terms $\EtrmCat{E}{\Gamma}$ from \cref{thm:internal_hom}.
Define a functor $\Etrmfun{E}{\Gamma}\colon \cat{F}/\Gamma \to \EtrmCat{E}{\Gamma}$ as follows.
It is the identity on objects and maps an arrow
$Q \colon \Efcmp{A}{Q} \to A$ in $\cat{F}/\Gamma$ to $\cprojfstf{A}{Q} \in \thom{\Efcmp{A}{Q}}{A}$.
For functoriality, we compute
$\cprojfstf{A}{\catid{\ctxext{\Gamma}{A}}} \jdeq \ctxwk{\catid{\ctxext{\Gamma}{A}}}{\tfid{A}} \jdeq \tfid{A}$ and
\begin{linenomath*}
\begin{align*}
\cprojfstf{A}{Q.R}
&\jdeq \ctxwk{Q.R}{\tfid{A}} \jdeq \ctxwk{R}{(\ctxwk{Q}{\tfid{A}})}
\\&\jdeq \ctxwk{R}{\cprojfstf{A}{Q}} \jdeq \Epcmp11{\cprojfstf{Q}{R}}{\cprojfstf{A}{Q}}
\\&\jdeq \jcomp{}{\cprojfstf{Q}{R}}{\cprojfstf{A}{Q}}.
\end{align*}
\end{linenomath*}

Next, we show that $\EtrmCat{\sys{E}}{\Gamma}$ admits a functorial choice of pullbacks of arrows in the image of $\Etrmfun{\sys{E}}{}$.
Given $f \in \thom{A}{B}$ and $R$ in $\cat{F}/\ctxext{\Gamma}{B}$,
there is $\jcomp{A}{f}{R}$ in $\cat{F}/\ctxext{\Gamma}{A}$.
We define
\begin{linenomath*}
\begin{equation}\label{eq:EtoCE-qar}
\pi_2(f,R)\defeq \jvcomp{\jcomp{A}{f}{R}}{f}{\tfid{\jcomp{A}{f}{R}}}
  \colon \Efcmp{A}{\jcomp{A}{f}{R}}\to\Efcmp{B}{R}.
\end{equation}
\end{linenomath*}
Then the following diagram in $\EtrmCat{E}{\Gamma}$
\begin{linenomath*}
\begin{equation}\label{cd:intmor-pbsq}
\begin{tikzcd}[column sep=huge]
\Efcmp{A}{(\jcomp{A}{f}{R})} \arrow[r,"{\pi_2(f,R)}"] \arrow[d,swap,"{\cprojfstf{A}{\jcomp{A}{f}{R}}}"]
& \Efcmp{B}{R} \arrow[d,"{\cprojfstf{B}{R}}"]
\\
A \arrow[r,"f"] & B
\end{tikzcd}
\end{equation}
\end{linenomath*}
commutes.
The functoriality conditions follow immediately from the interchange
laws proven in \cref{ssec:e2ce-verhorcmp}.
To show that \eqref{cd:intmor-pbsq} is a pullback square,
consider a morphism $g:X\to A$ in $\EtrmCat{E}{\Gamma}$
and use the isomorphisms
\begin{linenomath*}
\begin{align*}
\{h\in\thom{X}{\Efcmp{B}{Q}}\mid \jcomp{X}{h}{\cprojfstf{B}{Q}}\jdeq\jcomp{X}{g}{f}\}
  &\cong
T(\jcomp{}{(\jcomp{X}{g}{f})}{Q}) \jdeq T(\jcomp{}{g}{(\jcomp{}{f}{Q})})
\\&\cong
\{u\in\thom{X}{\Efcmp{A}{(\jcomp{A}{f}{Q})}}\mid\jcomp{X}{u}{\cprojfstf{A}{\jcomp{A}{f}{Q}}}\jdeq g\}
\end{align*}
\end{linenomath*}
given by \cref{cor:Etrm-homiso,lem:compcomp}.

Therefore, we have constructed a CE-system $\EptoCE(\sys{E},\Gamma)$ on
$\Etrmfun{E}{\Gamma}\colon \cat{F}/\Gamma \to \EtrmCat{E}{\Gamma}$.

Let now $(\sys{D},\Delta)$ be a pointed E-system and let
$F \colon \sys{E} \to \sys{D}$ be an E-homomorphism such that $F\Gamma \jdeq \Delta$.
In particular, for every $A,B \in \Efam{E}/\Gamma$
there is a function $F \colon T(\ctxwk{A}{B}) \to T(\ctxwk{FA}{FB})$.
These functions give the action on arrows of a functor
$F_{\Gamma} \colon \EtrmCat{E}{\Gamma} \to \EtrmCat{D}{F\Gamma}$
whose action on objects is given by
$F/\Gamma \colon \Efam{E}/\Gamma \to \Efam{D}/F\Gamma$.
Functoriality of $F_{\Gamma}$ follows from the fact that $F$ is a projection homomorphism and \cref{lem:Ehpres-pcmp}.
Using \cref{lem:Ehpres-pairproj}, we see that
$F_{\Gamma} \circ \Etrmfun{E}{\Gamma} \jdeq \Etrmfun{D}{F\Gamma} \circ (F/\Gamma)$.
Finally, it follows from \cref{lem:Ehpres-pcmp} and \cref{lem:Ehpres-vcomp}
that $F_{\Gamma}$ preserves the choice of pullback squares.

We have described the action of $\EptoCE$ on objects and arrows.
Its functoriality is straightforward.
\end{construction}

We obtain a functor $\rEtoCE \colon \rEsys \to \CEsys$ defining
$\rEtoCE :\jdeq \EptoCE \circ \rEtoEp$.

\begin{rem}\label{rem:EtoCE-altern}
The CE-system $\rEtoCE(\sys{E})\jdeq\EptoCE(\sys{E},\Eroot{})$ is on the functor $\Etrmfun{\sys{E}}{\Eroot{}}\colon\cat{F}/\Eroot{}\to\EtrmCat{\sys{E}}{\Eroot{}}$.
It is also possible to have a CE-system with category of families given by $\cat{F}$ itself.
Consider the commutative square below,
where the top functor $!$ maps an object $\Gamma$ to the unique arrow $!_{\Gamma}\colon\Gamma\to\Eroot{}$.
\begin{linenomath*}
\begin{equation*} %
\begin{tikzcd}[column sep=7ex]
\Efam{\sys{E}}	\ar[d,"I"'] \ar[r,"!"]
&	\Efam{\sys{E}}/\Eroot{}	\ar[d,"\Etrmfun{\sys{E}}{\Eroot{}}"]
\\
\cat{C}	\ar[r]	&	\EtrmCat{\sys{E}}{\Eroot{}}
\end{tikzcd}
\end{equation*}
\end{linenomath*}
The left and bottom functors are obtained as the factorisation of the composite of the top and right functors into an identity-on-objects functor $I$ followed by a fully faithful one.
The above \cref{constr:EtoCE-fun} can be easily adapted to obtain a CE-system on the functor $I$.

Alternatively, one could rephrase the results in \cref{ssec:e2ce-intmor} leading to \cref{thm:internal_hom}, as happening over the terminal object of the E-system.
In this case, the version of the results over a generic object $\Gamma$ can be recovered using the slice E-system $\sys{E}/\Gamma$ from \cref{cor:sliceEsys}.

Keeping the category of families fixed in the process might seem to be an advantage of this construction.
However, as we discuss in \cref{rem:E-CE-unit-altern},
it does not seem to have any actual useful consequence on the adjunction of which $\rEtoCE$ is the left adjoint.
For this reason, and because it is not an instance of the general construction from \cref{thm:internal_hom},
we prefer to use the one given in \cref{constr:EtoCE-fun}.
\end{rem}

\begin{rem}\label{rmk:EptoCE-rooted}
For every E-system \sys{E} and every $\Gamma$,
the CE-system $\EptoCE(\sys{E},\Gamma)$ is \rtdCE.
The canonical terminal object
$\catid{\Gamma}$ of $\Efam{E}/\Gamma$ is terminal in $\EtrmCat{E}{\Gamma}$
by \cref{cor:Etrm-homiso} since for every $A \in \Efam{E}/\Gamma$
\begin{linenomath*}
\[
\thom{A}{\catid{\Gamma}} \jdeq T(W_A(\catid{\Gamma})) \jdeq T(\catid{\ctxext{\Gamma}{A}}).
\]
\end{linenomath*}
\end{rem}

Next we give the choice of pullbacks in a CE-system in the image of $\rEtoCE$
in terms of the underlying E-system structure.

\begin{lem}\label{lem:E-CE-EC}
For \sys{E} an E-sytem and $\Gamma$ an object in \sys{E},
consider the CE-system $\sys{A} :\jdeq \EptoCE(\sys{E},\Gamma)$.
For every $A \in \Efam{}/\Gamma$ and  $P,Q \in \Efam{}/\ctxext{\Gamma}{A}$ it is
\begin{linenomath*}
\[
\CEpb10{\cprojfstf{A}{P}}{Q} \jdeq \ctxwk{P}{Q}
\in \Efam{}/\ctxext{\ctxext{\Gamma}{A}}{P}
\]
\end{linenomath*}
and
\begin{linenomath*}
\[
\pi_2(\cprojfstf{A}{P},Q) \jdeq \cprojsndf{P}{\ctxwk{P}{Q}}
\in \thom{\ctxext{P}{\ctxwk{P}{Q}}}{Q}.
\]
\end{linenomath*}
\end{lem}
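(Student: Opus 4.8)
The plan is to read both identities off the construction of $\EptoCE(\sys{E},\Gamma)$ in \cref{constr:EtoCE-fun}, in which the chosen pullback of a family is precomposition and the induced map $\pi_2$ is a vertical composite. Concretely, for $f\in\thom{A}{B}$ and a family $R$ over $B$ one has $\CEpb10{f}{R}\jdeq\Epcmpf0f(R)\jdeq\jcomp{A}{f}{R}$ (\cref{def:Esys-precmp}) and $\pi_2(f,R)\jdeq\jvcomp{\jcomp{A}{f}{R}}{f}{\tfid{\jcomp{A}{f}{R}}}$, while the projections are $\cprojfstf{A}{P}\jdeq\ctxwk{P}{\tfid{A}}$ and $\cprojsndf{A}{P}\jdeq\tfid{P}$ by \cref{def:E-proj}. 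Throughout I use that the families over the object $A$ of $\EtrmCat{E}{\Gamma}$ are exactly the families of $\sys{E}$ over $\ctxext{\Gamma}{A}$, so that weakening and substitution are computed the same in the slice over $A$ as in $\sys{E}$ itself.

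First I would dispatch the statement on families. By the description of pullbacks just recalled, $\CEpb10{\cprojfstf{A}{P}}{Q}\jdeq\inpar1{\cprojfstf{A}{P}}^{\ast}Q$, and \cref{thm:precomp_by_proj} says precisely that precomposition with the first projection is weakening, $\inpar1{\cprojfstf{A}{P}}^{\ast}\jdeq W_P$. Hence $\CEpb10{\cprojfstf{A}{P}}{Q}\jdeq W_P(Q)\jdeq\ctxwk{P}{Q}$, giving the first claim with no further work.

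For the arrow component I would substitute the first claim into the definition of $\pi_2$, so that $\pi_2(\cprojfstf{A}{P},Q)\jdeq\jvcomp{\ctxwk{P}{Q}}{\cprojfstf{A}{P}}{\tfid{\ctxwk{P}{Q}}}$, and then appeal to the uniqueness clause of \cref{thm:prjsquare}: this vertical composite is the unique internal morphism out of $\ctxext{\ctxext{A}{P}}{\ctxwk{P}{Q}}$ whose post-composite with $\cprojfstf{A}{Q}$ recovers $\cprojfstf{A}{P}$ on the base and whose post-composite with $\cprojsndf{A}{Q}$ equals $\tfid{\ctxwk{P}{Q}}$. It then suffices to verify that the candidate $\cprojsndf{P}{\ctxwk{P}{Q}}\jdeq\tfid{\ctxwk{P}{Q}}$ meets these two requirements, and each reduces to the projection relations of \cref{lem:pairproj} together with the weakening identities used in its proof.

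The main obstacle is bookkeeping rather than conceptual: one must keep the slice and context identifications straight, since $\pi_2(\cprojfstf{A}{P},Q)$ is computed in $\EtrmCat{E}{\Gamma}$ between composite families over $\Gamma$ whereas $\cprojsndf{P}{\ctxwk{P}{Q}}$ is most naturally read in the slice over $A$, and one must check that the types in \cref{thm:prjsquare} match once both the upper family and the fibre datum are instantiated by projection data. Should the uniqueness route prove awkward, the same equality can be obtained by direct computation: unfolding $\jvcomp{\ctxwk{P}{Q}}{\cprojfstf{A}{P}}{\tfid{\ctxwk{P}{Q}}}\jdeq\tmext{\ctxwk{\ctxwk{P}{Q}}{\cprojfstf{A}{P}}}{\tfid{\ctxwk{P}{Q}}}$ and applying the pairing bijection of \cref{thm:pairing} reduces the claim to two substitution identities that are again instances of \cref{lem:pairproj}.
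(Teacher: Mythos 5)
Your treatment of the first identity coincides with the paper's: both are immediate from \cref{thm:precomp_by_proj}. For the second identity you take a genuinely different route. The paper argues by direct computation: it unfolds $\pi_2(\cprojfstf{A}{P},Q)$ via \eqref{eq:EtoCE-qar} into a term extension, uses that weakening is an E-homomorphism (so it commutes with term extension by \cref{lem:Ehpres-pairproj} and preserves $\tfid{}$) to pull the whole expression inside a single $\ctxwk{P}{(-)}$, and finishes with the third identity of \cref{lem:pairproj}, namely $\tmext{\cprojfstf{}{}}{\cprojsndf{}{}}\jdeq\tfid{}$. You instead invoke the uniqueness clause of \cref{thm:prjsquare} and check that the candidate $\cprojsndf{P}{\ctxwk{P}{Q}}\jdeq\tfid{\ctxwk{P}{Q}}$ satisfies the two defining conditions of the vertical composite. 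That is a valid and arguably more conceptual argument: it exhibits $\pi_2(\cprojfstf{A}{P},Q)$ as the canonical comparison map determined by its two projections, rather than computing it symbolically. The cost is that the two verifications, which you compress into one sentence, carry the real content, and \cref{lem:pairproj} alone does not quite deliver them (its identities concern $\subst{\tmext{x}{u}}{-}$, whereas here one precomposes). The clean way to discharge both conditions at once is to record the second-projection analogue of \cref{thm:precomp_by_proj}: writing $R\jdeq\ctxwk{P}{Q}$, one has $\inpar1{\cprojsndf{P}{R}}^{\ast}\jdeq S_{\tfid{R}}\circ(W_{R}/R)\circ(W_P/Q)\jdeq W_P/Q$ by condition~\ref{def:Esys:SIdW} of \cref{def:Esys}; the square condition then becomes the self-distributivity of weakening, and the fibre condition becomes $W_P(\tfid{Q})\jdeq\tfid{\ctxwk{P}{Q}}$, which holds since $W_P$ is a projection homomorphism. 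With that computation made explicit your argument closes, and your fallback direct computation via \cref{thm:pairing} is essentially the paper's proof.
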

\begin{proof}
The first equality follows from \cref{thm:precomp_by_proj}.
For the second one:
\begin{linenomath*}
\begin{align*}
\pi_2(\cprojfstf{\catid{\Gamma}}{A},B)&\jdeq
\jvcomp{}{\cprojfstf{\catid{\Gamma}}{A}}{\tfid{\ctxwk{A}{B}}}
\\&\jdeq
\tmext{\inpar1{W_{\ctxwk{A}{B}} \ctxwk{A}{\tfid{\catid{\Gamma}}}}}
	  {\tfid{\ctxwk{A}{B}}}
\\&\jdeq
\tmext{\inpar1{W_A \ctxwk{B}{\tfid{\catid{\Gamma}}}}}{\inpar1{W_A\tfid{B}}}
\\&\jdeq
\ctxwk{A}{(\tmext{\ctxwk{B}{\tfid{\catid{\Gamma}}}}{\tfid{B}})}
\\&\jdeq
\ctxwk{A}{\inpar1{\tmext{\cprojfstf{\catid{\Gamma}}{B}}{\cprojsndf{\catid{\Gamma}}{B}}}}
\\&\jdeq
\ctxwk{A}{\tfid{B}} \jdeq \tfid{\ctxwk{A}{B}}
\\&\jdeq
\cprojsndf{A}{\ctxwk{A}{B}}.
\qedhere
\end{align*}
\end{linenomath*}
\end{proof}

\subsection{Equivalence between E-systems and CE-systems}
\label{ssec:eqv-e-ce}

In this section, we show that the functors constructed in~\cref{ssec:e2ce,ssec:ce2e} form an adjunction that, when suitably restricted, yields an equivalence of categories between rooted CE-systems and E-systems.

Specifically, we prove the following results:
\begin{thm}\label{thm:E-CE}\hfill
\begin{enumerate}
\item\label{thm:E-CE:adj}
The functor $\rEtoCE$ is left adjoint to the functor $\CEtorE$.
\begin{linenomath*}
\[\begin{tikzcd}[column sep=6em]
\CEsys	\ar[r,shift right=1ex,"\CEtorE"{swap}]
		\ar[r,phantom,"{\scriptstyle\bot}"]
&	\rEsys	\ar[l,shift right=1ex,"\rEtoCE"{swap}]
\end{tikzcd}\]
\end{linenomath*}
\item\label{thm:E-CE:lff}
The unit of the adjunction is invertible.
In particular, the left adjoint $\rEtoCE$ is full and faithful
and the right adjoint $\CEtorE$ is essentially surjective on objects.
\item\label{thm:E-CE:img}
The counit component at a CE-system $\sys{A}$ is invertible if and only if $\sys{A}$ is rooted.
\end{enumerate}
\end{thm}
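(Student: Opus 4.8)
The plan is to exhibit $\rEtoCE$ and $\CEtorE$ as an adjoint pair by constructing a unit $\eta \colon \mathrm{Id}_{\rEsys} \Rightarrow \CEtorE \circ \rEtoCE$ and a counit $\epsilon \colon \rEtoCE \circ \CEtorE \Rightarrow \mathrm{Id}_{\CEsys}$, and verifying the triangle identities. Both composites act as the identity on the shared underlying \scat, so the triangle identities will reduce to checking that two term-level bijections cancel. Parts~\ref{thm:E-CE:lff} and~\ref{thm:E-CE:img} then follow from two standard facts about an adjunction $L \dashv R$: first, $L$ is full and faithful iff its unit is a natural isomorphism; second, when $L$ is full and faithful it corestricts to an equivalence between its domain and the full subcategory of its codomain spanned by the objects at which the counit is invertible, and this subcategory is exactly the essential image of $L$. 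Everything thus reduces to a single lemma (\cref{lem:E-CE-isos}), with a \emph{unit} part asserting that $\eta_{\sys{E}}$ is an isomorphism for every E-system $\sys{E}$, and a \emph{counit} part asserting that $\epsilon_{\sys{A}}$ is an isomorphism exactly when $\sys{A}$ is \rtdCE.

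For the counit, I would unwind $\rEtoCE(\CEtorE(\sys{A}))$ for a CE-system $\sys{A}$ with underlying functor $I \colon \cat{F} \to \cat{C}$ and root $\CEroot0{}$. Its category of families is $\cat{F}/\CEroot0{} \cong \cat{F}$ (as $\CEroot0{}$ is terminal in $\cat{F}$ by \cref{def:CEsys}), while its category of contexts is the category of internal morphisms of $\CEtorE(\sys{A})$ over $\CEroot0{}$, whose morphisms $A \to B$ are the elements of $\thom{A}{B} = T(\ctxwk{A}{B})$, i.e.\ the sections in $\cat{C}$ of $I(\ctxwk{A}{B})$, where $\ctxwk{A}{B} = \CEpb00{A}{B}$ is the chosen pullback of $B$ along $A$. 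By the universal property of that pullback square, such a section corresponds to a morphism $\Efcmp{\CEroot0{}}{A} \to \Efcmp{\CEroot0{}}{B}$ in $\cat{C}$ lying over $\CEroot0{}$. I would take $\epsilon_{\sys{A}}$ to be the identity on families and this section-to-morphism assignment on contexts, using \cref{lem:E-CE-pair-proj} to confirm it respects the chosen pullbacks. This assignment is always an identity-on-objects functor that is injective on hom-sets, and it is bijective on hom-sets—hence an isomorphism of CE-systems—exactly when the constraint ``lying over $\CEroot0{}$'' is vacuous, that is, when $\CEroot0{}$ is terminal in $\cat{C}$, which is precisely the condition that $\sys{A}$ be \rtdCE. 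This gives the counit part of \cref{lem:E-CE-isos}; naturality of $\epsilon$ is inherited from the functoriality clauses established in \cref{constr:CE2E-fun,constr:EtoCE-fun}.

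For the unit, $\CEtorE(\rEtoCE(\sys{E}))$ again has underlying \scat $\cat{F}$, and its terms over a family $A \in \cat{F}/\Gamma$ are the sections of $\cprojfstf{\Gamma}{A}$ in $\EtrmCat{E}{\Gamma}$; by \cref{cor:Etrm-homiso}.\ref{cor:Etrm-homiso:sect} these are in natural bijection with $T(A)$. I would take $\eta_{\sys{E}}$ to be the identity on $\cat{F}$ together with this bijection on terms. Checking that $\eta_{\sys{E}}$ is an E-homomorphism—that it commutes with substitution, weakening, and the projection terms $\tfid{A}$—uses \cref{lem:E-CE-EC} to compute the chosen pullbacks of the intermediate CE-system $\rEtoCE(\sys{E})$ in terms of the operations of $\sys{E}$, together with \cref{thm:precomp_by_proj} and the pairing and interchange results of \cref{ssec:e2ce-verhorcmp} (in particular \cref{thm:prjsquare}). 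Since the term bijection is invertible, $\eta_{\sys{E}}$ is an isomorphism, giving the unit part of \cref{lem:E-CE-isos} and hence~\ref{thm:E-CE:lff}; the triangle identities then hold because both composites are the identity on the underlying \scat and the term bijections cancel. Finally, \rtdCE CE-systems form a full subcategory of $\CEsys$; \cref{rmk:EptoCE-rooted} shows every CE-system in the image of $\rEtoCE$ is \rtdCE, rootedness is invariant under isomorphism, and the counit is invertible exactly on the \rtdCE systems, so the essential image of the full and faithful functor $\rEtoCE$ is precisely $\rCEsys$, yielding~\ref{thm:E-CE:img}.

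The main obstacle I anticipate is not the categorical bookkeeping of the adjunction, which is formal, but the verification that the two comparison maps respect all of the layered structure. Concretely, the term-level bijections—sections versus $T(A)$ via pairing, and internal morphisms versus morphisms of $\cat{C}$ via pullbacks—must be shown to intertwine substitution with pullback along sections, weakening with pullback along families, and the projection terms $\tfid{A}$ with the canonical projections $\CEqar{}{}$; this is exactly the content packaged into \cref{lem:E-CE-pair-proj,lem:E-CE-EC}, and assembling these compatibilities coherently is the delicate step. A secondary point requiring care is that $\CEtorE$ is defined on all CE-systems whereas $\rEtoCE$ lands only among \rtdCE ones, so the failure of $\epsilon_{\sys{A}}$ to be invertible in the non-rooted case must be pinned precisely to the non-terminality of $\CEroot0{}$ in $\cat{C}$, and nowhere else.
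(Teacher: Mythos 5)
Your proposal follows essentially the same route as the paper: the unit is the slice isomorphism $! \colon \cat{F} \to \cat{F}/\Eroot{}$ equipped with the term bijection $t \mapsto \tmext{\tfid{!_\Gamma}}{t}$ from \cref{cor:Etrm-homiso}, the counit sends a section $x \in \thom{!_\Delta}{!_\Gamma}$ to $\CEqar{I(!_\Delta)}{!_\Gamma}\circ x$, and the paper likewise isolates exactly your key lemma (unit always invertible; counit invertible iff rooted, via the bijection onto morphisms lying over $\CEroot0{}$) before deducing parts~\ref{thm:E-CE:lff} and~\ref{thm:E-CE:img} from the standard facts about fully faithful left adjoints. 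Your identification of the delicate step — intertwining substitution/weakening/projections with the pullback structure via \cref{lem:E-CE-pair-proj,lem:E-CE-EC} — matches where the paper spends its effort.
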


\begin{cor}\label{cor:EeqvCE}
The adjoint functors $\rEtoCE$ and $\CEtorE$ induce an (adjoint) equivalence
between the category $\Esys$ of E-systems
and the category $\rCEsys$ of rooted CE-systems.
\end{cor}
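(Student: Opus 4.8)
The plan is to deduce the equivalence formally from \Cref{thm:E-CE}, using the standard fact that an adjunction whose left adjoint is fully faithful restricts to an equivalence onto the full subcategory of the codomain spanned by the objects at which the counit is invertible. All the genuine work has already been carried out in proving \Cref{thm:E-CE}; the corollary is a purely formal consequence.

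First I would record the general principle. Suppose $L \dashv R$ is an adjunction with unit $\eta$ and counit $\varepsilon$, and suppose $L$ is fully faithful. Then $\eta$ is a natural isomorphism, and for an object $\sys{A}$ of the codomain the component $\varepsilon_{\sys{A}}$ is an isomorphism if and only if $\sys{A}$ is isomorphic to an object in the image of $L$. Restricting both functors to the full subcategory of objects where $\varepsilon$ is invertible then yields an adjoint equivalence, since there both $\eta$ and $\varepsilon$ are natural isomorphisms.

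Next I would apply this with $L \defeq \rEtoCE$ and $R \defeq \CEtorE$. By \Cref{thm:E-CE}.\ref{thm:E-CE:adj} these form an adjunction, and by \Cref{thm:E-CE}.\ref{thm:E-CE:lff} the left adjoint $\rEtoCE$ is fully faithful, so the unit is a natural isomorphism. It remains only to identify the full subcategory of $\CEsys$ on which the counit is invertible. On the one hand, by \cref{rmk:EptoCE-rooted} every CE-system of the form $\rEtoCE(\sys{E})$ is rooted, so the image of $\rEtoCE$, and hence the subcategory of objects at which $\varepsilon$ is an isomorphism, is contained in $\rCEsys$. On the other hand, \Cref{thm:E-CE}.\ref{thm:E-CE:img} identifies this image with $\rCEsys$ up to equivalence, so every rooted CE-system lies in the essential image and the counit is invertible at each of them.

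Finally I would conclude. The (co)restrictions $\rEtorCE$ and $\rCEtorE$ of $\rEtoCE$ and $\CEtorE$ to $\rCEsys$ are therefore adjoint functors whose unit and counit are both natural isomorphisms, and hence form an adjoint equivalence $\Esys \simeq \rCEsys$. The only point requiring any care is the compatibility of the restriction with \cref{rmk:EptoCE-rooted}, ensuring that $\rEtoCE$ genuinely corestricts to $\rCEsys$; beyond this the statement is immediate, and I do not expect a real obstacle, since the substantive content lives entirely in \Cref{thm:E-CE}.
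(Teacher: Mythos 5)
Your argument is correct and matches the paper's proof in substance: the paper likewise deduces the corollary purely formally from \cref{thm:E-CE}, citing part~\ref{thm:E-CE:img} together with the fact that a coreflective subcategory (here arising because the fully faithful left adjoint $\rEtoCE$ has invertible unit) is equivalent to its image, which \cref{lem:E-CE-isos} identifies as exactly the rooted CE-systems. Your more explicit unpacking of the standard adjunction fact and the appeal to \cref{rmk:EptoCE-rooted} for the corestriction are just a spelled-out version of the same one-line argument.
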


\begin{proof}
The equivalence follows from \cref{thm:E-CE},
the observation in \cref{rmk:EptoCE-rooted} that, for every E-system \sys{E}, the CE-system $\rEtoCE(\sys{E})$ is rooted,
and the fact that $\rCEsys$ is a full subcategory of $\CEsys$ by \cref{rmk:CEhom-root}.
\end{proof}

To prove \cref{thm:E-CE} we construct unit and counit and prove the triangular identities.
In this proof we denote as
\begin{linenomath*}
\begin{equation}\label{sl-trm}
\begin{tikzcd}[column sep=4em]
\cat{F}/1	\ar[r,shift left=1ex,"\mathrm{d}"]
&	\cat{F}	\ar[l,shift left=.5ex,"!"]
\end{tikzcd}
\end{equation}
\end{linenomath*}
the canonical isomorphism of \scats,
for any \scat $\cat{F}$ with a terminal object $1$.
We may still leave this isomorphism implicit when doing so creates no confusion.

\begin{prob}\label{prob:E-CE-unit}
To construct, for each E-system \sys{E}, an invertible E-homomorphism $\eta_{\sys{E}} \colon \sys{E} \to \CEtorE \circ \rEtoCE(\sys{E})$,
naturally in \sys{E}.
\end{prob}

\begin{construction}{prob:E-CE-unit}\label{constr:E-CE-unit}
Let \sys{E} be an E-system %
and denote its terminal object by $\Eroot{}$.
In this proof we shall decorate with a hat the constituents
of the E-system structure of
$\hat{\sys{E}} :\jdeq \CEtorE \circ \rEtoCE(\sys{E})$.
The underlying \scat of $\hat{\sys{E}}$ is $\Efam{E}/\Eroot{}$ and,
for every $X \in (\Efam{E}/\Eroot{})/!_{\Gamma}$, we have
\begin{linenomath*}
\[
\hat{T}(X) \jdeq \left\{h \in \thom{!_{\Gamma}}{!_{\ctxext{\Gamma}{X}}} \mid \jcomp{}{h}{\cprojfstf{!_{\Gamma}}{!_{\ctxext{\Gamma}{X}}}} \jdeq \tfid{!_{\Gamma}} \right\}.
\]
\end{linenomath*}
We define $\eta_{\sys{E}}$ as the functor $!\colon \Efam{E} \to \Efam{E}/\Eroot{}$ in \eqref{sl-trm}
with term structure given by the bijections
\begin{linenomath*}
\begin{equation}\label{E-CE-unit:term}
\begin{tikzcd}[column sep=3em]
T(A)	\ar[r,"\varphi"]	&	\hat{T}(!(A))
\end{tikzcd}
\end{equation}
\end{linenomath*}
from \cref{cor:Etrm-homiso},
that is, for $A \in \Efam{E}/\Gamma$ and $t \in T(A)$,
it is $\eta_{\sys{E}}(t) :\jdeq \tmext{\tfid{!_{\Gamma}}}{t}$.
Therefore we have an invertible functor with term structure.

To conclude that this defines an invertible E-homomorphism,
we compute for $A \in \Efam{E}/\Gamma$
\begin{linenomath*}
\begin{align*}
\hat{W}_{!(A)} \circ (!/\Gamma) &\jdeq
\CEpbf1{\cprojfstf{!_{\Gamma}}{A}} \circ (!/\Gamma)
\jdeq W_A \circ (!/\Gamma)
\\&\jdeq
(!/\ctxext{\Gamma}{A}) \circ W_A,
\end{align*}
\end{linenomath*}
and for $t \in T(A)$,
\begin{linenomath*}
\begin{align*}
\hat{S}_{!(t)} \circ (!/\ctxext{\Gamma}{A})) &\jdeq
\CEpbf1{\tmext{\tfid{!_{\Gamma}}}{t}} %
\\&\jdeq
S_{\tmext{\tfid{!_{\Gamma}}}{t}} \circ (W_{!_{\Gamma}}/!_{\ctxext{\Gamma}{A}}) %
\\&\jdeq
S_t \circ (S_{\tfid{!_{\Gamma}}} \circ (W_{!_{\Gamma}}/!_{\Gamma}))/A %
\\&\jdeq
(!/\Gamma) \circ S_t,
\end{align*}
\end{linenomath*}
and finally
\begin{linenomath*}
\begin{align*}
\phi(\tfid{A}) &\jdeq
\tmext{\tfid{!_{\Gamma}}}{\tfid{A}}
\\&\jdeq
\tfid{!_{\ctxext{\Gamma}{A}}}
\\&\jdeq
\hat{\tfid{}}_{!(A)}.
\end{align*}
\end{linenomath*}

Finally, naturality in \sys{E} requires that any
E-homomorphism $F \colon \sys{E} \to \sys{D}$
commutes with $\eta$ as functors with term structures.
This follows from \cref{lem:Ehpres-pcmp,lem:Ehpres-pairproj}.
\end{construction}

\begin{rem}\label{rem:E-CE-unit-altern}
With the alternative construction for $\rEtoCE$ described in \cref{rem:EtoCE-altern},
the underlying category of $\CEtorE\circ\rEtoCE(\sys{E})$ is $\cat{F}$ itself.
In this case we could replace the isomorphism from~\eqref{sl-trm} with an identity.
However, the unit $\eta_{\sys{E}}$ would not become an identity,
as the term structures would still be different (though isomorphic).
\end{rem}

\begin{prob}\label{prob:E-CE-counit}
To construct, for each CE-system \sys{A}, a CE-homomorphism $\epsi_{\sys{A}} \colon \rEtoCE \circ \CEtorE(\sys{A}) \to \sys{A}$,
naturally in \sys{A}.
\end{prob}

\begin{construction}{prob:E-CE-counit}\label{constr:E-CE-counit}
Let \sys{A} be a CE-system and let $\sys{E} :\jdeq \CEtorE(\sys{A})$
be the associated E-system.
The underlying functor of the CE-system
$\hat{\sys{A}} :\jdeq \rEtoCE \circ \CEtorE(\sys{A})$
is $\Etrmfuncl{E} \colon \cat{F}/\Eroot{} \to \EtrmCatcl{E}$
defined in \cref{constr:EtoCE-fun}. %
As before, we decorate with a hat the constituents of
the CE-system structure of $\hat{\sys{A}}$.
For $\Gamma, \Delta$ in $\cat{F}$,
recall that 
$\thom{!_{\Delta}}{!_{\Gamma}} \jdeq
\{\Delta \overset{x}{\longrightarrow} \ctxext{\Delta}{(\CEpb00{!_{\Delta}}{!_{\Gamma}})} \mid
I(\CEpb00{!_{\Delta}}{!_{\Gamma}}) \circ x \jdeq \catid{\Delta}\}$
and let
\begin{linenomath*}
\begin{equation}\label{prob:E-CE-counit:hom}
\begin{tikzcd}[column sep=large]
\thom{!_{\Delta}}{!_{\Gamma}}
\ar[r,"\psi"]
&	\cat{C}(\Delta,\Gamma)
\end{tikzcd}\end{equation}
\end{linenomath*}
be the function that maps $x$ to the arrow
$\CEqar{I(!_{\Delta})}{!_{\Gamma}} \circ x$ of $\CEcat0{}$.
The functions $\psi$ give rise to a functor
$\Psi \colon \EtrmCatcl{E} \to \CEcat0{}$
as follows.
It maps identities to identities since
the identity on $\Gamma$ in $\EtrmCatcl{E}$
is the only $h \in \thom{!_{\Gamma}}{!_{\Gamma}}$
such that $\CEqar{I(!_{\Gamma}}{!_{\Gamma}} \circ h \jdeq \catid{\Gamma}$.
To see that it preserves composites,
consider the commutative diagram below which defines the composite $\jcomp{}{x}{y}$ of
$x \in \thom{!_{\Delta}}{!_{\Gamma}}$ and $y \in \thom{!_{\Gamma}}{!_{\Xi}}$
in $\EtrmCatcl{E}$.
\begin{linenomath*}
\[\begin{tikzcd}[row sep=small]
\Delta	\ar[dddrr,bend right,"\catid{\Delta}"{swap}] \ar[r,"x"]
&[2ex]	\CEctxext{\Delta}{(\CEpb00{!_{\Delta}}{!_{\Gamma}})}
	\ar[dddr,bend right=5ex] \ar[rr,"\CEqar{!_{\Delta}}{!_{\Gamma}}"]
&&[5ex]	\Gamma \ar[dddr,bend right=3ex,"I(!_{\Gamma})"{swap,near end}] \ar[rr,"y"]
&&	\CEctxext{\Gamma}{(\CEpb00{!_{\Gamma}}{!_{\Xi}})}
	\ar[dd] \ar[dl,"\CEqar{!_{\Gamma}}{!_{\Xi}}", {near end}]
\\
&&	\CEctxext{\Delta}{(\CEpb00{!_{\Delta}}{!_{\Xi}})}
	\ar[dd] \ar[rr,crossing over,"\CEqar{!_{\Delta}}{!_{\Xi}}"{pos=.4}]
	\ar[from=ull,crossing over,bend right=3ex,dotted,"\jcomp{}{x}{y}"']
&&	\Xi	\ar[dd,"I(!_{\Xi})"]
\\
&&&&&	\Gamma	\ar[dl,"I(!_{\Gamma})"]
\\
&&	\Delta	\ar[rr,"I(!_{\Delta})"{swap}]	&&	\CEroot0{}	&
\end{tikzcd}\]
\end{linenomath*}
Functoriality of $\Psi$ amounts to the commutativity of the upper face.

To conclude that $(\mathrm{d},\Psi)$ is a CE-homomorphism
it remains to show that it preserves chosen pullbacks,
since the square below commutes by definition of $\Psi$.
\begin{linenomath*}
\[\begin{tikzcd}
\cat{F}/\Eroot{}	\ar[d,"\Etrmfuncl{E}"{swap}] \ar[r,"\mathrm{d}"]
&	\cat{F}	\ar[d,"\CEfun1{A}"]
\\
\EtrmCatcl{E}	\ar[r,"\Psi"]	&	\CEcat0{}
\end{tikzcd}\]
\end{linenomath*}
Let then $x \in \thom{!_{\Delta}}{!_{\Gamma}}$ and $A \in \cat{F}/\Gamma$.
It is
\begin{linenomath*}
\begin{align*}
\CEpbvar01x{!(A)}
&\jdeq	S_x \circ (W_{!_{\Delta}}/!_{\Gamma}) \circ !(A)
\\&\jdeq
\CEpbf0x \circ (\CEpbf0{I(!_{\Delta})}/!_{\Gamma}) \circ !(A)
\\&\jdeq
\mathop{!}\left(\CEpb10{\CEqar{I(!_{\Delta})}{!_{\Gamma}} \circ x}{A}\right)
\\&\jdeq
\mathop{!}\left(\CEpb00{\Psi(x)}{A}\right)
\end{align*}
\end{linenomath*}
whereas
\begin{linenomath*}
\[
\Psi(\CEqarvar{x}{!(A)}) \jdeq
\pi_2(I(!_{\Delta.(\CEpb00{\Psi(x)}{A})}),!_{\Gamma.A}) \circ \CEqarvar{x}{!(A)}
\jdeq \CEqar{\Psi(x)}{A}
\]
\end{linenomath*}
holds by commutativity of the upper face in
\begin{linenomath*}
\[\begin{tikzcd}[sep=3em]
\ctxext{\Delta}{\CEpb00{\Psi(x)}{A}}	\ar[ddd,bend right=3em,"\catid{}"{swap}]
	\ar[d,"\tfid{\CEpb00{\Psi(x)}{A}}"] \ar[drr,bend left=2em,"\catid{}"]
&&&&&
\\
\bullet	\ar[dd,fib] \ar[dr] \ar[rr]
&&	\ctxext{\Delta}{\CEpb00{\Psi(x)}{A}}	\ar[dd,fib,"\CEpb00{\Psi(x)}{A}"{near end}] \ar[dr] \ar[drrr,bend left=2em,"{\pi_2(\Psi(x),A)}"{near end}]	&&&
\\
&	\bullet	\ar[from=uul,bend left=3em,crossing over,"{\hat{\pi_2}(x,!(A))}"{pos=.9}] \ar[rr,crossing over]
&&	\bullet	\ar[rr]	&&	\ctxext{\Gamma}{A}	\ar[dd,fib,"A"]
\\
\ctxext{\Delta}{\CEpb00{\Psi(x)}{A}}	\ar[ddr,bend right,"\catid{}"{swap}] \ar[dr] \ar[rr,"\CEpb00{\Psi(x)}{A}"{near end}]
&&	\Delta	\ar[ddr,bend right,"\catid{}"{swap,near end}] \ar[dr,"x"] \ar[drrr,bend left=2em,"\Psi(x)"{near end}]	&&
\\
&	\bullet	\ar[from=uu,fib,crossing over] \ar[d,fib] \ar[rr,crossing over]
&&	\bullet	\ar[from=uu,fib,crossing over] \ar[d,fib] \ar[rr,"{\pi_2(I(!_{\Delta}),!_{\Gamma})}"{swap}]
&&	\Gamma	\ar[d,fib,"!_{\Gamma}"]
\\
&	\ctxext{\Delta}{\CEpb00{\Psi(x)}{A}}	\ar[rr,"\CEpb00{\Psi(x)}{A}"{swap}]
&&	\Delta	\ar[rr,"!_{\Delta}"{swap}]	&&	\CEroot0{}
\end{tikzcd}\]
\end{linenomath*}
This diagram commutes because all the squares not involving the top-left object are chosen pullback squares in \sys{A},
two of the remaining triangles commute by definition of $\tfid{}$,
and the third one involving $\hat{\pi_2}(x,!(A))$ commutes by~\eqref{eq:EtoCE-qar} and \cref{lem:E-CE-pair-proj}.

The component $\epsi_{\sys{A}} \colon \rEtoCE \circ \CEtorE(\sys{A}) \to \sys{A}$
of the counit at \sys{A} is defined to be the pair $(\mathrm{d},\Psi)$.
To see that this choice is natural in \sys{A} it is enough to show that the square of functors
\begin{linenomath*}
\[\begin{tikzcd}
\EtrmCatcl{\CEtorE(\sys{A})}	\ar[d] \ar[r,"\Psi_{\sys{A}}"]
&	\CEcat1{A}	\ar[d,"\hCEcat{F}"]
\\
\EtrmCatcl{\CEtorE(\sys{B})}	\ar[r,"\Psi_{\sys{B}}"]	&	\CEcat1{B}
\end{tikzcd}\]
\end{linenomath*}
commutes for every CE-homomorphism $F \colon \sys{A} \to \sys{B}$.
Note that the action of the left-hand functor coincide with that of $F$.
Commutativity of the square thus follows from
\[
F(\CEqar{x}{A}) = \CEqar{Fx}{FA}
\]
which holds by definition of CE-homomorphism.
\end{construction}

Next we prove the second claim in \cref{thm:E-CE}.
\begin{lem}\label{lem:E-CE-isos}
For every CE-system \sys{A},
the CE-homomorphism $\epsi_{\sys{A}}$ from \cref{prob:E-CE-counit} is invertible
if and only if \sys{A} is \rtdCE.
\end{lem}

\begin{proof}
Note first that each function $\psi$ in \eqref{prob:E-CE-counit:hom} induces a bijection 
\begin{linenomath*}
\begin{equation}\label{lem:E-CE-isos:counit-sq}
\begin{tikzcd}[column sep=large]
\thom{!_{\Delta}}{!_{\Gamma}}	\ar[r,"\sim"]
&	\{f \in \cat{C}(\Delta,\Gamma) \mid I(!_{\Gamma}) \circ f \jdeq I(!_{\Delta}) \}
\end{tikzcd}
\end{equation}
\end{linenomath*}
with inverse given by the universal property of the canonical pullback square below.
\begin{linenomath*}
\[\begin{tikzcd}
\ctxext{\Delta}{\CEpb00{!_{\Delta}}{!_{\Gamma}}}	\ar[d] \ar[r]
&	\Gamma	\ar[d,"I(!_{\Gamma})"]
\\
\Delta	\ar[r,"I(!_{\Delta})"]	&	\CEroot0{}
\end{tikzcd}\]
\end{linenomath*}
As soon as $\CEroot0{}$ is terminal in $\CEcat0{}$,
the right-hand set in \eqref{lem:E-CE-isos:counit-sq}
coincides with $\CEcat0{}(\Delta,\Gamma)$.
Conversely, if the counit components are invertible it follows
from \eqref{lem:E-CE-isos:counit-sq} that
$\CEcat0{}(\Delta,\CEroot0{}) \jdeq \{!_{\Delta}\}$.
\end{proof}

\begin{proof}[Proof of \cref{thm:E-CE}]
\ref*{thm:E-CE:adj}.
To complete the proof we show that,
for an E-system \sys{E} and a CE-system \sys{A}
\begin{linenomath*}
\[
\CEtorE(\epsi_{\sys{A}}) \circ \eta_{\CEtorE(\sys{A})} = \mathrm{Id}_{\CEtorE(\sys{A})}
\qquad\text{and}\qquad
\epsi_{\rEtoCE(\sys{E})} \circ \rEtoCE(\eta_{\sys{E}}) = \mathrm{Id}_{\rEtoCE(\sys{E})}.
\]
\end{linenomath*}
It is clear that these equations hold between functors on families
by the isomorphism in \eqref{sl-trm}.
It remains to show that they hold also between the term structures in the left-hand one,
and between functors on substitutions in the right-hand one.

For a CE-system \sys{A}, a family $A \in \CEfam0{}/\Gamma$ and
$y \in T(A) = \{ x \colon \Gamma \to \ctxext{\Gamma}{A} \mid I(A) \circ x \jdeq \catid{\Gamma} \}$,
\cref{lem:E-CE-EC} yields
$\eta_{\CEtorE(\sys{A})}(y) \jdeq \pi_2(\tfid{!_{\Gamma}},\CEpb00{\pi_2(!_{\Gamma},!_{\Gamma})}{A}) \circ y$.
It follows that
\begin{linenomath*}
\begin{align*}
\CEtorE(\epsi_{\sys{A}}) \circ \eta_{\CEtorE(\sys{A})}(y)
&\jdeq
\pi_2(!_{\Gamma},!_{\ctxext{\Gamma}{A}}) \circ \pi_2(\tfid{!_{\Gamma}},\CEpb00{\pi_2(!_{\Gamma},!_{\Gamma})}{A}) \circ y
\\&\jdeq
\pi_2(\pi_2(!_{\Gamma},!_{\Gamma}),A) \circ \pi_2(\tfid{!_{\Gamma}},\CEpb00{\pi_2(!_{\Gamma},!_{\Gamma})}{A}) \circ y
\\&\jdeq
\pi_2(\pi_2(!_{\Gamma},!_{\Gamma}) \circ \tfid{!_{\Gamma}}, A) \circ y
\\&\jdeq
y.
\end{align*}
\end{linenomath*}

For an E-system \sys{E}, objects $\Delta$ and $\Gamma$ and $f \in \thom{!_{\Delta}}{!_{\Gamma}}$,
\cref{lem:E-CE-EC,lem:pairproj} yield
\begin{linenomath*}
\[
\epsi_{\rEtoCE(\sys{E})} \circ \rEtoCE(\eta_{\sys{E}})(f)
\jdeq
\cprojsnd{!_{\Delta}}{\ctxwk{!_{\Delta}}{!_{\Gamma}}}{\tmext{\tfid{!_{\Gamma}}}{f}}
\jdeq
f.
\]
\end{linenomath*}

This concludes the proof of the adjunction.

\ref*{thm:E-CE:img}.
This is \cref{lem:E-CE-isos}.
\end{proof}

\subsection{Equivalence between B-systems and C-systems}
\label{ssec:eqv-b-c}

Here we describe the main contribution of our work:
the construction of an equivalence of categories between the category of 
C-systems of \Cref{sec:c-sys} and the category of B-systems of \Cref{sec:b-sys}.

\begin{lem}\label{lem:CEtoE-strat}
The functor $\CEtorE \colon \CEsys \to \Esys$ from \cref{constr:CE2E-fun} restricts to a functor
$\CEtorE \colon \strrCEsys \to \strEsys$ between stratified systems.
\end{lem}

\begin{proof}
To see that the E-system $\CEtorE(\sys{A})$ is stratified
whenever the rooted CE-system $\sys{A}$ is stratified,
note first that the underlying category $\F$ is stratified by assumption.
Weakening and substitution homomorphisms are stratified
since the pullback functor that defines them in
\cref{constr:CE2E-fun}.(\ref*{constr:CEtoE-fun:weak},\ref*{constr:CEtoE-fun:sub})
is stratified.

For a stratified CE-homomorphism $F$, the underlying functor of
the E-homomorphism $\CEtorE(F)$ is the component $F_{\F}$ of $F$
on families, which is stratified by assumption.
\end{proof}

\begin{lem}\label{lem:EtoCE-strat}
The functor $\rEtoCE \colon \rEsys \to \rCEsys$ restricts to a functor
$\rEtoCE \colon \strEsys \to \strrCEsys$ between stratified systems.
\end{lem}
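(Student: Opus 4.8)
The plan is to unwind the definition $\rEtoCE = \EptoCE \circ \rEtoEp$ and verify the two clauses of \cref{def:stratCE} directly. Fix a stratified E-system $\sys{E}$ with underlying \scat $\cat{F}$ and terminal object $\Eroot{}$; since $\rEtoEp$ picks out the terminal object, the relevant base point is $\Eroot{}$. By \cref{constr:EtoCE-fun}, the category of families of $\rEtoCE(\sys{E})$ is $\cat{F}/\Eroot{}$. This is stratified by \cref{lem:strat-slice}, and under the canonical isomorphism \eqref{sl-trm} between $\cat{F}/\Eroot{}$ and $\cat{F}$ this stratification agrees with the given one on $\cat{F}$ (because $L(\Eroot{}) = 0$). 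So the first requirement, that the category of families be stratified, holds.

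For the second requirement I would use the identification, from \eqref{eq:EtoCE-qar} and \cref{constr:EtoCE-fun}, of the CE-pullback functor $\CEpbf0f$ along an internal morphism $f \in \thom{A}{B}$ with the precomposition functor $\Epcmpf0f = S_f \circ (W_A/B)$ of \cref{def:Esys-precmp}. Since $\sys{E}$ is a stratified E-system, the weakening functor $W_A$ and the substitution functor $S_f$ are stratified (\cref{def:stratEsys}); their slices $W_A/B$ and $S_f$ remain stratified by the corollary following \cref{lem:strat-slice}, and a composite of stratified functors is stratified by \cref{lem:strat-funct}. Hence each $\CEpbf0f$ is stratified with respect to the induced slice stratifications, so $\rEtoCE(\sys{E})$ is a stratified CE-system; it is rooted by \cref{rmk:EptoCE-rooted}, hence an object of $\strrCEsys$.

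On morphisms, for a stratified E-homomorphism $F \colon \sys{E} \to \sys{D}$ the family-component of the CE-homomorphism $\rEtoCE(F)$ is $F/\Eroot{}$ (\cref{constr:EtoCE-fun}). As $F$ preserves the terminal object, $F(\Eroot{}) = \Eroot{}$, and since $F$ is stratified its slice $F/\Eroot{}$ is again stratified by the corollary following \cref{lem:strat-slice}. By \cref{def:stratCE} a CE-homomorphism between stratified CE-systems is stratified exactly when its family-component is, so $\rEtoCE(F)$ is stratified. The only genuinely fiddly point is the bookkeeping: one must check that the slice stratifications invoked in \cref{def:stratCE} and in \cref{def:stratEsys} are literally the same ones produced by \cref{lem:strat-slice}, which amounts to tracing the identification $\cat{F}/\Eroot{} \cong \cat{F}$ through the slice construction. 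Once this is observed, everything follows from the stability of stratified functors under slicing and composition.
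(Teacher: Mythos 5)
Your proof is correct and follows essentially the same route as the paper's: the category of families is $\cat{F}/\Eroot{}$ (identified with $\cat{F}$ via \eqref{sl-trm}), the chosen pullback functors are the precomposition functors $S_f \circ (W_A/B)$, which are stratified because weakening and substitution are stratified in a stratified E-system, and on morphisms the family-component of $\rEtoCE(F)$ is (the slice of) $F$ itself. The paper's proof is merely a terser version of the same argument.
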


\begin{proof}
Let $\sys{E}$ be a stratified E-system.
In particular, the underlying category $\F$ is stratified.
Since weakening and substitution homomorphisms are also stratified
by assumption,
so is the precomposition homomorphisms from \cref{def:Esys-precmp}.
It follows that the CE-system $\rEtoCE(\sys{E})$ is stratified.

For a stratified E-homomorphism $F$,
the component on families of the CE-homomorphism $\rEtoCE(F)$
is the underlying functor of $F$,
which is stratified by assumption.
\end{proof}

\begin{lem}\label{lem:E-CE-strat}\hfill
\begin{enumerate}
\item\label{lem:E-CE-strat:unit}
For every stratified E-system $\sys{E}$,
the unit component $\eta_{\sys{E}}$ of \cref{constr:E-CE-unit}
is a stratified E-homomorphism.
\item\label{lem:E-CE-strat:counit}
For every stratified CE-system $\sys{A}$,
the counit component $\epsi_{\sys{A}}$ of \cref{constr:E-CE-counit}
is a stratified CE-homomorphism.
\item\label{lem:E-CE-strat:adj}
The adjunction $\rEtoCE \dashv \CEtorE$ from \cref{thm:E-CE}.\ref{thm:E-CE:adj}
restricts to an adjunction
\begin{linenomath*}
\[\begin{tikzcd}[column sep=6em]
\strCEsys	\ar[r,shift right=1ex,"\CEtorE"{swap}]
		\ar[r,phantom,"{\scriptstyle\bot}"]
&	\strEsys.	\ar[l,shift right=1ex,"\rEtoCE"{swap}]
\end{tikzcd}\]
\end{linenomath*}
between subcategories of stratified structures.
\end{enumerate}
\end{lem}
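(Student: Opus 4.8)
The plan is to reduce all three parts to a single observation: the canonical isomorphisms $\mathrm{d}\colon\cat{F}/1\to\cat{F}$ and $!\colon\cat{F}\to\cat{F}/1$ of~\eqref{sl-trm}, relating a stratified \scat to the slice over its terminal object $1$, are stratified functors. Indeed, $1$ has length $0$ by \cref{def:stratification}.\ref{def:stratification:term}, so the slice stratification $L_1(g)\jdeq L(\mathrm{dom}(g))-L(1)$ from \cref{lem:strat-slice} equals $L(\mathrm{dom}(g))$; since $!$ sends an object $Y$ to the arrow $!_Y\colon Y\to 1$ with domain $Y$, we get $L_1(!(Y))\jdeq L(Y)$, i.e. $L\jdeq L_1\circ{!}$, and likewise for its inverse $\mathrm{d}$.

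For part~\ref{lem:E-CE-strat:unit}, the underlying functor of $\eta_{\sys{E}}$ is exactly $!\colon\Efam{E}\to\Efam{E}/\Eroot{}$ (\cref{constr:E-CE-unit}). As $\sys{E}$ is stratified, $\Efam{E}$ is stratified, and its codomain $\CEtorE\rEtoCE(\sys{E})$ is stratified by \cref{lem:EtoCE-strat,lem:CEtoE-strat}; by the observation above $!$ is a stratified functor, so $\eta_{\sys{E}}$ is a stratified E-homomorphism by \cref{def:stratEsys}.

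For part~\ref{lem:E-CE-strat:counit}, the component on families of the counit $\epsi_{\sys{A}}\jdeq(\mathrm{d},\Psi)$ (\cref{constr:E-CE-counit}) is the functor $\mathrm{d}\colon\cat{F}/\Eroot{}\to\cat{F}$ inverse to $!$, where $\cat{F}$ is the stratified category of families of $\sys{A}$ and $\Eroot{}$ its terminal object $\CEroot0{}$. The same observation shows $\mathrm{d}$ is stratified; since a stratified CE-homomorphism is precisely one whose component on families is a stratified functor (\cref{def:stratCE}), $\epsi_{\sys{A}}$ is stratified.

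For part~\ref{lem:E-CE-strat:adj}, I assemble these facts. By \cref{lem:CEtoE-strat,lem:EtoCE-strat} both functors of the adjunction \cref{thm:E-CE}.\ref{thm:E-CE:adj} restrict to the stratified subcategories (the proof of the former does not use rootedness, so it applies to all of $\strCEsys$). By parts~\ref{lem:E-CE-strat:unit} and~\ref{lem:E-CE-strat:counit}, at every stratified object the unit and counit components are stratified homomorphisms, hence arrows of $\strEsys$ and $\strCEsys$ respectively. The triangle identities survive verbatim, since the restricted functors, unit, and counit carry the same underlying data as in \cref{thm:E-CE}.\ref{thm:E-CE:adj}; thus the adjunction restricts as claimed. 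There is no serious obstacle: the whole statement is bookkeeping once one recognises the unit and counit as the canonical slice isomorphisms and appeals to \cref{lem:strat-slice}. The only point to watch is that the vanishing length of the terminal object makes the slice stratification over it coincide with the ambient one, which is exactly what makes $!$ and $\mathrm{d}$ stratified.
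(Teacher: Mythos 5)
Your proof is correct and follows essentially the same route as the paper: the paper likewise observes that the unit's underlying functor $!$ and the counit's component on families $\mathrm{d}$ are stratified because $L(\Eroot{})\jdeq 0$, and then deduces the restricted adjunction from \cref{lem:CEtoE-strat,lem:EtoCE-strat} together with the first two parts. Your extra care in spelling out the slice stratification via \cref{lem:strat-slice} and in noting the rootedness issue for $\strCEsys$ only makes explicit what the paper leaves implicit.
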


\begin{proof}\hfill
\begin{enumerate}
\item The underlying functor of the unit component $\eta_{\sys{E}}$ is
the functor $! \colon \F\to \F/\Eroot{}$ from~\eqref{sl-trm}.
This functor is stratified since $L(\Eroot{}) \jdeq 0$.

\item 
The underlying functor of the counit component $\epsi_{\sys{A}}$
on families is the inverse $\mathrm{d} \colon \F/\CEroot0{} \to \F$ 
of $! \colon \F\to \F/\CEroot0{}$,
and it is stratified for the same reason.

\item 
This is a consequence of \cref{lem:CEtoE-strat,lem:EtoCE-strat}
and \Cref{lem:E-CE-strat:unit,lem:E-CE-strat:counit} just proved. \qedhere
\end{enumerate}
\end{proof}

Define a functor $\CtoB \colon \Csys \to \Bsys$ as the composite
\begin{linenomath*}
\begin{equation}\label{CtoB}
\begin{tikzcd}[column sep=4em]
\Csys	\ar[r,"\CtoCE"]	&	\strrCEsys	\ar[r,"\CEtorE"]	&	\strEsys	\ar[r,"\EtoB"]	&	\Bsys
\end{tikzcd}
\end{equation}
\end{linenomath*}
where the functors are, in order,
$\CtoCE$ from \cref{constr:C2CE-funct}
$\CEtorE$ from \cref{constr:CE2E-fun}
and $\EtoB$ from \cref{constr:EtoB-fun}.
Similarly, we obtain a functor $\BtoC \colon \Bsys \to \Csys$
in the other direction as the composite
\begin{linenomath*}
\begin{equation}\label{BtoC}
\begin{tikzcd}[column sep=4em]
\Bsys	\ar[r,"\BtoE"]	&	\strEsys	\ar[r,"\rEtoCE"]	&	\strrCEsys	\ar[r,"\CEtoC"]	&	\Csys
\end{tikzcd}
\end{equation}
\end{linenomath*}
where the functors are, in order,
$\BtoE$ from \cref{lem:BtoE-fun},
$\rEtoCE$ from \cref{constr:EtoCE-fun}
and $\CEtoC$ from \cref{def:CEtoC-fun}.

\begin{thm}\label{thm:BeqvC}
The pair of functors $\CtoB$ and $\BtoC$
establish an equivalence between the category of C-systems
and the category of B-systems.
\end{thm}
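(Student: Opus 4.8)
The plan is to recognise both $\CtoB$ and $\BtoC$ as composites of three equivalences and then conclude by composition. Recall the definitions in~\eqref{CtoB} and~\eqref{BtoC}: we have $\CtoB \jdeq \EtoB \circ \CEtorE \circ \CtoCE$ and $\BtoC \jdeq \CEtoC \circ \rEtoCE \circ \BtoE$, so that $\CtoB$ runs along $\Csys \to \strrCEsys \to \strEsys \to \Bsys$ while $\BtoC$ runs back along the same three stages. The outer two stages are already known to be equivalences with the displayed pseudo-inverses: $\CtoCE \colon \Csys \to \strrCEsys$ is an equivalence by \cref{thm:CasCE} (with pseudo-inverse $\CEtoC$), and $\EtoB \colon \strEsys \to \Bsys$ together with $\BtoE$ form an equivalence by \cref{thm:BasE}. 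Thus the only thing left to establish before assembling is that the middle stage $\CEtorE \colon \strrCEsys \to \strEsys$, with $\rEtoCE$ in the opposite direction, is an equivalence.

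The middle equivalence is the stratified analogue of \cref{cor:EeqvCE}, and I would obtain it by combining the already-proven facts about the adjunction $\rEtoCE \dashv \CEtorE$ with the stratification statements. Concretely: by \cref{lem:E-CE-isos}.\ref{lem:E-CE-isos:unit} the unit $\eta_{\sys{E}}$ is invertible for every E-system, and by \cref{lem:E-CE-strat}.\ref{lem:E-CE-strat:unit} its components are stratified homomorphisms whenever $\sys{E}$ is stratified, so $\eta$ restricts to a natural isomorphism $\mathrm{Id}_{\strEsys} \cong \CEtorE \circ \rEtoCE$. Dually, by \cref{lem:E-CE-isos}.\ref{lem:E-CE-isos:counit} the counit $\epsi_{\sys{A}}$ is invertible precisely for \rtdCE CE-systems; since every object of $\strrCEsys$ is rooted, and its components are stratified by \cref{lem:E-CE-strat}.\ref{lem:E-CE-strat:counit}, $\epsi$ restricts to a natural isomorphism $\rEtoCE \circ \CEtorE \cong \mathrm{Id}_{\strrCEsys}$. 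These two natural isomorphisms exhibit $\CEtorE$ and $\rEtoCE$ as mutually pseudo-inverse equivalences between $\strEsys$ and $\strrCEsys$; the restricted adjunction of \cref{lem:E-CE-strat}.\ref{lem:E-CE-strat:adj} guarantees that all the data in fact lives in the stratified subcategories.

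With the three equivalences in hand, $\CtoB$ is a composite of equivalences and hence an equivalence. To see that $\BtoC$ is a pseudo-inverse, I would chain the three witnessing isomorphisms in the order in which the factors meet: $\BtoC \circ \CtoB \cong \CEtoC \circ \rEtoCE \circ (\BtoE \circ \EtoB) \circ \CEtorE \circ \CtoCE \cong \CEtoC \circ (\rEtoCE \circ \CEtorE) \circ \CtoCE \cong \CEtoC \circ \CtoCE \cong \mathrm{Id}_{\Csys}$, using \cref{thm:BasE}, the middle equivalence, and \cref{thm:CasCE} in turn, and symmetrically $\CtoB \circ \BtoC \cong \mathrm{Id}_{\Bsys}$. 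All the substantive work has been done in the preceding sections, so the main obstacle here is purely organisational: confirming that each witnessing isomorphism has stratified (and, where relevant, rooted) components so that the composites are legitimate morphisms of the intermediate categories, and splicing the natural isomorphisms together in a way that respects the functor orderings. No further computation with the E-system operations is needed.
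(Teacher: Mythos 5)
Your proposal is correct and follows essentially the same route as the paper: both decompose $\CtoB$ and $\BtoC$ into the three stages through $\strrCEsys$ and $\strEsys$ and invoke \cref{thm:CasCE}, the (stratified restriction of the) E/CE equivalence, and \cref{thm:BasE}. If anything, you are slightly more explicit than the paper's one-line proof in assembling the middle stratified equivalence from \cref{lem:E-CE-isos} and \cref{lem:E-CE-strat}, where the paper simply cites \cref{cor:EeqvCE} and leaves the restriction to the stratified subcategories implicit.
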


\begin{proof}
The functors defining $\CtoB$ in~\eqref{CtoB} and $\BtoC$ in~\eqref{BtoC}
are essentially inverse to each other
by \cref{thm:CasCE,cor:EeqvCE,thm:BasE}.
The claim follows since equivalences compose.
\end{proof}

\section{Conclusion}
\label{sec:conclusion}

We have constructed an equivalence between the category of C-systems and the category of B-systems, each equipped with a suitable notion of morphism.
The equivalence does not rely on classical reasoning principles such as the axiom of choice or excluded middle.
This equivalence constitutes a crucial piece in Voevodsky's research program on the formulation and solution of an initiality conjecture.

Some questions that remain open:
\begin{itemize}

\item Voevodsky has studied different type constructions on C-systems, in particular, dependent function types~\cite{MR3584698,MR3607210} and identity types~\cite{1505.06446}.
  The equivalence constructed in the present paper should be extended to type and term constructors on C-systems and B-systems.
  
\item Via Generalized Algebraic Theories, B-systems and C-systems relate to Garner's algebras for a monad on type-and-term systems~\cite{GARNER20151885}, in the form of an equivalence of categories.
  It would be very useful to have an explicit description of the maps back and forth, without passing through GATs.

\item E-systems and CE-systems should be related to other unstratified categorical structures for the interpretation of type theory, such as categories with families~\cite{Dybjer1996}.

\item Voevodsky envisioned a formalization, in a computer proof assistant, of his theory of type theories; some work by Voevodsky towards this goal is available online.%
  \footnote{\url{https://github.com/UniMath/lBsystems}, \url{https://github.com/UniMath/lCsystems}}
A formalization of the equivalence between B- and C-systems is still missing.

\item As remarked in the introduction, B-systems seem more suitable than other semantics to accommodate for modifications of the syntax (either restricting to substructural rules, or extending it with type constructors and operators).
Carrying these modifications over along the equivalence could yield corresponding formulations for C-systems and more traditional semantics.
\end{itemize}

\bibliographystyle{alphaurl}
\bibliography{refs}

\end{document}